\documentclass[11pt]{article}
\usepackage{graphicx} 

\usepackage[a4paper,margin=2cm]{geometry}
\usepackage{enumerate}

\usepackage{setspace}

\DeclareFontEncoding{LS1}{}{}
\DeclareFontSubstitution{LS1}{stix}{m}{n}
\DeclareSymbolFont{symbols2}{LS1}{stixfrak}{m}{n}
\DeclareMathSymbol{\typecolon}{\mathbin}{symbols2}{"25}

\usepackage{pdfpages}

\usepackage{eurosym}

\usepackage{amsmath,amsfonts,amssymb,graphics,amsthm, comment}
\usepackage{hyperref}
\hypersetup{
    colorlinks=true,
    linkcolor=blue,
    citecolor=red,
    urlcolor=blue,
    pdfborder={0 0 0}
}
\usepackage{enumitem}
\usepackage{overpic}

\definecolor{mygreen}{rgb}{0,0.2,0.8}

\usepackage{xcolor}
\usepackage{transparent}
\usepackage{bbm}
\usepackage{wrapfig} 
\usepackage{enumerate}

\usepackage[font=sf, labelfont={sf,bf}, margin=2cm]{caption}

\usepackage{cleveref}
  \crefname{theorem}{Theorem}{Theorems}
  \crefname{thm}{Theorem}{Theorems}
  \crefname{thm*}{Theorem*}{Theorems}
    \crefname{problem}{Problem}{Theorems}
  \crefname{lemma}{Lemma}{Lemmas}
  \crefname{lem}{Lemma}{Lemmas}
  \crefname{remark}{Remark}{Remarks}
  \crefname{prop}{Proposition}{Propositions}
\crefname{notation}{Notation}{Notations}
\crefname{claim}{Claim}{Claims}
  \crefname{defn}{Definition}{Definitions}
  \crefname{corollary}{Corollary}{Corollaries}
  \crefname{section}{Section}{Sections}
  \crefname{figure}{Figure}{Figures}
    \crefname{assumption}{Assumption}{Assumptions}

\newtheorem{thm}{Theorem}[section]
\newtheorem{theorem}[thm]{Theorem}

\newtheorem{thm*}{Theorem*}[section]

\newtheorem{lemma}[thm]{Lemma}
\newtheorem{lem}[thm]{Lemma}

\newtheorem{corollary}[thm]{Corollary}
\newtheorem{prop}[thm]{Proposition}
\newtheorem{defn}[thm]{Definition}

\newtheorem{assumption}[thm]{Assumption}
\numberwithin{equation}{section}

\theoremstyle{definition}
\newtheorem{remark}[thm]{Remark}
\newtheorem{rmk}[thm]{Remark}

\def\cW{\mathcal{W}}

\def\cT{\mathcal{T}}

\def\cO{\mathcal{O}}

\def\cM{\mathcal{M}}
\def\cL{\mathcal{L}}

\def\cH{\mathcal{H}}
\def\cG{\mathcal{G}}
\def\cF{\mathcal{F}}

\def\cD{\mathcal{D}}
\def\cC{\mathcal{C}}
\def\cB{\mathcal{B}}

\def\PP{\mathbb{P}}
\def\EE{\mathbb{E}}
\def\CC{\mathbb{C}}
\def\RR{\mathbb{R}}
\def\ZZ{\mathbb{Z}}

\def\DD{\mathbb{D}}

\def\HH{\mathbb{H}}

\def  \p- {p\textunderscore}

\def\eps{\varepsilon}

\def\ph{\varphi}

\usepackage{multicol}

\DeclareMathOperator{\dist}{dist}

\DeclareMathOperator{\Tr}{Tr}

\DeclareMathOperator\supp{supp}

\newcommand{\ii}{\mathbf{i}}

\DeclareMathOperator{\diam}{Diam}

\newcommand{\nb}[1]{{\color{red}{[#1]}}}
\newcommand{\lr}[1]{{\color{blue}{#1}}}

\newcommand{\Ebr}[3]{\mathbb{E}_{#1 \overset{#3}{\to} #2}}

\newcommand{\Pbr}[3]{\mathbb{P}_{#1 \overset{#3}{\to} #2}}

\newcommand{\Primal}{\Gamma}
\newcommand{\Primalbis}{\Lambda}
\newcommand{\infPrimal}{\Primal^{\infty}}
\newcommand{\closurePrimal}{\overline{\Primal}}

\newcommand{\Dual}{\Primal^{\star}}
\newcommand{\Dualbis}{\Primalbis^{\star}}
\newcommand{\infDual}{(\Primal^{\infty})^*}
\newcommand{\Black}{B}
\newcommand{\White}{W}
\newcommand{\infBlack}{B^{\infty}}
\newcommand{\infWhite}{W^{\infty}}

\newcommand{\Int}{\mathrm{Int}}

\newcommand{\Rot}{\mathrm{Rot}}

\usepackage{slashed}

\newcommand{\mass}{M}
\newcommand{\weight}{\mu}

\newcommand{\mGreen}{G^m}

\newcommand{\dGreen}{G_\#}
\newcommand{\dfreeGreen}{G_{\#,\infty}}
\newcommand{\dmGreen}{\dGreen^m}
\newcommand{\dDelta}{\Delta_\#}
\newcommand{\dDirichletDelta}{\Delta_\#^d}

\newcommand{\dpartial}{\partial_\#}
\newcommand{\dbarpartial}{\overline{\partial}_\#}
\newcommand{\Proj}{\mathrm{Proj}}
 
\newcommand{\dd}{d_\#}
\newcommand{\daverage}{A_\#}
\newcommand{\daveragetwo}{(A_2)_\#}
\newcommand{\daverageone}{(A_1)_\#}
\newcommand{\dpartialone}{(\partial_1)_\#}
\newcommand{\dpartialtwo}{(\partial_2)_\#}
\renewcommand{\path}{\gamma}
\newcommand{\dpath}{\gamma_\#}

\newcommand{\dCRx}{\mathrm{d}_x}
\newcommand{\dCRy}{\mathrm{d}_y}

\newcommand{\tx}{\tilde{x}}
\newcommand{\ty}{\tilde{y}}
\newcommand{\tw}{\tilde{w}}
\newcommand{\ttheta}{\tilde{\theta}}

\newcommand{\kappabar}{\overline{\kappa}}
\newcommand{\kappastarbar}{\overline{\kappa^*}}
\newcommand{\kappastar}{\kappa^*}

\newcommand{\diagonal}{\mathcal{D}}

\newcommand{\ReIm}{\mathfrak{P}}

\newcommand{\dirac}{\overline{\partial}}

\newcommand{\tH}{\tilde{H}}
\newcommand{\tcC}{\tilde{\cC}}

\newcommand{\sgn}{\mathrm{sgn}}

\newcommand{\dz}{\partial}
\newcommand{\dzbar}{\overline{\partial}}

\newcommand{\BV}{\mathrm{BV}}
\newcommand{\CSG}{\mathrm{CSG}}

\newcommand{\Cauchy}{\mathcal{K}}

\newcommand{\partialout}{\partial_{\mathrm{out}}}

\title{Massive holomorphicity of 
near-critical dimers\\ and sine-Gordon model}
\author{Nathana\"el Berestycki\thanks{Fakultät für Mathematik,University of Vienna, Austria, \texttt{nathanael.berestycki@univie.ac.at}} \and Scott Mason\thanks{Courant Institute, New York University, \texttt{sm12814@nyu.edu}} \and 
Lucas Rey
  \thanks{PSL University-Dauphine, CNRS, UMR 7534, CEREMADE, 75016 Paris, France,\texttt{ lucas.rey@inrae.fr}}
\date{\today}
}
\begin{document}

\maketitle

\begin{abstract}
In this paper, we consider the near-critical dimer model in the setup of isoradial superpositions with Temperleyan boundary conditions. We show that the centered height function converges as the mesh size tends to zero to a limiting field which agrees with the (electromagnetically tilted) sine-Gordon model, whose derivative correlations are described by Grassmann variables
(or equivalently determinants involving a massive Dirac operator). This answers a longstanding question in the field. A crucial part of the work is to develop a notion of discrete massive holomorphic functions and the tools to study such functions, in particular finding an exact discrete form of the massive Cauchy--Riemann equations, which is satisfied by the inverse Kasteleyn matrix. In comparison with previous studies, a key novelty of this part of our work is that the mass is not only allowed to be non-constant but can be complex-valued. 
\end{abstract}

\tableofcontents

\section{Introduction; main results}

\subsection{Background and overview}
The dimer model, introduced by Kasteleyn \cite{Kasteleyn1961TheLattice} and independently Fisher and Temperley \cite{FisherTemperley} in the 1960s, is one of the most classical models of statistical mechanics. Given a finite, planar bipartite graph $G = (V,E)$, a dimer configuration $\mathbf{m} \subset E$ is a perfect matching of $G$, i.e., a subset of edges such that each vertex $v \in V$ is incident to exactly one edge $e \in \mathbf{m}$. We let $\cD(G)$ denote the set of dimer configurations on $G$. If edges of the graph $G$ are furthermore weighted (let $c_e>0$ denote the weight or conductance of the undirected edge $e$), then we form a probability measure on the set of dimer configurations by setting
$$
\mathbb{P} ( \mathbf{m}) = \frac1Z \prod_{e\in \mathbf{m}} c_e,
$$
where the constant $Z = \sum_{\mathbf{m} \in \cD(G)} \prod_{e \in \mathbf{m}} c_e$, the \textbf{partition function}, is chosen so that $\mathbb{P}$ is a probability measure. We will always assume that $Z>0$ (i.e., the set of dimer configurations $\cD(G)$ is nonempty).

When $G$ is a portion of the square lattice, or more generally when it is an isoradial graph (or more precisely, an isoradial superposition, see definitions below) it is well known that there is a natural choice of weights where the dimer model is \textbf{critical}: e.g., dimer-dimer correlations decay polynomially with the distance, and the associated dimer height function becomes conformally invariant in the scaling limit (and in an appropriate gauge), in fact converges to a multiple of the Gaussian free field with appropriate boundary conditions in the scaling limit where the mesh size $ \eps \to 0$. This occurs e.g. on the square lattice when the weights are uniform ($c_e \equiv 1$) and the graph has \textbf{Temperleyan boundary conditions}, which will be defined below more precisely; this was the content of Kenyon's groundbreaking results \cite{Kenyon_confinv, KenyonGFF}. On isoradial superpositions when the weights are inherited from Kenyon's critical weights \cite{Ken02} this was obtained in the full plane in \cite{dT07} (generalised to domains with Temperleyan boundary conditions in \cite{Li17} under additional assumptions on the boundary, and substantially more generally in \cite{BLR_DimersGeometry}).

In this paper we are concerned with the \textbf{near-critical dimer model}, in which the edge weights deviate from the above critical weights by an amount which converges to 0 as $ \eps \to 0$ at a suitably chosen speed. The goal of this paper will be to prove that in this case it is still possible to obtain a nontrivial scaling limit. Characteristic features of this scaling limit are that on scales smaller than macroscopic it is indistinguishable from the above critical scaling limit: in particular, at such scales dimer-dimer correlations will still decay polynomially fast with the distance, and the height function is locally a Gaussian free field. However, at macroscopic scales and beyond, dimer-dimer correlations decay exponentially fast, and the height function is no longer Gaussian in the scaling limit. In other words it is given by a \textbf{massive field theory}. The main goal of this paper is to identify precisely the limiting height function, which we show is given by the so-called \textbf{sine-Gordon model with electromagnetic field}, a model from Quantum Field Theory (QFT) which is integrable but non-conformal. Questions of this nature go back in the physics literature at least to the work of Lukanov \cite{Lukyanov} who considered the six-vertex model close to the critical point and suggested that in a suitable limit the scaling is identical to that of the sine-Gordon model.


Let us briefly introduce the model we will be working with in the case of the square lattice (the model will be described in a more general framework of isoradial superpositions below). Given a bounded, simply connected domain $\Omega \subset \RR^2 \simeq \CC$ and a graph $G_ \eps \subset  \eps \ZZ^2$ which is a Temperleyan approximation of $\Omega$, the basic parameter of the near-critical deformation we consider will be a \textbf{vector field} $\alpha : \bar \Omega \to \RR^2$ which is assumed to be smooth up to and including the boundary. We will assume that this vector derives from a potential:
\begin{equation}\label{eq:potential}
    \alpha = \nabla V, \text{ where } V : \bar \Omega \to \RR
\end{equation}
Given such a potential $V$, we define near-critical weights in $G_ \eps$ in the following manner. Consider the standard black-white chessboard decomposition of $ \eps \ZZ^2$. Around every other black vertex $b$ (See Figure \ref{Fig:rhombusintro} below), if $w$ is a neighbouring white vertex (thus $|w- b| =  \eps$), set 
\begin{align} \label{eq:weightsZ2}
c_{e} &= e^{2(V(w) - V(b))} 
 = 1 +   2 \langle \nabla V, w-b\rangle + o( \eps)
\end{align} where $e$ is the undirected edge $e= (bw)$. 

In the case where $\alpha(z) \equiv \alpha_0$ is constant throughout $\Omega$ (and when $\Omega$ is actually equal to the whole plane) such a model was introduced by Chhita in \cite{Chhita2012}, who showed among other things that the limiting height function cannot be Gaussian since its moments do not satisfy the Wick relation. The same model was further considered in \cite{Mason} who observed that the two-point correlation of the derivative of the height function match those of the sine-Gordon model in the full plane. Roughly simultaneously and independently, the above model was introduced (in a domain $\Omega$ and with nonconstant vector field $\alpha$) in \cite{BHS}. There, this model was studied from the perspective of Temperley's bijection. In particular it was shown there that under the assumption that $V$ is \textbf{log-convex}, paths in the Temperleyan tree have a nontrivial scaling limit, which in the case  where $\alpha \equiv \alpha_0$ is just constant coincides with Makarov and Smirnov's \textbf{Massive SLE$_2$} (\cite{MakarovSmirnov, ChelkakWan} --  in fact this massive SLE had been introduced independently and slightly before by Bauer, Bernard and Kytöla in \cite{BBK}; see also \cite{BBC}). A precise conjecture was also formulated there for the scaling limit of the height function: namely, the authors conjectured that in the scaling limit, the height function is given by a multiple (namely, $\sqrt{2}$) times a field whose law is 
\begin{equation}\label{eq:conjBHS1}
    \mathbb{P}_{(\Omega, \alpha)} ( \mathrm{d}\phi) =  \exp \left( z_0 \int_\Omega  \left\langle e^{\ii  \frac1\chi \phi(x)}, \alpha(x) \right\rangle \mathrm{d}x \right) \mathbb{P}^{\text{GFF}} ( \mathrm{d}\phi),
\end{equation}
where $\ii = \sqrt{-1}$ (here and everywhere in the paper), $z_0\in \RR$ is an unspecified constant, $\chi = 1/\sqrt{2}$ is the \textbf{Imaginary Geometry} constant associated with $\kappa =2$, and we identify complex numbers with vectors in multiple ways (thus $\langle z, w \rangle = \Re (z\bar w)$ is the standard Euclidean inner product), and $\PP^{\mathrm{GFF}}$ is the law of a Gaussian free field (see \cite{BP} for a general introduction). We note for future reference that in \eqref{eq:conjBHS1} (as well as in \eqref{eq:conjBHS2} and \eqref{eq:conj3} below) the height function is implicitly defined with respect to a reference flow is the winding flow of, say, \cite{BLR_torus, BLRriemann2} and whose total mass is $2\pi$; in other words this height function is $2\pi$ larger than the one we will be working with in this paper. In fact, this conjecture forgot to take into account a factor of $\ii$ in $\alpha$ (or, equivalently, an additive factor of $\pi/2$ in the limiting height function), as we will explain.

In \eqref{eq:conjBHS1} it is also important to note that the authors were dealing with an uncentered height function (i.e., the height function was defined with respect to the so-called winding reference flow, see \cite{BLR_DimersGeometry, BLR_torus, BLRriemann2}) and  the height of the Gaussian free fied is normalised according to the conventions of imaginary geometry, i.e., $\EE^{\text{GFF}}[\phi(x) \phi(y) ] = - \log |x- y| + O(1)$. However it is more common in the literature concerning sine-Gordon to normalise the field differently: namely we use the law $\PP^{\text{GFF}\#}$ to denote a GFF in which $\EE^{\text{GFF}\#} [\phi(x) \phi(y) ] = - (2\pi)^{-1} \log |x-y | + O(1)$. Then the law \eqref{eq:conjBHS1} in the conjecture from \cite{BHS} can be rewritten (up to multiplication by a factor $\sqrt{2} \times \sqrt{2\pi} = 2\sqrt{\pi}$, and after correcting the conjecture to include the factor of $\ii$) as the law
\begin{equation}\label{eq:conjBHS2}
    \mathbb{P}_{\text{SG}(
    \alpha)} ( \mathrm{d}\phi) =  \exp \left( z_0 \int_\Omega  \left\langle e^{\ii  \sqrt{4\pi} \phi (x) }, {\ii \alpha(x)} \right\rangle \mathrm{d}x \right) \mathbb{P}^{\text{GFF}\#} ( \mathrm{d}\phi).
\end{equation}
We note that, formally, in the case when $\Omega = \RR^2$ is the whole plane and $\alpha$ is constant (which can then be taken to be purely imaginary by rotation invariance, so $\alpha \equiv -\ii \alpha_0$, say), the law \eqref{eq:conjBHS2} coincides with the law
\begin{equation}
\label{eq:conj3}
\mathbb{P}_{\text{SG}(\alpha)} ( \mathrm{d}\phi) =  \exp \left( z_0 \int_\Omega  \cos (\sqrt{4\pi} \phi (x) ) \mathrm{d}x \right) \mathbb{P}^{\text{GFF}\#} ( \mathrm{d}\phi),
\end{equation}
where the value of $z_0$ has changed to incorporate that of $\alpha_0$.
Once again, we point out that, although it is not explicitly written in \cite{BHS}, in the above conjecture it is important  to note that the GFF of the law $\PP^{\text{GFF}\#}$ does not have zero boundary conditions: instead, it has \emph{winding} boundary conditions (see (2.7) in \cite{BLR_DimersGeometry}).

This is, at least formally, the so-called \textbf{sine-Gordon field at the free fermion point}. In the general case where $\alpha$ is non-constant, the law \eqref{eq:conjBHS2} can be viewed intuitively as that of a perturbed sine-Gordon field, in which the electromagnetic field $\alpha$ (deriving from the potential $V$) tilts the field so that $e^{\ii \sqrt{4\pi} \phi}$ points in the same direction as $\alpha$. 

\medskip {The main goal of this paper will be to prove this conjecture under certain assumptions on the potential $V$}. To state it, let us first point out that actually defining the sine-Gordon model from \eqref{eq:conj3} is in itself highly nontrivial. In the full plane, this was achieved by the breakthrough work of Bauerschmidt and Webb \cite{BauerschmidtWebb}. In a bounded domain (in fact, in the unit disc) subject to certain boundary conditions, this is the subject of a work by Park, Virtanen and Webb \cite{PVW}. In both cases, a prominent feature (both a property of the field and a tool used in its construction) is the so-called \textbf{Coleman transform}, which expresses the fermionic nature of the field (and will thus ultimately yield a massive extension of the Boson-Fermion correspondence for the dimer model). This leads to concrete expressions for the moments of the sine-Gordon field, of the type \begin{equation}\label{eq:conjmoments}
    \EE_{\text{SG}(\alpha)}( (\bar \phi, f)^n) = m_n(f),
\end{equation}
where $\bar \phi = \phi - \EE_{\text{SG}(\alpha)}( \phi)$, $f$ is a test function, and $m_n$ is the sum of $n$-fold integrals of certain kernels $\chi^{(\mathbf{s})}(z_1, \ldots, z_n)$ (which can be interpreted as the weak holomorphic and antiholomorphic derivative correlations); here $z_1, \ldots, z_n$ are pairwise disjoint points of the domain $\Omega$, while $\mathbf{s} = (s_1, \ldots, s_n)\in \{0,1\}^n$ indicates whether we consider the holomorphic or antiholomorphic derivative. These kernels can be expressed in terms of
Grassmann variables, or,  equivalently, in terms of determinants involving a massive Dirac operator. (Concrete expressions for the correlation kernels $\chi^{(\mathbf{s})}$, and thus for $m_k(f)$, will be given below). 

We state our main theorem below, in a slightly informal manner for now.
Let $\EE_{(\Omega, \alpha)}$ denote the expectation for the dimer model in $\Omega$ with weights from~\eqref{eq:weightsZ2}. 

\begin{thm}\label{T:intro}
    Suppose $\Omega$ is a bounded simply connected domain. Let $h_\eps$ denote the centered height function associated to the dimer model on the Temperleyan graph $G_\eps \subset \eps \ZZ^2 \cap \Omega$, associated with weights as in \eqref{eq:weightsZ2}. Suppose that the potential $V$ is log-convex, and satisfies additional assumptions. Then for any smooth test function $f$ with compact support in $\Omega$, for any $k\ge 1$, $\EE_{(\Omega, \alpha)}((h_\eps, f)^n) \to m_n(f)$  as $\eps \to 0$, where $m_{n}(f)$ are the expressions in \eqref{eq:conjmoments}.
    Furthermore the moments $m_n(f)$ identify uniquely the corresponding law. 
\end{thm}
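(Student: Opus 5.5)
The proof goes through Kasteleyn theory: height moments become determinants of the inverse Kasteleyn matrix, and the discrete inverse Kasteleyn matrix is shown to converge to a massive Dirac kernel.

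\textbf{Step 1: moments as determinants.} I start from Kenyon's formula. For the Temperleyan graph $G_\eps$, the centered height function tested against $f$ is a sum of centered dimer indicators along paths from the boundary. The $n$-th moment $\EE_{(\Omega,\alpha)}((h_\eps,f)^n)$ is then an $n$-fold sum of joint cumulant-type expressions. Each of these is a determinant of entries of the inverse Kasteleyn matrix $K_\eps^{-1}(w,b)$, with weights given by \eqref{eq:weightsZ2}. After the standard summation-by-parts of \cite{Kenyon_confinv, KenyonGFF}, only off-diagonal entries at macroscopically separated points contribute in the limit. The diagonal and near-diagonal terms are controlled by a uniform bound $|K_\eps^{-1}(w,b)|\lesssim \eps/|w-b|$.

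\textbf{Step 2: the massive Cauchy--Riemann equation.} The core of the argument is to show that $K_\eps^{-1}(\cdot,b)$ is, after the usual projections onto the four lattice types, discrete massive holomorphic. The $o(\eps)$ deviation of the weights becomes a zeroth-order term of size $\eps$, which is exactly a mass term in a discrete Dirac equation. Here the mass is expressed through $\alpha=\nabla V$ and is in general complex and non-constant. I would conjugate by the gauge $e^{\pm V}$ (black and white sides respectively), which makes the deviation explicit. I then aim to derive an exact discrete identity $\dbarpartial F = \mass \bar F +$ (errors of order $o(\eps)$) for the combination $F$ of real and imaginary parts.

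\textbf{Step 3: compactness and identification.} Next I need a priori estimates. Log-convexity of $V$ enters here: by \cite{BHS}, the Temperleyan random walk with drift has a nontrivial limit and exit estimates, which yields boundedness and equicontinuity of $\eps^{-1}K_\eps^{-1}$ away from the diagonal, via a massive discrete Harnack or Beurling-type estimate. Subsequential limits then solve the continuum massive Dirac equation with the singularity $\frac{1}{\pi(z-w)}$ at the pole and the Temperleyan boundary condition. I expect proving uniqueness for this boundary value problem with complex mass to be the main obstacle. I would try first an energy identity: integrating $\bar F\,\dzbar F$ gives a boundary term fixed by the boundary condition plus a mass term. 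This works cleanly only when the real part of the relevant quadratic form has a sign, which is where the additional assumptions on $V$ would be imposed.

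\textbf{Step 4: limit of the moments and uniqueness of the law.} Plugging the limiting kernel into the determinants of Step 1 gives $m_n(f)$, matching the massive-Dirac expressions for $\chi^{(\mathbf s)}$ in \eqref{eq:conjmoments}. For uniqueness of the law, I would bound $|m_n(f)|\le C^n n!$ using Hadamard's inequality on the determinants, since the kernel singularity is integrable. Carleman's condition then shows that the moments determine the law.
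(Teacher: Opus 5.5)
Your Step 1 matches the paper, which follows Li's path summation \cite{Li17}. Steps 2--3, however, have a genuine gap. It sits at the entries $K^{-1}(y,w)$ with $y$ a \emph{dual} black vertex, and in how you identify the limit.

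The random-walk tools you invoke (exit and Beurling estimates, which is where $\mass>0$ enters) control only the primal block of $K^*K$. In the gauge of Figure~\ref{Fig:rhombusgaugechange} this block is exactly $-\dDirichletDelta+mI$ (Lemma~\ref{lem:block}). That is what makes $K^{-1}(x,w)$ an explicit discrete massive gradient of $\dmGreen$, whose limit is then fixed by the maximum principle for $\Delta-\mass$. On the dual side the boundary condition is Neumann-type, and Dirichlet only at $b^*$ (Remark~\ref{rmk:temperleyan:corner}). So neither Beurling-type estimates nor \cite{BHS}, which concerns the primal walk, give bounds or equicontinuity there. There is also no notion of discrete massive holomorphicity on the dual lattice to run compactness on.

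Your uniqueness step cannot work as stated either. Take the limiting first-order system in the first variable, whose coefficient is $\alpha/2$; $\mass$ only enters the second-order equation for real parts. With the pole and the boundary relation $F_1=-F_0$, it has, for \emph{every} $V$ and every constant $c\in\CC$, the smooth homogeneous solution $(F_0,F_1)=(\tfrac c2e^{-V},-\tfrac c2e^{-V})$. This reflects the fact that the $\alpha$-conjugate $\kappastar$ is only defined up to $c\,e^{-V}$. What excludes it is the pointwise normalisation $\kappastar(\beta^*,\cdot)=0$ at the Temperleyan corner. An energy identity built from the bulk equation and the boundary relation cannot see that condition, so no sign assumption on $V$ rescues your argument.

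The missing idea is the exact rewriting \eqref{eq:exponential} of $KK^{-1}=I$. Once dual values are multiplied by $e^{V(y)}$, the difference $e^{V(y^+)}K^{-1}(y^+,w)-e^{V(y^-)}K^{-1}(y^-,w)$ becomes, via \eqref{eq:discrete:CR}, an explicit combination of primal values. It therefore telescopes along any path from $b^*$, where $K^{-1}(b^*,\cdot)=0$. Combine this with the convergence of $\dmGreen$ and of its first \emph{and second} discrete derivatives. That convergence needs the discrete massive Cauchy formula and Proposition~\ref{prop:discrete:gradient}, not just Harnack-type bounds. Together they give both the bound $O(\eps/|y-w|)$ and the limit $\kappastar$ directly. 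No uniqueness theorem is used, and Theorems~\ref{thm:isoradial} and~\ref{thm:height} hold for every log-convex $V$ once the boundary is straight near the corner.

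Step 4 then skips the part where the additional assumptions actually enter. The limiting kernel solves $\BV_1(\Omega,\alpha)$, which is not the fermionic problem of \cite{PVW}. Three further steps are needed:
\begin{enumerate}
\item Transport the result from a straight-boundary domain to the disc by conformal covariance (Proposition~\ref{thm:covariance} together with \cite{BHS}).
\item Apply the gauge change $\tilde F_s=\sqrt{g'(w)^{(s)}/g'(z)}\,F_s$. It leaves the determinants unchanged but turns $\alpha$ into $\tilde\alpha=e^{\ii\arg g'}\alpha$, and $F_1=-F_0$ into $\tilde F_1=-\tau\tilde F_0$ (Lemma~\ref{lem:detgaugechange}). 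Only now does one reach $\BV_3$ with mass $\theta=\tilde\alpha/(2\ii)$, which must be real; this is what the choice \eqref{eq:Vcond} ensures.
\item Use uniqueness for that problem (Lemma~\ref{lem:unicity}, under $\Im\tilde\alpha\le 0$) to equate your kernel with the two-point fermion correlators of \cite{PVW}.
\end{enumerate}

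Finally, your moment bound does not follow from integrability of the kernel. Hadamard's inequality involves the row norms $(\sum_j|F(z_i,z_j)|^2)^{1/2}$, and $|z-w|^{-2}$ is not integrable in two dimensions. You would need a careful, configuration-dependent choice of paths with constants uniform in $n$, which you do not provide. The paper instead writes the cumulants as $(n-1)!\Tr((fK)^n)$ with $K$ in the Schatten class $S_4$. Then $|\Tr((fK)^n)|\le\|fK\|_4^n$ for $n\ge 4$, so the log-Laplace transform has positive radius of convergence (Theorem~\ref{thm:schatten}).
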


In the above result the height function is defined with respect to the reference flow $\omega$ such that $\omega(wb)$ is the probability that the edge $wb$ belongs to a random dimer configuration. 

See Theorem \ref{T:SG} for a more precise statement. 
In fact, we obtain this result in the more general setting of \emph{double} isoradial graphs (also known as isoradial superpositions) with appropriate edge weights, which will be detailed below. 

\medskip We emphasise that the work of Park, Virtanen and Webb \cite{PVW} gives a rigorous construction of the sine-Gordon model from \eqref{eq:conj3} and proves rigorously \eqref{eq:conjmoments} in the case of the unit disc. Thus, combining their work with our Theorem \ref{T:intro}, this identifies the (centred) sine-Gordon model at the free fermion point $\beta = 4\pi$ as the limit of the centered height function for near-critical dimers, thereby resolving Conjecture \eqref{eq:conj3}, under the assumptions in the statement on the potential $V$. On the other hand, our work can be read independently from \cite{PVW} and proves the convergence of the height function to a field whose moments match with the \emph{Coleman form} of the sine-Gordon model: concretely, Theorem \ref{thm:fermionic} up to \eqref{eq:fermionic} does not use any input from \cite{PVW}. It is only at the very end that 
see Section \ref{SS:identification:overview} for more details.  


\medskip Physically, the result in Theorem \ref{T:intro} corresponds to a massive extension of the \textbf{boson-fermion correspondence} embodied by Kenyon's convergence result \cite{Kenyon_confinv, KenyonGFF} (note that the GFF has Gaussian correlations which are thus bosonic by virtue of Wick's formula, whereas the dimer correlations are determinantal/Pfaffian, hence fermionic).

\medskip \paragraph{Heuristics and connection to imaginary geometry.} Although few details are given in \cite{BHS} about this conjecture, it is in fact very natural from an imaginary geometry perspective. We now briefly explain this heuristics. We consider the Radon-Nikodym derivative of the near-critical dimer model defined by \eqref{eq:weightsZ2} with respect to the critical case where $c_e \equiv 1$, and argue that it is approximately given by \eqref{eq:conjBHS1}. Let $\Gamma$ denote the class of black vertices around which the edge weights are not equal to one (as defined by the Temperleyan, biperiodic structure of the edge weights). Note that if $\mathbf{m}$ is a given dimer configuration, then
$$
\PP_{(\Omega, \alpha)}(\mathbf{m}) \propto \left(\prod_{e \in \mathbf{m}} c_e\right)\PP_{(\Omega, 0)}(\mathbf{m}) \approx \exp \left ( 2 \sum_{e = (b,w)\in \mathbf{m}, b \in \Gamma} \langle \nabla V, w - b \rangle \right) \PP_{(\Omega, 0)}(\mathbf{m})
$$
from \eqref{eq:weightsZ2}. (We used here the Temperleyan nature of the weights, in that edges $e = (b,w)$ with $b \not \in \Gamma$ have weight equal to one by definition). Let $\cT(\mathbf{m})$ denote the wired spanning tree on the graph induced by $\Gamma$ associated to $\mathbf{m}$ via Temperley's bijection: i.e., for each $b\in \Gamma$, by definition, $\cT(\mathbf{m})$ includes the edge $(bb')$, where $b'\in \Gamma$ is uniquely defined by the requirement that $(bw) \in \mathbf{m}$, with $w$ the white vertex midway through $b$ and $b'$. We write $b' = \sigma(b)$ for this unique vertex, which is the successor of $b$ in the arborescence $\cT$. We can then rewrite
$$
\PP_{(\Omega, \alpha)}( \mathbf{m}) \approx \exp \left(  \sum_{b \in \Gamma}  \langle \nabla V , \overrightarrow{(b, \sigma(b))}\rangle \right) \PP_{(\Omega, 0)}(\mathbf{m})
$$
where $\overrightarrow{(b, \sigma(b))}$ is the vector joining $b$ to $\sigma(b)$. Dividing by the norm of this vector (necessarily equal to $2\eps$) we get a unit vector which we write as $e^{ \mathbf{i} \theta_b}$. Thus
\begin{align}
\PP_{(\Omega, \alpha)}( \mathbf{m}) &\approx \exp \left( 2\eps \sum_{b \in \Gamma}  \langle \nabla V , e^{\mathbf{i} \theta_b} 
\rangle 
\right) \PP_{(\Omega, 0)}(\mathbf{m})
= \exp \left( \frac{2}{\eps}\eps^2 \sum_{b \in \Gamma}  \langle \nabla V , e^{\mathbf{i} \theta_b} 
\rangle 
\right) \PP_{(\Omega, 0)}(\mathbf{m})
\label{eq:approxweights}
\end{align}
It remains to observe that $e^{\mathbf{i} \theta_b}$, the direction of the unique outgoing arrow emanating from $b$ in the arborescence $\cT(\mathbf{m})$, is nothing else but $e^{ \mathbf{i} h_\eps (f)}$, where $h_\eps$ is the height function with respect to the winding reference flow (in units of $2\pi$), and $f$ is a face adjacent to $b$, chosen by convention so that $b$ is the top-left corner of $f$. This follows from the fact that in Temperley's bijection, the height function measures the winding of the branches in the spanning tree $\cT(\mathbf{m})$ (see in particular Proposition 4.4 in \cite{BLR_torus} for this formulation of Temperley's bijection). Approximating the Riemann sum in \eqref{eq:approxweights} by an integral, we deduce 
\begin{equation}\label{eq:approxRN}
\PP_{(\Omega, \alpha)}( \mathbf{m}) \approx \exp\left( \frac{2}{\eps} \int_\Omega  \langle \nabla V(x), e^{\ii h_\eps (x)} \rangle \mathrm{d} x \right) \PP_{(\Omega, 0)}(\mathbf{m}).    
\end{equation}
Recall that, under the uniform law $\mathbb{P}_{(\Omega, 0)}$, by Kenyon's result \cite{Kenyon_confinv} (see also \cite[Theorem 1.2]{BLR_DimersGeometry}, as here we are working with the non-centered field), $h_\eps \to \tfrac{1}{\chi}{\phi}$, where $\phi$ has the law of a Gaussian free field with winding boundary conditions, $+\pi/2$ (this extra factor of $\pi/2$ accounts for the fact that the height function measures the intrinsic winding of the path in the tree connecting the point to the reference boundary vertex; and this path has an extra turn of $\pi/2$ when it hits the boundary; see, e.g., Theorem 3.1 or Theorem 5.1 in \cite{BLR_DimersGeometry}, bearing in mind that on the square lattice, the local mean winding of the uniform spanning tree, $m^{\#\delta}$, tends to 0).

Furthermore, we have (by definition of the imaginary chaos, see e.g. \cite{JunnilaSaksmanWebb}) 
\begin{equation}\label{eq:IMC}
\eps^{-\tfrac{1}{2\chi^2}} \int_\Omega  \langle \nabla V(x), e^{\ii \tfrac{1}{\chi} \phi_\eps (x) + \ii \pi/2} \rangle \mathrm{d} x \to - \int_\Omega \langle \ii \nabla V(x), e^{\ii \tfrac{1}{\chi}\phi (x)} \rangle \mathrm{d} x.
\end{equation}
Since $\chi = 1/\sqrt{2}$ (equivalently, since we are at the free fermion point $\beta = 4\pi$), the power of $\eps$ in the factor in front of the integral above is simply $\eps^{-1}$, which matches exactly the power of $\eps$ in \eqref{eq:approxRN}. (Note that in reality, \eqref{eq:IMC} only holds strictly below the free fermion point. At the free fermion point the integral itself does not make literal sense; this is related to the fact that the sine-Gordon model is not expected to be absolutely continuous with respect to the Gaussian free field at the free fermion point.)

Making the substitution, we finally find:
\begin{equation}
\PP_{(\Omega, \alpha)}( \mathbf{m}) \approx \exp\left( -2 \int_\Omega  \left\langle \ii \nabla V(x), e^{\ii \tfrac{1}{\chi} \phi (x)} \right\rangle \mathrm{d} x \right) \mathbb{P}^{\text{GFF}} ( \mathrm{d}\phi)
    \label{eq:approxweights_2}
\end{equation}
This is indeed a discrete form of \eqref{eq:conjBHS1} with $z_0 = -2$. 

\medskip \paragraph{Discrete and continuous massive holomorphicity.} While the reasoning leading to \eqref{eq:approxweights_2} is very suggestive and appealing, it appears difficult to turn it into a proof at this stage using currently available technology. On the other hand, we will approach this conjecture via a totally different perspective. Our approach {generalizes the approach of~\cite{KenyonGFF} to the near-critical setting: we} develop a theory of \textbf{massive holomorphicity} and use it to compute exactly the limit of the so-called inverse Kasteleyn matrix, which governs dimer-dimer correlations. The fact that the inverse Kasteleyn matrix is not discrete holomorphic but is instead \emph{massive} holomorphic in the scaling limit is one of the main challenges in the analysis.  
In turn, after explicit calculations, we will show that the moments of the height function integrated against a given test function are given by  formulas which match those of the above sine-Gordon model, at least when the latter is expressed in terms of fermionic integrals (something which is possible due to the already mentioned Coleman transform, a feature of the fact that we consider the sine-Gordon model at its free fermion point).


Discrete holomorphicity is one of the most useful tools to study critical models. 
In~\cite{MakarovSmirnov}, Makarov and Smirnov introduced discrete massive martingale observables and advocated the use of a \textbf{discrete massive holomorphicity theory} to study near-critical models.
In the context of the Ising model, this was first partially implemented in~\cite{DC_Garban_Pete}, and a theory of convergence of discrete massive holomorphic observables towards their continuous counterpart was developed in~\cite{Park,Par22} and used to study the near-critical FK-Ising model. Because our near-critical perturbation depends on a vector field $\alpha$ rather than single real number (``mass'') we will need to formulate a new notion of massive holomorphicity (which reduces to previously studied notions when the vector field is constant and, viewed as a complex-valued function, is purely real-valued). 

In the context of the dimer model, a first form of discrete massive holomorphicity theory was developed in~\cite{dT21} with the introduction of a massive discrete Dirac operator on double isoradial graphs with $Z$-invariant weights. In~\cite{Rey} this theory is extended to double isoradial graphs with more general weights, and is applied in the near-critical setting to generalize to double isoradial graphs the results of~\cite{BHS} on the convergence of the near-critical dimer height function.


\subsection{Isoradial setup; discrete massive harmonic functions}
\label{SS:isoradial:setup}

Let $\infPrimal = \infPrimal_{\eps}$ denote an infinite isoradial graph with mesh $\eps$. This means that it can be embedded in the plane in such a way that every face is inscribed in a circle of radius $\eps$, and such that the circumcenter of each face lies in the closure of the face. Each pair of dual edges form the diagonals of a rhombus.

 We will everywhere assume that this graph has the \textbf{bounded angle property}: there exists $\eta >0$ such that all rhombi angles are in $[\eta, \pi/2-\eta]$. 
One classical consequence of the bounded angle assumption (see for example Assumption 1 of \cite{SchStuWar24}) is that for any $x, y \in \infPrimal$ we can choose a path $\gamma_{\#}: x \to y$ in $\infPrimal$ of length $\leq \frac{C|y - x|}{\eps}$ where the constant $C$ depends only on $\eta$.

 Let $\Primal = \Primal_\eps$ be a finite simply connected subgraph of $\infPrimal$, denote by $\closurePrimal$ the union of $\Primal$ and all adjacent vertices, by $\Primal^*$ the restricted planar dual of $\Primal$, that is the dual graph from which the dual vertex corresponding to the outer face and all incident dual edges are removed. The set $\Black = \Primal \cup \Primal^*$ corresponds to the black vertices of a Temperleyan superposition graph $G$ (also known as medial graph). That is, we remove one vertex on the boundary of $\Primal^*$, called $b^*$. 

 We denote by $\White = \Black^*$ the set of white vertices, corresponding to centres of rhombi in the isoradial graph $\Primal$. Note that in general (e.g. if $\Primal$ is not bipartite) there is only one class of white vertices. When $\Primal = \infPrimal$, we will write $\White = \infWhite$ and $\Black = \infBlack$.

 \begin{assumption}\label{assumption:domain}
    We always assume that in the limit $\eps \to 0$, the graph $\Primal$ with Temperleyan corner $b^*$ approximates a fixed open set $\Omega$ with boundary point $\beta \in \partial \Omega$, by which we mean 
    \begin{enumerate}[label=(\roman*)]
        \item $\Primal \subset \Omega$
        \item for every compact subset $\cC \subset \Omega$, $\Primal \cap \cC = \infPrimal \cap \cC$ 
        \item for all $\omega \in \partial \Omega$, there exists $x_\eps \in \Primal$ such that $x_\eps \to \omega$
        \item $b^* = b^*(\eps) \to \beta^* \in \partial \Omega$.
    \end{enumerate}
 \end{assumption}
 
In fact, assumption (iii) easily follows from assumption (ii).


\paragraph{Edge weights.} To define the edge weights on the superposition graph $G$ we proceed as follows.

We fix a vector field $\alpha : \RR^2 \to \RR^2$, smooth on $\bar \Omega$. 
As mentioned previously we will assume that $\alpha = \nabla V$ derives from a potential $V: \RR^2 \to \RR$ which is assumed $C^2$ in $\bar \Omega$.


Let $x^-, x^+$ be two adjacent vertices of $\infPrimal$, and let $y^-$ and $y^+$ be the adjacent faces so that $x^- y^- x^+ y^+$ forms a rhombus in the superposition whose center is called $w$. 
Let $\theta_w = \theta_{x^-x^+} = \theta$ denote the half angle of $\widehat{y^- x^- y^+}$. 
Let 
$$
c^0_{x^-, x^+} = \tan (\theta),
$$
be the \textbf{critical weights} on $\infPrimal$. We now introduce the \textbf{near-critical weights} which we will use throughout the paper. We set 
\begin{equation}\label{Eq:offcriticaledgeweights}
c^\alpha_{x^-, x^+} = \exp \Big(  V(x^+) - V(x^-) \Big) \tan(\theta). 
\end{equation}

\begin{remark}
    This differs slightly from the convention in \eqref{eq:weightsZ2}, where (in these notations) we considered $c^\alpha_{x^-, x^+} = \exp \Big( \Big (V(w) - V(x^+)\Big)\tan (\theta)$ instead of \eqref{Eq:offcriticaledgeweights}. However this difference is irrelevant: it can be checked from the results in \cite{BHS} that the height functions associated to both weight sequences have the same limit. This also explains why we included a factor of 2 in the exponent of \eqref{eq:weightsZ2}.
\end{remark}
Associated with these conductances on $\closurePrimal$ we define edge weights on the superposition graph $G$ of $\Primal$ and $\Primal^{\star}$, where $ c_{x^-, w} = c^\alpha_{x^-, x^+} $; $c_{x^+, w} = c^{\alpha}_{x^+, x^-}$, whereas $c_{w, y^-} = c_{w, y^+} = 1$.
See Figure \ref{Fig:rhombusintro}. 
On the boundary, that is when $x^- \sim x^+, x^- \in \Primal, x^+ \in \closurePrimal \setminus \Primal$, we simply remove the edge $wx^+$. 
Similarly, if $y \sim b^*, y \in \Primal^{\star}$, we simply remove the edge $wb^*$. 


\begin{figure}
    \centering
    \begin{overpic}[scale = .95]{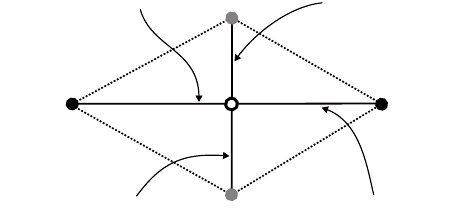}
        \put(10,25){$x^-$}
        \put(83,25){$x^+$}
        \put(50,0){\textcolor{gray}{$y^-$}}
        \put(50,45){\textcolor{gray}{$y^+$}}
        \put(15,46){$\frac{e^{V(x^+)}}{e^{V(x^-)}}c^0_{x-,x^+}$}
        \put(70,-2){$\frac{e^{V(x^-)}}{e^{V(x^+)}}c^0_{x^+,x^-}$}
        \put(25,4){$1$}
        \put(70,44){$1$}
    \end{overpic}
    \caption{Edge weights. (The weights indicated on this picture correspond to the orientation of a given edge from the black vertex towards the white vertex in the middle of the rhombus).
    }
    \label{Fig:rhombusintro}
\end{figure}

We consider the dimer model on $G$ with the above off-critical edge weights. Ultimately we aim to describe the scaling limit of the height function as the mesh size $\eps \to 0$.

\paragraph{Interpretation as drift.} We provide a simple lemma, which shows that the vector field $\alpha$ may be viewed as the limiting drift in the diffusion which describes the scaling limit of the random walk on $\bar \Gamma$.

\begin{lemma}
Let $\Omega \subset \RR^2$ be a domain and let $\alpha : \RR^2 \to \RR^2$ be a vector field which is smooth on $
\bar \Omega$. Then the random walk on $\closurePrimal$ with conductances $c^\alpha$, killed when exiting $\Primal$ converges to the solution of the above SDE, killed when exiting $\Omega$.
\end{lemma}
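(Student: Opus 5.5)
Time must be measured appropriately: with isoradial critical weights, the walk on $\closurePrimal$ run at the natural speed converges to Brownian motion (up to a constant time change depending on $\eps$). The SDE in question is presumably $dX_t = dB_t + \alpha(X_t)\,dt$, suitably normalised. My plan is to show generator convergence on smooth functions and then invoke a standard martingale problem / Stroock--Varadhan argument. The process is killed on exiting $\Omega$, and the exit time is continuous at almost every Brownian-type path because smooth bounded domains are regular; in general I would rely on regularity of $\partial\Omega$, or on the fact that both processes are killed at the first exit time of a converging sequence of domains.

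\textbf{Step 1 (one-step drift and covariance).} Fix $x\in\Primal$ and consider a jump to a neighbour $x'$ with probability $c^\alpha_{x,x'}/\sum_{x''\sim x}c^\alpha_{x,x''}$. Taylor expansion gives $c^\alpha_{x,x'}=\tan\theta_{xx'}\,(1+\langle\nabla V(x),x'-x\rangle+O(\eps^2))$. I would then use the two classical isoradial identities, which follow from the fact that $\tan\theta_{xx'}$ is proportional to the length of the dual edge divided by the primal edge:
\[
\sum_{x'\sim x}\tan\theta_{xx'}\,(x'-x)=0,
\qquad
\sum_{x'\sim x}\tan\theta_{xx'}\,(x'-x)(x'-x)^T=2\eps^2\,\mathrm{Area}(x)\,\mathrm{Id}/\eps^2\cdot(\text{const}).
\]
Concretely, the second sum equals $\lambda_x\,\mathrm{Id}$, where $\lambda_x$ is comparable to the dual-face area; this is the isotropy identity. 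These yield
\[
\mathbb{E}[\Delta X]=\frac{\lambda_x}{S_x}\,\alpha(x)+O(\eps^3),
\qquad
\mathbb{E}[\Delta X\,\Delta X^T]=\frac{\lambda_x}{S_x}\,\mathrm{Id}+O(\eps^3),
\qquad
S_x=\sum_{x'}c^\alpha_{x,x'}.
\]
The drift is therefore $\alpha(x)$ times the same local time scale as the diffusion coefficient, so after time-changing by the additive functional $\sum\lambda_{X_k}/S_{X_k}$ (order $\eps^2$ per step) the generator acting on $f\in C^2$ is $\tfrac12\Delta f+\langle\alpha,\nabla f\rangle+o(1)$, uniformly on compacts of $\Omega$. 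Third moments are $O(\eps^3)$, so the Taylor remainder vanishes in the limit. The factor $\tfrac12$ versus $1$ depends on the normalisation, and I would fix the SDE's constants to match.

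\textbf{Step 2 (tightness and identification).} The increments are bounded by $2\eps$, and the time change has increments bounded above and below by $c\eps^2$ (by the bounded angle property). Kolmogorov-type tightness in the Skorokhod space therefore follows, for example via the Aldous criterion or by checking that $f(X)-\int Lf$ is a martingale up to $o(1)$. Any subsequential limit solves the martingale problem for $L=\tfrac12\Delta+\alpha\cdot\nabla$ up to the exit time. Since $\alpha$ is smooth, hence Lipschitz on $\bar\Omega$ after extension, this martingale problem is well posed, so the limit is the killed SDE.

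\textbf{Step 3 (killing).} This is the main obstacle. The sticking point is passing the exit time to the limit when only Assumption~\ref{assumption:domain} is available. My plan is to extend $\alpha$ smoothly to $\RR^2$ and prove convergence of the unkilled walk on $\infPrimal$ first. Then I would use that the exit time from $\Omega$ of the limiting diffusion equals the exit time from $\bar\Omega$ almost surely, because the diffusion is locally absolutely continuous with respect to Brownian motion by Girsanov, and Brownian motion immediately enters the exterior after touching the boundary for nice boundaries. Combined with (i)–(ii) of the assumption, this gives convergence of the killing times.
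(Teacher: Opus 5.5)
The paper states this lemma without proof, and never actually writes down ``the above SDE'', so your argument has to stand on its own. Steps~1--2 do. The exact isoradial identities are $\sum_{x'\sim x}\tan\theta_{xx'}(x'-x)=0$ and $\sum_{x'\sim x}\tan\theta_{xx'}(x'-x)(x'-x)^{\intercal}=2\eps^2\weight(x)\,\mathrm{Id}$; this is $\dDelta f=\eps^2\weight\,\Delta f$ exactly for quadratic $f$ (Lemma~2.2 of \cite{ChelkakSmirnov}). Hence $\sum_{x'}c^\alpha_{x,x'}(f(x')-f(x))=\eps^2\weight(x)\bigl(\Delta f+2\langle\alpha,\nabla f\rangle\bigr)(x)+O(\eps^3)$. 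The limit is therefore $\mathrm{d}X_t=\mathrm{d}B_t+\alpha(X_t)\,\mathrm{d}t$, which is the normalisation compatible with the drift--mass identity $e^{V}(\tfrac12\Delta+\alpha\cdot\nabla)(e^{-V}f)=\tfrac12(\Delta-\mass)f$.

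The gap is Step~3. With your coupling, (ii) gives $\liminf_\eps\tau^\eps\ge\tau_\Omega$ and (i) gives $\limsup_\eps\tau^\eps\le\sigma:=\inf\{t:X_t\notin\bar\Omega\}$, so everything hinges on $\tau_\Omega=\sigma$ a.s. This is \emph{not} what Dirichlet regularity gives: every bounded simply connected planar domain is Dirichlet-regular, yet $\tau_\Omega=\sigma$ fails for the slit disc $\DD\setminus[0,1)$, where the diffusion hits the slit without entering $\RR^2\setminus\bar\DD$. There, no argument using only (i)--(ii) can work, because the conclusion itself fails under Assumption~\ref{assumption:domain} alone. Take the subgraph induced by $\infPrimal\cap\Omega$, for a lattice with no vertex on the slit. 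It satisfies the assumption, yet its walk crosses the slit freely and converges to the diffusion killed on $\partial\DD$. So an extra hypothesis is unavoidable and must be stated.

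Your condition $\tau_\Omega=\sigma$ does hold for Jordan domains: every boundary point is accessible from $\RR^2\setminus\bar\Omega$, planar Brownian motion immediately hits any continuum through its starting point, and Girsanov transfers this to the diffusion. So your proof is complete there.

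For general simply connected $\Omega$ you need two further ingredients. First, the discrete boundary must track $\partial\Omega$, e.g.\ $\dist(x,\closurePrimal\setminus\Primal)\to\dist(\chi,\partial\Omega)$ as $x\to\chi$. The proof of Lemma~\ref{lem:cv:green} asserts this follows from (i)--(ii), but the slit example shows it does not. Second, you need a uniform discrete Beurling estimate as in Lemma~\ref{lem:beurling}. Started within $d$ of $\closurePrimal\setminus\Primal$, the walk leaves $B(x,r)$ before being killed with probability at most $C(d/r)^\beta$, because a full turn in each dyadic annulus meets the connected complement of the simply connected $\Primal$. It also leaves $B(x,r)$ by time $t$ except with probability $O(r^2/t)$. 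Together these give $\limsup_\eps\tau^\eps\le\tau_\Omega$ directly, with no regularity of $\partial\Omega$.
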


We also define a \textbf{mass function}
\begin{equation}\label{eq:mass_intro}
    M(x) := \Delta V(x)  + \| \nabla V(x) \|^2.
\end{equation}
This function will play an important role in the rest of this paper. Under the \textbf{drift-mass equivalence} observed in \cite{BHS} and generalised in \cite{Rey} to the isoradial setup, the function $M$ will play the role of a mass. This is the main reason for enforcing the following assumption (the log-convexity assumption of $V$ ensures precisely that $M(x) \ge 0$ for all $x \in \bar \Omega$):

\begin{assumption}[Log-convexity]\label{assumption:pos}
    $V$ is log-convex throughout $\Omega$: that is,
    \begin{equation}\label{eq:pos}
        M(x) = \Delta V(x)  + \| \nabla V(x) \|^2 > 0; \quad x \in \bar \Omega.
    \end{equation}
    If $V$ is positive then this assumption is indeed equivalent to stating that $\log V$ is convex on $\Omega$.
\end{assumption}

\subsection{Notion of massive holomorphicity in the continuum}

Our first goal will be to state a theorem on the scaling limit for the inverse Kasteleyn matrix $K^{-1}$ (see Section \ref{SS:Gauge} for the choice of gauge function and corresponding Kasteleyn matrix; see Theorem \ref{thm:isoradial} below for the statement on $K^{-1}$). This necessitates the introduction of a notion of massive holomorphic function and some basic notions related to this. 


\begin{defn} \label{D:massive_holo}
    Let $U \subset \CC$ be an open set. 
    
    A function $h: U \to \RR$ is \textbf{massive harmonic} (or $M$-massive harmonic) if for all $x \in \Omega$,
    \begin{equation}\label{eq:massive_holo_Laplacian}
        [\Delta h](x) = \mass(x)h(x)
    \end{equation}
    with the function $M$ from~\eqref{eq:mass_intro}.
    
    A function $f: U \to \CC$  is \textbf{$\alpha$-massive holomorphic}, resp. \textbf{$-\bar\alpha$-massive holomorphic}, if it satisfies the identity:
\begin{equation}\label{eq:massive_holo}
    \dzbar f = \frac12\alpha \overline{f} \quad , \text{resp.} \quad  \dzbar f = -\frac12\bar\alpha \overline{f} 
\end{equation}
where we identify here $\alpha = \nabla \ph$ with the complex number $\partial_x \ph + \ii \partial_y \ph$ (as usual) and the above product on the right hand side is the multiplication of two complex numbers. 
\end{defn}

\begin{rmk}
    This differs from previously considered notions of massive holomorphicity (e.g. \cite{MakarovSmirnov, Park}) where the corresponding equations were
    $$
        \Delta h = M^2 h \quad ; \quad \partial \bar f = iM f, 
    $$
    for a \textbf{constant} $M \in \RR$ (with $M^2$ often called the \textbf{mass}). 
    
    The notion introduced in Definition \ref{D:massive_holo} is therefore a novel and more general formulation of the notion of massive holomorphicity; note in particular that the ``mass'' term $M(x)$ in  \eqref{eq:massive_holo_Laplacian} is not simply the square (or even the squared norm) of $\alpha$ in \eqref{eq:massive_holo}. 
    Ultimately, the novelty comes from the fact that the near-criticality in the dimer model is described by a vector field $\alpha$ in the domain, instead of a single real number. 
    
    It is worth noting that, in the full plane and with the vector field $\alpha$ being constant, the two notions \emph{do} coincide, since (after rotation) we can assume without loss of generality that $\alpha$ points in the real positive direction. This will be analogous to the fact that the generalized sine-Gordon field coincides with the (constant-mass) sine-Gordon field in the full plane. 
\end{rmk}

Written explicitly in real coordinates \eqref{eq:massive_holo} becomes the \textbf{($\alpha$-), resp. ($-\bar\alpha$-), massive Cauchy-Riemann equation}
\begin{equation}\label{eq:a:CR}
\begin{cases}
\partial_x u - \alpha_x  u & = \partial_y v + \alpha_y v \\
\partial_y u - \alpha_y u & = - ( \partial_x v + \alpha_x v)
\end{cases}
\quad 
\text{, resp.}
\quad 
\begin{cases}
\partial_x u + \alpha_x  u & = \partial_y v + \alpha_y v \\
\partial_y u - \alpha_y u & = - ( \partial_x v - \alpha_x v)
\end{cases}
\end{equation}
Recalling that $\alpha = \nabla V$, this can be rewritten as
$$
\begin{cases}
     \partial_x ( ue^{-V}) e^V & = e^{-V} \partial_y ( v e^V) \\
      \partial_y (ue^{-V}) e^V& = - e^{-V} \partial_x ( ve^V) .
\end{cases}
\quad 
\text{, resp.}
\quad 
\begin{cases}
     \partial_x ( ue^{V}) & = \partial_y ( v e^V) \\
      \partial_y (ue^{-V}) & = -\partial_x ( ve^{-V}) .
\end{cases}
$$
The $\alpha$-massive Cauchy-Riemann equation can be rewritten more geometrically 
\begin{equation}\label{eq:CR:geometric}
e^V\nabla ( u e^{- V}) = e^{-V} (-\ii)\nabla ( v e^{V}),
\end{equation}
where the gradient is identified with a complex number. This relation is the massive analogue of the fact that in the critical case, the gradients of $u$ and $v$ can be obtained from one another by applying a 90 degrees rotation.

In the critical case, the real part of a holomorphic function is harmonic. This extends to the massive case, where the mass $M$ is precisely the function introduced in \eqref{eq:mass_intro}. Furthermore, given a real-valued, massive harmonic function $u$, there is at most one function $v$ such that $f = u+\ii v$, which by analogy with the critical case we call the \textbf{massive harmonic conjugate} of $u$. (More precisely, two massive holomorphic functions $f$ and $g$ with the same real part coincide). We state these two properties in the lemma below.

\begin{lem} Suppose $\Omega \subset \CC$ is open and connected (not necessarily simply connected).
If $f = u+\ii v$ is $\alpha$-massive holomorphic in $\Omega$, then $u$ is a $\mass$-massive harmonic function in $\Omega$. Furthermore, the $\alpha$-harmonic conjugate of $u$ is at most unique up to an additive constant. That is, suppose $f$ and $g$ are two massive holomorphic functions with $\Re(f) = \Re(g)$. Then $f-g$ is a constant. 
\end{lem}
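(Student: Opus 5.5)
The plan is to work entirely with the geometric form \eqref{eq:CR:geometric} of the $\alpha$-massive Cauchy--Riemann equation, assuming $f$ is $C^2$ (if $f$ is only weakly $\alpha$-massive holomorphic, I would first upgrade to $C^2$ by elliptic regularity, since $\dzbar f = \frac12 \alpha \bar f$ with $\alpha$ smooth bootstraps). Rewrite \eqref{eq:CR:geometric} as
\[
e^{2V}\nabla(ue^{-V}) = -\ii \nabla(ve^{V}).
\]
The right-hand side is the gradient of the scalar $ve^V$ rotated by $-90^\circ$. Its components are $(\partial_y(ve^V), -\partial_x(ve^V))$, so its divergence vanishes identically by equality of mixed partials.

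\textbf{Step 1 (massive harmonicity).} Taking the divergence of both sides gives $\nabla\cdot\big(e^{2V}\nabla\psi\big)=0$ with $\psi := ue^{-V}$, that is,
\[
\Delta\psi + 2\nabla V\cdot\nabla\psi = 0.
\]
Next substitute $u=\psi e^{V}$ and expand:
\[
\Delta u = e^V\big(\Delta\psi + 2\nabla V\cdot\nabla\psi + \psi(\Delta V+\|\nabla V\|^2)\big).
\]
The first two terms cancel by the previous identity, leaving $\Delta u = (\Delta V + \|\nabla V\|^2)\,u = \mass u$. This is exactly \eqref{eq:massive_holo_Laplacian} with $\mass$ from \eqref{eq:mass_intro}. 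Connectedness and simple connectivity play no role in this step.

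\textbf{Step 2 (uniqueness of the conjugate).} If $f,g$ are both $\alpha$-massive holomorphic with $\Re f=\Re g$, then by $\RR$-linearity of \eqref{eq:massive_holo} the difference $h=f-g=\ii w$, with $w$ real, is again $\alpha$-massive holomorphic. Apply \eqref{eq:CR:geometric} with $u=0$ and $v=w$: this gives $-\ii\, e^{-V}\nabla(we^{V})=0$, hence $\nabla(we^V)=0$ on $\Omega$. Since $\Omega$ is connected (simple connectivity is not needed, as we only integrate a vanishing gradient along paths), $we^V\equiv c$ is constant. Therefore $f-g = \ii c\, e^{-V}$.

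The only delicate point is the precise meaning of ``constant'' in the statement. The computation shows the difference is a constant multiple of $\ii e^{-V}$, i.e.\ constant after the natural gauge $v\mapsto ve^{V}$ that appears in \eqref{eq:CR:geometric}, and not literally constant unless $V$ is. I would state the conclusion in this gauged form ($(f-g)e^{V}$ is a purely imaginary constant) and remark that this is the intended reading of uniqueness ``up to an additive constant'' for the conjugate $ve^V$. The rest is routine differentiation.
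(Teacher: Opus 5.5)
Your proof is correct, and for the first claim it takes a different route from the paper. The paper works in complex form. It computes $\Delta f = 4\partial\overline{\partial} f = 2\partial(\alpha\bar f)$, uses $\partial\alpha = \frac12\Delta V$ and $\partial\bar f = \frac12\bar\alpha f$ to get $\Delta f = \Delta V\,\bar f + |\nabla V|^2 f$, and reads off real and imaginary parts. That is shorter, and it gives at the same time $\Delta v = (|\nabla V|^2-\Delta V)\,v$, so $v$ is massive harmonic but with a different mass.

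Your route takes the divergence of \eqref{eq:CR:geometric} and then undoes the gauge $u=e^{V}\psi$. It exhibits the identity $\Delta u - Mu = e^{-V}\nabla\cdot\big(e^{2V}\nabla(ue^{-V})\big)$, and this buys something extra because the identity is reversible. A $C^1$ field $W$ can be written locally as $W=-\ii\nabla\phi$ if and only if $\nabla\cdot W=0$. Hence $\Delta u = Mu$ is \emph{equivalent} to local solvability of \eqref{eq:CR:geometric} for $ve^{V}$. On a simply connected domain, every $M$-massive harmonic $u$ therefore has an $\alpha$-conjugate, obtained by integrating $\ii e^{2V}\nabla(ue^{-V})$. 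This answers the question left open in the remark after the lemma; note that condition \eqref{eq:lackofconjugate} there should be stated for the rotated field $\ii e^{2V}\nabla(ue^{-V})$. On a set like $\Omega\setminus\{\omega_2\}$, only the period around $\omega_2$ remains to be checked.

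Your caveat about ``constant'' is more than a matter of interpretation: the last sentence of the lemma is false as literally written. For real $c$, $\overline{\partial}(\ii c\,e^{-V}) = -\frac{\ii c}{2}\alpha e^{-V} = \frac12\alpha\,\overline{\ii c\,e^{-V}}$. So $g=f+\ii c\,e^{-V}$ is $\alpha$-massive holomorphic with the same real part as $f$. On the other hand, a nonzero imaginary constant is not $\alpha$-massive holomorphic unless $\alpha\equiv 0$.

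The paper's one-line justification slips exactly where you located the issue: \eqref{eq:CR:geometric} determines $\nabla(ve^{V})$ from $u$, not $\nabla v$. Your gauged conclusion, that $(f-g)e^{V}$ is a purely imaginary constant, is the correct statement. It gives genuine uniqueness under a pointwise normalisation such as $f^*(x)=0$, which is how $\kappa^*$ is pinned down later in the paper.
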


\begin{proof} Note that
    $$
    \begin{aligned}
    \Delta u + \ii \Delta v = \Delta f 
    &= 4 \dz \dzbar f\\
    &= 2 \dz (\alpha \bar f)\\
    &= 2\left[(\dz \alpha) \bar f + \alpha (\dz \bar f)\right]\\
    &= 2\left[2(\dz\dzbar V) \bar f+ \frac{1}{2}\alpha \bar \alpha f \right]\\
    &= \Delta V \bar f+ |\nabla V|^2 f\\
    &= \Delta V (u-\ii v) + |\nabla V|^2 (u+\ii v)\\
    &= (\Delta V + |\nabla V|^2) u + \ii (-\Delta V + |\nabla V|^2)) v\\
    &= \mass u + \ii (-\Delta V + |\nabla V|^2)) v
    \end{aligned}
$$
Identifying the real and imaginary parts, we obtain that $u$ is always a $\mass$-massive harmonic function, while $v$ is a $\mass$-massive harmonic function if and only if $\Delta V = 0$ (which happens in particular when the drift $\alpha$ is constant). 

The fact that $v$ is uniquely defined is a direct consequence of \eqref{eq:CR:geometric} since the gradient of $u$ determines the gradient of $v$.
\end{proof}

\begin{remark}
Let $u:\Omega \to \RR$ be a massive harmonic function. Suppose that
    \begin{equation}\label{eq:lackofconjugate}
        w\mapsto e^{2V(w)} \nabla ( u(w) e^{-V(w)}) \text{ derives from a potential.}
    \end{equation} 
    Then, integrating this quantity from a designated marked point in $\Omega$ to an arbitrary $z\in \Omega$ produces a function $z\in \Omega \mapsto v(z)\in \RR$ (indeed, by assumption this does not depend on the choice of path). It is then straightforward to verify that $u+\ii v$ is massive holomorphic. 

    On the other hand, it is \emph{a priori} not clear at all why \eqref{eq:lackofconjugate} should hold. In the critical/massless case, this happens as soon as $\Omega$ is simply connected. Could this also hold in the massive case? We do not know the answer to this question. As a result we do not know any criterion (other than \eqref{eq:lackofconjugate} itself) for the existence of a harmonic conjugate. Fortunately, this is not needed in the rest of the paper: in Theorem \ref{thm:isoradial}, we appeal to a specific harmonic conjugate and need to prove as part of the argument that this function is well defined.
    \end{remark}
    
We will also need the following definition.
\begin{defn}\label{def:massive:harmonic:conjugate}
    Let $f = u+\ii v, f^* = u^*+\ii v^*: U \to \CC$, be two complex-valued function whose real and imaginary parts $u,u^*$ and $v,v^*$ are massive harmonic functions. We \textbf{do not} assume that $f$ and $f^*$ are massive holomorphic. We say that $f^*$ is the \textbf{$\alpha$-conjugate} of $f$ if $u^*$, resp. $v^*$, is the $\alpha$-conjugate of $u$, resp. $v$.
\end{defn}
This definition can be written conveniently in matrix form.
For a complex number $z = a+\ii b$ we denote by $z^\intercal$ the line vector $\begin{pmatrix} a & b \end{pmatrix}$.
We denote by $\Rot_{-\pi/2} $ the rotation matrix of angle $-\pi/2$.
Then, $f$ and $f^*$ are $\alpha$-conjugates if and only if the following equality between $2$ by $2$ matrices holds:
    \begin{equation}\label{eq:harmonic:conjugate}
        e^V \nabla (fe^{-V})^{\intercal} = e^{-V} \Rot_{-\pi/2} \cdot \nabla(f^*e^V)^{\intercal}. 
    \end{equation}


Note that given $f = u+\ii v: \Omega \to \CC$ with $u,v$ real massive harmonic, the existence of an $\alpha$-conjugate is not obvious. 
However, if $\Omega$ is connected and $x \in \partial U$ is fixed, there is at most one $\alpha$-conjugate $f^*$ with $f^*(x)=0$, and the values of $f^*$ can be recovered by integrating \eqref{eq:harmonic:conjugate}.

\subsection{Scaling limit for the inverse Kasteleyn matrix}

The scaling limit of the inverse Kasteleyn matrix will be given in terms of a suitable massive Green function, which we now introduce. (In fact, the scaling limit itself will involve a number of transformations of this Green function). Recall the mass function $M(x) = \|\nabla V(x)\|^2 + \Delta V(x)$ defined in \eqref{eq:mass_intro}.

\begin{defn}\label{def:mGreen}
    The \textbf{(continuous) massive Green function} in $\Omega$, with mass $\mass(\omega)$ at position $\omega \in \Omega$, and with Dirichlet boundary condition is  
    \begin{equation}\label{eq:def:mGreen}
        \mGreen (\omega_1,\omega_2) = \int_0^\infty p_t^\Omega (\omega_1,\omega_2) \Ebr{\omega_1}{\omega_2}{t}  \left( \exp \left( - \int_0^t \mass(B_s) \mathrm{d}s \right) \right) \mathrm{d}t
    \end{equation}
    where $p_t^\Omega (\omega_1,\omega_2)$ is the transition probability of ordinary Brownian motion killed upon reaching $\partial \Omega$, and $\Pbr{\omega_1}{\omega_2}{t}$ is the law of Brownian bridge from $\omega_1$ to $\omega_2$ of duration $t$, while $\Ebr{\omega_1}{\omega_2}{t}$ is the corresponding expectation. 
\end{defn}
Note that the expectation in the definition of the Green function ~\eqref{eq:def:mGreen} is well-defined due to the log-convexity assumption  (Assumption~\ref{assumption:pos}), which makes $M$ a nonnegative function.

The following properties of the massive Green function are easily established. 

\begin{lemma}
The massive Green function is symmetric: $\mGreen(\omega_1,\omega_2) = \mGreen(\omega_2,\omega_1)$ for all $\omega_1 \neq \omega_2 \in \Omega$.
Furthermore, $G^m$ can be characterised as follows: for each fixed $\omega_1 \in \Omega$, the massive Green function is the unique solution in $L^2(\Omega) \cap \cC^0(\overline{\Omega}\setminus \{\omega_1\},\RR)$ of the boundary value problem:    \begin{equation}\label{eq:green:distribution}
        \Delta_2 \mGreen(\omega_1,\cdot) = \mass(\cdot)\mGreen(\omega_1,\cdot) + \delta_{\omega_1} ( \cdot), 
    \end{equation}
    in the sense of distributions, and with boundary conditions $\mGreen(\omega_1,\omega_2) = 0$ for $\omega_2 \in \partial \Omega$.
    \end{lemma}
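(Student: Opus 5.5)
Symmetry follows from time reversal of Brownian bridges, and the characterisation is a Feynman--Kac argument for existence plus a maximum principle for uniqueness.

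\emph{Symmetry.} Two ingredients give this directly.
First, the killed heat kernel is symmetric: $p_t^\Omega(\omega_1,\omega_2)=p_t^\Omega(\omega_2,\omega_1)$.
Second, if $(B_s)_{0\le s\le t}$ is a Brownian bridge from $\omega_1$ to $\omega_2$ of duration $t$, then $(B_{t-s})_{0\le s\le t}$ is a Brownian bridge from $\omega_2$ to $\omega_1$. The functional $\int_0^t \mass(B_s)\,\mathrm{d}s$ is invariant under $s\mapsto t-s$, so the bridge expectations in \eqref{eq:def:mGreen} agree for each $t$. Integrating in $t$ gives symmetry.

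\emph{Existence.} Fix $\omega_1$ and write $g=\mGreen(\omega_1,\cdot)$.
By the Markov property, $q_t(\omega_1,\omega_2):=p_t^\Omega(\omega_1,\omega_2)\Ebr{\omega_1}{\omega_2}{t}[e^{-\int_0^t\mass(B_s)\mathrm{d}s}]$ is the Feynman--Kac kernel of the semigroup generated by $\tfrac12\Delta-\mass$ with Dirichlet conditions. Note the factor $\tfrac12$ from standard Brownian motion: I would check the paper's normalisation of $p_t$ (a Brownian motion with generator $\Delta$) so that \eqref{eq:green:distribution} holds with the stated constants, adjusting by time change if needed.

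Since $\mass\ge 0$, we have $0\le q_t\le p_t^\Omega$, and hence $0\le g\le G^\Omega_0$, the ordinary Dirichlet Green function. This gives $g\in L^2(\Omega)$, since $G_0^\Omega$ has only a logarithmic singularity and $\Omega$ is bounded.

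To get the equation, I would take a test function $\varphi\in C_c^\infty(\Omega)$ and compute $\int g\,(\Delta-\mass)\varphi$. Writing this as $\int_0^\infty \partial_t (Q_t\varphi)(\omega_1)\,\mathrm{d}t$ gives $-\varphi(\omega_1)$ up to sign conventions, with the sign matching the convention in \eqref{eq:green:distribution}.

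Continuity on $\overline\Omega\setminus\{\omega_1\}$ and the zero boundary values come from the bound $g\le G_0^\Omega$, together with $G_0^\Omega\to0$ at the boundary. This is where some regularity of $\partial\Omega$ is needed (regular points for the Dirichlet problem); in the paper's setting I would invoke the standard fact for bounded simply connected domains, where every boundary point is regular. Interior continuity away from $\omega_1$ follows from elliptic regularity: $\Delta g=\mass g$ with $g$ locally bounded there.

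\emph{Uniqueness.} This is the step needing most care. If $g_1,g_2$ are two solutions, then $h=g_1-g_2$ satisfies $\Delta h=\mass h$ in distributions on all of $\Omega$, since the deltas cancel. It is also continuous on $\overline\Omega\setminus\{\omega_1\}$, vanishes on $\partial\Omega$, and lies in $L^2$.

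The obstacle is the possible singularity at $\omega_1$. I would remove it as follows. Since $h\in L^2$ and $\Delta h=\mass h\in L^2_{loc}$, elliptic regularity gives $h\in H^2_{loc}$. In dimension two this makes $h$ continuous, so the point $\omega_1$ is removable.

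Then $h$ is continuous on $\overline\Omega$ and satisfies $\Delta h=\mass h$ with $\mass>0$. At a positive interior maximum we would have $\Delta h\le 0<\mass h$, a contradiction; the same argument applies to a negative minimum. Hence $h\le 0$ and $h\ge0$, so $h\equiv0$.
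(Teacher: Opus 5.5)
Your symmetry and uniqueness arguments follow the paper's proof. Symmetry comes from time reversal of the bridge. Uniqueness comes from applying elliptic regularity to $h=g_1-g_2$ on all of $\Omega$, where the two deltas cancel, so that the point $\omega_1$ becomes removable, followed by the maximum principle for $\Delta-M$ with $M>0$. Your version is slightly more careful than the paper's. The paper asserts $h\in C^\infty(\overline\Omega)$, but $h\in C(\overline\Omega)\cap C^\infty(\Omega)$ is all the maximum principle needs, and that is exactly what the hypotheses plus interior regularity give you.

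The paper gives no existence argument, so your Feynman--Kac sketch is an addition. Its structure is sound:
\begin{itemize}
\item Fubini, justified by $0\le q_t\le p_t^\Omega$;
\item Dynkin's formula $Q_t\varphi-\varphi=\int_0^t Q_s(\Delta-M)\varphi\,\mathrm{d}s$ for $\varphi\in C_c^\infty(\Omega)$;
\item $Q_t\varphi\to0$ as $t\to\infty$.
\end{itemize}
This requires reading the bridge in \eqref{eq:def:mGreen} as the bridge of the killed process, which is what makes $q_t$ the Feynman--Kac kernel.

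However, the claim that the sign matches the convention in \eqref{eq:green:distribution} is false. Your computation gives $\int g\,(\Delta-M)\varphi=-\varphi(\omega_1)$, that is, $\Delta g=Mg-\delta_{\omega_1}$. Already for $M=0$, a nonnegative Green function vanishing on $\partial\Omega$ behaves like a positive multiple of $-\log|\omega_2-\omega_1|$ near the pole, so its Laplacian is a negative multiple of $\delta_{\omega_1}$. No time normalisation can flip a sign.

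This is an inconsistency in the paper's conventions, not in your method. The $+\delta$ convention matches the nonpositive $G_{\mathbb{H}}$ used in Section~\ref{SS:corr_bvp}. By contrast, \eqref{eq:def:mGreen} defines a nonnegative function, which also makes the bound \eqref{eq:bound:mGreen} inconsistent.

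The issue affects neither symmetry nor uniqueness. You should still state the correct conclusion explicitly. With $p_t^\Omega$ the kernel of Brownian motion with generator $\Delta$, it is $-G^m$ that solves \eqref{eq:green:distribution}; equivalently, $G^m$ solves it with $-\delta_{\omega_1}$ in place of $\delta_{\omega_1}$.
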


\begin{proof}
We include a short proof here for completeness. The symmetry follows at once from the time-reversibility of the law of Brownian bridge.
The uniqueness of the boundary value problem satisfied by $\mGreen$ follows from the maximum principle for the massive Laplacian $\Delta - M I$, which holds under the positivity Assumption~\ref{assumption:pos}.
Indeed, if $g_1, g_2 \in L^2(\Omega) \cap \cC^0(\overline{\Omega}\setminus \{\chi_1\},\RR)$ are two solutions of Equation \eqref{eq:green:distribution} in the sense of distributions, then $h = g_1-g_2 \in L^2(\Omega) \cap \cC^0(\overline{\Omega}\setminus \{\omega_1\},\RR)$ is a solution in the sense of distributions of 
    $$
        \Delta h = \mass h
    $$
    with zero boundary conditions. 
    By elliptic regularity (see \cite[Chapter 10.1 (5)]{LiebLoss}, $h \in \cC^{\infty}(\overline{\Omega},\RR)$, hence by the maximum principle $h=0$ so $g_1 = g_2$.
\end{proof}



We now introduce a function of two variables $\kappa$ which plays a prominent role in our results below, and for which we give the three equivalent definitions. For this it is useful to make use of \textbf{Wirtinger derivatives} (i.e., holomorphic and antiholomorphic derivatives), $$\dz = dx + \ii dy = \frac12 \overline{\nabla}, \text{\ and \ }\dzbar = dx - \ii dy = \frac12 \nabla.
$$
Thus in terms of Wirtinger derivatives, if we think of $\alpha$ as a complex-valued function rather than a vector field, since $\alpha = \nabla V$ we see that
\begin{equation}\label{eq:alphadzbar}
\alpha =  2 \dzbar V. 
\end{equation}
With this in hand we now introduce the function $\kappa$:
\begin{equation}\label{eq:def:kappa}
    \begin{aligned}
        \kappa(\omega_1,\omega_2) 
        &= \overline{\nabla_2} \mGreen(\omega_1,\omega_2) - \overline{\alpha}(\omega_2)\mGreen(\omega_1,\omega_2)\\
        &= \overline{\nabla_2} \mGreen(\omega_1,\omega_2) - \overline{\nabla} V(\omega_2)\mGreen(\omega_1,\omega_2)\\
        &= 2\left(\dz_2 \mGreen(\omega_1,\omega_2) - \dz V(\omega_2)\mGreen(\omega_1,\omega_2)\right)
    \end{aligned}
\end{equation}
(The expressions $\nabla_2, \dz_2, \ldots$ indicate that the differentiation is taking place with respect to the second variable). As the last expression suggests, $\kappa$ may be seen as a kind of \textbf{massive gradient} of the massive Green function.

Note that the real part and imaginary parts of $\kappa$ are massive harmonic in the first variable away from $\omega_2$, since derivatives in the first and second variables commute. As for the behavior with respect to the second variable, using that $\Delta = 4\dz \dzbar$, we obtain that $\kappa$ satisfies
\begin{equation}\label{eq:holomorphy:kappa}
    \begin{aligned}
        \dzbar_2 \kappa(\omega_1,\omega_2) 
        &= 2 \dzbar_2 \dz_2 \mGreen(\omega_1,\omega_2) -\overline{\alpha}(\omega_2)\dzbar_2\mGreen(\omega_1,\omega_2)-2\dzbar\dz V(\omega_2)\mGreen(\omega_1,\omega_2)\\
        &= \frac12 \Delta_2 \mGreen(\omega_1,\omega_2) -\overline{\alpha}(\omega_2)\dzbar_2\mGreen(\omega_1,\omega_2)-\frac12\Delta V(\omega_2)\mGreen(\omega_1,\omega_2)\\
        &\overset{\eqref{eq:green:distribution},\eqref{eq:pos}}{=} \frac12 \mass(\omega_2) \mGreen(\omega_1,\omega_2) + \frac12 \delta_{\omega_1}-\overline{\alpha}(\omega_2) \dzbar_2\mGreen(\omega_1,\omega_2) -\frac12\mGreen(\omega_1,\omega_2)(\mass(\omega_2)-|\alpha(\omega_2)|^2)\\
        &= \frac12 \delta_{\omega_1}- \frac{\overline{\alpha}(\omega_2)}{2}\kappabar(\omega_1,\omega_2)\\
    \end{aligned}
\end{equation}

Because of \eqref{eq:massive_holo}, this identity means that $\kappa(\omega_1, \cdot)$ is $(-\bar \alpha)$-massive holomorphic (in the second variable) away from $\omega_1$.

We are now ready to state our result concerning the scaling limit of the inverse Kasteleyn matrix associated to the near-critical weights defined in \eqref{Eq:offcriticaledgeweights} (we recall the choice of gauge function and thus Kasteleyn matrix is specified in Section \ref{SS:Gauge}.) This theorem holds for any (sequence of) isoradial superposition lattices $G_\eps$ (where $\eps>0$ denotes the common radius of all faces), requiring only the bounded angle assumption and straight boundary assumption that will be introduced formally in Section~\ref{subsec:straight}. We also introduce the following notation: for a set $S \subset \CC^2$, $\diagonal(S)$ denotes the set of its diagonal elements, that is $\diagonal(S) = \{(z,z) \in S, z \in \CC\}$. 
More generally, for $S \subset \CC^k$, $\diagonal(S)$ denotes the set of elements having at least two coinciding coordinates.

\begin{thm}\label{thm:isoradial}
    Assume that $\Omega$ and $\Primal$ have a straight boundary centered at the Temperleyan corner $\partial \Omega \cap B(b^*, \eta) = L$, as in Section~\ref{subsec:straight}.
    Let $\kappa$ be as in \eqref{eq:def:kappa}. Then there exists a unique $\alpha$-conjugate function $\kappastar(\cdot, \omega_2)$ in $\Omega \setminus \{\omega_2\}$ satisfying $\kappastar(\beta^*,\omega_2)=0$ at the Temperleyan corner.

    Furthermore, let $x_1 = x_1(\eps) \in \Primal$, $y_1 = y_1(\eps)$ and $w_2 = w_2(\eps) \in \White$, $w_2 = w_2(\eps)$. 
    Let $x_2^-x_2^+$ denote the edge of $\Primal$ corresponding to $w_2$, with $K_{w,x_2^+}>0$, $K_{wx_2^-}<0$. 
    Uniformly for $(x_1,w_2)$, resp. $(y_1,w_2)$, in compact subsets of $\Omega^2 \setminus \diagonal(\Omega^2)$,
        \begin{equation}\label{eq:thm:invK}
        \begin{aligned}
        K^{-1}(x_1,w_2) &=  \sqrt{\tan(\theta_{w_2})}\langle\kappabar(x_1,w_2) , x_2^+-x_2^-\rangle+o(\eps)\\
         K^{-1}(y_1,w_2)
	&=  \ii\sqrt{\tan(\theta_{w_2})} \langle\kappastarbar(y_1,w_2), x_2^+-x_2^-\rangle+o(\eps)
        \end{aligned}
        \end{equation}
    
    Moreover, we have the following bounds: uniformly for $(x_1,w_2)$ in compact subsets of $(\Omega \cup L)^2$ and $(y_1,w_2)$ in compact subsets of $(\Omega \cup L) \times (\Omega \cup L \setminus \{\beta^*\})$,
    \begin{equation}\label{eq:bound:isoradial}
        K^{-1}(x_1,w_2) = O\left(\frac{\eps}{|x_1-w_2|}\right) \quad ; \quad 
        K^{-1}(y_1,w_2) = O\left(\frac{\eps}{|y_1-w_2|}\right).
    \end{equation}
\end{thm}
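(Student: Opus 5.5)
We follow Kenyon's strategy for the critical Temperleyan case, adapted to the massive setting via the drift--mass equivalence. The first observation is structural. Restricted to black vertices $x\in\Primal$, the column $K^{-1}(x_1,\cdot)$ is essentially a discrete gradient in the second variable of a Green function. More precisely, by Temperley's bijection, $K^*K$ restricted to $\Primal$ is (up to the gauge of Section~\ref{SS:Gauge}) the generator of the random walk on $\closurePrimal$ with conductances $c^\alpha$, killed on exiting $\Primal$. After the conjugation $g\mapsto e^{-V}g$, this walk becomes a symmetric massive Laplacian whose discrete mass is $\eps^2 M(x)+o(\eps^2)$. We will therefore write
\begin{equation*}
K^{-1}(x_1,w_2)=\sqrt{\tan\theta_{w_2}}\,\Big(\dGreen^m(x_1,x_2^+)e^{-V(x_2^+)}-\dGreen^m(x_1,x_2^-)e^{-V(x_2^-)}\Big)e^{V(w_2)}\cdot(\text{normalisation}),
\end{equation*}
where $\dGreen^m$ is the discrete massive Green function with Dirichlet boundary conditions, and we check this identity algebraically from the kernel relation $KK^{-1}=I$. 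Next we invoke convergence of the discrete massive Green function to $\mGreen$. The Feynman--Kac representation of $\dGreen^m$, combined with the random walk convergence lemma above and local CLT estimates on isoradial graphs, gives convergence of $\dGreen^m$ together with its discrete gradients on compacts away from the diagonal. Expanding $e^{\pm V}$ to first order then produces $\langle\overline{\nabla_2\mGreen-\bar\alpha \mGreen},x_2^+-x_2^-\rangle=\langle\kappabar,x_2^+-x_2^-\rangle$, which is the first line of \eqref{eq:thm:invK}. Gradient convergence will be proved via discrete massive harmonicity in the second variable and a discrete Harnack/regularity estimate, obtained by comparing with critical discrete harmonic functions with an $O(\eps^2)$ perturbation.

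For the dual rows $y_1\in\Primal^*$ we use the exact discrete massive Cauchy--Riemann relations satisfied by $K^{-1}$. Around each rhombus, $K K^{-1}=I$ gives an identity linking $K^{-1}(y^+,\cdot)-K^{-1}(y^-,\cdot)$, weighted by $e^{\pm V}$, to $K^{-1}(x^+,\cdot)-K^{-1}(x^-,\cdot)$. This is a discrete version of \eqref{eq:harmonic:conjugate}. Consequently $K^{-1}(y_1,w_2)$ is obtained by summing this discrete relation along a dual path $\dpath$ from $b^*$, where the values vanish by the Temperleyan boundary condition, to $y_1$. Passing to the limit term by term using the first line, the Riemann sums converge to the integral of $e^{-V}\Rot_{-\pi/2}e^{V}\nabla(\kappa e^{-V})$ against the path. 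That the limit is path independent, because it is the limit of a path-independent discrete quantity, shows that the form in \eqref{eq:lackofconjugate} is exact on $\Omega\setminus\{\omega_2\}$ (including around $\omega_2$, by the discrete identity). Hence the $\alpha$-conjugate $\kappastar$ exists, and uniqueness follows from the normalisation at $\beta^*$. This yields the second line of \eqref{eq:thm:invK}.

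The bounds \eqref{eq:bound:isoradial} follow from a priori estimates. First, $\dGreen^m\le\dGreen$ pointwise since $M\ge0$. Second, we need gradient estimates $|\nabla\dGreen(x,\cdot)|\lesssim \eps/|x-\cdot|$; these hold in the bulk and, thanks to the straight boundary assumption, up to $L$, by reflection. The dual bound is obtained by integrating the conjugate relation along paths avoiding $\beta^*$.

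\textbf{Main obstacle.} The hardest step will be the boundary behaviour near the Temperleyan corner $b^*$. Controlling the conjugate integral uniformly up to $L$, and in particular showing that the contribution near $\beta^*$ vanishes in the limit, requires the straight-boundary reflection argument. We would first attempt it by comparing with the explicit critical half-plane Green function and treating the mass as an $O(\eps^2)$ perturbation.
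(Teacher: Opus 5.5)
Your overall architecture matches the paper's proof. That includes the block structure of $K^*K$, which expresses $K^{-1}(x_1,w_2)$ as a weighted difference of $\dmGreen(x_1,x_2^\pm)$, and the convergence of $\dmGreen$ and its directional increments for the primal rows. It also includes the exact discrete massive Cauchy--Riemann relation from $KK^{-1}=I$, the weights $e^{V(y)}$ that make the dual increments telescope from $b^*$ as in \eqref{eq:exponential2}, and the existence of $\kappastar$ from path-independence of the discrete sum.

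\textbf{The gap in the dual rows.} The problem is the step ``passing to the limit term by term using the first line''. After telescoping and applying Cauchy--Riemann, $e^{V(y_1)}K^{-1}(y_1,w_2)$ is a sum of $n\asymp\eps^{-1}$ terms $[\dCRx K^{-1}](\tw_k,w_2)$. Each term is essentially the difference $K^{-1}(\tx_k^+,w_2)-K^{-1}(\tx_k^-,w_2)$ across the path, which has size $\eps^2$. The first line of \eqref{eq:thm:invK} gives $K^{-1}(x,w_2)$ only up to $o(\eps)$, so each difference is known only up to $o(\eps)$. The accumulated error is then $o(1)$, while the quantity you are computing is $O(\eps)$.

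To get the Riemann sum you need each increment with error $o(\eps^2)$. That is, you need convergence of the mixed second discrete derivative,
$(x_1^+-x_1^-)(x_2^+-x_2^-)\,\dpartialone\dpartialtwo\dmGreen(w_1,w_2)=\tfrac14(x_1^+-x_1^-)^{\intercal}\nabla_1\nabla_2^{\intercal}\mGreen(w_1,w_2)(x_2^+-x_2^-)+o(\eps^2)$.
This is Lemma~\ref{lem:cv:second:derivative}, and it is what produces \eqref{eq:asymptotic:LHS:CR}. This step, not the corner $b^*$, is the real obstacle; near $L$ the straight-boundary reflection handles things with little extra effort.

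In the paper this second-order step uses the averaged gradients $\daverage\dpartial$. After differentiating in the second variable, $\daveragetwo\dpartialtwo\dmGreen$ is no longer massive harmonic there. So the paper needs the commutation identities \eqref{eq:commute} together with the symmetry of $\dmGreen$ to get equicontinuity in both variables. An alternative is to rerun the first-variable gradient theory on the real massive harmonic function $x\mapsto K^{-1}(x,w_2)/\eps$, uniformly in $w_2$. But that has to be stated and proved; it does not follow from the first line alone.

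\textbf{The same gap in the bounds.} Summing along the path gives $K^{-1}(y_1,w_2)=O(\eps/|y_1-w_2|)$ only if each increment satisfies $[\dCRx K^{-1}](w_1,w_2)=O(\eps^2/|w_1-w_2|^2)$, as in \eqref{eq:bound:K:dual}. With only first-order bounds $O(\eps/|\tw_k-w_2|)$ per term, the sum $\sum_k \eps/(|y_1-w_2|+k\eps)$ is only $O(\log(1/|y_1-w_2|))$. The second-difference bound does follow from the first-order one by Lemma~\ref{lem:lipschitz}, applied in the first variable at scale $|w_1-w_2|$, but you must invoke it.

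The first-order bound is also needed for $\dmGreen$, not only for $\dGreen$. The domination $0\le\dmGreen\le\dGreen$ controls values, not increments. The paper transfers the gradient bound through the resolvent identity (Proposition~\ref{prop:resolvent}) plus an explicit estimate of the resulting sum (Lemma~\ref{lem:cv:green:derivative}).

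\textbf{On first-derivative convergence.} On a non-periodic isoradial graph, a local CLT with gradient-level precision is not an available tool. Moreover, the discrete gradient $\dpartial H$ does not itself converge, since it depends on the local edge direction. What converges is $\daverage\dpartial H$; its projection onto the lattice direction recovers $\dpartial H(w)$ up to $O(\eps|\log\eps|)$ (Proposition~\ref{prop:discrete:gradient}). Your perturbative alternative, via the resolvent identity and known critical asymptotics, is viable for first derivatives. It would have to be carried out at second order as well.
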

    Note that the existence of the $\alpha$-conjugate $\kappastar$ is a part of the statement. The proof of this theorem will be given in Section \ref{subsec:proof:thm:b0}.

\subsection{Scaling limit for moments of height function}
Using the asymptotic estimate of the inverse Kasteleyn matrix obtained in Theorem~\ref{thm:isoradial}, we can compute the asymptotic moments of the height field.
This kind of computation is by now very well understood in the critical (non-massive) setting, see~\cite{Ken02} for the square lattice case, \cite{dT07} for the isoradial case (full plane), and~\cite{Li17} for the Temperleyan isoradial case.

Let $h_{\eps}$ be the height function associated with a random dimer configuration (on a graph $G_\eps$ subject to the same assumptions as in Theorem \ref{thm:isoradial}), using the reference flow $\omega$ defined by setting $\omega(wb)$ to be the probability that the edge $wb$ belongs to a random dimer configuration, and $\omega(wb) = - \omega(bw)$. Recall that $h_{\eps}$ is a random real-valued function defined on the faces of the superposition (medial) graph $G$. Recall the definition of $\kappa, \kappastar$ from Theorem~\ref{thm:isoradial}, and let
\begin{equation}\label{eq:def:F_i}
    F_0 = \frac{\kappastar - \ii\kappa}{2} \quad ; \quad F_1 = -\frac{\kappastar + \ii\kappa}{2}.
\end{equation}
For a complex number $z \in \CC$, we write $z^{(0)} = z, z^{(1)} = \overline{z}$. Then, the following holds.
\begin{thm}\label{thm:height}
    Assume that $\Omega$ and $\Primal$ have a straight boundary centered at the Temperleyan corner $\partial \Omega \cap B(b^*, \eta) = L$, as in Section~\ref{subsec:straight}.
    Let $n \geq 2$ and $\zeta_1, \zeta_2, \dots, \zeta_n$ denote faces of the superposition graph $G$. Then, uniformly for $(\zeta_1,\dots, \zeta_n)$ in compact subsets of $\Omega^n \setminus \diagonal(\Omega^n)$,
    \begin{equation}\label{eq:thm:height}
    \EE\left[\prod_{i=1}^n h_\eps(\zeta_i)\right]
    \overset{\eps \to 0}{\longrightarrow} 
    \sum_{s_i \in \{0,1\}} \int_{\path_1}\cdots \int_{\path_n}  \det_{i \neq j}\Big[F_{s_i+s_j}^{(s_j)}(z_i,z_j)\Big] \prod_{i=1}^n \mathrm{d}z_i^{(s_i)}.
    \end{equation}
    where $s_i + s_j$ is computed mod 2, $\path_1, \dots, \path_n$ are any disjoint smooth paths in $\Omega \cup L$, starting at $n$ distinct points on $L$ and ending at the $\zeta_i$.
    Moreover, uniformly for $(\zeta_1,\dots, \zeta_n)$ in compact subsets of $\Omega^n$,
    \begin{equation}\label{eq:bound:height}
        \EE\left[\prod_{i=1}^n h_\eps(\zeta_i)\right] = 
        O\left(\sum_{\sigma} \prod_{i=1}^n (1+|\log(|\zeta_i-\zeta_{\sigma(i)}|)|)\right),
    \end{equation}
    where the sum is over all permutations of $\{1,\dots,n\}$ with no fixed points.
\end{thm}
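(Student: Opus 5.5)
We follow the now-standard route of Kenyon \cite{KenyonGFF}, de Tilière \cite{dT07} and Li \cite{Li17}, with Theorem~\ref{thm:isoradial} as the only analytic input. Fix disjoint lattice paths $\dpath_1,\dots,\dpath_n$ in the dual of $G$, running from distinct faces adjacent to the straight boundary piece $L$ (where the height is deterministic and normalised to zero) to $\zeta_1,\dots,\zeta_n$, and approximating the smooth paths $\path_i$. Since the reference flow is the one-point dimer probability, the height increment across an edge $e=wb$ equals $\pm(\mathbbm{1}_{e\in \mathbf m}-\PP(e\in\mathbf m))$. Hence $h_\eps(\zeta_i)$ is a sum of centred dimer indicators along $\dpath_i$, and
\[
\EE\Big[\prod_i h_\eps(\zeta_i)\Big]=\sum_{e_1\in\dpath_1}\cdots\sum_{e_n\in\dpath_n}\pm\,\EE\Big[\prod_i(\mathbbm 1_{e_i}-\PP(e_i))\Big].
\]
Kenyon's local statistics formula turns each centred product into $\prod_i K(b_i,w_i)\det_{i\neq j}[K^{-1}(w_i,b_j)]$, the determinant with diagonal entries set to zero. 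All edges are distinct because the paths are disjoint.

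The main step is to insert the asymptotics \eqref{eq:thm:invK}. A black vertex is either a primal vertex $x$ or a dual vertex $y$. For an edge $e_i$ crossed by $\dpath_i$ we pair it with the adjacent edge of the same rhombus, exactly as in the critical case, so that the sum over the two edges of a rhombus recombines. The factors $K(b_i,w_i)$ and $\sqrt{\tan\theta}$ then combine with $\langle \kappabar, x^+-x^-\rangle$ and $\ii\langle\kappastarbar,\cdot\rangle$ into Riemann sums of the form $\Re$ or $\Im$ of $\kappa\,\mathrm dz$.

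Writing $\langle a,v\rangle=\tfrac12(a\bar v+\bar a v)$ splits each entry into a holomorphic part ($dz$) and an antiholomorphic part ($d\bar z$). Collecting the contributions of $x$ and $y$ gives exactly the combinations $F_0=(\kappastar-\ii\kappa)/2$ and $F_1=-(\kappastar+\ii\kappa)/2$ of \eqref{eq:def:F_i}, with conjugation $F^{(s_j)}$ determined by the type $s_j$ chosen at point $j$. Expanding the determinant multilinearly over $s_i\in\{0,1\}$ and passing to the limit of the Riemann sums produces \eqref{eq:thm:height}. Because \eqref{eq:thm:invK} is uniform only away from the diagonal and from the boundary, the remaining parts of the sums need separate control.

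The main obstacle is therefore domination, namely the contributions of edges near $L$ (path endpoints) and of pairs $e_i,e_j$ at distance $O(\eps)$ on different paths near crossings. We handle these with the a priori bound \eqref{eq:bound:isoradial}, which holds uniformly up to $L$ (away from $\beta^*$ for the dual case; we choose the starting points on $L$ away from $\beta^*$). Each determinant term is bounded by $\prod_i \eps/|z_i-z_{\sigma(i)}|$ for a fixed-point-free permutation $\sigma$. Summing over $e_i\in\dpath_i$, the $n$ path sums of $\eps\sum 1/|z_i-z_{\sigma(i)}|$ along a cycle give at most $\prod_i(1+|\log|\zeta_i-\zeta_{\sigma(i)}||)$ by elementary one-dimensional integration. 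This yields \eqref{eq:bound:height} and, through dominated convergence, shows that a neighbourhood of $L$ contributes negligibly, so the Riemann sums converge.

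Path independence of the limit, and hence the freedom to choose any $\path_i$, follows from the discrete height being well defined. In the limit it also follows from the massive holomorphicity \eqref{eq:holomorphy:kappa} and the $\alpha$-conjugacy relation \eqref{eq:harmonic:conjugate}, which together make the integrand a closed form in each variable away from the other points.
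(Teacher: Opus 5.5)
Your overall route is the paper's (Li's) argument, but two steps do not work as written. First, you never derive the index structure $F^{(s_j)}_{s_i+s_j}$, and the local rhombus pairing you describe cannot replace that derivation. Insert \eqref{eq:thm:invK}, use $\ii(-1)^{\xi(w_ib_i)}\eps\,\dirac(w_i,b_i)=2\Delta z_i+O(\eps^2)$, and use Li's relation $\overline{x_i^+-x_i^-}\,\Delta z_i=(-1)^{\eta(b_i)+1}(x_i^+-x_i^-)\,\overline{\Delta z_i}$, where $\eta(b)=1$ iff $b$ is dual. Then, in the term of a permutation $\sigma$, the factor attached to path $i$ becomes $\bigl(F_0+(-1)^{\eta(b_{\sigma(i)})}F_1\bigr)\Delta z_i+(-1)^{\eta(b_{\sigma(i)})+\eta(b_i)}\bigl(\overline{F_0}+(-1)^{\eta(b_{\sigma(i)})}\overline{F_1}\bigr)\overline{\Delta z_i}$, evaluated at $(b_{\sigma(i)},w_i)$. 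After multiplying out, each term carries along every path $j$ a weight $(-1)^{c_j\eta(b_j)}\Delta z_j^{(s_j)}$ times a smooth kernel. The terms with some $c_j$ odd are exactly those whose $F$-index differs from $s_i+s_{\sigma(i)}$, and they have to be shown to vanish. The paper does this with Li's Lemma~14, $\sum(-1)^{\eta(b)}\Delta z=O(\eps)$ along any portion of a path, followed by summation by parts against the smooth kernel. That lemma is a telescoping identity: $(-1)^{\eta(b)}\Delta z$ is the increment of a face function of size $O(\eps)$. The cancellation is not local. If the path crosses the rhombus $x,y,x',y'$ through the edges $wy$ and $wx'$, the two increments are, up to orientation, half the orthogonal diagonals $x'-x$ and $y'-y$. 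So the alternating combination has the same modulus as the total displacement across the rhombus, and these $O(\eps)$ pieces summed over $O(1/\eps)$ rhombi give $O(1)$ unless you telescope. Hence ``collecting the contributions of $x$ and $y$'' does not tell you which of $F_0,F_1$ sits in entry $(i,j)$; the subscript $s_i+s_j$ is precisely the survival condition of the non-oscillating terms.

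Second, your proof of \eqref{eq:bound:height} needs a specific choice of paths, which you do not make. The multiple sum $\sum_{m_1,\dots,m_n}\prod_i\eps/|z_i(m_i)-z_{\sigma(i)}(m_{\sigma(i)})|$ does not factorise, because every factor couples two summation indices. For general disjoint paths it is not bounded by the product of logarithms. Suppose the paths from $\zeta_1,\zeta_2$, with $|\zeta_1-\zeta_2|=\delta$, run parallel at distance about $\delta$ over a macroscopic length. Then the term $\sigma=(12)$ is only bounded by $\sum_{m_1,m_2}\eps^2/(\delta^2+\eps^2(m_1-m_2)^2)$, which is of order $\delta^{-1}$. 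Since \eqref{eq:bound:height} must hold uniformly on compact subsets of $\Omega^n$, near-diagonal configurations included, the paths must be adapted to the configuration. The paper, following Appendix~A.3 of \cite{LT15}, lets each path leave $\zeta_i$ along a straight segment, with pairwise distinct directions. Away from their endpoints the paths are kept macroscopically apart. With edges numbered from $\zeta_i$ and $\delta_{ij}=|\zeta_i-\zeta_j|$, this gives $\dist(e_i(m_i),e_j(m_j))\ge c(\delta_{ij}+\eps(m_i+m_j))$. Each factor is then at most $C\eps/(\delta_{i\sigma(i)}+\eps m_i)$, a function of a single index. The sum therefore factorises into harmonic sums of size $O(1+|\log\delta_{i\sigma(i)}|)$.
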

    We prove this theorem in Section~\ref{sec:Li}.

    \begin{remark}[Coherence with Kenyon's \cite{Kenyon_confinv, KenyonGFF} and Li's \cite{Li17} results]\label{rem:GFF}
    As a sanity check we now explain how these results are coherent with what is known in the critical case, since the translation is not entirely obvious.
    If we let $G_i(w,z) = \mathbf{i} F_i(w,z)$, by multi-linearity of the determinant and being careful with the conjugates, rewrites as~\eqref{eq:thm:height}
    \begin{equation}\label{eq:ken:li}
        \EE\left[\prod_{i=1}^n h_\eps(\zeta_i)\right]
    \overset{\eps \to 0}{\longrightarrow} 
        (-\mathbf{i})^n\sum_{s_i \in \{0,1\}} \int_{\path_1}\cdots \int_{\path_n}  \det_{i \neq j}\big[G_{s_i+s_j}^{(s_j)}(z_i,z_j)\big] \prod_{i=1}^n (-1)^{s_i}\mathrm{d}z_i^{(s_i)}.
    \end{equation}
    Note that the variables are exchanged between our setting and Kenyon and Li's setting: in our setting, $K^{-1}(w,b) = \kappa(w,b)$ while in their setting $K^{-1}(w,b) = \kappa(b,w)$ (see for example Lemma 6 of Li's paper). 
    Hence, if we let $H_0(w,z) = G_0(z,w)$, $H_1(w,z) = G_1(z,w)$, re-indexing Equation~\eqref{eq:ken:li} we obtain 
    $$
        \EE\left[\prod_{i=1}^n h_\eps(\zeta_i)\right]
    \overset{\eps \to 0}{\longrightarrow} 
        (-\mathbf{i})^n\sum_{s_i \in \{0,1\}} \int_{\path_1}\cdots \int_{\path_n}  \det_{i \neq j}\big[H_{s_i+s_j}^{(s_i)}(z_i,z_j)\big] \prod_{i=1}^n (-1)^{s_i}\mathrm{d}z_i^{(s_i)}.
    $$
    In other words, if we define $F_+(w,z) = H_0(w,z) = \mathbf{i}F_0(z,w)$ and $F_-(w,z) = \overline{H_1}(w,z) = -\mathbf{i}\bar F_1(z,w)$, then
    $$
        \EE\left[\prod_{i=1}^n h_\eps(\zeta_i)\right]
    \overset{\eps \to 0}{\longrightarrow} 
        (-\mathbf{i})^n\sum_{s_i \in \{0,1\}} \int_{\path_1}\cdots \int_{\path_n}  \det_{i \neq j}\big[F_{s_i,s_j}(z_i,z_j)\big] \prod_{i=1}^n (-1)^{s_i}\mathrm{d}z_i^{(s_i)}.
    $$
    with the convention that 
    $$
        F_{0,0} = F_+ \quad ; \quad F_{1,0} = F_- \quad ; \quad F_{0,1} = \bar F_- \quad ; \quad F_{1,1} = \bar F_+.
    $$
    This matches the result of~\cite{Li17}, see the expression before Lemma 15 (this is not so straightforward to check since her expression is very explicit).
    This also matches Proposition 20 in Kenyon~\cite{Kenyon_confinv}, up to what seems to be a typo: the $(-\mathbf{i})^n$ is present at the last line of the proof, but not in the statement of Proposition 20.
    This also matches Proposition 2.2 in \cite{KenyonGFF}. The $(-\mathbf{i})^n$ does not appear there, but it is compensated by the fact that $F_-$ is changed into $-F_-$ compared to \cite{Kenyon_confinv}, accounting for the $(-\mathbf{i})^n$ in the determinant by multi-linearity (note that the odd terms vanish [for example because an expectation of height functions is always real], and for even terms changing $F_1$ into $-F_1$ indeed produces an overall factor $(-1)^n$).
    \end{remark}

In the work of~\cite{Li17} (which concerns 
 the critical, i.e. non-massive case), the analogue of this theorem is proved in two steps: the first step is to compute the asymptotic of $K^{-1}$, as we did in Theorem~\ref{thm:isoradial}. 
The second step is to recover the height function from $K^{-1}$ by summing along paths and to recover integrals. 
This second step can be applied with little modification to our context, see Section~\ref{sec:Li}.

To prove that the limit in Theorem~\ref{thm:height} is the sine-Gordon model with external electromagnetic field, we will need to identify $F_0, F_1$ as the solution of a certain boundary value problem. 
This is the content of the following proposition.

\begin{prop}\label{lem:F0F1} \label{lem:bc}
    The functions $F_0$ and $F_1$ satisfy
        \begin{align}\label{eq:F0F1_bulk}
            \bar \partial_w F_0(w,z) 
        &= \frac{\alpha(w)}{2} F_1(w,z) + \frac{\ii}{2}\delta_w(z) && \bar\partial_z F_0(w,z) = \frac{\bar\alpha(z)}{2}\bar F_1(w,z) - \frac{\ii}{2}\delta_w(z) 
        \\
        \partial_w F_1(w,z) 
        &= \frac{\bar\alpha(w)}{2}F_0(w,z) && \bar\partial_z F_1(w,z) = \frac{\bar\alpha(z)}{2}\bar F_0(w,z).
        \end{align}
     Their diagonal behavior is given by
     \begin{equation}\label{eq:singularity:F0F1}
         F_0(w,z) = \frac{\ii}{4\pi(w-z)}+O(\log|w-z|) \quad ; \quad F_1(w,z) = O(\log|w-z|).
    \end{equation}
            Furthermore, for $z \in \partial \Omega$, let us denote by $\tau(w)$ the tangent vector to $\Omega$ at $w$ such that $\Omega$ is on the left.
    The functions $F_0$ and $F_1$ satisfy the following boundary conditions:
    \begin{align}
        F_1(w,z) &= -F_0(w,z) \quad ; \quad w \in \partial \Omega\\
        F_1(w,z) &= \bar\tau(z)^2 \bar F_0(w,z) \quad ; \quad z \in \partial \Omega.
    \end{align}
\end{prop}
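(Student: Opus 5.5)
The plan is to reduce every claim about $F_0,F_1$ to statements about the pair $(\kappa,\kappastar)$. These statements come from three sources: the massive Green function equation \eqref{eq:green:distribution}, the computation \eqref{eq:holomorphy:kappa} showing that $\kappa(\omega_1,\cdot)$ is $(-\bar\alpha)$-massive holomorphic with source $\frac12\delta$, and the defining $\alpha$-conjugacy relation \eqref{eq:harmonic:conjugate} for $\kappastar$ in the first variable, whose existence is granted by Theorem~\ref{thm:isoradial}. Throughout, $w$ is the first variable and $z$ the second, so $F_i(w,z)$ is built from $\kappa(w,z)$ and $\kappastar(w,z)$.

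\emph{Step 1: equations in the second variable $z$.} By \eqref{eq:holomorphy:kappa}, $\bar\partial_z\kappa=\frac12\delta-\frac{\bar\alpha}{2}\bar\kappa$. For $\kappastar$ I will derive the analogous identity. The route is to differentiate the conjugacy relation \eqref{eq:harmonic:conjugate}; it is taken in $w$, but its coefficients involve only $V(w)$, so it commutes with operations in $z$. Hence $\bar\partial_z\kappastar+\frac{\bar\alpha(z)}{2}\overline{\kappastar}$ is itself an $\alpha$-conjugate of $\bar\partial_z\kappa+\frac{\bar\alpha}{2}\bar\kappa=\frac12\delta_w(z)$, up to distributional terms. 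Away from the diagonal, uniqueness of conjugates vanishing at $\beta^*$ gives $\bar\partial_z\kappastar=-\frac{\bar\alpha}{2}\overline{\kappastar}$. Forming $F_0=(\kappastar-\ii\kappa)/2$, note $\overline{\ii\kappa}=-\ii\bar\kappa$. Then
\[\bar\partial_zF_0=-\tfrac{\bar\alpha}{4}\bigl(\overline{\kappastar}+\ii\bar\kappa\bigr)-\tfrac{\ii}{4}\delta .\]
The first term equals $\frac{\bar\alpha}{2}\bar F_1$ exactly, because $\bar F_1=-(\overline{\kappastar}-\ii\bar\kappa)/2$. This is a sign bookkeeping I will check carefully; the delta coefficient $\frac{\ii}{2}$ claimed in the statement must come out of the same bookkeeping, with the $\kappastar$ singularity contributing the other half. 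The same computation gives $\bar\partial_zF_1$.

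\emph{Step 2: equations in $w$ and the singularity.} Write \eqref{eq:harmonic:conjugate} in Wirtinger form. The relation $e^V\nabla(\kappa e^{-V})=-\ii e^{-V}\nabla(\kappastar e^{V})$, applied separately to real and imaginary parts, converts into first-order equations for $\bar\partial_w$ and $\partial_w$ of $\kappa\pm\kappastar$ with zeroth-order terms $\alpha$ and $\bar\alpha$. Recombining gives $\bar\partial_wF_0=\frac{\alpha}{2}F_1$ and $\partial_wF_1=\frac{\bar\alpha}{2}F_0$ off the diagonal. The diagonal delta is then read off from the local singularity. Since $G^m(w,z)=-\frac1{2\pi}\log|w-z|+O(1)$, with the correction being $O(|w-z|^2\log)$ by comparison with the massless Green function, we get $\kappa=2\partial_zG^m+O(\log)=\frac{1}{2\pi(w-z)}\cdot(\text{sign})+O(\log)$. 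Its conjugate $\kappastar$ has, to leading order, the massless conjugate singularity $\mp\frac{\ii}{2\pi(w-z)}$ (up to the same sign convention). In $F_0$ these add to $\frac{\ii}{4\pi(w-z)}$ and in $F_1$ they cancel, which gives \eqref{eq:singularity:F0F1}. The delta terms in \eqref{eq:F0F1_bulk} then follow from $\bar\partial\frac{1}{\pi z}=\delta$.

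\emph{Step 3: boundary conditions.} For $w\in\partial\Omega$ we have $G^m(w,z)=0$, so $\kappa(w,z)=0$ and hence $F_1=-F_0=-\kappastar/2$ there. The harder boundary condition is the one for $z\in\partial\Omega$. Since $G^m(\cdot,z)$ vanishes along $\partial\Omega$ in $z$, $\nabla_zG^m$ is normal, so $\kappa(w,z)=\overline{\nabla_z}G^m$ is a real multiple of $\overline{\ii\tau(z)}$. The same then needs to hold for $\kappastar$; I would obtain this by integrating the conjugacy relation in $w$ from $\beta^*$, using that tangential $z$-derivatives of $G^m$ vanish on the boundary, and the straight-boundary hypothesis near $\beta^*$. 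With both $\kappa$ and $\kappastar$ real multiples of $\ii\bar\tau$ (so that $\overline{(\cdot)}=-\bar\tau^2(\cdot)$ for such values), a short algebraic check gives $F_1=\bar\tau^2\bar F_0$. I expect this last step, namely controlling the phase of $\kappastar$ at $z\in\partial\Omega$ (which requires justifying the regularity of $\kappastar$ up to the boundary), to be the main obstacle.
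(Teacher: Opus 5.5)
Your three steps follow the paper's proof:
\begin{itemize}
\item the $z$-equations come from pushing $z$-derivatives through the path-integral definition of the $\alpha$-conjugate;
\item the $w$-equations come from the $\alpha$-holomorphicity of $f=u+\ii u^*$ and $g=v+\ii v^*$, for which $F_0=\frac12(g-\ii f)$ and $F_1=\frac12(\bar g-\ii\bar f)$;
\item the deltas come from the diagonal singularity via a residue computation;
\item the boundary conditions come from $\kappa(w,z)=0$ for $w\in\partial\Omega$, and for $z\in\partial\Omega$ from $\kappa(\cdot,z)=h_z\bar\eta(z)$ (with $h_z$ real and $\eta$ the normal). Real-linearity of the conjugation then gives $\kappastar(\cdot,z)=h_z^*\bar\eta(z)$.
\end{itemize}
Your ``main obstacle'' is handled in the paper by continuity of $\mGreen$, $\partial_{z_i}\mGreen$ and $\partial_{w_j}\partial_{z_i}\mGreen$ up to $z\in\partial\Omega$. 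This continuity comes from the differentiated resolvent identity and dominated convergence.

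\textbf{The constant in Step~2 fails as written.} With the normalization \eqref{eq:green:distribution} that produces the deltas in \eqref{eq:holomorphy:kappa}, one has $\mGreen(w,z)=+\frac1{2\pi}\log|w-z|+O(1)$. Hence $\kappa=\frac{1}{2\pi(z-w)}+O(\log|w-z|)$ and $\kappastar\approx-\ii\kappa$ at leading order, so $F_0=\frac12(\kappastar-\ii\kappa)\approx\frac{\ii}{2\pi(w-z)}$. Two terms of size $\frac1{2\pi}$ do not add up to $\frac{\ii}{4\pi(w-z)}$.

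Conversely, a singularity $\frac{\ii}{4\pi(w-z)}$ with an $O(\log)$ remainder would force $\dzbar_wF_0\ni\frac{\ii}{4}\delta_w(z)$, because $\dzbar\frac{1}{\pi w}=\delta$. So the two constants in the statement are incompatible. Your own Step~1 count $-\frac{\ii}{4}-\frac{\ii}{4}$ already uses the $\frac1{2\pi}$ normalization.

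The paper carries the same factor-$2$ mismatch. Its \eqref{eq:kappa_0} has $\frac1{4\pi}$, whereas both $\kappa_0=2\dz_zG$ and \eqref{eq:dirac:critical} require $\frac1{2\pi}$. You should correct one of the two constants rather than force the sum.

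\textbf{The singularity of $\kappastar$ is asserted, not proved.} You only claim that $\kappastar$ has the massless conjugate singularity up to $O(\log|w-z|)$. This is the one non-local estimate, since $\kappastar(\cdot,z)$ is a path integral from $\beta^*$. It is also exactly what gives $F_1=O(\log|w-z|)$, through cancellation of the $\frac{1}{w-z}$ terms of $\kappastar$ and $\ii\kappa$. The paper proves it as Lemma~\ref{lem:kappa:kappa_0}:
\begin{itemize}
\item split $u=u_0+u_1+u_2$ via \eqref{eq:resolvent:identity:continuous};
\item use $\dz_z\mGreen=\dz_zG+O(1)$ and $\nabla_\omega\mGreen(\omega,y)=O(1/|\omega-y|)$;
\item check that the $\alpha$-terms in the conjugacy relation and the contributions of $u_1,u_2$ are path integrals of $O(1/|\omega-z|)$, hence $O(\log|w-z|)$.
\end{itemize}
For $\kappa$ itself, what you need is likewise $\dz_z\mGreen=\dz_zG+O(1)$. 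An $O(|w-z|^2\log)$ correction is not available.

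\textbf{Minor slips.} Your displayed $\dzbar_zF_0$ should read $-\frac{\bar\alpha}{4}(\overline{\kappastar}-\ii\bar\kappa)-\frac{\ii}{4}\delta$ before the $\kappastar$ delta is added. For $x\in\ii\bar\tau\RR$ the identity is $x=-\bar\tau^2\bar x$, not $\bar x=-\bar\tau^2x$.
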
 
This result is proved in Section \ref{SS:corr_bvp}.

\begin{remark}
    As a sanity check, let us compare once again with the massless case of Kenyon~\cite{Kenyon_confinv} and Li~\cite{Li17}.
    Recall the definition of $F_+, F_-$ from Remark~\ref{rem:GFF}.
    According to Proposition~15 in \cite{KenyonGFF} or Lemma~15 in \cite{Li17} in the massless case, if $\ph$ is a conformal map from $\Omega$ to the upper half-plane,
    $$
        F_+(w,z) = -\frac{\ph'(w)}{4\pi(\ph(w)-\ph(z))} \quad ; \quad F_-(w,z) = \frac{\overline{\ph'}(w)}{4\pi(\bar\ph(w)-\ph(z))}.
    $$
    (note that there is a factor $8$ between the normalization of Kenyon and the normalization of Li, which is the same as ours). By our Remark~\ref{rem:GFF}, this implies that in the massless case,
    $$
        F_0(w,z) = -\ii F_+(z,w) = -\frac{\ii \ph'(z)}{4\pi(\ph(w)-\ph(z))} \quad ; \quad F_1(w,z) = -\ii\bar F_-(z,w) = \frac{\ii\ph'(z)}{4\pi(\bar\ph(w)- \ph(z))}
    $$
    For $w \in \partial \Omega$, $\ph(w) \in \RR$ so $F_1(w,z) = -F_0(w,z)$.
    For $z \in \partial \Omega$, $\ph(z) \in \RR$ so
    $$
        \bar F_0(w,z) = \frac{\overline{\ph'}(z)}{\ph'(z)} F_1(w,z).
    $$
    and since $\arg(\ph'(z)) = \arg(\bar\tau(z))$ at a boundary point, this is consistent with the boundary condition of Proposition~\ref{lem:bc}.
\end{remark}

The boundary value problem of Proposition \ref{lem:F0F1} will play a crucial role in the identification with the sine Gordon field, which will be discussed below. 
The Dirac boundary value problem of Proposition \ref{lem:F0F1} is to find a pair of functions $F = (F_0, F_1): \Omega^2 \to \CC^2$ such that
\begin{equation}\label{eq:BV:alpha}
\begin{cases}
\bar \partial_w F_0(w,z) 
        = \frac{\alpha(w)}{2} F_1(w,z) + \frac{\ii}{2}\delta_w(z) 
        \quad ; \quad 
        \partial_w F_1(w,z) 
        = \frac{\bar\alpha(w)}{2}F_0(w,z)
        &\text{ on }\Omega^2
    \\
F_1(w,z) = -F_0(w,z) ; & \text{ if } w \in \partial \Omega\\
\end{cases}
\end{equation}
We call this boundary value problem $\BV_1(\Omega, \alpha)$.
By Proposition \ref{lem:F0F1}, the functions $(F_0,F_1)$ provided by Theorem~\ref{thm:height} solve $\BV_1(\Omega, \alpha)$.
Given a vector field $\alpha: \Omega \to \RR^2 \simeq \CC$ in $\Omega$, this problem can be conveniently rewritten in terms of the Dirac operator with complex mass 
$$
\slashed\partial_{\alpha}:=\begin{pmatrix}  \alpha & 2\bar\partial_w \\ 2\partial_w & \overline{\alpha}\end{pmatrix}.
$$
The first two lines can equivalently be rewritten as
$$
\slashed{\partial}_\alpha 
\begin{pmatrix}
    \ii F_1 & \ii \overline{F_0}\\
    -\ii F_0 & -\ii \overline{F_1}  
\end{pmatrix} 
= I \text{ on }  \Omega^2
$$

\subsection{Identification with sine-Gordon model}

\label{SS:identification:overview}

Recently, \cite{PVW} have constructed the sine-Gordon correlation functions in a setting which intersects with our own. In particular, they prove formulas for the mixed moments of Wirtinger derivatives ($\partial \phi, \bar\partial\phi$) in terms of Pfaffians of two-point massive Ising fermion correlations. 
 Here we introduce some of their key results and 
 state a theorem which shows that the correlations of the (Wirtinger derivatives) of the limiting height function agree with those of the sine-Gordon, as constructed in \cite{PVW}.

\cite{PVW} consider the case where $\Omega$ is the unit disc ${\mathbb{D}}$ and regularise the underlying Dirichlet GFF via a heat kernel regularisation. That is, they take the covariance of the Gaussian process on $\Omega$ to be
\begin{align}
C_\eps(x,y):=\int_{\eps^2}^\infty p_{\Omega}(s,x,y)\mathrm{d}s
\end{align}
where $p_{\Omega}(s,x,y)$ is the Dirichlet heat kernel on $\Omega$, i.e., the transition density of a Brownian motion (with speed two, so its infinitesimal generator is the Laplacian $\Delta$ instead of $(1/2) \Delta$) killed upon exiting $\Omega$. We let $\mathbb{P}_{\mathrm{GFF}^\#(\epsilon)}$ denote the law of this Gaussian field $\phi_\epsilon$, and following their notation, use $\langle \cdot \rangle_{\mathrm{GFF}^\#(\epsilon)}$ to denote expectation under it. They define Wick-ordered trigonometric functions $: \! \cos(\gamma \phi_\epsilon)\!:$ and $:\!\sin(\gamma\phi_\epsilon)\!:$ which contains an explicit additional normalising constant $c_\alpha$ (defined in terms of the Euler--Mascheroni constant) compared to the standard Wick-ordered notions; see  \cite[(1.3.6) and (2.2.2)]{PVW}. This normalising constant $c_\alpha$ is non-universal and depends on the choice of regularisation. 
They then introduce 
a further regularisation $\typecolon \cos (\sqrt{4\pi} \phi_\eps)\typecolon, \typecolon \sin (\sqrt{4\pi}\phi_\epsilon)\typecolon$ for the case $\gamma=\sqrt{4\pi}$, see \cite[(1.3.7)]{PVW}. Let $\rho\in C_c^\infty(\Omega)$ (this $\rho$ plays the same role as our $\alpha$, up to a constant multiple, but is in particular assumed here to be \emph{real-valued}).
The authors of \cite{PVW} define the regularised sine-Gordon expectation by 
\begin{align}
    \langle  F(\phi_\epsilon)\rangle_{\mathrm{SG}(\rho|\epsilon,\Omega)}:=\frac{\Big\langle F(\phi_\epsilon) \exp\Big(\int_\Omega \mathrm{d}A(v) \rho(v)\typecolon\cos (\sqrt{4\pi}\phi_\epsilon(v))\typecolon\Big)\Big\rangle_{\mathrm{GFF}^\#(\epsilon)}}{\Big\langle\exp\Big(\int_\Omega \mathrm{d}A(v) \rho(v)\typecolon\cos (\sqrt{4\pi}\phi_\epsilon(v))\typecolon\Big)\Big\rangle_{\mathrm{GFF}^\#(\epsilon)}},
\end{align}
where $\mathrm{d}A(v)$ is the area (Lebesgue) measure. That is, for a fixed $\epsilon$, they define a probability measure $\mathbb{P}_{\mathrm{SG}(\rho|\epsilon, \Omega)}$ on fields (in fact, smooth fields in $C^1(\Omega)$) via its Radon-Nikodym derivative with respect to $\mathbb{P}_{\mathrm{GFF}^\#(\epsilon)}$.

Fix some test functions $f_1,..,f_p, g_1,...,g_{q}\in C^\infty_c(\Omega)$ with 
pairwise disjoint supports. \cite{PVW} define their correlation functional by taking $\epsilon \rightarrow 0$ limits of the regularised versions. A key aspect of their result (contained within Theorem 1.1 of \cite{PVW}) is to show that these limits exist, thereby showing the existence of a measure associated to the sine-Gordon model. 

\begin{thm}[Part of Theorem 1.1 in \cite{PVW}] \label{thm:PVWintro}
\begin{align}
& \Big\langle \prod_{j=1}^p \partial \phi(f_j)\prod_{j'=1}^q \bar\partial \phi(g_{j'})\Big\rangle_{\mathrm{SG}(\rho|\Omega)}\nonumber \\
:=& \lim_{\epsilon\rightarrow 0} \int (\prod_{j=1}^p(-\partial f_j(z_j)\prod_{j'=1}^q(-\bar\partial g_{j'}(w_{j'})) \Big\langle \prod_{j=1}^p \phi_\epsilon(z_j)\prod_{j'=1}^q\phi_\epsilon(w_{j'})\Big\rangle_{\mathrm{SG}(\rho|\epsilon, \Omega)} \mathrm{d}z_j \mathrm{d}\bar w_{j'}.
\end{align}
\end{thm}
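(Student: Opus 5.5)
The plan is to prove existence of the limit through the Coleman correspondence at the free fermion point $\gamma=\sqrt{4\pi}$. For fixed $\epsilon$ I would expand both numerator and denominator of $\langle\cdot\rangle_{\mathrm{SG}(\rho|\epsilon,\Omega)}$ as power series in $\rho$. This is legitimate because $\phi_\epsilon$ is a smooth Gaussian field and $\typecolon\cos(\sqrt{4\pi}\phi_\epsilon)\typecolon$ is bounded for fixed $\epsilon$. Writing $\cos$ as a sum of charges $\pm\sqrt{4\pi}$, each term of order $N$ becomes an integral over $N$ points with signs $\sigma_k=\pm1$ of an explicit Coulomb gas weight:
\begin{equation*}
\prod_{k<l}e^{-4\pi\sigma_k\sigma_l C_\epsilon(v_k,v_l)}\times(\text{self-energy and Wick constants}).
\end{equation*}
The insertions $\prod\phi_\epsilon(z_j)\prod\phi_\epsilon(w_{j'})$ contribute polynomial factors in the covariances $C_\epsilon(z_j,v_k)$, $C_\epsilon(z_j,z_{j'})$ by Gaussian integration by parts. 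After testing against $-\partial f_j$, $-\bar\partial g_{j'}$, these become $\partial C_\epsilon$, $\bar\partial C_\epsilon$ factors.

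The key algebraic step is that, after $\epsilon\to0$ and since $\beta=4\pi$, the Coulomb weights for a neutral configuration are the squared Cauchy determinants $\bigl|\det[(u_k-u'_l)^{-1}]\bigr|^2$, corrected by boundary (conformal radius) terms that the constant $c_\alpha$ in the Wick ordering is designed to absorb. Each order-$N$ term is therefore a fermionic (free massless Dirac, Dirichlet-type boundary) correlation. Its Wirtinger-derivative insertions correspond to bilinears $\psi\psi^*$ and $\bar\psi\bar\psi^*$. Summing over $N$ reorganises the numerator as a minor of, and the denominator as, a Fredholm determinant $\det(I+\rho\,\mathcal K_\epsilon)$. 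Here $\mathcal K_\epsilon$ is the regularised massless Dirac Green operator acting on $L^2(\Omega;\CC^2)$. The ratio is then a Pfaffian/determinant of two-point functions of the massive operator $(I+\rho\mathcal K_\epsilon)^{-1}\mathcal K_\epsilon$, by Jacobi's formula.

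The main obstacle is the logarithmic divergence of neutral pairs at $\beta=4\pi$. The kernel $|v-v'|^{-2}$ is not integrable, so $\mathcal K_\epsilon$ converges only in Hilbert--Schmidt norm, not in trace class, and the plain Fredholm determinant diverges. I would handle this by showing that the extra regularisation $\typecolon\cdot\typecolon$ of \cite[(1.3.7)]{PVW} contributes exactly the counterterm $\exp(-\mathrm{Tr}\,\rho\mathcal K_\epsilon)$ and its second-order analogue. This turns numerator and denominator into Carleman regularised determinants $\det_2(I+\rho\mathcal K_\epsilon)$. Those determinants are continuous in Hilbert--Schmidt norm and obey Hadamard-type bounds $|\det_2(I+A)|\le e^{\|A\|_{HS}^2/2}$, which give uniform-in-$\epsilon$ summability of the series.

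For the limit $\epsilon\to0$, I would check that $\det_2(I+\rho\mathcal K)\neq0$. For real $\rho$ this follows because $I+\rho\mathcal K$ is invertible: the massive Dirac operator with real mass has trivial kernel by an energy (positivity) identity. The inserted test functions have disjoint supports, so the two-point kernels $\partial_z\partial_w C_\epsilon$ stay smooth on the relevant supports and converge. Continuity of $\det_2$ and of the resolvent then yields existence of the limit and identifies it with the massive Dirac Pfaffian.
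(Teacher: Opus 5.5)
\paragraph{Review of the proposal.} The paper does not prove this statement. It is quoted from \cite{PVW} (Theorem 1.1 there) and used as a black box in Theorem~\ref{thm:PVW} and Theorem~\ref{T:SG}, so your argument has to stand on its own. Your outline is the natural Coleman-correspondence strategy: Coulomb-gas expansion, a determinant identity at $\beta=4\pi$, and resummation into a regularised Fredholm determinant. As written, however, the step that is supposed to control $\epsilon\to0$ does not work.

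\emph{First gap: $\mathcal K$ is not Hilbert--Schmidt.}
\begin{itemize}
\item Its kernel behaves like $1/(\pi(v-v'))$ near the diagonal. Hence $\|\rho\mathcal K\|_{S_2}^2\asymp\int\!\!\int|\rho(v)|^2|v-v'|^{-2}\,\mathrm{d}v\,\mathrm{d}v'=\infty$.
\item The non-integrability of $|v-v'|^{-2}$ that you invoke is exactly this statement. If $\rho\mathcal K$ were in $S_2$, then $|\mathrm{Tr}\big((\rho\mathcal K)^2\big)|\le\|\rho\mathcal K\|_{S_2}^2<\infty$, and there would be no neutral-pair divergence at all.
\item For a cutoff at scale $\epsilon$ one has $\|\rho\mathcal K_\epsilon\|_{S_2}^2\sim\log(1/\epsilon)$. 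So $\det_2(I+\rho\mathcal K)$ is undefined, and the bound $|\det_2(I+A)|\le e^{\|A\|_{S_2}^2/2}$ gives nothing uniform in $\epsilon$.
\item The bookkeeping is also off. $\log\det_2(I+A)=-\tfrac12\mathrm{Tr}A^2+\tfrac13\mathrm{Tr}A^3-\cdots$ still contains the divergent second-order trace. Removing the first- and second-order traces gives $\det_3$, not $\det_2$.
\end{itemize}
What is true is that $\mathcal K\in S_p$ for every $p>2$. For instance, $\mathcal K^*\mathcal K$ has a logarithmic kernel, hence is Hilbert--Schmidt, so $\mathcal K\in S_4$. This is exactly the argument in the proof of Theorem~\ref{thm:schatten}, and it is why a $\det_4$, with the first three cumulants put back by hand, appears there. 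So you should work with $\det_3$ or $\det_4$ and the bound $|\det_p(I+A)|\le e^{C_p\|A\|_{S_p}^p}$. The divergent factor does not depend on the insertions and cancels in the ratio, so you need not show that \cite[(1.3.7)]{PVW} produces it exactly.

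\emph{Second and more serious gap: no determinant identity at fixed $\epsilon$.}
\begin{itemize}
\item The Cauchy-type identity holds only for the exact logarithmic interaction. With the heat-kernel covariance $C_\epsilon$, the weights $\prod_{k<l}e^{-4\pi\sigma_k\sigma_lC_\epsilon(v_k,v_l)}$ are not of determinantal form.
\item Consequently there is no exact fixed-$\epsilon$ identity $Z_\epsilon=\det(\cdot)$ to feed into Fredholm bounds; those bounds are available only at $\epsilon=0$.
\item Whatever the extra regularisation does to individual terms, passing $\epsilon\to0$ through the sum over $N$ needs a separate estimate. You need a bound on the renormalised, regularised order-$N$ Coulomb-gas integrals with insertions, uniform in $\epsilon$ and summable in $N$. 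Dominated convergence plus termwise convergence to the fermionic expressions then gives the limit.
\item At $\beta=4\pi$ the dipole singularity is exactly borderline. This is therefore the substantive estimate of the whole proof, and your proposal does not supply it.
\end{itemize}

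\emph{Two smaller points.}
\begin{itemize}
\item In a bounded domain with Dirichlet covariance, non-neutral charge configurations do not vanish. They are determinantal too: in the disc this comes through entries of type $1/(1-u\bar v)$, which produce $\langle\psi\psi^*\rangle$ and the boundary condition $\overline{f^{[\eta]}}=\tau f^{[\eta]}$. The position-dependent conformal-radius factors are part of that identity, not something the constant $c_\alpha$ can absorb.
\item For $\det(I+\rho\mathcal K)\neq0$ you invoke an energy identity. As in Lemma~\ref{lem:unicity}, such an identity forces a trivial kernel only when the mass has a sign ($\Im\tilde\alpha\le0$, i.e.\ $\theta\le0$). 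The statement allows sign-changing real $\rho$, so you need a different argument there, for instance a Bers--Vekua similarity principle together with an index count for the boundary condition.
\end{itemize}
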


A more precise version of this result (still restricted to the setting we need) will be stated in Theorem \ref{thm:PVW}. An equivalent and more compact way of formulating the above result is as follows. Let $n\ge 1$ and pick ``signs'' $s_1, \ldots, s_n \in \{0,1\}$, as well as functions $f_1, \ldots, f_n \in C^\infty(\Omega)$ such that $\partial^{(s_j)} f_j$ have pairwise disjoint support. Then
\begin{align}
 \Big\langle \prod_{j=1}^n \partial^{(s_j)} \phi(f_j)\Big\rangle_{\mathrm{SG}(\rho|\Omega)}\nonumber 
 &=\lim_{\epsilon\rightarrow 0} \int \prod_{j=1}^n(-\partial^{(s_j)} f_j(z_j)) \Big\langle \prod_{j=1}^n \phi_\epsilon(z_j)
\Big\rangle_{\mathrm{SG}(\rho|\epsilon, \Omega)} \mathrm{d} z_j^{(s_j)}. 
\end{align}
Here, $\partial^{(0)}$ and $\partial^{(1)}$ refer to the Wirtinger derivatives $\partial_z$ and $\bar\partial_z$ respectively. This limit can be written in the form 
$$
 \Big\langle \prod_{j=1}^n \partial^{(s_j)} \phi(f_j)\Big\rangle_{\mathrm{SG}(\rho|\Omega)} = \int \chi^{(\mathbf{s})}_{\mathrm{SG}(\rho)}(z_1, \ldots, z_n) \prod_{j=1}^n f_j(z_j) \mathrm{d}z_j^{(s_j)},
 $$
where ${\mathbf{s}} = (s_1, \ldots, s_n) \in \{0,1\}^n$. The kernel $\chi^{(\mathbf{s})}_{\mathrm{SG}(\rho)}(z_1, \ldots,z_n)$ above can thus be viewed (somewhat informally) as the correlation kernel of the sine-Gordon mixed Wirtinger derivatives, i.e.,
\begin{equation}\label{eq:def:correlations:SG}
\chi^{(\mathbf{s})}_{\mathrm{SG}(\rho)}(z_1, \ldots, z_n) = \Big\langle \prod_{j=1}^n \partial^{(s_j)}_{z_j} \phi(z_j) \Big\rangle_{\mathrm{SG}(\rho|\Omega)} .
\end{equation}
\cite{PVW} give an explicit expression for $\chi^{(\mathbf{s})}_{\mathrm{SG}(\rho)}$ in terms of Grassmann variables or, equivalently, Pfaffians of certain operators, which we will also introduce and describe later in Section \ref{SS:identificationPVW}.

We now explain how to proceed with the identification of the limiting height field as the sine-Gordon model at the free fermion point with suitable electromagnetic field, as conjectured in \cite{BHS} and as briefly explained in the overview of this introduction. Roughly, the idea will be to say that the expression of the moments of the height function in Theorem \ref{thm:height} gives us access to the mixed Wirtinger derivative correlations and that these correlations are \emph{exactly} equal to $\chi^{(\mathbf{s})}_{\mathrm{SG}(\rho)}$ for a suitable choice of $\rho$, up to a scaling factor $\lambda^n$ which accounts for different choices of normalisations. 

In order to state this rigorously, we must however extend the preceding results in different directions:
\begin{itemize}
    
    \item We must remove the condition that $\rho$ is compactly supported in Theorem \ref{thm:PVWintro}, since ultimately $\rho$ is a multiple of $\alpha  = \nabla V$ where $V$ is assumed log-convex (this assumption is incompatible with the fact that $\alpha$ has compact support).

    \item We must extend the validity of Theorem \ref{thm:height} to domains not having a straight edge, in particular to the unit disc, so as to match the setting of Theorem \ref{thm:PVWintro}. 
    
    \item We must impose conditions on $V$ which ensure assumptions needed both for Theorem \ref{thm:height} and Theorem \ref{thm:PVWintro} to hold. 
\end{itemize}

To deal with the first issue, we consider a smooth truncation $\rho_N$ so that $\rho_N$ has compact support in $\Omega$ for each finite $N\ge 1$, and $\rho_N $ coincides with $\rho$ on compact $K_n \subset \Omega$ with $K_n \uparrow \Omega$. We then explain in Corollary \ref{Cor:PVW_notcompact} that 
\begin{equation}\label{eq:rho_notcompact}
\chi^{(\mathbf{s})}_{\mathrm{SG}(\rho)}(z_1, \ldots, z_n):= \lim_{N\to \infty} \chi^{(\mathbf{s})}_{\mathrm{SG}(\rho_N)} (z_1, \ldots, z_n)
\end{equation}
exists pointwise for each pairwise distinct $z_1, \ldots, z_n \in \Omega$ and each $\mathbf{s} \in \{0,1\}^n$. 

Concerning the second issue, we will show in Corollary \ref{cor:BV:alpha:conformal} that the validity of Theorem \ref{thm:height} can be extended to domains $\Omega$ that are ``smooth'' in the sense that they are the image of a bounded domain $U$ with a straight edge by a bijective map $T: U \to \Omega$ such that $T$ is analytic in a neighbourhood of $U$. This includes the case of $\Omega = \mathbb{D}$ being the unit disc. Therefore, for such a domain, if we let $\mathbb{P}_{(\Omega, \alpha)}$ denote the law of the limiting dimer height function in $\Omega$ with electromagnetic field $\alpha = \nabla V$ (which is already known to exist thanks to \cite{BHS}), then we have
\begin{align}
    \mathbb{E}_{(\Omega, \alpha)}\left[ (\phi, f)^n \right] = \sum_{\mathbf{s} \in \{0,1\}^n} \int_{\gamma_1} \ldots \int_{\gamma_n} v^{(\mathbf{s})}_{(\Omega, \alpha)} (z_1, \ldots ,z_n) \prod_{i=1}^n \mathrm{d}z_i^{(s_i)}
\end{align}
for a test function $f$, for some explicitly characterized mixed Wirtinger derivative correlation function 
\begin{equation}\label{eq:height_corr}
v^{(\mathbf{s})}_{(\Omega, \alpha)}(z_1, \ldots,z_n) = \mathbb{E}_{(\Omega, \alpha)} [\prod_{i=1}^n \partial_{z_i}^{(s_i)} \phi(z_i) ].
\end{equation}

Finally, concerning the third issue we will show in Section \ref{SS:identificationPVW} that when $\Omega = \DD$, for some very explicit conditions on $\alpha$, all required assumptions are satisfied. A concrete example where these conditions are satisfied is as follows. Fix a function $f\in C^\infty((0,\infty))$ such that $f'<0 $ and $(e^f)''>0$ (e.g., $f(x) $ could be of the form $f(x) = ax+b$ with $a<0$ or $f(x) = c(x+1)^2+b$ with $c<-1/2$). 

Let $V_{\HH} (z) = f ( \Im (z))$, and let $\alpha_{\HH} = \nabla V_{\HH}$. Finally, set 
\begin{equation}\label{eq:Vcond}
\alpha_{\DD} (w) = \bar g'(w) \alpha_{\HH}(g(w)),
\end{equation}
 where $g: \DD \to \HH$ is a conformal isomorphism. 


We can now state the theorem relating the height function to the sine-Gordon model. 
\begin{thm}\label{T:SG}
Suppose $\Omega = \mathbb{D}$, $\alpha = \alpha_\DD$ is as in \eqref{eq:Vcond}. For $\mathbf{s} \in \{0,1\}^n$, let $v^{(\mathbf{s})}_{(\Omega, \alpha)}$ denote the mixed holomorphic and antiholomorphic derivative correlations of the limiting height function in $(\Omega, \alpha)$ defined in \eqref{eq:height_corr}, and let $\chi^{(\mathbf{s})}_{\mathrm{SG}(\rho)}$ be the analogous sine-Gordon correlation kernel associated with $\rho \in C^\infty(\Omega)$, defined in \eqref{eq:rho_notcompact}. Then 
\begin{equation}
    v^{(\mathbf{s})}_{(\Omega, \alpha)} =  \lambda^n \chi^{(\mathbf{s})}_{\mathrm{SG}(\rho)}
\end{equation}
with 
\begin{equation}
\lambda = - (4\sqrt{\pi})^{-1},\quad     \rho = -\frac{\tilde \alpha}{8\ii \pi}, \text{ where } \tilde \alpha(z) = e^{\ii \arg g'(z)}\alpha(z), 
\end{equation}
and $g:\mathbb{H} \to \DD $ is a conformal isomorphism, $\arg g'$ being any continuous determination of the argument. 
\end{thm}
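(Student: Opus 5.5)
Both sides are Pfaffian/determinantal expressions built from two-point kernels, and each kernel is characterised as the unique solution of a massive Dirac boundary value problem. So the plan is: reduce the $n$-point identity to the two-point kernels, then identify those kernels via uniqueness for that problem.

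\emph{Height-function side.} By Corollary~\ref{cor:BV:alpha:conformal}, applied to a domain $U$ with straight edge mapped analytically onto $\DD$, the moments of the limiting height function in $\DD$ are given by \eqref{eq:thm:height} with kernels $(F_0,F_1)$ solving $\BV_1(\DD,\alpha)$. Differentiating the iterated path integrals in the endpoints $\zeta_i$ yields
\[
v^{(\mathbf{s})}_{(\DD,\alpha)}(z_1,\dots,z_n)=\det_{i\neq j}\big[F^{(s_j)}_{s_i+s_j}(z_i,z_j)\big].
\]
Before differentiating, I will check that the contribution of the starting points on $L$ vanishes. This should follow from the bound \eqref{eq:bound:height} together with the boundary relation $F_1=-F_0$ from Proposition~\ref{lem:bc}.

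\emph{Sine-Gordon side.} From the explicit Grassmann/Pfaffian formula in Theorem~\ref{thm:PVW}, I will write $\chi^{(\mathbf{s})}_{\mathrm{SG}(\rho_N)}$ as a Pfaffian of massive fermion two-point functions. The block structure (charge conservation between $\psi$ and $\bar\psi$) should reduce this Pfaffian to a determinant $\det_{i\ne j}[S^{N}_{s_i s_j}(z_i,z_j)]$. Here $S^N$ is the Green function of a Dirac operator $\begin{pmatrix} m & 2\bar\partial\\ 2\partial & \bar m\end{pmatrix}$ with $m\propto\rho_N$, subject to the PVW (bag-type) boundary condition on $\partial\DD$. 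Passing $N\to\infty$ as in Corollary~\ref{Cor:PVW_notcompact} produces $S$, which solves the same problem with $\rho$ in place of $\rho_N$. This passage needs a uniform resolvent bound, which log-convexity provides through positivity of the associated massive Laplacian.

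\emph{Matching the two problems.} The core step, and the main obstacle, is showing that a gauge/conformal change maps $\BV_1(\DD,\alpha)$ exactly onto the PVW problem, with constants matching $\lambda=-(4\sqrt\pi)^{-1}$ and $\rho=-\tilde\alpha/(8\ii\pi)$.
\begin{itemize}
\item \emph{The gauge.} I will multiply $F_0,F_1$ by the phases $e^{\pm\frac{\ii}{2}\arg g'}$, i.e.\ spinor half-densities. This turns the real-direction condition $F_1=-F_0$ into PVW's condition involving the tangent $\tau$. It also changes the mass from $\alpha$ to $\tilde\alpha=e^{\ii\arg g'}\alpha$.
\item \emph{Realness of $\rho$.} PVW requires $\rho$ to be real-valued. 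With $\rho=-\tilde\alpha/(8\ii\pi)$, this requires $\tilde\alpha\in\ii\RR$. This is exactly what the specific choice \eqref{eq:Vcond} guarantees: $V_\HH$ depends only on $\Im z$, so $\alpha_\HH$ is purely imaginary, and transporting it by $g$ and undoing with $e^{\ii\arg g'}$ keeps it imaginary.
\item \emph{Constants.} I will compare residues: \eqref{eq:singularity:F0F1} gives $F_0\sim \ii/(4\pi(w-z))$, while the free-fermion two-point function in PVW has a residue determined by their normalisation. The ratio of these residues, raised to the power $n$, should produce $\lambda^n$.
\end{itemize}
I also need to verify that $(e^f)''>0$ gives Assumption~\ref{assumption:pos} after the conformal transport. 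I expect this because $M$ transforms with the factor $|g'|^2$.

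\emph{Conclusion.} I will prove uniqueness for the Dirac BVP by an energy/integration-by-parts argument. Under the boundary condition, the boundary term vanishes, and the bulk term is controlled by $M>0$; this should be done by reducing to the massive Laplacian maximum principle, as for $\mGreen$. Uniqueness then identifies the transformed $(F_0,F_1)$ with $S$, up to the constant $\lambda$. Multilinearity of the determinant, with one factor of $\lambda$ per row, yields $v^{(\mathbf{s})}=\lambda^n\chi^{(\mathbf{s})}_{\mathrm{SG}(\rho)}$.
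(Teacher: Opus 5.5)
Your treatment of the two sides and of the gauge $\sqrt{g'}$ follows the paper (Corollary~\ref{cor:BV:alpha:conformal}, Lemma~\ref{lem:detgaugechange}, Theorem~\ref{thm:fermionic}). The problem is the step that carries the identification, uniqueness for the Dirac problem: it fails as you describe it.

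\textbf{Why the $M>0$ argument cannot give uniqueness.} Reducing to a massive Laplacian only controls one component. From $\bar\partial_w F_0=\frac{\alpha}{2}F_1$, $\partial_w F_1=\frac{\bar\alpha}{2}F_0$ and $\alpha=2\bar\partial V$, one gets $\Delta_w(F_0+F_1)=M\,(F_0+F_1)$. Since $F_0+F_1=-\ii\kappa$ vanishes for $w\in\partial\Omega$, $M>0$ does fix this combination; it is just the Dirichlet problem for $\mGreen$ again. The other combination $F_0-F_1=\kappastar$ satisfies $\Delta_w(F_0-F_1)=(|\nabla V|^2-\Delta V)(F_0-F_1)$. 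Here the potential has no sign and there is no boundary condition, and this component really is undetermined. For any real $V\in C^2(\bar\Omega)$, log-convex or not, the pair $(F_0,F_1)=(e^{-V},-e^{-V})$ solves the homogeneous equations, is regular up to the boundary, and satisfies $F_1=-F_0$ on $\partial\Omega$. This is exactly the additive freedom $\kappastar\mapsto\kappastar+c\,e^{-V}$ in the $\alpha$-conjugate, which the paper removes by the normalisation $\kappastar(\beta^*,\cdot)=0$. So no energy argument with a vanishing boundary term and bulk controlled by $M>0$ can match your kernel with PVW's. At best it identifies $\kappa$.

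\textbf{What the paper does instead.} The paper's mechanism (Lemma~\ref{lem:unicity}) is different. Take the difference $(u_0,u_1)$ of two solutions of $\BV_2$. A derivative of $u_0\bar u_1$ is proportional to $\tilde\alpha(|u_0|^2+|u_1|^2)$, so Green's theorem expresses $\int_\Omega \Im(\tilde\alpha)(|u_0|^2+|u_1|^2)$ as a boundary integral. That boundary term does not vanish. The $\tau$-boundary condition turns it into a boundary integral of $|u|^2$ with a fixed sign, opposite to the sign forced on the bulk side by $\Im\tilde\alpha\le 0$. Hence $u$ vanishes on $\{\Im\tilde\alpha<0\}$, and then everywhere by unique continuation. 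The argument is run on $\HH$, where $\tilde\alpha=\alpha_\HH=\ii f'(\Im z)$ has $\Im\tilde\alpha=f'<0$, and transported to $\DD$ through Lemma~\ref{lem:detgaugechange} and Proposition~\ref{thm:covariance}.

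Your plan never uses the hypothesis $f'<0$, and that is precisely what buys uniqueness. The condition $(e^f)''>0$ is only needed for existence, via Theorem~\ref{thm:height}.

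\textbf{A lesser point.} Log-convexity cannot supply your uniform bound for the limit $N\to\infty$, because the truncations $\rho_N$ do not derive from log-convex potentials. The paper instead appeals to PVW's Lemma~6.39 (Lemma~\ref{Cor:PVW_notcompact}).
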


Therefore, from the point of view of correlations (and thus also of moments), the limiting height function $\phi$ with law $\mathbb{P}_{(\Omega, \alpha)}$ agrees with $\lambda$ times a sine-Gordon field SG$(\rho|\Omega)$ with $\rho $ as above. This factor $\lambda$ accounts for different choices of normalisations.
(Note that, due to the presence of a normalising constant $c_\alpha$ involving the Euler--Mascheroni constant in the definition of $\typecolon \cos (\sqrt{4\pi}) \phi \typecolon$, and the fact that $\rho = - \tilde \alpha / 8\ii \pi$, the sine-Gordon penalisation is merely proportional to $\ii \tilde \alpha$.)

\begin{remark}\label{R:alternate_alpha}
    Another possible choice of $\alpha$ so that Theorem \ref{T:SG} holds (instead of \eqref{eq:Vcond}) would be to take $V_\DD(z) = f ( \Im(g(z))$ where $f$ is as above, and $\alpha(z) = \nabla V_{\DD}(z), z \in \DD$. 
\end{remark}

\medskip In light of Theorem \ref{T:SG} it is natural to wonder if the moments identify uniquely the distribution of the height functions. To this end we state a theorem which shows that this is indeed the case as soon as the domain satisfies some very mild regularity conditions. While it may be more common to consider the moment generating function associated to a given moment sequence and verify that this has positive radius of convergence, we find it more convenient to consider the log-Laplace transform, whose generating function is given in terms of the cumulants (rather than the moments). More precisely, if $X$ is a random variable and $\kappa_n  = \langle X, \ldots, X\rangle$ denotes its $n$th cumulant, then $\log \EE (e^{\ii tX} ) = \sum_n (\ii t)^n \kappa_n / n!$. 

We find that the log-Laplace transform of the height function integrated against a test function $f$ has positive radius of convergence, hence the cumulants (and thus the moments) uniquely identify the law. As a byproduct of this approach, we obtain a Fredholm regularised determinant (of fourth order) formula for the log-Laplace transform of the sine-Gordon model, which may be of independent interest. See Section \ref{SS:Fredholm} for definitions and more details.

\begin{thm}\label{thm:moment:characterize:field}
    \label{thm:schatten} Assume that $\Omega$ is a Jordan domain and that $\partial \Omega$ is Dini-smooth, and assume that the conclusion \eqref{eq:thm:height} of Theorem \ref{thm:height} holds in this domain (thus, $\Omega$ could be a domain with a straight edge as in Theorem \ref{thm:height} on top of being Dini-smooth, or it could be smooth in the sense that it is the image of a domain with a straight edge by a conformal isomorphism that is analytic in a neighbourhood of this domain). 

        There exists an integral operator  
    \begin{align}
  K: L^{2}(\Omega\times[0,1])\oplus L^{2}(\Omega\times[0,1])
  \to L^{2}(\Omega\times[0,1])\oplus L^{2}(\Omega\times[0,1])
\end{align}
which lies in the fourth Schatten class $S_4$, such that 
$$
\langle (\phi, f); \ldots; (\phi, f)\rangle_{(\Omega, \alpha)} = (n-1)! \Tr ((fK)^n).
$$
As a result, the power series defined by the cumulants has a positive radius of convergence, so that the moments of $(\phi, f)^n$ uniquely determine the law of $(\phi,f)$. Furthermore, for $\mu\in \mathbb{C}$ with $|\mu|>0$ sufficiently small, we have the following regularized determinant formula for the moment generating function
\begin{align}
\EE_{(\Omega, \alpha)}[e^{\mu(\phi, f)}]&=\det_4(I-\mu fK)e^{\sum_{n=1}^3 \frac{(-\mu)^n}{n!}\big\langle (\phi,f);...;(\phi, f)\big\rangle_{(\Omega, \alpha)}}.
\end{align}
\end{thm}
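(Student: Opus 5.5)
Our plan is to start from the determinantal formula \eqref{eq:thm:height} and rewrite the moments of $(\phi,f)$ as traces of powers of a single integral operator, in the spirit of the critical computation of Kenyon. Integrating \eqref{eq:thm:height} against $f^{\otimes n}$ gives a sum over $s\in\{0,1\}^n$ of integrals over $z_i\in\gamma_i$ and $\zeta_i\in\Omega$. We parametrise each path $\gamma_i$ by $t\in[0,1]$, with $\gamma_{\zeta}(t)$ a smooth family of paths from $L$ (or from a fixed boundary point, after conformal transport) to $\zeta$. We then introduce the kernel on $(\Omega\times[0,1])\times\{0,1\}$
\begin{equation*}
K\big((\zeta,t,s),(\zeta',t',s')\big)=F^{(s')}_{s+s'}\big(\gamma_\zeta(t),\gamma_{\zeta'}(t')\big)\,\big(\partial_t\gamma_\zeta(t)\big)^{(s)},
\end{equation*}
with the convention that the diagonal term is removed, as in the $\det_{i\neq j}$. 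Expanding the determinant as a sum over permutations, and using that fixed points are excluded, we obtain a sum over derangements. Grouping these into cycles, the standard moment–cumulant inversion identifies the $n$-th joint cumulant of $(\phi,f)$ with the contribution of the single $n$-cycles. Summing over the $(n-1)!$ cyclic orders then gives $(n-1)!\,\Tr((fK)^n)$, where $f$ acts by multiplication in the $\zeta$ variable. The signs work out because the sign of an $n$-cycle is $(-1)^{n-1}$, which is absorbed in the convention for cumulants.

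Next we show $K\in S_4$. By \eqref{eq:singularity:F0F1}, the kernel behaves like $|\gamma_\zeta(t)-\gamma_{\zeta'}(t')|^{-1}$ near the diagonal. The key estimate is that for a suitable choice of path family, for instance radial or hyperbolic geodesics after mapping to the disc, which is possible since $\partial\Omega$ is Dini-smooth and hence the conformal map has continuous nonvanishing derivative up to the boundary, the pushforward of $d\zeta\,dt$ under $(\zeta,t)\mapsto\gamma_\zeta(t)$ has density bounded by $C\,\mathrm{dist}$-type weights. With this bound, $K$ is dominated by a Riesz-potential-type kernel $|x-y|^{-1}$ on a two-dimensional set, which lies in $S_p$ for every $p>2$ (weak-$S_2$). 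To make this precise, we would show $K^*K$ has a kernel bounded by $C(1+|\log|x-y||)$, whose square is integrable, so $K^*K\in S_2$ and $K\in S_4$. Boundary behaviour of $F_0,F_1$ near $\partial\Omega$ is controlled via \eqref{eq:bound:height}-type bounds inherited from \eqref{eq:bound:isoradial}. We expect this step to be the main obstacle: controlling the density of path points near the boundary, where all paths start and crowd together, needs the Dini regularity.

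Given $K\in S_4$, Hölder's inequality for Schatten norms gives $|\Tr((fK)^n)|\le \|fK\|_{S_4}^n$ for $n\ge4$. Hence the cumulant series $\sum_n \mu^n\kappa_n/n!=\sum_n \mu^n\Tr((fK)^n)/n$ converges for $|\mu|<\|fK\|_{S_4}^{-1}$. The moment generating function is therefore analytic near $0$, and the moments determine the law of $(\phi,f)$ by the classical Carleman/analyticity criterion. Finally, recall $\log\det_4(I-A)=-\sum_{n\ge4}\Tr(A^n)/n$ for $A\in S_4$. We match the $n\ge4$ terms of $\log\EE[e^{\mu(\phi,f)}]$ with $-\log\det_4(I-\mu fK)$, paying attention to the sign convention $(-\mu)^n$ coming from the cycle signs, and keep the first three cumulants explicitly. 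This yields the stated regularized determinant formula.
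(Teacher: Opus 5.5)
Your outline follows the paper's proof: only the $n$-cycles of the determinant expansion survive the moment--cumulant inversion, which gives the trace formula for the path-parametrised kernel, and $K\in S_4$ follows from a pointwise bound $|K|\lesssim|\gamma_\zeta(t)-\gamma_{\zeta'}(t')|^{-1}$, a logarithmic bound on the kernel of $K^*K$ (transferred to the disc with the bi-Lipschitz, Dini-smooth conformal map) and the Hilbert--Schmidt criterion, after which Schatten--H\"older and the series for $\log\det_4$ conclude. Two points need correcting, and the paper also passes over both: first, the cycle sign cannot be absorbed by a convention, since one really gets $\kappa_n=(-1)^{n-1}(n-1)!\,\Tr((fK)^n)$ for the $n$-th cumulant, hence $\EE_{(\Omega,\alpha)}[e^{\mu(\phi,f)}]=\det_4(I+\mu fK)\,e^{\sum_{n=1}^{3}\mu^n\kappa_n/n!}$; second, the pointwise bound holds only away from the Temperleyan corner, because the normalisation $\kappa^*(\beta^*,\cdot)=0$ makes $F_0(w,z)$ and $F_1(w,z)$ blow up like $|z-\beta^*|^{-1}$ (in the critical case on $\DD$ with $\beta^*=-1$ one finds $\kappa_0^*(w,z)=-\frac{\ii}{4\pi}\bigl(\frac{1}{z-w}+\frac{1}{z-1/\bar w}-\frac{2}{1+z}\bigr)$, consistent with the factor $|b^*-w_2|$ at the end of the proof of Theorem~\ref{thm:isoradial}), so the paths must start on a boundary arc away from $\beta^*$ (horizontal paths in $\DD$ started on the half-circle opposite to $\beta^*$ are fine, as the paper's choice tacitly requires, whereas radial paths are not, and a common starting point is excluded by the disjointness in Theorem~\ref{thm:height}), while the only logarithmic growth of the path-point density near $\partial\Omega$ that such families actually produce, instead of a bounded density, does no harm to the Hilbert--Schmidt bound.
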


See \cite[Chapter 3.1]{Pommerenke1992BoundaryMaps} for the definition of a Dini smooth Jordan domain. In particular, any Jordan domain whose boundary has a $C^{1,\alpha}$ parametrisation for some $0<\alpha<1$ is Dini-smooth (see \cite[(9), Chapter 3.3]{Pommerenke1992BoundaryMaps}). 
This theorem will be proved in Section \ref{SS:Fredholm}.

\paragraph{Acknowledgements.}
This project started when the first and third authors were participating in the program ``Geometry, Statistical Mechanics, and Integrability" hosted by the Institute for Pure and Applied Mathematics (IPAM) in Los Angeles, California, US, in Spring 2024,  
supported by the National Science Foundation (Grant No.~DMS-1925919).

N.B.  acknowledges the support from the Austrian Science Fund (FWF) grants 10.55776/F1002 on ``Discrete random structures: enumeration and scaling limits" and 10.55776/PAT1878824 on ``Random Conformal Fields''.

S.M. was supported by the Knut and Alice Wallenberg Grant KAW 2022.0295.

L.R. was
partially supported by the DIMERS project ANR-18-CE40-0033 funded by the French National Research Agency.

Finally, we are  indebted to Christian Webb for many highly illuminating discussions concerning the connections between this work and \cite{PVW}.

 \section{Analysis of massive discrete differentials}
 The goal of this section is to develop the tools we will need in order to prove Theorems~\ref{thm:isoradial} and~\ref{thm:height}. These can be viewed as a toolbox for discrete massive harmonic functions and discrete massive differentials which is interesting in itself, generalizing some of the tools of~\cite{ChelkakSmirnov} to the non-critical setting.
 This section is structured as follows:
 \begin{itemize}
     \item we first give an equivalent formulation of the dimer model to relate the Kasteleyn matrix to a discrete massive Laplacian, in the spirit of~\cite{BHS, Rey}. 
     This enables us to express the inverse Kasteleyn matrix in terms of derivatives of the discrete massive Green function, which are discrete massive harmonic in one variable and discrete massive differentials in the other.
     \item We develop a general theory of discrete massive harmonic functions and a discrete holomorphicity theory for discrete massive differentials, in the spirit of~\cite{ChelkakSmirnov} : we find analogues of the Harnack and Beurling estimates for discrete massive harmonic functions, and we also give a discrete Cauchy formula and regularity estimates for discrete massive differentials
     \item We apply those results to obtain asymptotic estimates of the derivatives of the discrete massive Green function.
 \end{itemize}
 In the next section, we will use these results to:
 \begin{itemize}
     \item Obtain asymptotic estimates for the inverse Kasteleyn matrix, that is prove Theorem~\ref{thm:isoradial}.
     \item Prove how Theorem~\ref{thm:isoradial} implies Theorem~\ref{thm:height}, that is how to recover the asymptotic of the heights from the asymptotic of the inverse Kasteleyn matrix.
     \end{itemize}

\subsection{Discrete massive Laplacian} 
For $x \in \infPrimal$, define its weight
\begin{equation}\label{eq:def:weight}
    \weight(x) = \frac{1}{2}\sum_{x \sim x'}\sin(2\theta_{xx'}).
\end{equation}
In comparison with Equation (2.1) of \cite{ChelkakSmirnov}, we do not include the $\eps^2$ factor in the renormalization. 
Define the discrete (critical) Laplacian $\dDelta$ on $\infPrimal$ by setting for $f: \infPrimal \to \RR$
\begin{equation}
    \dDelta f(x) = \sum_{x \sim x', x' \in \infPrimal}  c^0_{x,x'} (f(x') - f(x))  ; x \in \infPrimal.
\end{equation} 
Note the sign convention: $\dDelta$ is a negative operator which converges in the scaling limit to the continuum Laplacian after diffusive scaling. Also note that contrary to \cite{ChelkakSmirnov}, we do not renormalize the discrete Laplacian by $\eps^2\weight(x)$.

Define a \textbf{discrete mass function} 
\begin{equation}\label{eq:mass}
m(x) := e^{-V(x)}[\dDelta e^V](x), \quad x \in \Primal.
\end{equation}
By definition, $e^V$ is then a \textbf{discrete massive harmonic function} on $\Primal$:
\begin{equation}\label{eq:lambdaharmonic}
[(\dDelta - mI)e^V](x) = 0; \quad x \in \Primal.
\end{equation}
Now, note that on $\infPrimal$, since the discrete Laplacian approximates the continuum Laplacian as $\eps>0$ (see for example Lemma 2.2 of~\cite{ChelkakSmirnov}),
\begin{align}
\dDelta e^V &= \eps^2 \weight \Delta e^V + O ( \eps^3)\nonumber \\
& = \eps^2 \weight ( \Delta V + \| \nabla V \|^2 ) e^V + O( \eps^3).\label{asympLaplacian}
\end{align}
Hence, as $\eps \to 0$, recalling the definition of the continuous mass function $M$ from~\eqref{eq:mass_intro},
\begin{equation}\label{eq:mass:asymptotic}
m (x) = \eps^2 \weight(x)\mass(x) + O( \eps^3).
\end{equation}
Observe that using the positivity Assumption~\ref{assumption:pos}, this ensures that $m$ is non-negative for $\eps$ small enough.
\begin{remark}\label{rem:positivity}
    This is the only place where we need that $M$ is positive and not only non-negative.
    This is a technical assumption and can probably be removed.
\end{remark}

We will also need to discuss the boundary conditions of the Laplacian, when restricted to $\Primal$. For $f: \Primal \to \RR$, extends $f$ to a function $f_0: \infPrimal \to \RR$ by letting $f_0$ be $0$ outside $\Primal$, that is:
\begin{equation*}
    f_0(x) = f(x)~\forall x \in \Primal \quad;\quad f_0(x) = 0~\forall x \in \infPrimal \setminus \Primal.
\end{equation*} 
Then, the discrete Laplacian with \textbf{Dirichlet boundary conditions} $\dDirichletDelta$ on $\Primal$ is defined by setting, for $f:\Primal \to \RR$,
\begin{equation}\label{eq:dDirichletDelta}
    \dDirichletDelta f(x) := \dDelta f_0(x) ; x\in \Primal.
\end{equation}
We emphasise that $\dDelta$ is defined on the entire graph $\infPrimal$, whereas $\dDirichletDelta$ will always mean the discrete Laplacian on $\Primal$, acting on functions with zero boundary conditions in the above sense.

\subsection{Gauge equivalence, Kasteleyn matrix.} 

\label{SS:Gauge}
We set up an appropriate equivalent formulation of the off-critical dimer model using the massive Laplacian, analogous to \cite{BHS} and to \cite{Rey}.
As noted in \cite{Rey}, the dimer weights of Figure~\ref{Fig:rhombusintro} on the superposition graph $G$ are actually gauge equivalent to the family of weights specified by Figure~\ref{Fig:rhombusgaugechange}.


\begin{figure}
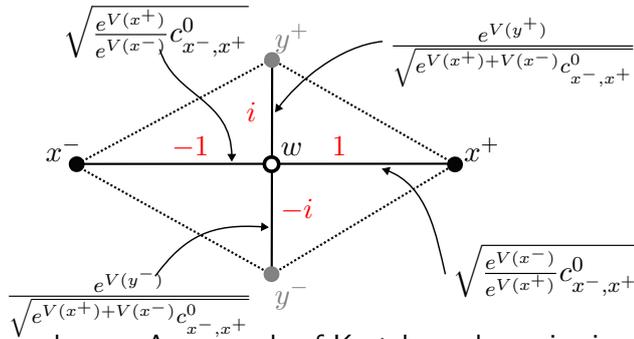

    \centering
    \begin{overpic}[scale = .95]{dimer_weight.pdf}
        \put(51,26){$w$}
        \put(10,25){$x^-$}
        \put(83,25){$x^+$}
        \put(50,0){\textcolor{gray}{$y^-$}}
        \put(50,45){\textcolor{gray}{$y^+$}}
        \put(13,46){$\sqrt{\frac{e^{V(x^+)}}{e^{V(x^-)}}c^0_{x^-,x^+}}$}
        \put(81,4){$\sqrt{\frac{e^{V(x^-)}}{e^{V(x^+)}}c^0_{x^-,x^+}}$}
        \put(3,0){$\frac{e^{V(y^-)}}{\sqrt{e^{V(x^+)+V(x^-)}c^0_{x^-,x^+}}}$}
        \put(70,44){$\frac{e^{V(y^+)}}{\sqrt{e^{V(x^+)+V(x^-)}c^0_{x^-,x^+}}}$}
        \put(32,26){\textcolor{red}{$-1$}}
        \put(60,26){\textcolor{red}{$1$}}
        \put(51,15){\textcolor{red}{$-i$}}
        \put(45,32){\textcolor{red}{$i$}}
    \end{overpic}
    \caption{Gauge change. An example of Kasteleyn phases is given in red.}
    \label{Fig:rhombusgaugechange}
\end{figure}

In reality, compared to \cite{Rey} we have performed an additional gauge change on the $B_1$ vertices (i.e.,  $y^-, y^+$ on the figure). Namely, the weights on $w y^-$ and $wy^+$ were both $1/ \sqrt{c^0_{x^-, x^+}e^{V(x^+)+V(x^-)}}$, and we have now multiplied those respectively by $e^{V(y^-)}$ and $e^{V(y^-)}$. 

\begin{remark}
    There might be other interesting choices of gauge change. What we want here is that the Kasteleyn matrix below corresponds to a drift in the vertical direction.
\end{remark}

We now specify a collection of Kasteleyn phases $\zeta$. 
Around each white vertex $w \in \Primal^*$ we pick one primal vertex $x^-$ (there are two possible choices, and we pick one among these two arbitrarily), and associate a phase to the edge $(wx^-) $ given by $\zeta_{w,x^-}=1$. Then rotating cyclically (counterclockwise) all four edges are given phases $1, \ii, -1, -\ii$. 
These phases have the property that around each face $x,w_1,y,w_2$ of $G$,
\begin{equation}
    \zeta_{w_1,x}\overline{\zeta_{w_1,y}}\zeta_{w_2,y}\overline{\zeta_{w_2,x}} = -1.
\end{equation}
Recall that $B = \Primal \cup \Dual$.
We define the \textbf{Kasteleyn operator/matrix} $K: W \times B$ by setting $K (w, b) = \tilde c_{w,b} \times$ corresponding phase, where $\tilde c_{w,b}$ is the above gauge-changed weights. 
Note that $K^* K : B\times B \to \CC$. In fact,

\begin{lemma}\label{lem:block}
$K^*K$ is naturally block diagonal where the blocks correspond respectively to $B_0 = \Primal$ and $B_1 = \Primal^*$:
\begin{equation}
\label{eq:blockdiagonal}
K^*K = 
\left(
\begin{array}{cc}
-\dDirichletDelta + m I & 0 \\
0 & \star
\end{array}
\right),
\end{equation}
where $m(x)\ge 0$ is the discrete mass function defined in \eqref{eq:mass}.
\end{lemma}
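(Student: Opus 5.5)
We compute $(K^*K)(b,b') = \sum_{w} \overline{K(w,b)}K(w,b')$ entrywise, organising the sum by white vertices. Each white vertex $w$ is the centre of a rhombus $x^-y^-x^+y^+$, so it has exactly four neighbours: two primal ($x^\pm \in B_0$) and two dual ($y^\pm\in B_1$). An entry $(b,b')$ can therefore be nonzero only if $b=b'$, or if $b,b'$ share a white neighbour.

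\textbf{Off-diagonal blocks.} Take $b=x\in\Primal$ and $b'=y\in\Dual$ with a common white neighbour. Since $x$ and $y$ are vertices of a common rhombus, they share exactly two white neighbours $w_1,w_2$ (the two rhombi adjacent along the edge $xy$ of $G$). Then $x,w_1,y,w_2$ is a face of $G$.
\begin{itemize}
\item The moduli of the two terms agree. On both $w_1$ and $w_2$, the product $\tilde c_{w,x}\tilde c_{w,y}$ equals $e^{V(y)}e^{V(x)-V(x')}\cdot$ (common factors) up to the $c^0$ factors. The $\sqrt{c^0}$ factors cancel between the primal and dual edges of a given rhombus, leaving $e^{V(y)-V(x)}$ in each case. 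I will verify this carefully from Figure~\ref{Fig:rhombusgaugechange}, tracking which of $x^\pm$ plays the role of $x$.
\item The phases cancel: the Kasteleyn condition $\zeta_{w_1,x}\overline{\zeta_{w_1,y}}\zeta_{w_2,y}\overline{\zeta_{w_2,x}}=-1$ makes the two contributions opposite.
\end{itemize}
Hence $(K^*K)(x,y)=0$. Entries between two distinct primal vertices and two distinct dual vertices stay within their blocks, which proves the block structure.

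\textbf{Diagonal primal entries.} For $x\in\Primal$,
\[
(K^*K)(x,x)=\sum_{w\sim x}\tilde c_{w,x}^2=\sum_{x'\sim x,\,x'\in\closurePrimal} e^{V(x')-V(x)}\tan\theta_{xx'} ,
\]
which equals $e^{-V(x)}[\dDelta e^V](x)+\sum_{x'\sim x} c^0_{xx'}=m(x)+\sum_{x'\sim x}c^0_{xx'}$. This uses the definition \eqref{eq:mass}, with the sum taken over all neighbours in $\infPrimal$.

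The boundary convention matters here. A boundary edge $wx^+$ with $x^+\notin\Primal$ is removed, but $w$ is still adjacent to $x$, so the term with $x'\in\closurePrimal\setminus\Primal$ is kept in the diagonal. This is exactly what Dirichlet conditions require: the diagonal of $-\dDirichletDelta$ includes the conductances to outside neighbours.

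\textbf{Off-diagonal primal entries.} For $x\ne x'$ primal neighbours, the unique common white vertex is the centre $w$ of their rhombus. The phases there are $\pm1$ (opposite vertices, phases $1$ and $-1$), so the product of conjugate phases is $-1$. The moduli give
\[
\sqrt{e^{V(x')-V(x)}c^0}\sqrt{e^{V(x)-V(x')}c^0}=c^0_{xx'} .
\]
Thus $(K^*K)(x,x')=-c^0_{xx'}$, matching $-\dDirichletDelta$.

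\textbf{Positivity of $m$.} Finally, $m\ge0$ follows from \eqref{eq:mass:asymptotic} and Assumption~\ref{assumption:pos} for $\eps$ small.

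The main obstacle is bookkeeping. First, the gauge-factor moduli must agree on the two white vertices in the cross term. Second, the boundary, including the removed edge at $b^*$ and removed primal edges, must not break either the cancellation or the diagonal identity. I will check these cases separately, for a face of $G$ touching the boundary.
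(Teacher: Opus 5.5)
Your proposal is correct and follows essentially the same route as the paper: you compute $K^*K$ entry by entry. The $\Primal\times\Dual$ entries vanish because the two white vertices $w_1,w_2$ of the face $xw_1yw_2$ contribute equal moduli $\tilde c_{w_i,x}\tilde c_{w_i,y}=e^{V(y)-V(x)}$ with opposite Kasteleyn phases. The primal entries give $-c^0_{xx'}$ off the diagonal and $e^{-V(x)}[\dDelta e^V](x)+\sum_{x'\sim x}c^0_{xx'}=m(x)-(\dDelta)_{x,x}$ on it.

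Your value $e^{V(y)-V(x)}$ for the modulus is the right one; the paper's display $e^{2V(y)-V(x)}$ is a typo. Your remark that conductances to neighbours in $\closurePrimal\setminus\Primal$ stay in the diagonal, which is exactly the Dirichlet convention, is also accurate.
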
 
\begin{proof}
This is a special case of Proposition 4.7 in \cite{Rey}, but we include the proof here since it is very short. We first check that $K^*K$ is block diagonal. Let $x \in \Primal, y \in \Primal^*$. If $x$ and $y$ are not adjacent in $G$ to a common vertex in $\Black^*$, then for all $w \in \Black^*$ either $K^*_{x,w} = 0$ or $K_{w,y} = 0$ so $(K^*K)_{x,y} = 0.$ Otherwise, $x$ and $y$ are the opposite black vertices of a rhombus $xw_1yw_2$, and
\begin{equation}
    (K^*K)_{x,y} = K^*_{x,w_1}K_{w_1,y}+K^*_{x,w_2}K_{w_2,x} = \sum_{i \in \{1,2\}}\tilde{c}_{w_i,x}\tilde{c}_{w_i,y}\overline{\zeta_{w_i,x}}\zeta_{w_i,y} = 0
\end{equation}
because by definition of the Kasteleyn phases $\overline{\zeta_{w_1,x}}\zeta_{w_1,y} = -\overline{\zeta_{w_2,x}}\zeta_{w_2,y}$ and by definition of the weights 
\begin{equation}
    \forall i \in \{1,2\}, \quad \tilde{c}_{w_i,x}\tilde{c}_{w_i,y} = e^{2V(y)-V(x)}
\end{equation}
Hence $K^*K$ vanishes on $\Primal \times \Primal^*$, and for the same reason it vanishes on $\Primal^* \times \Primal$.

We now check that $K^*K = I$ on $\Primal \times \Primal$. If $x_1 \neq x_2 \in \Primal$ are not adjacent in $\Primal$, then for all $w \in \Black^*$ either $K^*_{x_1,w} = 0$ or $K_{w,x_2} = 0$ so $(K^*K)_{x_1,x_2} = 0$. If $x_1 \sim x_2$, denoting by $w \in \Black^{\star}$ the middle of the edge $x_1x_2$, 
\begin{equation}
    (K^*K)_{x_1x_2} = \overline{\zeta_{w,x_1}}\zeta_{w,x_2}\tilde{c}_{w,x_1}\tilde{c}_{w,x_2} = - c_{x_1,x_2}.
\end{equation}
If $x \in \Primal$, denoting by $x_1, \dots, x_n$ its neighbours in $\Primal$ and by $w_1, \dots, w_n$ the middle of the corresponding edges, recalling the definition of edge weights $\tilde c_{x,w}$ in Figure \ref{Fig:rhombusgaugechange},
\begin{align*}
    (K^*K)_{xx} &= \sum_{i=1}^n \overline{\zeta_{w_i,x}}\zeta_{w_i,x} \tilde{c}_{w_i,x}^2\\
    & = \sum_{i=1}^n c^0_{x,x_i}\frac{e^{V(x_i)}}{e^{V(x)}} \\
&   = \sum_{i=1}^n c^0_{x,x_i}\frac{e^{V(x_i)} - e^{V(x)}}{e^{V(x)}} + \sum_{i=1}^n c^0_{x, x_i}\\
&     =   \frac{\dDelta e^{V(x)}}{e^{V(x)}}  + \sum_{i=1}^n c^0_{x,x_i}.\\
& =  m (x) - (\dDelta)_{x,x}.
\end{align*}
If one of the $x_i \in \closurePrimal \setminus \Primal$, this is equivalent to setting $\tilde{c}_{x,x_i} = 0$ that is $\dDirichletDelta$ has Dirichlet boundary conditions.
\end{proof}

\subsection{The “straight boundary” assumption.}\label{subsec:straight} 
To avoid boundary issues, we will need an additional assumption on $\Omega$ and $\Primal$. Let $\partialout \Primal$ denote the vertices of $y \in \infPrimal \setminus \Primal$ having a neighbor in $\Primal$. 

\begin{defn}
    We say that $\Omega$ and $\Primal$ \emph{have a straight boundary} (near $\beta^*\in \partial \Omega$) if there exists $\eta > 0$ (independent of $\eps$) such that 
    \begin{itemize}
        \item $\partial \Omega \cap B(\beta^*,\eta) = L$ is a straight segment in $\CC$
        \item $\partialout \Primal \subset L$
        \item $\infPrimal \cap B(\beta^*,\eta)$ is symmetric by reflection along $L$. 

    \end{itemize}\end{defn}
This is illustrated on Figure~\ref{Fig:straight:boundary}.
\begin{figure}
    \centering
    \includegraphics[scale=3]{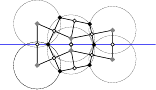}
    \caption{Straight boundary}
    \label{Fig:straight:boundary}
\end{figure}
This assumption is classical when dealing with Temperleyan graphs: see for example Section~5.3 of \cite{Ken02} or Section~3.4 of~\cite{ChelkakSmirnov}.
Note that when we make this assumption, $\eta > 0$ is a fixed macroscopic length, in particular $\eta \gg \eps$. For future reference, we denote 
by $\Gamma_\ph$ the graph $\Gamma_\ph := \Gamma \cup \ph( \Gamma \cap B)$ and by $\Omega_\ph = \Omega \cup \ph( \Omega \cap B)$ with $B = B(\beta^* , \eta)$. Equivalently under our assumptions, $\Gamma_\ph = \Gamma \cup (\Gamma^\infty \cap  B)$ and $\Omega_\ph = \Omega \cup B$. 

\begin{rmk}\label{rmk:twice:the:size}
    For convenience, we will also assume that $\partial \Omega \cap B(z, 2\eta)$ is a straight line. Note that this is not a stronger assumption, upon replacing $L$ by a segment half the size. 
\end{rmk}

When $\Omega$ and $\Primal$ have a straight boundary, $L = \partial \Omega \cap B(\beta^*,\eta)$, we denote by $\Primal \cup B(\beta^*,\eta)$ the set obtained by adding to $\Primal$ the edges and vertices contained in $\infPrimal \cap B(\beta^*,\eta)$. Let $\ph$ denote the reflection along $L$. The key observation is that if $H$ is massive harmonic with Dirichlet boundary conditions on $\Primal \cap B(\beta^*,\eta)$, that is $(\dDirichletDelta -m I)H = 0$ on $\Primal \cap B(\beta^*,\eta)$, we can extend $H$ to $\Primal \cup B(\beta^*,\eta)$ as follows: if $x \in \Primal \cap B(\beta^*,\eta)$, then 
$$
    H(\ph(x)) := -H(x).
$$
Note that for $x \in \partialout \Primal \cap B(\beta^*,\eta) \subset L$, $H(\ph(x)) = H(x) = 0$. We abusively denote by $H$ the extended function. It is easy to see that if the mass function is also symmetric in $B(\beta^*,\eta)$, that is $m(x) = m(\ph(x))$ for all $x \in \Primal \cap B(\beta^*,\eta)$, then the extended function $H$ is massive harmonic in $\infPrimal \cap B(\beta^*,\eta)$ (and vanishes on $L$). This is what we will call the \textbf{discrete Schwarz reflection principle}.

\subsection{Discrete massive Green function, resolvent identity}\label{subsec:def:green:function}
\begin{defn}[\cite{ChelkakSmirnov}, Definition 2.3]\label{def:dfreeGreen}
    The \textbf{non-massive free discrete Green function} $\dfreeGreen:(\infPrimal)^2 \to \RR$ is the unique function satisfying for all fixed $x_1 \in \infPrimal$,
    \begin{enumerate}
    \item  
    $
    \dDelta \dfreeGreen(x_1, \cdot) = \delta_{x_1=\cdot}
    $
    \item 
    $
    \dfreeGreen(x_1,x_1) = (\log(\eps)-\gamma_{Euler}-\log2)/(2\pi)
    $
    \item
    $
    \dfreeGreen(x_1,x_2) = o(|x_1-x_2|), |x_1-x_2|\to \infty.
    $
    \end{enumerate}
\end{defn}
Its asymptotic was obtained in Section~7 of~\cite{Ken02} and recalled in Theorem 2.5 of \cite{ChelkakSmirnov}:
    \begin{equation}\label{eq:green:asymptotic}
        \dfreeGreen(x_1,x_2) = \frac{1}{2\pi}\log|x_1-x_2| + O\left(\frac{\eps^2}{|x_1-x_2|^2}\right); x_1 \neq x_2.
    \end{equation}
    uniformly in $x_1,x_2$, $\eps$ and the isoradial graph (under the bounded angle assumption). Another important property of the non-massive free discrete Green function is its \emph{locality}: for $x_1, x_2 \in \infPrimal$, and any path of primal vertices $\gamma$ from $x_1$ to $x_2$, $\dfreeGreen(x_1,x_2)$ can be expressed as a function only of the angles of the rhombi adjacent to $\gamma$. This can be seen on the explicit expression of the Green function given in Theorem~7.1 of~\cite{Ken02}.
    \begin{rmk}\label{rem:locality}
        This locality of the discrete Green function also holds for the $Z$-invariant massive Laplacian on isoradial graphs, which corresponds to the constant mass case $m(x) = constant$ in our setting, see~\cite{BTR17}. This is due to the specific combinatorial structure of the weights and to the existence of an explicit multiplicative discrete exponential function. This is not known to hold in our more general setting.
    \end{rmk}
    \begin{defn}\label{def:discrete:massive:green}
        The \textbf{discrete massive Green function with Dirichlet boundary condition} $\dmGreen: \Primal^2 \to \RR$ is the unique function satisfying, for all fixed $x_1 \in \Primal$, $(\dDirichletDelta-m I) \dmGreen(x_1,\cdot) = \delta_{x_1}(\cdot)$.
        Equivalently, in matrix notation, $\dmGreen = (\dDirichletDelta-m I)^{-1}$ (both sides should be interpreted as $\Primal \times \Primal$ matrices).
    \end{defn}
    Note that by definition of the discrete Laplacian with Dirichlet boundary conditions (see Equation \eqref{eq:dDirichletDelta}), $\dmGreen(x_1, \cdot)$ has ``Dirichlet boundary conditions'': i.e. $\dmGreen (x_1, \cdot)$ is the unique function $g$ on $\Primal$ such that, extending it to be zero $\infPrimal \setminus \Primal$, $\dDelta g (x) = \delta_{x_1} (x)$ for all $x \in \Gamma$. The uniqueness in this  definition follows from the maximum principle for the discrete massive Laplacian $\dDelta - mI$, which holds under the positivity Assumption~\ref{assumption:pos} (see Remark~\ref{rem:positivity}).
    In the case $m=0$, $\dGreen := \dGreen^0$ is the \textbf{non-massive discrete Green function with Dirichlet boundary conditions} and corresponds to Definition 2.6 of \cite{ChelkakSmirnov}. 
 
    The massive and non-massive discrete Green functions have a logarithmic singularity near the diagonal. In Appendix~\ref{app:green:bound}, we recall several classical bounds, which will be useful everywhere in what follows.

    Since the Laplacian $\dDirichletDelta$ is a symmetric matrix, a direct consequence of Definition~\ref{def:discrete:massive:green} is the symmetry of the discrete Green function with Dirichlet boundary conditions:
    $$
        \forall x_1,x_2 \in \Primal,~\dmGreen(x_1,x_2) = \dmGreen(x_2,x_1). 
    $$
    This will be used without mention in what follows.

The massive and non-massive Green functions are related by the following identity, see Lemma~2.1 of~\cite{ChelkakWan} (and Proposition~4.10 of~\cite{BHS} for a continuous counterpart):
    \begin{prop}[Resolvent identity]\label{prop:resolvent}
        For all $x_1,x_2 \in \Primal$, 
        $$
            \dmGreen(x_1,x_2) = \dGreen(x_1,x_2)+\sum_{x \in \Primal}m(x)\dmGreen(x_1,x)\dGreen(x,x_2).
        $$
    \end{prop}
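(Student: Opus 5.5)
The plan is to prove the resolvent identity as a matrix identity on $\Primal\times\Primal$, using only the definitions of $\dmGreen$ and $\dGreen$ as inverses of Dirichlet Laplacians. Write $L_0 := \dDirichletDelta$ and $L_m := \dDirichletDelta - mI$, both viewed as $\Primal\times\Primal$ matrices, and let $D_m$ denote the diagonal matrix with entries $m(x)$. By Definition~\ref{def:discrete:massive:green}, $\dmGreen = L_m^{-1}$ and $\dGreen = L_0^{-1}$. Both matrices are invertible: this follows from the maximum principle already invoked after that definition, using $m\ge 0$ for $\eps$ small via \eqref{eq:mass:asymptotic} and Assumption~\ref{assumption:pos}, together with the Dirichlet boundary conditions. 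Concretely, $-L_m$ is a symmetric diagonally dominant matrix with Dirichlet boundary, hence positive definite.

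The key step is the algebraic identity
$$A^{-1} - B^{-1} = A^{-1}(B-A)B^{-1},$$
applied with $A = L_m$ and $B = L_0$. Since $B - A = D_m$, this gives
$$\dmGreen - \dGreen = \dmGreen D_m \dGreen.$$
Reading off the $(x_1,x_2)$ entry yields
$$\dmGreen(x_1,x_2) = \dGreen(x_1,x_2) + \sum_{x\in\Primal} \dmGreen(x_1,x)\, m(x)\, \dGreen(x,x_2),$$
which is exactly the claimed formula.

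To be careful about index conventions, I would note two points. First, the definition specifies $(\dDirichletDelta - mI)\dmGreen(x_1,\cdot)=\delta_{x_1}$ acting in the second variable. By the symmetry of $\dDirichletDelta$, noted in the paper, $\dmGreen$ is symmetric, so the row and column conventions coincide. Second, the sum runs only over $x\in\Primal$ because both Green functions vanish off $\Primal$.

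Alternatively, the identity can be checked directly. Fix $x_1$ and set $g(\cdot)$ equal to the right-hand side. Applying $\dDirichletDelta$ in the second variable gives $\delta_{x_1} + m(\cdot)\dmGreen(x_1,\cdot)$. Hence $(\dDirichletDelta - mI)\big(g - \dmGreen(x_1,\cdot)\big) = m\,(\dmGreen(x_1,\cdot) - g)$, that is,
$$\dDirichletDelta\big(g-\dmGreen(x_1,\cdot)\big) = 0.$$
Uniqueness for the non-massive Dirichlet problem then gives $g = \dmGreen(x_1,\cdot)$.

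There is no real obstacle in this argument. The only point needing care is the invertibility of $L_m$, that is, nonnegativity of $m$, which holds by the cited assumption.
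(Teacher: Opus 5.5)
Your proof is correct and is essentially the paper's. The paper fixes $x_1$, applies $\dDirichletDelta - mI$ in the second variable to the right-hand side, and invokes the uniqueness in Definition~\ref{def:discrete:massive:green}; your matrix identity $A^{-1}-B^{-1}=A^{-1}(B-A)B^{-1}$ is just a compact encoding of that check. If anything your version is slightly cleaner: the paper's displayed computation, with the term $\sum_x m(x)\dGreen(x_1,x)\delta_x$, actually verifies the swapped form $\dGreen(x_1,x_2)+\sum_x m(x)\dGreen(x_1,x)\dmGreen(x,x_2)$ and needs the symmetry of both Green functions to reach the stated ordering, whereas your direct check with the massless $\dDirichletDelta$ (like the matrix identity) gives the stated ordering without that step.
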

    \begin{proof}
        Let $x_1 \in \Primal$ be fixed and denote the right-hand side by $x_2 \to H(x_1,x_2)$. 
        It satisfies 
        \begin{align*}
            (\dDirichletDelta-m I)H(x_1,\cdot) 
            = \delta_{x_1}(\cdot) -m (\cdot)\dGreen(x_1,\cdot) + \sum_{x \in \Primal}m(x)\dGreen(x_1,x)\delta_{x}(\cdot) = \delta_{x_1}(\cdot),
        \end{align*}
        hence by definition $H$ is the massive Green function on $\Primal$. 
    \end{proof}

Let $X^m$ denote the killed random walk associated with the massive Laplacian $\dDelta-m I$. When $m=0$, we simply write $X=X^m$. Using the optional stopping theorem, we obtain the following probabilistic interpretation of massive harmonic functions. For any finite subset $\Primal' \subset \Primal_{\infty}$, let $\tau$ denote the first exit time of $\Primal'$ by $X^m$. Then, if $H: \overline{\Primal'} \to \RR$ is massive harmonic on $\Primal'$, 
\begin{equation}\label{eq:probabilistic:representation}
    H(x) = \EE_x[H(X^m_\tau)],~x \in \Primal'.
\end{equation}
In this equality, when $X^m$ dies before leaving $\Primal'$, the right-hand side should be interpreted as $H(X^m_\tau) = 0$.
    Define
    $$
        \pi^m(x) = m(x)+\sum_{y \sim x}c^0_{x,y},~x \in \Primal.
    $$
    The discrete massive Green function with Dirichlet boundary conditions can alternatively be defined as follows. Let $\tau$ denote the first hitting time of $(\closurePrimal\setminus \Primal) \cup \{x_1\}$. Then,
        \begin{equation}\label{eq:green:proba}
            \dmGreen(x_1,x_2) = \frac{\PP_{x_2}[X^m_\tau = x_1]}{\pi^m(x_1)}.
        \end{equation}
    Indeed, computing the Laplacian of the right-hand side gives Definition~\ref{def:discrete:massive:green}.
    As a consequence of this random walk representation, we obtain the following result which will be used everywhere in the sequel without mention, and relies on the positivity Assumption~\ref{assumption:pos} as well as \eqref{eq:mass:asymptotic}.
    \begin{lem}\label{lem:dmGreen<dGreen}
        $$
        \forall x_1, x_2 \in \Primal,~0 \leq \dmGreen(x_1,x_2) \leq \dGreen(x_1,x_2).
        $$
    \end{lem}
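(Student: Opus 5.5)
The plan is to read both inequalities directly off the random walk representation \eqref{eq:green:proba}, which holds for every nonnegative mass function. In particular it holds for $m\equiv 0$, where it gives $\dGreen(x_1,x_2) = \PP_{x_2}[X_\tau = x_1]/\pi^0(x_1)$ with $\pi^0(x)=\sum_{y\sim x}c^0_{x,y}$.

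The first step is to check that $m\ge 0$ on $\Primal$. By \eqref{eq:mass:asymptotic}, $m(x)=\eps^2\weight(x)\mass(x)+O(\eps^3)$. The positivity Assumption~\ref{assumption:pos}, together with the continuity of $\mass$ on $\bar\Omega$, gives $\inf_{\bar\Omega}\mass>0$. The bounded angle property gives $\weight$ bounded below. Hence $m>0$ on $\Primal$ for all $\eps$ small enough, uniformly in $x$. This is needed for $X^m$ to be a genuine (sub-)Markov chain and for \eqref{eq:green:proba} to make sense.

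Nonnegativity is then immediate. In \eqref{eq:green:proba} the numerator is a probability and $\pi^m(x_1)\ge \pi^0(x_1)>0$, so $\dmGreen(x_1,x_2)\ge 0$.

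For the upper bound I will couple $X^m$ with $X$. From $x$, the walk $X^m$ jumps to a neighbour $y$ with probability $c^0_{x,y}/\pi^m(x)$ and is killed with probability $m(x)/\pi^m(x)$. Equivalently, $X^m$ is obtained by running $X$ and, at each step spent at a vertex $x$ (including the final step at $x_1$, so the time-$\tau$ vertex is counted), killing the walk independently with probability $m(x)/\pi^m(x)$. Both walks stop at the same hitting time $\tau$ of $(\closurePrimal\setminus\Primal)\cup\{x_1\}$. This gives
$$
\PP_{x_2}[X^m_\tau=x_1] = \EE_{x_2}\Big[\prod_{k< \tau}\tfrac{\pi^0(X_k)}{\pi^m(X_k)}\,;\,X_\tau = x_1\Big]\le \PP_{x_2}[X_\tau=x_1],
$$
since each factor lies in $(0,1]$ because $m\ge0$. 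Combined with $1/\pi^m(x_1)\le 1/\pi^0(x_1)$, this yields $\dmGreen(x_1,x_2)\le\dGreen(x_1,x_2)$.

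As a cross-check, the resolvent identity (Proposition~\ref{prop:resolvent}) gives the same comparison, because all the terms $m(x)\dmGreen(x_1,x)\dGreen(x,x_2)$ have a fixed sign. The only delicate point is making the coupling and the time-$\tau$ bookkeeping precise. Everything else is a one-line consequence of $m\ge 0$.
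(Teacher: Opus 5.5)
Your route is the paper's own: the paper justifies the lemma in one line from the random walk representation \eqref{eq:green:proba} and $m\ge 0$, and your thinning coupling is the natural way to fill in that line. The gap is that you take \eqref{eq:green:proba} at face value, and it does not satisfy Definition~\ref{def:discrete:massive:green}. Set $h(x_2)=\PP_{x_2}[X^m_\tau=x_1]$. Then $h(x_1)=1$, $h=0$ on $\closurePrimal\setminus\Primal$, and $(\dDelta-m)h=0$ on $\Primal\setminus\{x_1\}$, but
\[
(\dDelta-m)h(x_1)=\sum_{y\sim x_1}c^0_{x_1,y}h(y)-\pi^m(x_1)=-\pi^m(x_1)\bigl(1-q^m\bigr).
\]
Here $q^m<1$ is the probability that $X^m$, started at $x_1$, returns to $x_1$ before leaving $\Primal$ or being killed. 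The correct representation is therefore
\[
\dmGreen(x_1,x_2)=-\frac{\PP_{x_2}[X^m_\tau=x_1]}{(1-q^m)\,\pi^m(x_1)}=-\frac{\EE_{x_2}[N^m_{x_1}]}{\pi^m(x_1)},
\]
where $N^m_{x_1}$ is the number of visits of $X^m$ to $x_1$ before it exits $\Primal$ or dies. With the paper's convention that $\dDelta$ is a negative operator, $\dmGreen$ and $\dGreen$ are therefore entrywise negative: they are minus the inverses of the nonsingular M-matrices $-\dDirichletDelta+mI$ and $-\dDirichletDelta$. So your claim that nonnegativity is immediate is false; already $\dmGreen(x_1,x_1)<0$. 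The inequality that actually holds is $\dGreen\le\dmGreen\le 0$, i.e.\ $|\dmGreen|\le|\dGreen|$, and this is all that is used later (e.g.\ in \eqref{eq:G:bound:log}).

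Your cross-check should have exposed this rather than confirmed it. If both Green functions were nonnegative, Proposition~\ref{prop:resolvent} would give $\dmGreen-\dGreen=\sum_x m(x)\dmGreen(x_1,x)\dGreen(x,x_2)\ge 0$, which is the reverse of the inequality you claim. With the correct signs the same identity gives $\dmGreen\ge\dGreen$ at once, so together with $\dmGreen\le 0$ it is a complete proof. To repair your probabilistic argument, use the visit-count form. Under your coupling $N^m_{x_1}\le N^0_{x_1}$ pathwise, and $\pi^m(x_1)\ge\pi^0(x_1)$, hence $0\le-\dmGreen\le-\dGreen$. If you keep the hitting-probability form, you must also compare the escape factors, $1-q^m\ge 1-q^0$, which the same coupling gives. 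Comparing only the numerators and $\pi^m$ with $\pi^0$, as you do, is not enough.
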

    This will often be used in combination with the bound~\ref{lem:green:bounded}.

\subsection{Massive harmonic analysis on isoradial graphs}
We now show how to adapt to the massive setting some of the basic tools in harmonic analysis on isoradial graphs developed in \cite{ChelkakSmirnov}. 
For any $\Primal \subset \infPrimal$ a subset of vertices, we denote by $\partial_E \Primal'$ its edge boundary, that is
\begin{equation}
    \partial_E \Primal' := \{x^-x^+, x^- \sim x^+, x^- \in \Primal', x^+ \in \infPrimal \setminus \Primal'\}. 
\end{equation}
The following \textbf{discrete Green formula} holds: it is not specific to isoradial graphs, the proof is a simple computation involving only the reversibility of the conductances.
\begin{lemma}[Equation (2.4) of \cite{ChelkakSmirnov}]
    For $\Primal' \subset \infPrimal$ and $F,G: \infPrimal \to \RR$,
\begin{equation}\label{eq:discrete:green:formula}
    \sum_{x \in \Primal'}[G(x)(\dDelta F)(x)-(\dDelta G)(x)F(x)] = \sum_{x^-x^+ \in \partial_E \Primal'}c^0_{x^-,x^+}[G(x^-)F(x^+)-G(x^+)F(x^-)].
\end{equation}
\end{lemma}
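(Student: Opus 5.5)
We prove the discrete Green formula \eqref{eq:discrete:green:formula} by expanding both Laplacians over edges and letting the symmetry $c^0_{x,x'} = c^0_{x',x}$ (both equal $\tan\theta_{xx'}$) cancel every edge lying inside $\Primal'$. We assume $\Primal'$ is finite, as in its intended applications, so that all sums are finite.

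First, by definition of $\dDelta$, the left-hand side is
\[
\sum_{x\in\Primal'}\sum_{x'\sim x} c^0_{x,x'}\bigl[G(x)(F(x')-F(x)) - (G(x')-G(x))F(x)\bigr]
= \sum_{x\in\Primal'}\sum_{x'\sim x} c^0_{x,x'}\bigl[G(x)F(x') - G(x')F(x)\bigr],
\]
since the diagonal terms $-c^0_{x,x'}G(x)F(x)$ and $+c^0_{x,x'}G(x)F(x)$ cancel for each pair $(x,x')$.

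Next, we split the oriented pairs $(x,x')$ with $x\in\Primal'$ and $x'\sim x$ according to whether $x'\in\Primal'$ or not. Suppose both endpoints lie in $\Primal'$. Then the pair $(x,x')$ contributes $c^0_{x,x'}[G(x)F(x')-G(x')F(x)]$. The reversed pair $(x',x)$ contributes $c^0_{x',x}[G(x')F(x)-G(x)F(x')]$. Since $c^0_{x,x'}=c^0_{x',x}$, these two contributions cancel exactly. This reversibility of the conductances is the only property of the weights we use.

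The remaining pairs have $x=x^-\in\Primal'$ and $x'=x^+\notin\Primal'$, and these are precisely the edges of $\partial_E\Primal'$. Each such edge contributes $c^0_{x^-,x^+}[G(x^-)F(x^+)-G(x^+)F(x^-)]$, which is the right-hand side of \eqref{eq:discrete:green:formula}.

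There is no real obstacle; the only care needed is the bookkeeping of orientations in the pairing step. If $\infPrimal$ has multiple edges between the same two vertices, we treat each edge separately, and the cancellation still holds edge by edge.
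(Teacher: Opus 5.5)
Your proof is correct and is the same short computation the paper has in mind. Expanding $\dDelta$, the $G(x)F(x)$ terms cancel; oriented pairs with both endpoints in $\Primal'$ cancel by the symmetry $c^0_{x,x'}=c^0_{x',x}$ (opposite angles of a rhombus coincide); and the surviving pairs are exactly the edges of $\partial_E\Primal'$. Assuming $\Primal'$ finite is the implicit hypothesis needed for the sums to make sense.
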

Note that the factor $\weight(x)$ does not appear on the left-hand side, contrary to Equation (2.4) of \cite{ChelkakSmirnov}, since we do not use the same renormalization for the Laplacian. 

We now state a lemma which implies that massive harmonic functions are globally Lipschitz: it is a generalization to the massive setting of Corollary 2.9 in \cite{ChelkakSmirnov}. For $x \in \RR^2$, $r >0$, let $B(x,r)$ denote the Euclidean ball of radius $r$ centered at $x$. Let $B_\#(x,r)$ denote its \emph{discretization}, that is the largest connected component of $B(x,r) \cap \infPrimal$. 

Let us also denote by $\cM := \sup_{x \in \Omega}|M(x)| < \infty$.

\begin{lemma}\label{lem:lipschitz}
    Let $x_0 \in \RR^2$, $R,r>0$ be such that $B(x_0,R+r) \subset \Omega$ and $H: \infPrimal \to \RR$ is massive harmonic on $B_\#(x_0,R+r)$ (i.e., $\dDelta H(x) = m(x) H(x)$ for all $x \in B_\#(x_0,R+r)$). 
    Then $H$ is globally Lipschitz on $B_\#(x_0,R)$, in the following sense: for all $w \in B_\#(x_0,R)$ corresponding to the middle of the edge $x^-x^+$,
    $$
    |H(x^+) - H(x^-) | \leq C\eps\left(r+\frac{1}{r}\right)\max_{B_\#(x_0,R+r)}|H|,
    $$
    where $C = C(\cM)$ depends only on $\cM$ and the bounded angles assumption, but not on $\eps$, $r$ and $x$. 
    
    We will often use the following weaker form: if $\cO \subset \RR^2$ is an open set, $\cC \subset \cO$ is a compact set, $H: \infPrimal \to \RR^2$ is harmonic and bounded (by a constant $C$) on $\cO$, then $H$ is Lipschitz on $\cC$ (with Lipschitz constant depending only on $C$ and $\dist(\cC, \cO^c)$).
\end{lemma}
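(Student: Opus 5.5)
The plan is to reduce to the non-massive case (Corollary 2.9 of \cite{ChelkakSmirnov}) by treating the mass term as a source term, and then to control the resulting correction with the Green function bounds of Appendix~\ref{app:green:bound}.

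First fix the constants. Set $B' = B_\#(x_0,R+r)$ and $\|H\| = \max_{B'}|H|$. Since $m(x) = O(\eps^2)$ by \eqref{eq:mass:asymptotic}, with a constant depending only on $\cM$ and the bounded angles, we have
\[
|\dDelta H(x)| = |m(x)H(x)| \le C\eps^2 \|H\| \quad \text{for } x \in B'.
\]

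Next decompose $H = H_0 + H_1$ on $B'$. Here $H_0$ is the (non-massive) discrete harmonic extension to $B'$ of the boundary values of $H$, so $\dDelta H_0 = 0$ on $B'$ and $H_0 = H$ on $\partial B'$. The correction is
\[
H_1(x) = \sum_{y \in B'} \dGreen^{B'}(x,y)\, m(y) H(y),
\]
where $\dGreen^{B'}$ is the Dirichlet Green function of $B'$. Then $\dDelta H_1 = mH$ on $B'$ and $H_1 = 0$ on $\partial B'$, so $H_0 + H_1 - H$ is discrete harmonic with zero boundary values and therefore vanishes.

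The two pieces are then estimated separately.
\begin{itemize}
\item For $H_0$: the maximum principle gives $|H_0| \le \|H\|$ on $B'$. Corollary 2.9 of \cite{ChelkakSmirnov}, applied on $B_\#(w,r) \subset B'$ for any $w \in B_\#(x_0,R)$, gives $|H_0(x^+) - H_0(x^-)| \le C\eps \|H\|/r$.
\item For $H_1$: the edge difference is
\[
H_1(x^+) - H_1(x^-) = \sum_y \bigl[\dGreen^{B'}(x^+,y) - \dGreen^{B'}(x^-,y)\bigr]\, m(y) H(y).
\]
I would invoke the standard bound on discrete gradients of the Dirichlet Green function,
\[
|\dGreen^{B'}(x^+,y) - \dGreen^{B'}(x^-,y)| \le \frac{C\eps}{|x-y|+\eps}.
\]
This follows from the free Green function asymptotics \eqref{eq:green:asymptotic}, with its explicit gradient form from \cite{Ken02}, together with the Lipschitz bound for the harmonic correction $\dGreen^{B'} - \dfreeGreen$. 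Summing over the $O(\eps^{-2})$ vertices, with $m(y) \le C\eps^2$, gives
\[
|H_1(x^+) - H_1(x^-)| \le C\eps \|H\| \sum_y \eps^2 \,\frac{1}{|x-y|+\eps} \le C\eps \|H\| \int_{B(x_0,R+r)} \frac{dA(z)}{|x-z|} \le C\eps (R+r)\|H\|.
\]
\end{itemize}

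The main obstacle is that this last bound yields a factor $R+r$ rather than $r$. To fix it, I would not decompose on all of $B'$ but on the smaller ball $B_\#(w,r)$ centred at the midpoint $w$ of the edge. On that ball the maximum is still at most $\|H\|$, the $H_0$ term is unchanged, and the integral becomes $\int_{B(w,r)} |w-z|^{-1}\, dA(z) = 2\pi r$, giving exactly the bound $C\eps(r + 1/r)\|H\|$.

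The delicate point is the Green-gradient bound uniformly up to the boundary of a discrete ball. Since we only evaluate at the centre $w$, it suffices to combine \eqref{eq:green:asymptotic} near the diagonal with the harmonic Lipschitz estimate (Corollary 2.9 of \cite{ChelkakSmirnov}) for $\dGreen^{B_\#(w,r)} - \dfreeGreen$ at distance $r$ from the boundary. The near-diagonal terms with $|x-y| \lesssim \eps$ are handled directly, since $|\dGreen| = O(|\log \eps|)$ is not needed there: the difference of the free Green function across one edge is $O(1)$.

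The weaker form of the statement follows by covering $\cC$ with balls of radius $r = \dist(\cC,\cO^c)/2$.
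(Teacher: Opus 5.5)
Your proof is correct, but it takes a different route from the paper's. The paper never introduces the Dirichlet Green function of a ball. Instead it reruns the Chelkak--Smirnov mean-value argument with the mass built in. For $x_1\in\{x^-,x^+\}$ it applies the discrete Green formula \eqref{eq:discrete:green:formula} on $B_\#(x_1,r)$ to $H$ and to the explicit function $F(x_1,\cdot)=\dfreeGreen(x_1,\cdot)-\frac{\log r}{2\pi}+\frac{r^2-|x_1-\cdot|^2}{4\pi r^2}$. This function satisfies $\dDelta F(x_1,\cdot)=\delta_{x_1}-\eps^2\weight/(\pi r^2)$ and is $O(\eps^2/r^2)$ near the circle of radius $r$. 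The result is the massive mean-value identity \eqref{eq:massive:mean:value}, with error $O(\eps/r)\max|H|$. Subtracting it at $x^-$ and at $x^+$, two contributions remain:
\begin{itemize}
\item the symmetric difference of the two balls contributes $O(\eps/r)\max|H|$;
\item the mass terms on their intersection contribute $\sum m\,|F(x^-,\cdot)-F(x^+,\cdot)|\,|H|=O(\eps r)\max|H|$.
\end{itemize}
Both estimates use only the free asymptotics \eqref{eq:green:asymptotic}.

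Your splitting $H=H_0+H_1$ on $B_\#(w,r)$ instead black-boxes the massless estimate (Corollary 2.9 of \cite{ChelkakSmirnov}) for $H_0$ and treats $mH$ as a bounded source. This is the same spirit as the resolvent identity (Proposition~\ref{prop:resolvent}) used later in the paper. It is more modular, and it makes plain that only $|m|=O(\eps^2)$ matters. The price is a bound on edge increments of $\dGreen^{B_\#(w,r)}$ that is uniform in the pole up to the boundary of the ball; the paper's explicit $F$ avoids this.

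On that delicate step, the justification you give yields slightly less than you claim. Take a pole $y$ at distance $d$ from $\partial B_\#(w,r)$. The correction $\dfreeGreen(\cdot,y)-\dGreen^{B_\#(w,r)}(\cdot,y)$ has boundary data $\dfreeGreen(z,y)$, whose oscillation is of order $1+\log\frac{r}{d+\eps}$, not $O(1)$. So Corollary 2.9 of \cite{ChelkakSmirnov} only bounds its increment across the central edge by $C\frac{\eps}{r}\bigl(1+\log\frac{r}{d+\eps}\bigr)$, not by $C\eps/r$.

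Your pointwise bound $C\eps/(|x-y|+\eps)$ is still true, for instance via the Harnack inequality for the positive discrete Poisson kernel of the ball. But you do not need it. The logarithm of the distance to the boundary is integrable, so
$$\sum_{y}\eps^2\cdot\frac{\eps}{r}\bigl(1+\log\frac{r}{d(y)+\eps}\bigr)=O(\eps r).$$
Hence the $H_1$ term is still $O(\eps r)\max|H|$, and your conclusion $C\eps(r+1/r)\max|H|$ stands.
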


Recall that because of the bounded angles assumption, there exist paths in the graph $\Gamma^\infty$ between points $x$ and $y$ of total length comparable to the Euclidean distance $|x-y|$, see the beginning of Section~\ref{SS:isoradial:setup}. Hence the second (weaker) part of the statement indeed follows from the first.


\begin{proof}
    The proof is an adaptation of the proof of Proposition 2.7 in Appendix 2 of \cite{ChelkakSmirnov}.
    We first prove an analogue of the mean property for massive harmonic functions. 
    Recall that $\dfreeGreen$ denotes the free discrete Green function on $\infPrimal$, see Definition \ref{def:dfreeGreen}. 
    Let $w \in B_\#(x_0,R)$ corresponding to the middle of $x^-x^+$. Define $F: \infPrimal \times \infPrimal \to \RR$ by
    \begin{equation}
        \forall x_1,x_2 \in \infPrimal,~F(x_1,x_2) = \dfreeGreen(x_1,x_2) - \frac{\log(r)}{2\pi}+\frac{r^2-|x_1-x_2|^2}{4\pi r^2}.
    \end{equation}
    as in the proof of Proposition A.2 in \cite{ChelkakSmirnov}. 
    It can be checked that this satisfies $$\dDelta F(x_1,\cdot) = \delta_{x_1}(\cdot) -\eps^2\weight(\cdot)/(\pi r^2),$$ 
    see \cite[Lemma 2.2 (i)]{ChelkakSmirnov}. Using Equation \eqref{eq:green:asymptotic}, it is also seen to satisfy 
    $F(x_1,x_2) = O(\eps^2/r^2)$ if $|x_1-x_2|=r+O(\eps)$.
    For $x_1 \in \{x^-,x^+\}$, the discrete Green formula Equation \eqref{eq:discrete:green:formula} applied on $B_{\#}(x',r)$ with the two functions $H$, $F(x',\cdot)$ gives
    \begin{align*}
        &H(x_1)-\sum_{x_2 \in B_{\#}(x_1,r)}\left(\frac{\eps^2\weight(x_2)}{\pi r^2}+m(x_2)F(x_1,x_2)\right)H(x_2) \\
        &\quad = \sum_{x_2^-x_2^+ \in \partial B_{\#}(x_1,r)}\tan(\theta_{x_2^-x_2^+})(H(x_2^-)F(x_1,x_2^+)-H(x_2^+)F(x_1,x_2^-)).
    \end{align*}
    Since $F(x_1,\cdot) = O(\eps^2/r^2)$ near $\partial B_{\#}(x_1,r)$, which contains $O(r/\eps)$ elements, and since $B(x_1,r) \subset B(x,R+r) \subset \Omega$,
    \begin{equation}\label{eq:massive:mean:value}
        \left|H(x_1)-\sum_{x_2 \in B_{\#}(x_1,r)}\left(\frac{\eps^2\weight(x_2)}{\pi r^2}+m(x_2)F(x_1,x_2)\right)H(x_2)\right| \leq \frac{C\eps}{r}\max_{x_2 \in B_\#(x_0,R+r)}|H(x_2)|
    \end{equation}
    for some absolute constant $C = C(\cM)$. Subtracting Equation \eqref{eq:massive:mean:value} with $x_1= x^-, x_1=x^+$ and using the triangle inequality, we obtain
    \begin{equation}\label{eq:bound:lipschitz}
        \begin{aligned}
            |H(x^+)-H(x^-)| 
            & \leq \sum_{x_2 \in B_{\#}(x^-,r) \cap B_{\#}(x^+,r)} m(x_2)|F(x^-,x_2)-F(x^+,x_2)|\cdot |H(x_2)|\\
            &\quad + \sum_{x_2 \in B_{\#}(x^-,r) \Delta B_{\#}(x^+,r)}\left| \left(\frac{\eps^2\weight(x_2)}{\pi r^2}+m(x_2)F(x_1,x_2)\right)H(x_2)\right|+ \frac{2C\eps\max_{B_\#(x_0,R+r)}|H|}{r},
        \end{aligned}
    \end{equation}
    where $A \Delta B$ denotes the symmetric difference $(A \setminus B) \cup (B \setminus A)$, and where in the last sum $F(x_1,x_2)$ is $F(x^-,x_2)$ if $x \in B(x^-,r) \setminus B(x^+,r)$, and $F(x^+,x_2)$ if $x \in B(x^+,r) \setminus B(x^-,r)$.
    On the one hand, since $m(x_2) = \eps^2 \weight(x_2)\mass(x_2)+O(\eps^3)$ by Equation \eqref{eq:mass:asymptotic}, and since on $B_\#(x^-,r) \Delta B_{\#}(x^+,r)$ (which contains $O(r/\eps)$ elements, $F(x_1,x_2) = O(\eps^2/r^2)$, we obtain
    \begin{equation}\label{eq:bound:1}
        \sum_{x_2 \in B_{\#}(x^-,r) \Delta B_{\#}(x^+,r)}\left| \left(\frac{\eps^2\weight(x_2)}{\pi r^2}+m(x_2)F(x_1,x_2)\right)H(x_2)\right| \leq \frac{C(\cM)\eps\max_{B_\#(x_0,R+r)}|H|}{r}
    \end{equation}
    for some constant $C = C(\cM)$ depending only on $\cM$ and the constant in the bounded angle assumption. 
    On the other hand, for all $x_2 \in B_{\#}(x^-,r) \cap B_{\#}(x^+,r)$, using that 
    \begin{align*}
        ||x_2-x^-|-|x_2-x^+|| \leq |x^--x^+| = \eps = O(\eps),
    \end{align*}
    we obtain for all $x_2 \neq x^-,x^+$
    \begin{align*}
        |F(x^-,x_2)-F(x^+,x_2)| 
        &\leq |\dfreeGreen(x^-,x_2)-\dfreeGreen(x^+,x_2)|+\frac{\left||x_2-x^-|^2-|x_2-x^+|^2\right|}{4\pi r^2}\\
        &\leq \frac{1}{2\pi}\log\left(\frac{|x_2-x^-|}{|x_2-x^+|}\right) +O\left(\frac{\eps^2}{|x_2-x^+|^2}\right)+\frac{\left||x_2-x^-|^2-|x_2-x^+|^2\right|}{4\pi r^2}\\
        &= \frac{1}{2\pi}\log\left(1+O\left(\frac{\eps}{|x_2-x^+|}\right)\right) +O\left(\frac{\eps^2}{|x_2-x^+|^2}\right)\\
        &\qquad +\frac{\left|(|x_2-x^+|+O(\eps))^2-|x_2-x^+|^2\right|}{4\pi r^2}\\
        &= O\left(\frac{\eps}{|x_2-x^+|}\right)+O\left(\frac{\eps|x_2-x^+|}{r^2}\right)
    \end{align*}
    where the $O$ is uniform in $x_2,r$. Note that for $x_2 = x_1 \in \{x^-,x^+\}$,
    \begin{equation*}
        |\dfreeGreen(x^-,x_2)-\dfreeGreen(x^+,x_2)|= \left|\frac{\log(\eps)-c}{2\pi}-\frac{1}{2\pi}\log|x^--x^+|+O\left(\frac{\eps^2}{|x^--x^+|^2}\right)\right| = O(1).
    \end{equation*}
    Hence, we can write for all $x_2 \in \infPrimal$,
    \begin{equation}
        |F(x^-,x_2)-F(x^+,x_2)| = O\left(1\wedge\frac{\eps}{|x_2-x^+|}\right)+O\left(\frac{\eps|x_2-x^+|}{r^2}\right),
    \end{equation}
    and the first term on the right-hand side of Equation \eqref{eq:bound:lipschitz} can be bounded as follows:
    \begin{equation}\label{eq:bound:2}
        \begin{aligned}
        &\sum_{x_2 \in B_{\#}(x^-,r) \cap B_{\#}(x^+,r)} m(x_2)|H(x_2)|\cdot|F(x^-,x_2)-F(x^+,x_2)| \\
        &\quad \leq C\eps^2\max_{B_\#(x_0,R+r)}|H|\sup_{B(x_0,R+r)}|\mass|\sum_{x_2 \in B_{\#}(x^-,r) \cap B_{\#}(x^+,r)} \left[O\left(1\wedge\frac{\eps}{|x_2-x^+|}\right)+O\left(\frac{\eps|x_2-x^+|}{r^2}\right)\right]\\
        &\quad \leq C'r\eps\max_{B_\#(x_0,R+r)}|H|\sup_{B(x_0,R+r)}|\mass|
    \end{aligned}
    \end{equation}
    for some universal constant $C,C' > 0$. 
    Inserting Equations \eqref{eq:bound:1} and \eqref{eq:bound:2} in Equation \eqref{eq:bound:lipschitz} gives
    \begin{equation}
        |H(x^+)-H(x^-)| \leq C\eps\left(r+\frac{1}{r}\right)\max_{B_\#(x_0,R+r)}|H|.
    \end{equation}
    for some constant $C = C(\cM)$ depending only on $\cM$ and the bounded angle assumption, which is the desired result.
\end{proof}

We now state a lemma to say that massive harmonic functions with Dirichlet boundary conditions are small near the boundary: this is a \textbf{Beurling estimate}, and is very close in spirit and proof to Proposition 2.11 of \cite{ChelkakSmirnov}. 
\begin{lemma}\label{lem:beurling}
    Assume that for some $r>0$ and $x \in \Primal$, $H$ satisfies $(-\Delta^d_\#+mI)H = 0$ on $B_\#(x,r) \cap \Primal$. 
    Then,
    \begin{equation}
        |H(x)| \leq C\left(\frac{\dist(x, \closurePrimal\setminus \Primal)}{r}\right)^{\beta} \cdot \sup_{B_\#(x,r)}|H|.
    \end{equation}
    for some universal constants $C=C(\mass),\beta > 0$ (depending only on the bounded angle assumption and the diameter of $\Omega$).
    We emphasise that $\Delta^d_{\#}$ denotes the Laplacian with Dirichlet boundary conditions on $\Primal$, hence (implicitly) $H$ is assumed to vanish outside $\Primal$.
\end{lemma}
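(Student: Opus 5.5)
The plan is to reduce to the non-massive Beurling estimate of \cite[Proposition 2.11]{ChelkakSmirnov} by comparison through the probabilistic representation \eqref{eq:probabilistic:representation}. Set $\Primal' := B_\#(x,r)\cap\Primal$ and let $X^m$ be the killed random walk of $\dDelta - mI$, started at $x$, with $\tau$ its exit time from $\Primal'$. Since $H$ is massive harmonic on $\Primal'$ and vanishes on $\infPrimal\setminus\Primal$, \eqref{eq:probabilistic:representation} gives $H(x)=\EE_x[H(X^m_\tau)]$, where the contribution is zero if the walk is killed before $\tau$ or if it exits $\Primal'$ through $\closurePrimal\setminus\Primal$. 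The only nonzero contributions therefore come from exits through the outer part $\partial B_\#(x,r)$, and
$$
|H(x)|\le \sup_{B_\#(x,r)}|H|\cdot \PP_x\big[X^m \text{ reaches } \partial B_\#(x,r) \text{ before dying and before hitting } \closurePrimal\setminus\Primal\big].
$$

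Next I compare with the non-massive walk. The massive walk $X^m$ is the critical walk $X$ (jump rates $c^0$) with an extra killing rate $m\ge 0$, where $m\ge0$ holds by Assumption~\ref{assumption:pos} and \eqref{eq:mass:asymptotic} for $\eps$ small. Couple the two walks so that $X^m$ follows $X$ until it is killed. The event above for $X^m$ is then contained in the event that $X$ reaches $\partial B_\#(x,r)$ before hitting $\closurePrimal\setminus\Primal$. Equivalently, the function $x\mapsto \PP_x[X \text{ exits } \Primal' \text{ through } \partial B_\#(x,r)]$ is a nonnegative harmonic majorant, and the maximum principle for $\dDelta - mI$ applies to the difference.

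The non-massive probability is exactly what \cite[Proposition 2.11]{ChelkakSmirnov} bounds: it is at most $C(\dist(x,\closurePrimal\setminus\Primal)/r)^\beta$, with $C,\beta$ depending only on the bounded angle constant. That argument uses the fact that the walk has a uniformly positive probability of making a full turn around a point of the boundary set in each dyadic annulus, and it needs the complement of $\Primal$ to be connected to infinity, which holds because $\Primal$ is simply connected. In fact the constants do not depend on $\mass$, so the stated dependence on $\mass$ and $\diam\Omega$ is harmless.

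I expect the main obstacle to be bookkeeping rather than new estimates. There are two points to check. First, \cite{ChelkakSmirnov} normalises the Laplacian by $\eps^2\weight$, which changes only the time parametrisation of the walk and not its exit distribution. Second, the killing must be treated correctly. Since the process is a discrete-time chain with killing probability $m(y)/\pi^m(y)$ at each step, it is cleanest to realise $X^m$ as the embedded chain of $X$ together with an independent Bernoulli killing at each step. With that construction, the coupling inclusion is immediate.
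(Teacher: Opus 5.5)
Your argument is correct, but it takes a shorter route than the paper.

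\textbf{The paper's route.} The paper works with the killed walk $X^m$ directly. It starts from the annulus estimate of \cite[Lemma 2.10]{ChelkakSmirnov}: the non-massive walk makes a full turn in $B_\#(x,2r)\setminus B_\#(x,r)$ before leaving it with probability at least $p$. It then transfers this to $X^m$ in two steps. First, using the quadratic variation of $\Re X$, optional stopping and Markov's inequality, it shows the full turn happens within time $Tr^2/\eps^2$ with probability at least $p/2$. Second, it uses $m=O(\eps^2)$ to show that $X^m$ survives up to that time with probability bounded below. Iterating over the $\lfloor\log_2(r/d)\rfloor$ dyadic annuli gives the bound, with constants depending on $\mass$ and the diameter.

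\textbf{Your route.} You observe that killing can only help an upper bound. Consider the probability that $X^m$ exits $B_\#(x,r)\cap\Primal$ alive through the outer part. It is dominated by the corresponding probability for the non-massive walk. This follows from your Bernoulli-killing coupling, or from the minimum principle applied to $u-v$: here $u$ and $v$ are the non-massive and massive exit probabilities, $(\dDelta-mI)(u-v)=-mu\le 0$, and $u-v$ vanishes on the boundary. The non-massive probability is then exactly the weak Beurling estimate of \cite{ChelkakSmirnov}.

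\textbf{What each buys.} Your route makes the time-control and survival step unnecessary, since the walk dying before the full turn is as good as the full turn for an upper bound. It also gives constants depending only on the bounded angle assumption, which is slightly stronger than stated. The paper's route is self-contained from the annulus lemma. It also yields a statement about $X^m$ itself (a full turn before death with probability at least $p'$), which would be needed for lower bounds but not here.

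\textbf{A side remark.} The quantity you bound, the probability of exiting alive through $\partial B_\#(x,r)$, is the correct one. The paper's display with $\PP[X_\sigma\notin\Primal]$ should be read in that sense.
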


The proof of this lemma heavily relies on the positivity Assumption~\ref{assumption:pos}.
\begin{proof}
    This proof is standard. 
    Recall that $X^m$ denotes the killed random walk on $\infPrimal$, $X$ denotes the random walk with no killing (i.e. $m=0$). 
    Using Lemma 2.10 of \cite{ChelkakSmirnov}, see their Figure 3B, there exists a constant $p > 0$ such that for each $r > 0$, $x \in \infPrimal$, the probability starting from any $x' \in B_{\#}(x,2r) \setminus B_{\#}(x,r)$ that the random walk $X$ does a full turn before leaving the annulus is bounded from below by some absolute constant $p > 0$. 
    We want to show that the same holds for the $X^m$. 
    The real part $\Re(X)$ is a martingale with quadratic variation bounded from below (see Equation (1.2) of \cite{ChelkakSmirnov} and around): $[X]_t \geq ct\eps^2$ for some constant $c>0$ depending on the bounded angle assumption. 
    Hence, if $\tau$ denotes the first time when $\Re(X)$ is at distance at least $4r$ from its starting point, applying the optional stopping theorem gives $\EE[\tau] \leq 16r^2/(c\eps^2)$ and by the Markov inequality, for all $T >0$, $\PP[\tau \geq Tr^2/\eps^2] \leq 16/(cT)$.
    In particular, for $T = 32/(cp)$, this probability is bounded from above by $p/2$. 
    Hence, by union bound, the probability that the random walk $X$ starting from any $x' \in B_{\#}(x,2r) \setminus B_{\#}(x,r)$ does a full turn in time less than $Tr^2/\eps^2$ is at least $p/2$. 
    Since the killing rate is $m = \eps^2\mu\mass + O(\eps^3) = O(\eps^2)$, the probability that $X^m$ is killed before time $Tr^2/\eps^2$ is bounded from below by some positive constant depending only on the diameter of $\Omega$.
    Hence we obtain a new constant $p' >0$ depending only on $\sup_{B(x,r) \cap \Omega} |\mass|$ and the diameter of $\Omega$, such that the probability that the killed random wall $X^m$ does a full turn before leaving the annulus $B_{\#}(x,2r) \setminus B_{\#}(x,r)$ or dying is bounded from below by $p'$. 

    Let $H$, $r$ be as in the statement of the lemma. 
    Recall that $H_0$ denotes the function $H$ extended by zero outside $\Primal$. 
    If $\sigma$ denotes the first escape time of $B_\#(x,r) \cap \Primal$, then using Equation \eqref{eq:probabilistic:representation},
    \begin{equation}
        |H(x)| = |\EE_x[H_0(X_{\sigma})]| \leq \PP[X_{\sigma} \notin \Primal] \cdot \sup_{B_\#(x,r)} |H|.
    \end{equation}
    Recall that in this equation, if $X^m$ dies before escaping $B_\#(x,r) \cap \Primal$, then $H_0(X_{\sigma}) = 0$.
    Denoting $d = \dist(x, \closurePrimal\setminus \Primal)$, $x$ is surrounded by $ n = \lfloor \log_2(r/d) \rfloor$ annuli $B(x, 2^{i+1}d) \setminus B(x, 2^{i}d) \subset \Primal \cap B(x,r)$.
    In each of these annuli, $X^m$ has a positive probability to make a full turn before dying or escaping the annulus: when this happens, $X^m$ hits $\closurePrimal\setminus \Primal$ before dying or escaping $B(x,r)$. 
    Hence 
    \begin{equation}
        \PP[X_{\sigma} \notin \Primal] \leq (1-p')^n \leq (1-p')^{\log_2(r/d)-1} \leq (1-p')^{-1} \left(\frac{d}{r}\right)^{-\log_2(1-p')},
    \end{equation}
    which concludes the proof.
\end{proof}

\subsection{Discrete holomorphicity and discrete massive differentials}
In addition to the discrete massive harmonic analysis tools developed in the beginning of the section, we must also develop a discrete holomorphicity theory to analyse discrete massive differentials. 
This theory extends to the massive case the discrete holomorphicity theory of \cite{ChelkakSmirnov}. 
Our first definitions and results are reminders of \cite{ChelkakSmirnov}. 
For a rhombus $x^-y^-x^+y^+$ with center $w \in \infWhite$, and angle $\theta$, we let
\begin{equation}\label{eq:def:mu}
\begin{aligned}
        \mu_{\White}(w) &= \text{Area}(x^-y^-x^+y^+) = \eps^2 \sin(2\theta)\\
	\mu_{wx^{\pm}} &= 2 \tan(\theta) \cdot (x^\pm-w) = \ii \cdot (y^\mp-y^\pm)\\
	\mu_{wy^\pm} &= 2 \cot(\theta) \cdot (y^\pm-w) = \ii \cdot (x^\pm-x^\mp).
\end{aligned}
\end{equation}
For $b \in \infBlack$, we also set
\begin{equation}
        \mu_{\Black}(b) = \frac{1}{4}\sum_{w \sim b}\mu_{\White}(w)
\end{equation}
where the sum is over the white vertices neighbouring $b$ in the superposition (medial) graph. Note that for $x \in \Primal$, $\mu_{\Black}(x) = \eps^2\mu(x)/2$ with $\mu$ the weight defined in Equation \eqref{eq:def:weight}.
\begin{defn}[Definition 2.12 of \cite{ChelkakSmirnov}]\label{def:holomorphy:black}
	Let $w \in \infWhite$ and $x^-,y^-,x^+,y^+ \in \infBlack$ the vertices of the corresponding rhombus in clockwise order. 
    If $H$ is defined on some subset of $\infBlack$ including $x^-,y^-,x^+,y^+$, let
        \begin{equation}
	\begin{aligned}
		[\dpartial H](w) &= \frac{1}{2}\left[\frac{H(x^+)-H(x^-)}{x^+-x^-}+\frac{H(y^+)-H(y^-)}{y^+-y^-}\right]\\
            [\dbarpartial H](w) &= \frac{1}{2}\left[\frac{H(x^+)-H(x^-)}{\overline{x^+-x^-}}+\frac{H(y^+)-H(y^-)}{\overline{y^+-y^-}}\right]
        \end{aligned}
        \end{equation}
        When $H$ is defined only on $\Primal$, we also use this definition with the convention $H_{|\Primal^*} = 0$.
\end{defn}
This definition can be rephrased as follows (see page 12 of \cite{ChelkakSmirnov}):
        \begin{equation}\label{eq:alternative:def:holomorphy}
            [\dpartial H](w) = \frac{1}{4\mu_{\White}(w)}\sum_{b \sim w}\overline{\mu_{wb}}H(b) \quad ; \quad [\dbarpartial H](w) = \frac{1}{4\mu_{\White}(w)}\sum_{b \sim w}\mu_{wb}H(b)
        \end{equation}
where the sum is taken over $b \in \{x^-,y^-,x^+,y^+\}$ that is the neighbours of $w$ in the superposition graph. We also need to define a dual notion of holomorphicity for functions defined on white vertices. 
\begin{defn}[Definition 2.13 of \cite{ChelkakSmirnov}]\label{def:dirac:dual}
	Let $b \in \infBlack$ and $w_i \in \infWhite$ be the neighbors of $b$ in the superposition (medial) graph $G$ with corresponding rhombi angles $\theta_i$. If a function $F$ is defined on some subset of $\infWhite$ including the $w_i$, define
	$$
		[\dbarpartial F](b) = - \frac{1}{4\mu_{\Black}(b)}\sum_i \mu_{w_ib}F(w_i) \quad ; \quad [\dpartial F](b) = - \frac{1}{4\mu_{\Black}(b)}\sum_i \overline{\mu_{w_ib}}F(w_i).
	$$
	We call $F$ \emph{discrete holomorphic} at $b$ if $[\dbarpartial F](b)=0$.
\end{defn}
Note that we slightly abuse notation by using the same notation as in Definition \ref{def:holomorphy:black}, but since the definitions apply to different objects there is no risk of confusion.
\begin{prop}[Proposition 2.14 of \cite{ChelkakSmirnov}]\label{prop:laplacian:holomorphy}
	Let $H$ be defined on a subset of $\infBlack$. Then
	$$
		[\dDelta H](b) = 8\mu_B(b)[\dbarpartial\dpartial H](b)
	$$
	at all $b \in \infBlack$ where the right-hand side makes sense. 
    When $b \in \infDual$, the right-hand side is the (dual) discrete Laplacian on the graph $\infDual$ with conductances $c^0_{y^-y^+} = 1/c^0_{x^-x^+}$ (for all rhombi $x^-y^-x^+y^+$).
\end{prop}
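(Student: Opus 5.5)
The identity is purely local, so the plan is a direct computation at a single black vertex $b$. First I write out $[\dpartial H](w)$ at each white neighbour $w_i$ of $b$ via \eqref{eq:alternative:def:holomorphy}. Then I apply the dual operator of Definition~\ref{def:dirac:dual}, which gives
\[
[\dbarpartial\dpartial H](b) = -\frac{1}{4\mu_{\Black}(b)}\sum_i \frac{\mu_{w_ib}}{4\mu_{\White}(w_i)}\sum_{b'\sim w_i}\overline{\mu_{w_ib'}}\,H(b').
\]
Multiplying by $8\mu_{\Black}(b)$ cancels $\mu_{\Black}(b)$ entirely. What remains is to show that
\[
-\frac{1}{2}\sum_i \frac{\mu_{w_ib}}{4\mu_{\White}(w_i)}\sum_{b'\sim w_i}\overline{\mu_{w_ib'}}\,H(b')
\]
equals $\sum_{b''} c^0_{b,b''}(H(b'')-H(b))$. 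Here $b''$ ranges over the vertices of the same type as $b$ opposite to it in the rhombi around $b$.

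I take $b=x\in\infPrimal$ and consider one rhombus $w=w_i$ with vertices $x^-=x,y^-,x^+,y^+$ and half-angle $\theta$. From \eqref{eq:def:mu} the needed products are:
\begin{itemize}
\item $\mu_{wx^-}\overline{\mu_{wx^-}}=4\tan^2\theta\,|x^--w|^2=4\tan^2\theta\,\eps^2\cos^2\theta=4\eps^2\sin^2\theta$;
\item $\mu_{wx^-}\overline{\mu_{wx^+}}=-4\eps^2\sin^2\theta$, since $x^+-w=-(x^--w)$;
\item $\mu_{wx^-}\overline{\mu_{wy^\pm}}$, which by the second forms in \eqref{eq:def:mu} equals $\ii(y^--y^+)\cdot\overline{\ii(x^\pm-x^\mp)}=(y^--y^+)\overline{(x^\pm-x^\mp)}$. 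This is purely imaginary because the diagonals of a rhombus are orthogonal. The two terms come with opposite signs, so they contribute $\pm c\,(H(y^+)-H(y^-))$ with $c\in\ii\RR$.
\end{itemize}

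Dividing the real terms by $4\mu_{\White}(w)=4\eps^2\sin 2\theta$ and by $2$ gives the contribution $\tfrac12\tan\theta\,(H(x^+)-H(x^-))$ per rhombus. This is off from $c^0_{x,x^+}(H(x^+)-H(x))$ by a factor of $2$ unless the normalisations are tracked carefully. I will therefore pin the constant against Proposition 2.14 of \cite{ChelkakSmirnov}, adjusting for the fact that the present $\dDelta$ and $\mu_{\Black}$ differ from theirs by the factors $\eps^2\mu$ and $\eps^2\mu/2$ noted after \eqref{eq:def:weight} and after \eqref{eq:def:mu}. This bookkeeping of constants is the step I expect to be most error-prone.

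The imaginary cross terms $\pm c_i\,(H(y_i^+)-H(y_i^-))$ must cancel after summing over $i$. Consecutive rhombi around $x$ share a dual vertex $y$, with $y_i^+=y_{i+1}^-$. The coefficient of $H(y)$ receives one contribution from each of the two rhombi containing it, and these are $\ii\times$ the same real quantity $\pm|y-x|\,|x^+-x^-|$-type expressions with opposite orientation. I expect the cancellation to follow from the identity $\sum_i \mu_{w_ix}=0$, i.e.\ the closedness of the polygon of dual edges around $x$. I will verify this by writing each coefficient as $\Im$ of a telescoping sum.

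For $b=y\in\infDual$ the same computation applies with the roles swapped. Now $\mu_{wy^\pm}\overline{\mu_{wy^\pm}}$ carries $\cot^2\theta$, which yields the conductances $\cot\theta=1/c^0_{x^-x^+}$, as stated in the proposition.
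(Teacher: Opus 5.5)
Your route, a direct local computation at a single black vertex, is the right one. The paper itself just defers to the computation in \cite[Proposition~2.14]{ChelkakSmirnov}, with $\mu_\Black(b)$ absorbing the different normalisation of $\dDelta$. However, two steps do not go through as written.

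\textbf{The constant.} We have $8\mu_\Black(b)\cdot\bigl(-\tfrac{1}{4\mu_\Black(b)}\bigr)=-2$, not $-\tfrac12$. With the correct prefactor, a rhombus $w$ with $x=x^-$ contributes through its $x$-terms
\[
-2\cdot\frac{-4\eps^2\sin^2\theta}{4\eps^2\sin 2\theta}\,\bigl(H(x^+)-H(x)\bigr)=\tan\theta\,\bigl(H(x^+)-H(x)\bigr),
\]
which is exactly $c^0_{x,x^+}(H(x^+)-H(x))$. The factor-$2$ discrepancy is an artefact of this slip; your own $-\tfrac12$ would actually have produced $\tfrac14\tan\theta$. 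Also, ``pinning the constant'' against \cite{ChelkakSmirnov} is not available inside a direct proof. Had your arithmetic been right, it would have contradicted the statement, and a citation cannot repair that.

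\textbf{The cross terms.} The identity $\sum_i\mu_{w_ix}=0$ concerns positions only. It cannot make $\sum_i c_i\bigl(H(y_i^+)-H(y_i^-)\bigr)$ vanish for arbitrary $H$ unless the $c_i$ are all equal, and that equality is the real content. Since the $\tan\theta$ and $\cot\theta$ cancel, $\mu_{wx}\overline{\mu_{wy}}=4(x-w)\overline{(y-w)}$. Moreover $|x-w|\,|y-w|=\eps^2\sin\theta\cos\theta=\mu_\White(w)/2$. Hence $\mu_{wx}\overline{\mu_{wy}}/(4\mu_\White(w))=\pm\ii/2$, independently of $\theta$.

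Concretely, label each rhombus around $x$ with $x=x^-$, so that $y^+-y^-=\ii\tan\theta\,(x^+-x^-)$ as encoded in \eqref{eq:def:mu}. Then every rhombus contributes exactly $-\ii\bigl(H(y_i^+)-H(y_i^-)\bigr)$, and the sum telescopes because $y_i^+=y_{i+1}^-$. Equivalently, the two rhombi containing the edge $xy$ lie on opposite sides of it, so their contributions $\mp\ii$ to the coefficient of $H(y)$ cancel.

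Your phrase ``the same real quantity with opposite orientation'' is correct only after dividing by $4\mu_\White(w)$. The unnormalised quantities $|y-x|\,|x^+-x^-|$ you mention differ between the two rhombi, since they involve $\cos\theta_i$.

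With these two fixes the dual case goes through as you describe. The diagonal coefficient at $y$ is $-2\cdot\frac{4\eps^2\cos^2\theta}{4\eps^2\sin 2\theta}=-\cot\theta$ per rhombus, giving conductances $\cot\theta = 1/c^0_{x^-x^+}$.
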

\begin{proof}
    The proof is a straightforward computation, see the proof of Proposition 2.14 in \cite{ChelkakSmirnov}. The factor $\mu_{\Black}(b)$ comes from our different normalization of the Laplacian.
\end{proof}
\begin{defn}
    We say that a function $F: \infWhite \to \CC$ is a \emph{massive discrete differential} (on a part of $\infWhite$) if it is the discrete derivative of a massive harmonic function $F = \dpartial H$ with $H: \infPrimal \to \RR$. In this case, by the proposition above and Equation~\eqref{eq:mass:asymptotic},
    $$
        [\dbarpartial F](x) = \frac{m(x)}{4\eps^2 \mu(x)} H(x) = \frac{\mass(x)+O(\eps)}{4}H(x).
    $$
\end{defn}
Note that this definition is more restrictive than the corresponding definition of discrete holomorphic functions in \cite{ChelkakSmirnov}, but this will be enough for our purposes.

\begin{rmk}\label{rem:def:discrete:massive:differential}
    In the massless case, this notion coincides with that of a discrete holomorphic function, as used for instance in \cite{ChelkakSmirnov}. This is natural from a continuous perspective: a continuous (non-massive) holomorphic function on a simply connected domain is always the gradient of a continuous (non-massive) harmonic function, which makes this definition natural. 
    
    Although the definition above seems like a natural generalization of this notion to the massive case, we will see that in fact discrete massive differentials do not in general converge in the scaling limit to massive holomorphic functions in the sense of Definition \ref{D:massive_holo}. This is why we refer to functions satisfying the above definition as \emph{massive discrete differentials}.

    In our massive setting, the correct definition of a discrete massive holomorphic function would be any function in the kernel of the Kasteleyn matrix. We expect these functions to converge in the scaling limit towards continuous massive holomorphic functions in the sense of Definition \ref{D:massive_holo}: for example we prove this for the inverse Kasteleyn matrix in Theorem~\ref{thm:isoradial}. 
\end{rmk}

The basic building block of the discrete holomorphicity theory is the \emph{discrete Cauchy kernel}, which is the discrete analogue of the Cauchy kernel $k(z,w) = 1/[2\ii \pi (z-w)]$ in complex analysis. It was introduced by Kenyon in \cite{Ken02} for periodic isoradial graphs, but his arguments extend to the non-periodic isoradial case, see Appendix A.1 of \cite{ChelkakSmirnov}. For complex numbers $z, \zeta \in \CC$, let
\begin{equation}\label{eq:def:proj}
    \Proj[z,\zeta] = \Re\left(z \frac{\overline{\zeta}}{|\zeta|}\right)\frac{\zeta}{|\zeta|}
\end{equation}
denote the orthogonal projection of $z$ onto the line $\zeta \RR$.
\begin{thm}[Theorems 4.1, 4.2, 4.3 of \cite{Ken02}, Theorem 2.21 of \cite{ChelkakSmirnov}]\label{thm:cauchy:kernel}
    Let $w_2 \in \infWhite$. There exists a unique function $k = \Cauchy(\cdot, w_2): \infBlack \to \CC$ called the Cauchy kernel such that
    \begin{itemize}
        \item $[\dbarpartial k](w_1) = 0$ for all $w_1 \neq w_2$ and $[\dbarpartial k](w_2) = 1/\mu_{W}(w_2)$
        \item $|k(b_1)| \to 0$ as $|b_1-w_2| \to \infty$.
    \end{itemize}
    The Cauchy kernel has the following asymptotics:
    $$
        \begin{aligned}
        \Cauchy(x_1,w_2) &= \frac{2}{\pi}\Proj\left[\frac{1}{x_1-w_2};\overline{x_2^+-x_2^-}\right] + O\left(\frac{\eps}{|x_1-w_2|^2}\right) \quad ; \quad x_1 \in \infPrimal\\ 
        \Cauchy(y_1,w_2) &= \frac{2}{\pi}\Proj\left[\frac{1}{y_1-w_2};\overline{y_2^+-y_2^-}\right] + O\left(\frac{\eps}{|y_1-w_2|^2}\right) \quad ; \quad y_1 \in \infDual\\ 
        \end{aligned}
    $$
    where $x_2^+y_2^+x_2^-y_2^-$ denotes the rhombus with center $w_2$ and the $O$ holds uniformly in $\eps, x_1, y_1, w_2$ and the isoradial graph $\infPrimal$.
\end{thm}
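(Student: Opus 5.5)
The theorem is classical, and I would follow Kenyon's construction via discrete exponentials, as presented in Appendix~A.1 of \cite{ChelkakSmirnov}. The plan is to construct $\Cauchy$ explicitly as a contour integral, check the two defining properties, prove uniqueness by a Liouville-type argument, and extract the asymptotics by a local expansion of the integrand.

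\emph{Construction.} For $\lambda\in\CC$ and $b\in\infBlack$, define the discrete exponential $e_\lambda(b;w_2)$ by choosing a path in the superposition graph from a vertex of the rhombus of $w_2$ to $b$. Along the path, multiply factors of the form $(\lambda+e^{\ii\phi_j})/(\lambda-e^{\ii\phi_j})$, where $e^{\ii\phi_j}$ are the unit half-edge directions of the rhombi crossed. The rhombic structure makes the product around each rhombus equal to $1$, so $e_\lambda$ is path independent. For each fixed $\lambda$, a direct check on a single rhombus gives $\dbarpartial e_\lambda=0$ at every white vertex. Then set
\[
\Cauchy(b,w_2)=\frac{1}{2\pi^2\ii}\int_{C_b} e_\lambda(b;w_2)\,\frac{\log\lambda}{\lambda}\,\mathrm{d}\lambda ,
\]
up to normalisation. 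Here $C_b$ is a contour circling the poles $\{e^{\ii\phi_j}\}$ of $e_\lambda(b)$, and the branch of $\log\lambda$ is chosen depending on the direction of $b-w_2$, as in Kenyon's Theorem~4.1.

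\emph{Holomorphy and the delta mass.} Away from $w_2$, the same branch can be used for all four vertices of a rhombus, so $\dbarpartial\Cauchy(w_1)=0$ follows from the holomorphy of $e_\lambda$ under the integral. At $w_2$ itself the branch cut jumps across the rhombus, and the difference is a residue computation giving $1/\mu_{\White}(w_2)$.

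\emph{Decay and uniqueness.} Decay follows from the asymptotics below. For uniqueness, suppose $D$ is the difference of two solutions. Then $D$ is discrete holomorphic everywhere and tends to $0$ at infinity. By Proposition~\ref{prop:laplacian:holomorphy}, together with the commutation $\dbarpartial\dpartial=\dpartial\dbarpartial$ on black vertices, the restrictions of $\Re D$ and $\Im D$ to $\infPrimal$ and to $\infDual$ are discrete harmonic. The maximum principle on large balls, combined with decay at infinity, then forces $D\equiv0$.

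\emph{Asymptotics: the main obstacle.} The hard step is the uniform asymptotic expansion; the rest is algebra. The plan is to deform $C_b$ into two small circles around $\lambda=0$ and $\lambda=\infty$ of radius about $\eps/|b-w_2|$, plus the segment along the branch cut. Near $0$ one has $e_\lambda(b)\approx\exp(\lambda\,\overline{(b-w_2)}/\eps\cdot c)$, and near $\infty$ the analogous expansion in $1/\lambda$ holds. Integrating $\log\lambda/\lambda$ against these produces $\frac{1}{\pi}\bigl(\frac{1}{b-w_2}\cdot u+\overline{\frac{1}{b-w_2}\cdot u}\bigr)$ with $u$ the appropriate rhombus direction. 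This is exactly $\frac2\pi\Proj[1/(b-w_2);\overline{x_2^+-x_2^-}]$ on $\infPrimal$, and the analogous expression with the $y$-diagonal on $\infDual$. The next-order Taylor terms give the $O(\eps/|b-w_2|^2)$ error. The bounded angle assumption keeps the poles $e^{\ii\phi_j}$ away from the deformed contour, which makes the error uniform in $\eps$, $b$, $w_2$ and the isoradial graph; I expect this uniformity to be the delicate point.
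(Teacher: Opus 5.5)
The paper does not prove this statement itself; it imports it from \cite{Ken02} and Appendix~A.1 of \cite{ChelkakSmirnov}. Your outline follows that same route: Kenyon's discrete exponentials, a contour integral whose $\log$-branch depends on the direction of $b$, Liouville-type uniqueness, and endpoint analysis of the integral. Your uniqueness argument is fine.

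\textbf{The gap: your integrand gives a discrete logarithm, not the Cauchy kernel.} The holomorphy, delta-mass and asymptotic steps all rest on this formula. With $e_\lambda(\cdot;w_2)$ normalised at a vertex $v_0$ of the rhombus of $w_2$, the integral $\oint_{C_b}e_\lambda(b)\,\lambda^{-1}\log\lambda\,\mathrm{d}\lambda$ is, up to normalisation, Kenyon's integral for a discrete \emph{logarithm}. To see this, note that with your factors $e_\lambda(b)\to(-1)^n$ as $\lambda\to0$ and $e_\lambda(b)\to1$ as $\lambda\to\infty$, where $n$ is the combinatorial distance from $v_0$ to $b$. Shifting the branch of $\log\lambda$ by $2\pi\ii$ therefore changes the value by $2\pi\ii\oint_{C_b}e_\lambda(b)\,\lambda^{-1}\mathrm{d}\lambda=(2\pi\ii)^2\bigl(1-(-1)^n\bigr)$. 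This is nonzero on the sublattice not containing $v_0$, so there your function is multivalued: it is a discrete analogue of $\arg(b-v_0)$ and jumps along a whole ray. On the sublattice of $v_0$ you get a multiple of the free Green function with pole at $v_0$, which grows like $\log|b-w_2|$ instead of decaying.

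Your asymptotic step fails for the same reason. With $z=b-v_0$, the weight $\lambda^{-1}\log\lambda$ against $e^{2\lambda\bar z/\eps}$ produces $\int e^{-2t|z|/\eps}\,t^{-1}\,\mathrm{d}t$, which is $\log|z|$ behaviour and contains no $1/z$ term. Two smaller points:
\begin{itemize}
\item The expression $\frac1\pi\bigl(\frac{u}{z}+\overline{\frac{u}{z}}\bigr)$ you wrote is real, whereas $\Proj[1/z;\zeta]$ points along $\zeta$.
\item The exponential must be built along rhombus sides, i.e.\ on the diamond graph $\infPrimal\cup\infDual$. Along half-diagonals of the black--white superposition graph, the product of your factors around a face $x\,w_1\,y\,w_2$ is not $1$ unless that face is a rectangle, so path independence fails.
\end{itemize}

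\textbf{What is missing is a factor encoding the rhombus of $w_2$.} Let $\eps e^{\ii\alpha}$ and $\eps e^{\ii\beta}$ be the two sides of that rhombus at $x_2^-$, and take $v_0=x_2^-$. You should integrate $e_\lambda(b)\log\lambda\,/\bigl((\lambda-e^{\ii\alpha})(\lambda-e^{\ii\beta})\bigr)$, suitably normalised. This is Kenyon's exponential normalised at the face $w_2$, or equivalently a discrete derivative of the Green function in its source across that rhombus.

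This rational weight has no pole at $0$ and is $O(\lambda^{-2})$ at $\infty$. Hence its contour integral around all its poles vanishes, and the branch ambiguity disappears.

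In the endpoint analysis, the cut now contributes a multiple of $(-1)^ne^{-\ii(\alpha+\beta)}\,\eps/\bar z$ near $\lambda=0$, and the same multiple of $\eps/z$ near $\lambda=\infty$. Since $\arg(x_2^+-x_2^-)=(\alpha+\beta)/2$, we have $e^{-\ii(\alpha+\beta)}=\zeta^2/|\zeta|^2$ with $\zeta=\overline{x_2^+-x_2^-}$. For $b\in\infPrimal$ ($n$ even) this gives $\frac1z+\frac{\zeta^2}{|\zeta|^2\bar z}=2\Proj[1/z;\zeta]$. For $b\in\infDual$ the sign $(-1)^n=-1$ replaces $\zeta$ by $\overline{y_2^+-y_2^-}$. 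This face factor, together with the parity of $n$, is exactly what produces the two projection directions in the statement; your integrand has no mechanism for either.
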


The discrete Cauchy formula of Proposition 2.22 of \cite{ChelkakSmirnov} can be adapted to the massive setting as follows. Let $\Primalbis \subset \infPrimal$ denote a simply connected subgraph and $\Dualbis$ denote its restricted dual (see Figure~\ref{fig:chelkak-smirnov} and below).
Let $x_0w_0x_1w_1 \dots x_nw_n$ denote the closed counterclockwise polyline boundary of the simply connected domain obtained by gluing all faces of $\Primalbis$ (see Figure~\ref{fig:chelkak-smirnov}), with vertices alternating between $\Primalbis$ and $\White$. 
Denote
$$
    \partial \Primalbis = \{x_0, \dots, x_n\} \quad ; \quad \partial W(\Primalbis) = \{w_0, w_1, \dots, w_n\},
$$
respectively the black and white vertices on the thick red outer line of Figure~\ref{fig:chelkak-smirnov}.

\begin{figure}
    \centering
    \begin{overpic}[scale = 3]{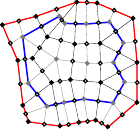}
        \put(6,48){$x_i$}
        \put(4,16){$x_{i+1}$}
        \put(7,31){$w_i$}
        \put(22,32){$y(w_i)$}
        \put(77,33){$y_{k+1}$}
        \put(76,21){$y_k$}
        \put(87,26){$\tilde{w}_k$}
        \put(100,19){$x(\tilde{w}_k)$}
    \end{overpic}
    \caption{A simply connected subgraph $\Primalbis$ of $\infPrimal$, in black. Its restricted dual $\Dualbis$ in grey.
The polylines $x_0w_0\dots x_nw_n$ and $y_0\tilde{w}_0\dots y_m \tilde{w}_m$ are respectively the outer thick red line and inner thick blue line.} 
    \label{fig:chelkak-smirnov}
\end{figure}

Note the difference with the notation $\partialout \Primalbis$ for the outer vertex boundary, which we introduced in Section~\ref{subsec:straight}. 
For all $i$, $x_i, x_{i+1}$ are the opposite vertices of a rhombus with center $w_i$. 
Let us denote by $y(w_i) \in \Dualbis$ the unique vertex of this rhombus lying strictly inside the polyline (see Figure~\ref{fig:chelkak-smirnov}).
For $F: W \to \CC, G: \Dualbis \to \CC$, we can define a contour integral
\begin{equation}
    \begin{aligned}
        \oint_{\partial \Primalbis}F(w)G(y)\dd w &:= \sum_{i=0}^{n}F(w_i)G(y(w_i))\cdot (x_{i+1}-x_i),\\
    \end{aligned}
\end{equation}
with $x_{n+1} = x_0$. Similarly, let $y_0\tilde w_0 y_1 \tilde w_1 \dots y_m \tilde w_m$ denote the closed polyline path, enumerated in counter-clockwise order, passing through the centers of all faces touching $x_0 x_1\dots x_n$ from inside (see also Figure~\ref{fig:chelkak-smirnov}). 
Denote
$$
    \partial \Dualbis = \{y_0, \dots, y_m\} \quad ; \quad \partial W(\Dualbis) = \{\tilde w_0, \tilde w_1, \dots, \tilde w_m\},
$$
respectively the grey and white vertices on the thick blue inner line of Figure~\ref{fig:chelkak-smirnov}.
For all $k$, $y_k, y_{k+1}$ are the opposite vertices of a rhombus with center $\tilde k_i$. 
Let us denote by $x(\tilde w_k) \in \partial \Lambda$ the unique black vertex of the rhombus containing $\tilde w_k$ lying on the polyline $x_0w_0\dots x_nw_n$.
For $F: W \to \CC, G: \Primalbis \to \CC$, we define a contour integral
\begin{equation}
    \begin{aligned}
        \oint_{\partial \Dualbis}F(w)G(x)\dd w &:= \sum_{i=0}^{m}F(\tilde w_i)G(x(\tilde w_i))\cdot (y_{i+1}-y_i).
    \end{aligned}
\end{equation}
Finally, let $\White(\Primalbis)$, resp. $\Black(\Primalbis)$, denote the set of white, resp. black, vertices inside (or on the boundary of) the closed polyline $x_0  \dots w_n$ (note that $B(\Lambda)$ contains both primal and dual black vertices). Define
\begin{equation}
    \Int \Primalbis = \Primalbis \setminus \partial \Primalbis 
    \quad ; \quad 
    \Int \White(\Primalbis) = \White(\Primalbis) \setminus \partial \White(\Primalbis),
    \quad ; \quad 
    \Int \Black(\Primalbis) = \Black(\Primalbis) \setminus \partial \Primalbis.
\end{equation}
Note that $\Int(\Lambda) \subset \infPrimal$ contains all \emph{primal} black vertices (strictly) inside the polyline $x_0, \ldots, w_n$, whereas $\Int B(\Lambda) \subset \infPrimal \cup \infDual$ contains \emph{all} black vertices (whether primal or dual) strictly inside that polyline. Thus, $\Int B(\Lambda) = \Int \Primalbis \cup \Dualbis$; where recall that $\Dualbis$ denotes the restricted dual.

Up to now everything was as in \cite{ChelkakSmirnov}, but we now start discussing how this theory needs to be adapted in our massive context. We are ready to adapt the discrete Cauchy formula Proposition 2.22 of \cite{ChelkakSmirnov} to the massive setting.
\begin{prop}\label{prop:cauchy:formula}
    Let $\Primalbis \subset \infPrimal$ be a simply connected subgraph and $H: \infPrimal \to \RR$ be a real discrete massive harmonic function on $\Int \Primalbis$. 
    Denote by $F := \dpartial H : \infWhite \to \CC$. 
    Recall that this is what we call a discrete massive differential. 
    Then, for any $w_* \in 
    \Int \White(\Primalbis)$, 
    $$
        F(w_*) = \frac{1}{4\ii}\left[\oint_{\partial \Primalbis} F(w) \Cauchy(y,w_*) dw +\oint_{\partial \Dualbis} F(w ) \Cauchy(x,w_*) dw\right] -\frac{1}{2} \sum_{x \in \Int(\Lambda)}\Cauchy(x,w_*)m(x)H(x).
    $$
\end{prop}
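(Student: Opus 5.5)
The plan is to follow the proof of Proposition 2.22 of \cite{ChelkakSmirnov} essentially verbatim, using a discrete summation by parts on the white vertices of $\White(\Primalbis)$. Massivity enters only through the fact that $\dbarpartial F$ no longer vanishes at black vertices; that defect becomes the bulk sum.

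\textbf{Step 1: the summation-by-parts identity.} For functions $F$ on $\infWhite$ and $G$ on $\infBlack$, I would start from the discrete Stokes/Green identity underlying \cite{ChelkakSmirnov}. Summing $\mu_{\White}(w)\,F(w)\,[\dbarpartial G](w)$ over $w \in \White(\Primalbis)$ should equal
\[
-\sum_{b\in \Int \Black(\Primalbis)}\mu_{\Black}(b)\,G(b)\,[\dbarpartial F](b)
\]
plus boundary contour terms. Here one uses \eqref{eq:alternative:def:holomorphy} and Definition~\ref{def:dirac:dual}: both operators are built from the same coefficients $\mu_{wb}$, with the factor $\tfrac14$ and opposite signs. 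The contour terms come from black vertices on the polylines, whose white neighbourhood is only partially included. They should reproduce exactly $\frac{1}{4\ii}\big[\oint_{\partial\Primalbis}F\,G\,\dd w+\oint_{\partial\Dualbis}F\,G\,\dd w\big]$, using $\mu_{wx^\pm}=\ii(y^\mp-y^\pm)$ and $\mu_{wy^\pm}=\ii(x^\pm-x^\mp)$. This is a purely combinatorial identity, independent of the mass, so I would quote it from \cite{ChelkakSmirnov} and only check the signs and the orientation conventions.

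\textbf{Step 2: apply the identity with the Cauchy kernel.} Take $G=\Cauchy(\cdot,w_*)$. By Theorem~\ref{thm:cauchy:kernel}, $[\dbarpartial G](w)=\delta_{w,w_*}/\mu_{\White}(w_*)$, so the white sum collapses to $F(w_*)$. This requires $w_* \in \Int \White(\Primalbis)$, so that $w_*$ is counted with its full rhombus.

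Next we need $\dbarpartial F$ at the black vertices:
\begin{itemize}
\item At dual vertices $y\in\Dualbis$, $F=\dpartial H$ with $H$ extended by zero on $\Primal^*$. The dual $\dbarpartial\dpartial$ then reduces to the dual Laplacian of the zero function (Proposition~\ref{prop:laplacian:holomorphy}), so it vanishes.
\item At interior primal vertices $x\in\Int\Primalbis$, Proposition~\ref{prop:laplacian:holomorphy} gives $\mu_{\Black}(x)[\dbarpartial F](x)=\tfrac18\dDelta H(x)=\tfrac18 m(x)H(x)$.
\end{itemize}
Hence the bulk term is $\sum_{x \in \Int(\Lambda)}\Cauchy(x,w_*)\,m(x)H(x)$ times a constant. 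Rearranging gives the formula, and the constant must come out as $-\tfrac12$ (up to the normalisation of the identity). Fixing that constant is bookkeeping: the factor $8$ in Proposition~\ref{prop:laplacian:holomorphy}, the $\tfrac14$'s, and the relation $\mu_{\Black}=\eps^2\mu/2$.

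\textbf{Main obstacle.} I expect the difficulty to be the precise boundary bookkeeping in Step 1, not the mass. Black vertices on $\partial\Primalbis$, and the dual vertices $y(w_i)$ adjacent to them, have only part of their white neighbourhood inside $\White(\Primalbis)$. One has to verify that the leftover partial sums are exactly the two contour integrals, with each term pairing $F(w_i)$ against $G$ at the \emph{opposite-type} vertex of the rhombus, that is $y(w_i)$ or $x(\tilde w_k)$.

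The cleanest way to do this is to reduce to the massless statement. Since the identity is linear and local, I would write $F(w_*)$ minus the contour terms as $-\sum_b \mu_{\Black}(b)\Cauchy(b,w_*)[\dbarpartial F](b)$ over interior black vertices. In the massless case this sum vanishes, which is exactly Proposition 2.22 of \cite{ChelkakSmirnov}. Since that proposition is itself proved through this identity for arbitrary $F$, the identity holds for general $F$, and the massive correction is simply the nonzero bulk term computed in Step 2.
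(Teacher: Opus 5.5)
Your argument is the paper's. It integrates by parts against $\Cauchy(\cdot,w_*)$ over $\Int\White(\Primalbis)$; summing over interior white vertices is what makes the leftovers pair $F(w_i)$ with $\Cauchy(y(w_i),w_*)$ and $F(\tilde w_k)$ with $\Cauchy(x(\tilde w_k),w_*)$. It identifies those leftovers with the two contour integrals via $\mu_{wx^\pm}=\ii(y^\mp-y^\pm)$ and $\mu_{wy^\pm}=\ii(x^\pm-x^\mp)$. Finally it evaluates $\dbarpartial F$, which vanishes on $\Dualbis$ and equals $m H/(8\mu_\Black)$ on $\Int\Primalbis$.

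However, the step you deferred as bookkeeping does not produce $-\tfrac12$. Your own two ingredients already fix the constant. With $G=\Cauchy(\cdot,w_*)$, your identity becomes
\[
F(w_*) = \frac{1}{4\ii}\Big[\oint_{\partial \Primalbis} F(w) \Cauchy(y,w_*)\, dw +\oint_{\partial \Dualbis} F(w) \Cauchy(x,w_*)\, dw\Big] -\sum_{x \in \Int(\Primalbis)}\mu_\Black(x)\,\Cauchy(x,w_*)\,\frac{m(x)H(x)}{8\mu_\Black(x)},
\]
so the bulk coefficient is $-\tfrac18$.

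The paper's proof gives the same result. It derives
\[
4F(w_*)=-\ii\Big[\oint_{\partial\Primalbis}+\oint_{\partial\Dualbis}\Big]-4\sum_{b}\mu_\Black(b)\Cauchy(b,w_*)[\dbarpartial F](b)
\]
together with $4\mu_\Black(x)[\dbarpartial F](x)=\tfrac12 m(x)H(x)$. So the $-\tfrac12$ in the statement is the bulk coefficient of the identity for $4F(w_*)$. It was left undivided, while the boundary part was divided by $4$.

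The continuum Cauchy--Pompeiu formula agrees with $-\tfrac18$. Since $\bar\partial(\partial H)=\tfrac14\mass H$, the bulk term should be $-\tfrac{1}{4\pi}\int\frac{\mass H}{z-w_*}\,\mathrm{d}A$. This is what $-\tfrac18\sum_x\Cauchy(x,w_*)m(x)H(x)$ reproduces, using $\Cauchy(x,w_*)\approx\tfrac{2}{\pi}\Proj\big[\tfrac{1}{x-w_*};\,\cdot\,\big]$ and $m=\eps^2\weight\mass+O(\eps^3)$. The coefficient $-\tfrac12$ would instead give $-\tfrac{1}{\pi}$.

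So do not match the constant to the statement; carry out the computation and prove the formula with $-\tfrac18$. The slip carries over to the bulk term of Corollary~\ref{cor:asymptotic:cauchy:formula}, which should be divided by $4$. It is harmless later on, because Proposition~\ref{prop:discrete:gradient} and the lemmas resting on it only use the size of that term.
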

\begin{proof}
    The proof is the same as in \cite{ChelkakSmirnov}, taking into account the massive terms.  By the definitions of the discrete Cauchy kernel and of $\dbarpartial$ (see the alternative definition Equation \eqref{eq:alternative:def:holomorphy}),
    $$
        \begin{aligned}
        4F(w_*) &= \underset{w \in \Int\White(\Primalbis), b \sim w, b \in \Black(\Primalbis)}{\sum\sum}F(w)\mu_{wb}\Cauchy(b,w_*)\\
        &= \underset{b  \in \Black(\Primalbis),w \sim b, w \in \Int\White(\Primalbis)}{\sum\sum}\Cauchy(b,w_*)\mu_{wb}F(w)\\
        &= \sum_{x \in \partial \Primalbis, w \sim x, w \in \partial W(\Dualbis)}\Cauchy(x,w_0)\mu_{wx}F(w)-\sum_{y \in \partial \Dualbis, w \sim y, w \in \partial W(\Primalbis)}\Cauchy(y,w_*)\mu_{wy}F(w)\\
        &\quad +\underset{b  \in \Int\Black(\Primalbis),w \sim b, w \in \White(\Primalbis)}{\sum\sum}\Cauchy(b,w_0)\mu_{wb}F(w)\\
        &= \sum_{x \in \partial \Primalbis, w \sim x, w \in \partial W(\Dualbis)}\Cauchy(x,w_*)\mu_{wx}F(w)-\sum_{y \in \partial \Dualbis, w \sim y, w \in \partial W(\Primalbis)}\Cauchy(y,w_*)\mu_{wy}F(w)\\
        &\quad -4\sum_{b \in \Int\Black(\Primalbis)}\mu_B(b)\Cauchy(b,w_*)[\dbarpartial F](b).
        \end{aligned}
    $$
    Note that the first two sums are exactly the integrals appearing in the statement of the proposition by Equation \eqref{eq:def:mu}, up to a factor $\ii$. For instance, 
    \begin{align*}
        \sum_{x \in \partial \Primalbis, w \sim x, w \in \partial W(\Dualbis)}\Cauchy(x,w_*)\mu_{wx}F(w) & = \sum_{w \in \partial W(\Lambda^*)} \Cauchy ( x(w), w_*) \mu_{w, x(w)} F(w)\\
        & = -\ii \sum_{i=0}^m F(\tilde w_i) \Cauchy ( x(\tilde w_i), w_*) (y_{i+1} - y_i)\\
        & = - \ii \oint_{\partial \Dualbis} F(w ) \Cauchy(x,w_*) dw.
    \end{align*}
Likewise, 
\begin{align*}
    \sum_{y \in \partial \Dualbis, w \sim y, w \in \partial W(\Primalbis)}\Cauchy(y,w_*)\mu_{wy}F(w) & = \sum_{w \in \partial W(\Lambda)} F(w) \Cauchy (y(w), w_*) \mu_{w, y(w)}\\
    & = \sum_{i=0}^n F(w_i) \Cauchy ( y(w_i), w_*)  \ii (x_{i+1} - x_i)\\
    & = \ii \oint_{\partial \Primalbis} F(w) \Cauchy(y,w_*) dw 
\end{align*}

    We identify the last term. For $x \in \Int \Primalbis$, 
    $$
       4\mu_B(x)[\dbarpartial F](x) = 4\mu_B(x)[\dbarpartial \dpartial H](x) = \frac12(\dDelta H)(x) = \frac12m(x)H(x)
    $$
    since $H$ is massive harmonic on $\Int \Primalbis$. For $y \in \Dualbis$, by definition 
    $$
        [\dbarpartial F](y) = [\dbarpartial \dpartial H](x) = \frac{1}{8\mu_B(y)}  [\dDelta H](y) = 0
    $$
    since in the definition of $F = \dpartial H$ we used the convention $H_{|\infDual} = 0$ (see Definition \ref{def:holomorphy:black}). 
\end{proof}



As in Corollary 2.23 of \cite{ChelkakSmirnov}, we provide an asymptotic form of the discrete Cauchy formula using Theorem \ref{thm:cauchy:kernel}.

\begin{corollary}\label{cor:asymptotic:cauchy:formula}
    Let $\Primalbis \subset \infPrimal$ be a simply connected subgraph and $H: \infPrimal \to \CC$ be a discrete massive harmonic function on $\Int \Primalbis$. Denote by $F := \dpartial H : \infWhite \to \CC$. For any $w_0 \in \White(\Primalbis)$, denote by $x_0^+y_0^+x_0^-y_0^-$ the corresponding rhombus, in counterclockwise order. Then,
    $$
        \begin{aligned}
        F(w_0) &= \Proj\left[\frac{1}{2\ii \pi}\left(\oint_{\partial \Primal} \frac{F(w)}{w-w_0}\dd w + \oint_{\partial \Dualbis} \frac{F(w)}{w-w_0}\dd w\right)-\sum_{x \in \Int(\Primalbis)}\frac{\eps^2\mu(x)\mass(x)H(x)}{\pi(x-w_0)};\overline{x_0^+-x_0^-}\right]\\
        &\quad + O\left(\frac{\eps \cF \cL}{d^2}\right) + O\left(\cH \cM \eps \log\left(\frac{\cD}{\eps}\right)\right)
        \end{aligned}
    $$
    where $d = \dist(w_0, \partial \Dualbis)$, $\cH = \max_{x \in \Primalbis} |H(x)|$, $\cF = \max_{w \in \partial \White(\Primalbis) \cup \partial \Black(\Primalbis)}|F(w)|$ and $\cL = O(\eps(n+m))$ is the sum of the length of the polylines $\partial \Lambda$ and $\partial \Lambda^*$, $\cM = \sup_{x \in \Primalbis}|\mass(x)|$, $\cD = \diam(\Primalbis)$ (the diameter of the simply connected set obtained by gluing all faces).
\end{corollary}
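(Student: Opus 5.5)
The plan is to start from the exact massive Cauchy formula of Proposition~\ref{prop:cauchy:formula}, applied at $w_*=w_0$. In each of its three terms I would substitute the asymptotics of the discrete Cauchy kernel from Theorem~\ref{thm:cauchy:kernel}, and the error terms then come out as in Corollary~2.23 of \cite{ChelkakSmirnov}, with one extra bulk term to handle.

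\textbf{Boundary terms.} For $y$ on $\partial\Lambda$-adjacent dual vertices and $x$ on $\partial\Lambda$, replace $\Cauchy(b,w_0)$ by
\begin{equation*}
\frac{2}{\pi}\Proj\left[\frac{1}{b-w_0};\overline{x_0^+-x_0^-}\right]+O\left(\frac{\eps}{|b-w_0|^2}\right).
\end{equation*}
Here one uses that the projection directions for primal and dual vertices are $\overline{x_0^+-x_0^-}$ and $\overline{y_0^+-y_0^-}$; these are orthogonal, since the rhombus diagonals are perpendicular.

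The contour integrals carry an extra factor $\dd w$ and a prefactor $\frac{1}{4\ii}$. Following Chelkak--Smirnov, I would check that $\frac{1}{4\ii}\cdot\frac{2}{\pi}\Proj[\cdot]$, combined over primal and dual contributions, reassembles into $\Proj\big[\frac{1}{2\ii\pi}\oint\frac{F(w)}{w-w_0}\dd w;\overline{x_0^+-x_0^-}\big]$. This is an algebraic manipulation identical to the massless case, with the $\ii$ factors tracked through $\Proj$ being $\RR$-linear.

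We also replace $b$ by $w$ in $1/(b-w_0)$, at a cost of $O(\eps/|w-w_0|^2)$. Summing all these errors, with $|F|\le\cF$ on at most $\cL/\eps$ boundary terms of length $O(\eps)$ each at distance $\ge d$, gives $O(\eps\cF\cL/d^2)$.

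\textbf{Bulk term.} This is the new part:
\begin{equation*}
-\tfrac12\sum_{x\in\Int\Lambda}\Cauchy(x,w_0)\,m(x)H(x).
\end{equation*}
By \eqref{eq:mass:asymptotic}, $m(x)=\eps^2\mu(x)M(x)+O(\eps^3)$. The $O(\eps^3)$ part contributes at most $\eps^3\cH\sum_x|\Cauchy(x,w_0)|$. Since $|\Cauchy(x,w_0)|\lesssim 1/|x-w_0|$ (bounded near $w_0$), and there are $O(r/\eps^2)\,\mathrm{d}r$ vertices at distance $r$, we get
\begin{equation*}
\sum_x|\Cauchy(x,w_0)|\lesssim \eps^{-2}\int_\eps^{\cD}\frac{r\,\mathrm{d}r}{r}=O(\cD\eps^{-2}),
\end{equation*}
so this part is $O(\eps\cH\cD)$. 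That is absorbed, treating $\cD$ as bounded, or else dominated by the log term after adjusting constants.

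For the main part, replace $\Cauchy$ by its asymptotic. The kernel error $\eps/|x-w_0|^2$, summed against $\eps^2\cM\cH$, gives
\begin{equation*}
\eps^3\cM\cH\sum_x|x-w_0|^{-2}\lesssim \eps\cM\cH\int_\eps^{\cD}\frac{\mathrm{d}r}{r}=O\big(\cH\cM\eps\log(\cD/\eps)\big).
\end{equation*}
The vertices with $|x-w_0|=O(\eps)$ contribute only $O(\eps\cM\cH)$. What remains is
\begin{equation*}
-\tfrac12\cdot\tfrac{2}{\pi}\sum_x\eps^2\mu(x)M(x)H(x)\,\Proj\big[\tfrac{1}{x-w_0};\overline{x_0^+-x_0^-}\big],
\end{equation*}
and pulling $\Proj$ out by $\RR$-linearity ($\mu M H$ is real) yields exactly the stated sum. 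Only primal $x$ appear here, so the projection direction is the primal one, consistent with the statement.

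\textbf{Main obstacle.} The delicate point is the boundary bookkeeping. One must verify that the primal-contour and dual-contour pieces, each projected onto its own direction, combine into a single projection of the full sum of both contour integrals onto $\overline{x_0^+-x_0^-}$. I expect this to follow verbatim from the proof of Corollary~2.23 in \cite{ChelkakSmirnov}, because the boundary terms there do not use the holomorphicity of $F$. I would reproduce that computation rather than re-derive it.
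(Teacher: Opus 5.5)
Your architecture is the paper's: apply the exact formula of Proposition~\ref{prop:cauchy:formula}, substitute the kernel asymptotics, and sum the errors. Your bulk mass term is handled correctly. The gap is the one step you flag as the ``main obstacle'' and then defer. You do not give the reason the two contour pieces collapse onto the single line $\overline{x_0^+-x_0^-}\RR$, and the reason you offer (that the boundary terms do not use holomorphicity of $F$) is not the relevant one.

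Your displayed replacement is also wrong for dual vertices. By Theorem~\ref{thm:cauchy:kernel}, $\Cauchy(y,w_0)\approx\frac{2}{\pi}\Proj[\frac{1}{y-w_0};\overline{y_0^+-y_0^-}]$, which is a projection onto the perpendicular line. Since $\Proj$ is only $\RR$-linear, you cannot move the complex coefficient $\frac{1}{4\ii}F(w)\,\dd w$ inside it, let alone change the line, without knowing its phase. ``Tracking the $\ii$ factors'' does nothing until that phase is identified.

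The missing observation concerns the phase on each contour.
\begin{itemize}
\item On $\partial\Dualbis$ one has $\dd w=y_{k+1}-y_k\perp x^+-x^-$. Because $H$ is real and $H|_{\Dual}=0$ in the definition of $\dpartial$, this gives $\frac{F(w)\dd w}{4\ii}=\frac{(H(x^+)-H(x^-))(y_{k+1}-y_k)}{8\ii(x^+-x^-)}\in\RR$. This piece is paired with the primal kernel $\Cauchy(x(w),w_0)$, so it enters $\Proj[\,\cdot\,;\overline{x_0^+-x_0^-}]$ by $\RR$-linearity.
\item On $\partial\Primalbis$ one has $\dd w=x_{i+1}-x_i$, so $\frac{F(w)\dd w}{4\ii}=\frac{H(x_{i+1})-H(x_i)}{8\ii}\in\ii\RR$. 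This piece is paired with the dual kernel. The identity $\ii\Proj[z;\zeta]=\Proj[\ii z;\ii\zeta]$, together with $\ii\,\overline{y_0^+-y_0^-}\parallel\overline{x_0^+-x_0^-}$, rotates it onto the primal direction.
\end{itemize}

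This reality structure is indispensable, not cosmetic. The ``$H:\infPrimal\to\CC$'' in the statement must be read as real-valued, as in Proposition~\ref{prop:cauchy:formula}. Indeed, for $H=\ii h$ with $h$ real, the left side $F(w_0)=\frac{\ii}{2}\frac{h(x_0^+)-h(x_0^-)}{x_0^+-x_0^-}$ lies on $\ii\,\overline{x_0^+-x_0^-}\RR$. That line is orthogonal to the one the right side projects onto, while the error terms are $O(\eps\log)$, so the identity fails.

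For the same reason, the combination does not follow ``verbatim'' from Chelkak--Smirnov. As the remark after the statement notes, their corollary gives separate asymptotics for two different projections of a general discrete holomorphic $F$. The single-projection form here holds only because $F=\dpartial H$ with $H$ real on the primal graph and zero on the dual. You use realness of $H$ in the bulk term but not at the boundary, where it is essential.

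Once you insert this phase observation, the rest of your bookkeeping goes through: replacing $b$ by $w$ costs $O(\eps/d^2)$ per term, and summing over $O(\cL/\eps)$ terms gives $O(\eps\cF\cL/d^2)$.
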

\begin{rmk}
    The analogous result of \cite{ChelkakSmirnov} establishes separate asymptotics for the two different projections $\cB F$ and $\cW F$ (which we did not define). In our case, since $F = \dpartial H$ and $H$ is real on $\Primalbis$ and vanishes on $\Dualbis$, $F$ coincides with $\cB F$ by Remark 2.18 of \cite{ChelkakSmirnov}.
\end{rmk}
\begin{proof}
    Again, the proof is exactly the same as the proof of Corollary 2.23 of \cite{ChelkakSmirnov}, with an additional mass term. Let $w \in \partial \White(\Dualbis)$ correspond to the rhombus $x_1y_1x_2y_2$. Note that the edge $y_1y_2$ of the dual graph $\Dualbis$ belongs to the polyline $y_0\dots y_m$, hence $\dd w = y_2-y_1$, and since furthermore $H|_{\Gamma^*} = 0$,
    $$
        \frac{F(w)\dd w}{4\ii}= \frac{(H(x_2)-H(x_1))(y_2-y_1)}{8\ii(x_2-x_1)} \in \RR
    $$
    because $y_2-y_1$ and $x_2-x_1$ are orthogonal. Hence by the asymptotics of Theorem \ref{thm:cauchy:kernel},
    $$
        \Cauchy(y(w),w_0)\cdot \frac{F(w)\dd w}{4\ii} = \Proj \left[\frac{F(w)\dd w}{2\ii\pi(y(w)-w_0)}; \overline{x_0^+-x_0^-}\right]+O\left(\frac{\eps \cF|\dd w|}{d^2}\right).
    $$
    Similarly, let $w \in \partial \Black(\Primalbis)$ correspond to the rhombus $x_1y_1x_2y_2$. Note that the edge $x_1x_2$ of the primal graph belongs to the polyline $x_0\dots x_n$: hence $\dd w = x_2-x_1$, and
    $$
        \frac{F(w)\dd w}{4\ii} = \frac{H(x_2)-H(x_1)}{8\ii} \in \ii\RR.
    $$
    Hence by the asymptotics of Theorem \ref{thm:cauchy:kernel}, and using the fact that $\ii \Proj (z; \zeta) = \Proj ( \ii z, \ii \zeta)$, 
    $$
        \begin{aligned}
        \Cauchy(x(w),w_0) \cdot \frac{F(w)\dd w}{4\ii} 
        &= \Proj\left[\frac{F(w)\dd w}{2\ii\pi (x(w)-w_0)}; \ii \overline{y_0^+-y_0^-}\right] + O\left(\frac{\eps \cF |\dd w|}{d^2}\right) \\
        &= \Proj\left[\frac{F(w)\dd w}{2\ii\pi (x(w)-w_0)}; \overline{x_0^+-x_0^-}\right] + O\left(\frac{\eps \cF |\dd w|}{d^2}\right)
        \end{aligned}
    $$
    since $y_0^+ - y_0^-$ is orthogonal to $x_0^+-x_0^-$. Finally, for $x \in \Int(\Primalbis)$, using Equation \eqref{eq:mass:asymptotic},
    $$
        \Cauchy(x,w_0)m(x)H(x) = \Proj\left[\frac{2\eps^2\mu(x)\mass(x)H(x)}{\pi(x-w_0)};\overline{x_0^+-x_0^-}\right]+O\left(\frac{\eps^3 \cH \cM}{|x-w_0|^2}\right).
    $$
    Since 
    $$
    \sum_{x \in \Primalbis}\frac{\eps^3}{|x-w_0|^2} = O\left(\eps \log\left(\frac{\cD}{\eps}\right)\right),
    $$
    we finally obtain the corollary.
\end{proof}
For $F: \infWhite \to \CC$, we define an averaging operator
$$
    [\daverage F](z) := \frac{1}{4\mu_\Black(z)}\sum_{w \sim z, w \in \infWhite}\mu_\White(w)F(w) \quad; \quad z \in \infBlack.
$$
The point is that, when $F$ is a massive discrete differential, it is actually not that well behaved at the discrete level itself; rather, it should be viewed as the projection onto a direction $x_0^-, x_0^+$ -- which is lattice-dependent -- of $A_\# F$, which \emph{is} well behaved (e.g., it is $\beta$-Hölder for all $\beta <1$, as we will see below).

We now provide an analogue of Proposition 2.24 of \cite{ChelkakSmirnov}: for a massive harmonic function $H$ on $\Gamma^\infty$ we are able to define a \emph{discrete gradient} of $H$ (namely, $A_\# F$ where $F = \partial_\# H$ below) such that the discrete derivative in a given direction is close to the projection of the {discrete} gradient in this direction. We are also able to prove that this discrete gradient is well-behaved. 

\begin{prop}\label{prop:discrete:gradient}
    Let $u \in \CC$, $R> r >0$ and recall that $B_\#(u,R) \subset \infPrimal$ denotes the  discretized euclidean ball. Assume that $H$ is massive harmonic in $\Int B_\#(u,R)$, and denote by $F = \dpartial H$. Then for all $x_0 \in B(u,r)$, for all $x_0 \sim w_0 \in \infWhite$,
    $$
        \left|F(w_0)-\Proj\left[2[\daverage F](x_0);\overline{w_0-x_0}\right]\right| = O\left(\frac{\cF \eps R}{(R-r)^2}\right) + O\left(\cM \cH \eps \log\left(\frac{2R}{\eps}\right)\right).
    $$
    Moreover, for all $r>0$, $x_0, x_0' \in B_\#(u,r)$,
    $$
        \left|[\daverage F](x_0')-[\daverage F](x_0)\right| = O\left(\frac{|x_0'-x_0|R\cF}{(R-r)^2}\right) + O\left(\cM \cH |x_0'-x_0|\left(1+\log\left(\frac{R}{|x_0'-x_0|}\right)\right)\right)
    $$
\end{prop}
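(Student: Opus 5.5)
Both bounds will be read off from the asymptotic discrete Cauchy formula of Corollary~\ref{cor:asymptotic:cauchy:formula}, applied with $\Primalbis = B_\#(u,R)$, or more precisely its largest simply connected subgraph whose boundary polylines lie at distance $R+O(\eps)$ from $u$. Every $w_0$ adjacent to $x_0\in B(u,r)$ then satisfies $d=\dist(w_0,\partial\Dualbis)\ge R-r-O(\eps)$. The boundary length is $\cL=O(R)$ and the diameter is $\cD=O(R)$. Define the continuous field
$$
\Phi(z) := \frac{1}{2\ii\pi}\Big(\oint_{\partial\Primalbis}\frac{F(w)}{w-z}\dd w+\oint_{\partial\Dualbis}\frac{F(w)}{w-z}\dd w\Big)-\sum_{x\in\Int\Primalbis}\frac{\eps^2\mu(x)\mass(x)H(x)}{\pi(x-z)} .
$$
The corollary then reads
$$
F(w_0)=\Proj\big[\Phi(w_0);\overline{x_0^+-x_0^-}\big]+O\big(\eps\cF R/(R-r)^2\big)+O\big(\cM\cH\eps\log(2R/\eps)\big).
$$
Since $x_0$ is one of $x_0^\pm$, the direction $\overline{x_0^+-x_0^-}$ is parallel to $\overline{w_0-x_0}$.

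\textbf{Step 1: the first estimate.} I will show that $\Phi(w_0)$ is close to $\Phi(x_0)$ and that $2[\daverage F](x_0)$ is close to $\Phi(x_0)$.
\begin{itemize}
\item The boundary integrals are smooth in $z$ at distance $\ge R-r$ from the contour. Their derivative is $O(\cF\cL/d^2)$, so moving $z$ by $O(\eps)$ costs $O(\eps\cF R/(R-r)^2)$.
\item In the mass sum, moving the pole by $O(\eps)$ costs $\sum_x \eps^3\cM\cH/|x-z|^2=O(\cM\cH\eps\log(R/\eps))$. The few terms with $|x-z|\lesssim\eps$ contribute $O(\eps\cM\cH)$.
\end{itemize}
Next, average the expansion of $F(w)$ over the white neighbours $w\sim x_0$ with weights $\mu_\White(w)/(4\mu_\Black(x_0))$. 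This needs the isoradial identity
$$
\frac{1}{4\mu_\Black(x_0)}\sum_{w\sim x_0}\mu_\White(w)\,\Proj[\Phi;\overline{w-x_0}]=\tfrac12\Phi ,
$$
which follows from $\mu_\White(w)=\eps^2\sin 2\theta$ and the fact that the rhombi around $x_0$ tile a neighbourhood. Equivalently, $\sum_w \mu_\White(w)e^{2\ii\arg(w-x_0)}=0$: the half-rhombus vectors around $x_0$ close up, so the $\bar\Phi$-components cancel. This identity (or the equivalent statement from \cite{ChelkakSmirnov}, used in their Proposition 2.24) gives $2[\daverage F](x_0)=\Phi(x_0)+\text{errors}$, and substituting back proves the first claim.

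\textbf{Step 2: the H\"older estimate.} Using the same representation, $[\daverage F](x_0')-[\daverage F](x_0)$ equals $\tfrac12(\Phi(x_0')-\Phi(x_0))$ up to the errors of Step 1. The boundary part contributes $O(|x_0'-x_0|R\cF/(R-r)^2)$. For the mass part, split the sum at the scale $\delta=|x_0'-x_0|$:
\begin{itemize}
\item points with $|x-x_0|>2\delta$ give $\eps^2\cM\cH\,\delta\sum 1/|x-x_0|^2=O(\cM\cH\delta\log(R/\delta))$;
\item points within $2\delta$ give $O(\cM\cH\delta)$ by direct summation of $\eps^2/|x-z|$.
\end{itemize}
The additive $\eps$-errors from Step 1 are also $O(\cM\cH\eps\log(R/\eps))$, and these must be absorbed. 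When $\delta\ge\eps$ they are bounded by the claimed right-hand side, since $\eps\log(R/\eps)\lesssim\delta(1+\log(R/\delta))$ for $\eps\le\delta$, and $\eps R/(R-r)^2\le\delta R/(R-r)^2$. When $\delta<\eps$ the points coincide.

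\textbf{Main obstacle.} I expect the main obstacle to be the averaging identity in Step 1, i.e.\ checking exactly that the weighted average of projections returns half the vector. The cleanest route is to recognise that $\daverage\dpartial$ is the Chelkak--Smirnov operator; once that is in place, the remaining estimates are routine summations.
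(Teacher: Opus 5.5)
Your proposal is correct and follows the paper's proof. Apply the asymptotic Cauchy formula on $B_\#(u,R)$ and shift the pole from $w_0$ to $x_0$. Then average over $w\sim x_0$ using the Chelkak--Smirnov identity to identify $2[\daverage F](x_0)$ with the $w_0$-independent Cauchy expression. That identity does hold: writing $y^\pm=x_0+\eps e^{\ii\phi_\pm}$ for the two primal-dual neighbours of $x_0$ in the rhombus of $w$, one has $\mu_\White(w)e^{2\ii\arg(w-x_0)}=\frac{\eps^2}{2\ii}\big(e^{2\ii\phi_+}-e^{2\ii\phi_-}\big)$, which telescopes around $x_0$. Finally, split the mass sum near $x_0$ and $x_0'$ for the H\"older bound.

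The only detail to repair is that your $\Phi(x_0)$ contains the singular term $x=x_0$ of the mass sum. This term must be dropped, as in the paper's $S(x_0)$, which costs only $O(\eps\cM\cH)$ because $|x_0-w_0|\ge c\eps$.
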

In fact this phenomenon is already apparent in the Cauchy formula above. If we had alternatively defined $\daverage F(x_0)$ as the term inside the projection in the right-hand side of the Cauchy formula, that is
$$
    \frac{1}{2\ii\pi}\left(\oint_{\partial \Primal} \frac{F(w)}{w-x_0}\dd w + \oint_{\partial \Dualbis} \frac{F(w)}{w-x_0}\dd w\right)-\sum_{x \in \Int(\Primalbis)\setminus \{x_0\}}\frac{\eps^2\mu(x)\mass(x)H(x)}{\pi(x-x_0)},
$$
this proposition would be a direct corollary of the discrete massive Cauchy formula. For later purposes, and following \cite{ChelkakSmirnov}, we prefer to use the definition of $\daverage$ as the local average.
Also note that our result in the massive setting is weaker than the corresponding result in the non-massive case: the discrete gradient is not Lipschitz (because of the $\log \eps$ factor). 
\begin{proof}
    Once again, the proof is similar to the proof of Proposition 2.24 in \cite{ChelkakSmirnov}. We apply to $\Lambda = B_\#(u,R)$ the asymptotic Cauchy formula of Corollary \ref{cor:asymptotic:cauchy:formula}. Note that 
    $$ 
        \cL \leq CR
    $$
    for some absolute constant $C$, and that $\cD = 2R$. For all $w_0 \sim x_0$, $|w_0-x_0| \leq \eps$ so for all $w \in \partial \White(\Lambda)$,
    $$
    \left|\frac{1}{w-w_0}-\frac{1}{w-x_0}\right| \leq \frac{C\eps}{(R-r)^2}.
    $$
    for some absolute constant $C$. Moreover, for all $x \in \Int(\Lambda)$, $|x_0-w_0| \leq \eps$ hence for all $x \in \Int(\Lambda) \setminus \{x_0\}$,
    $$
        \left|\frac{1}{x-w_0}-\frac{1}{x-x_0}\right| \leq \frac{C\eps}{|x-x_0|^2}.
    $$
    and since $|x_0-w_0| \geq c \eps$ for some absolute constant $c>0$, we obtain
    $$
        \begin{aligned}
        \left|\sum_{x \in \Int(\Lambda)}\frac{\eps^2}{x-w_0}-\sum_{x \in \Int(\Lambda)\setminus \{x_0\}}\frac{\eps^2}{x-x_0}\right| &\leq c\eps + \sum_{x \in \Int(\Lambda))\setminus \{x_0\}} \frac{C\eps^2}{|x-x_0|^2}\\
        &= O\left(\cM \cH \eps \log\left(\frac{2R}{\eps}\right)\right).
        \end{aligned}
    $$
    The asymptotic Cauchy formula implies 
    \begin{equation}\label{eq:F:averaged}
        F(w_0) = \Proj[S(x_0);\overline{w_0-x_0}]+ {O\left(\frac{\cF \eps R}{(R-r)^2}\right) + }O\left(\cM \cH \eps \log\left(\frac{2R}{\eps}\right)\right)
    \end{equation}
    with
    $$
        S(x_0) = \frac{1}{2\ii\pi}\left(\oint_{\partial \Lambda} \frac{F(w)}{w-x_0}\dd w + \oint_{\partial \Lambda^*} \frac{F(w)}{w-x_0}\dd w\right)-\sum_{x \in \Int(\Lambda) \setminus \{x_0\}}\frac{\eps^2\mu(x)\mass(x)H(x)}{\pi(x-x_0)}.
    $$
    Note that $S(x_0)$ does not depend on $w_0$, hence summing over $w_0 \sim x_0$ and using the identity
    $$
        \frac{1}{4\mu_\Black(x_0)}\sum_{w_0 \sim x_0}\mu_\White(w_0)\Proj[S;\overline{w_0-x_0}] = \frac{S}{2}
    $$
    valid for all $S \in \CC$ (see the proof of Proposition 2.24 of \cite{ChelkakSmirnov} for details), we obtain
    \begin{equation}\label{eq:daverage:S}
        [\daverage F](x_0) = \frac{S(x_0)}{2}+ {O\left(\frac{\cF \eps R}{(R-r)^2}\right) +} O\left(\cM \cH \eps \log\left(\frac{2R}{\eps}\right)\right)
    \end{equation}
    so in particular for $w_0 \sim x_0$, Equation \eqref{eq:F:averaged} implies that 
    $$
        \left|F(w_0)-\Proj[2[\daverage F](x_0);\overline{x_0-w_0}] \right| = O\left(\frac{\cF \eps}{R}\right) + O\left(\cM \cH \eps \log\left(\frac{2R}{\eps}\right)\right).
    $$
    This proves the first point of the lemma. 
    To prove the second point, let $x_0,x_0' \in B_\#(u,r)$. We can apply the first point in the ball $B_\#(u,R)$ to $x_0$ and $x_0'$ to obtain, for $x \in \{x_0, x_0'\}$,
    \begin{equation}\label{eq:daverage:S2}
        [\daverage F](x) = \frac{S(x)}{2}+ O\left(\frac{\cF \eps R}{(R-r)^2}\right) + O\left(\cM \cH \eps \log\left(\frac{2R}{\eps}\right)\right)
    \end{equation}
    Let $d = |x_0'-x_0|$. On the one hand, for $w \in \partial \White(B_\#(u,R))$,
    $$
        \left|\frac{1}{w-x_0}-\frac{1}{w-x_0'}\right| \leq \frac{d}{(R-r)^2}.
    $$
    Hence,
    $$
        \left|\oint_{\partial B_\#(u,R)} \frac{F(w)}{w-x_0}\dd w - \oint_{\partial B_\#(u,R)} \frac{F(w)}{w-x_0'}\dd w\right| = O\left(\frac{dR\cF}{(R-r)^2}\right),
    $$
    and the same holds for the second integral. 
    On the other hand, for $x \in \Int(B_\#(u,R)) \setminus \{x_0,x_0'\}$,
    $$
        \left|\frac{1}{x-x_0}-\frac{1}{x-x_0'}\right| = \frac{d}{|x-x_0||x-x_0'|}\leq
        \left\{
        \begin{array}{cc}
            2/|x-x_0| & \text{ when }x \in B(x_0,d/2) \\
            2/|x-x_0'| & \text{ when }x \in B(x_0',d/2) \\
            3d/|x-x_1|^2 & \text{ otherwise }.
        \end{array}
        \right. 
    $$
    Hence, splitting the sum in three, we obtain
    $$
        \left|\sum_{x \in \Int(B_\#(u,R)) \setminus \{x_0,x_0'\}}\frac{\eps^2}{x-x_0}-\frac{\eps^2}{x-x_0'}\right| \leq Cd\left(1 + \log\left(\frac{R}{d}\right)\right) 
    $$
    for some absolute constant $C$. Finally, we get
    $$
        \left|S(x_0)-S(x_0')\right| = O\left(\frac{dR\cF}{(R-r)^2}\right) + O\left(\cM \cH d\left(1+\log\left(\frac{R}{d}\right)\right)\right).
    $$
    Combining this with Equation \eqref{eq:daverage:S2}, since $\eps \leq d$, this concludes the proof of the lemma.
\end{proof}

\subsection{Convergence of the discrete massive Green function}
We  state a lemma to prove convergence of the discrete massive Green function with Dirichlet boundary conditions towards its continuous analog.
This lemma heavily relies on the Assumption~\ref{assumption:domain} that the graph $\Gamma$ approximates the domain $\Omega$.
\begin{lemma}\label{lem:cv:green}
 Suppose $x_1 = x_1(\eps), x_2 = x_2(\eps) \in \Primal$. 
 Then, uniformly for $(x_1,x_2)$ in compact subsets of $\Omega^2 \setminus \cD(\Omega^2)$,
	$$
	    \dmGreen(x_1,x_2) = \mGreen(x_1,x_2) + o(1).
	$$
    If $\Omega$ and $\Primal$ have a straight boundary near $\beta^*$ (say on the ball $B = B(\beta^*, \eta)$, so that $L = \partial\Omega \cap B$ is straight, recall that $\Gamma_\ph = \Gamma \cup \ph( \Gamma \cap B)$ and $\Omega_\ph = \Omega \cup B$, where $\ph$ is the reflection across the straight line $L$ defined in Section \ref{subsec:straight}.
    Recall that in this case, $\dmGreen$ is naturally extended to $\Primal_\ph$.
    Then, uniformly for $x_1 \in \Gamma_\ph$ over compact  subsets of $\Omega_\ph$, and $x_2 \in \Gamma_\ph$,
    $$
      \dmGreen(x_1,x_2) = O\left(1+|\log(|x_1-x_2| \wedge |\ph(x_1)-x_2|)|\right)
    $$ 
    where we set $\ph (x_1) = \infty$ if $x_1 \notin B$. 
\end{lemma}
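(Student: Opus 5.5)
We will prove the convergence statement by combining the resolvent identity (Proposition~\ref{prop:resolvent}) with the known convergence of the non-massive discrete Dirichlet Green function $\dGreen$ to the continuous Dirichlet Green function $G^0_\Omega$ (Chelkak--Smirnov, under Assumption~\ref{assumption:domain}), together with the a priori bound $0\le \dmGreen\le \dGreen$ of Lemma~\ref{lem:dmGreen<dGreen} and the logarithmic bounds of Appendix~\ref{app:green:bound}.

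\textbf{Step 1: compactness.} Fix $x_1$ in a compact subset of $\Omega$. The function $x\mapsto\dmGreen(x_1,x)$ is massive harmonic away from $x_1$. It is also bounded on compact sets away from $x_1$ by $\dGreen(x_1,\cdot)=O(1+|\log|x_1-x||)$. Lemma~\ref{lem:lipschitz} then makes it equi-Lipschitz on compacts of $\Omega\setminus\{x_1\}$, with symmetric control in $x_1$. Lemma~\ref{lem:beurling} shows that it vanishes uniformly near $\partial\Omega$, since it is bounded there by a power of the distance to the boundary times an $O(1)$ supremum. By Arzel\`a--Ascoli, along subsequences $\dmGreen\to g$ locally uniformly on $\Omega^2\setminus\diagonal(\Omega^2)$, with $g$ continuous, $g(x_1,\cdot)\in \cC^0(\overline\Omega\setminus\{x_1\})$, and $g$ vanishing on $\partial\Omega$.

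\textbf{Step 2: identification.} Pass to the limit in the resolvent identity. Using \eqref{eq:mass:asymptotic}, $m(x)=\eps^2\mu(x)M(x)+O(\eps^3)$, and $\eps^2\mu(x)$ is the natural area weight, so the sum $\sum_x m(x)\dmGreen(x_1,x)\dGreen(x,x_2)$ is a Riemann sum. The integrable logarithmic singularities at $x_1$, at $x_2$ and near the boundary are controlled by the log bounds, which make the contribution of $\delta$-neighbourhoods $o_\delta(1)$ uniformly in $\eps$. We obtain
\[
g(x_1,x_2)=G^0_\Omega(x_1,x_2)+\int_\Omega M(x)\,g(x_1,x)\,G^0_\Omega(x,x_2)\,\mathrm{d}x .
\]
Applying $\Delta_2$ shows that $g(x_1,\cdot)$ solves \eqref{eq:green:distribution} with zero boundary values and lies in $L^2$, because its singularity is logarithmic. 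By the uniqueness lemma for $\mGreen$, $g=\mGreen$. The limit is therefore independent of the subsequence, which gives the full convergence, uniform on compacts. The main obstacle is the uniform control of the Riemann sum near the singularities and near $\partial\Omega$, where only $\eps$-uniform log bounds are available. We plan to handle it by truncating at scale $\delta$ and then letting $\delta\to0$.

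\textbf{Step 3: the bound with a straight boundary.} For $x_1$ away from $B$, the claimed bound is just $\dmGreen\le\dGreen$ together with the log bound for $\dGreen$. For $x_1$ near $L$, use the reflected extension. On $\Gamma_\ph$ the extended function is $\dmGreen(x_1,\cdot)$ on $\Gamma$ and $-\dmGreen(x_1,\ph(\cdot))$ on the reflected part. Its absolute value is bounded by $\dGreen(x_1,x_2)$ or by $\dGreen(x_1,\ph(x_2))$, and $|x_1-\ph(x_2)|=|\ph(x_1)-x_2|$. Combined with the uniform log bound on $\dGreen$ from Appendix~\ref{app:green:bound}, valid up to the boundary, this gives $O(1+|\log(|x_1-x_2|\wedge|\ph(x_1)-x_2|)|)$. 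We take the convention $\ph(x_1)=\infty$ when $x_1\notin B$.
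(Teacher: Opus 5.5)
Your Steps 1 and 3 follow the paper's own argument:
\begin{itemize}
\item equi-Lipschitz bounds from Lemmas~\ref{lem:green:bounded}, \ref{lem:lipschitz} and \ref{lem:lip:two:var}, then Arzel\`a--Ascoli;
\item zero boundary values of the limit from Lemma~\ref{lem:beurling};
\item the straight-boundary bound read off from $|\dmGreen|\le|\dGreen|$ and the second part of Lemma~\ref{lem:green:bounded}.
\end{itemize}
The genuine difference is how you identify the subsequential limit $g$.

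\textbf{The paper's identification.} It does not use the massless Green function at this stage. It applies the discrete Green formula \eqref{eq:discrete:green:formula} to $\dmGreen(x_1,\cdot)$ and a smooth $f$ compactly supported in $\Omega$. It then uses only the consistency relations $\dDelta f=\eps^2\mu\,\Delta f+O(\eps^3)$ and $m=\eps^2\mu M+O(\eps^3)$, and passes to the limit in a Riemann sum over $\supp f$, where only the logarithmic singularity at $x_1$ needs care. This shows $g(\chi_1,\cdot)$ is a weak solution of \eqref{eq:green:distribution}, so $g=\mGreen$ by uniqueness.

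\textbf{Your identification.} You pass to the limit in the resolvent identity of Proposition~\ref{prop:resolvent}. This works, but it costs two things.
\begin{itemize}
\item It imports an external theorem: convergence $\dGreen\to G$ of the massless discrete Dirichlet Green function, e.g.\ from \cite{ChelkakSmirnov}. You must check that its approximation hypotheses follow from Assumption~\ref{assumption:domain}.
\item The sum runs over all of $\Primal$, so you must control it up to $\partial\Omega$. Near the boundary there is no singularity; the integrand is bounded. Bounding the strip contribution by the discrete area of the strip alone is only safe when $\partial\Omega$ has zero area. What actually makes it small uniformly in $\eps$ is the Beurling smallness of $\dmGreen(x_1,\cdot)$ from your Step~1, together with the bounded total weight $\sum_x\eps^2\mu(x)$. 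This should replace your phrase ``controlled by the log bounds''.
\end{itemize}

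\textbf{What your route buys.} It yields the continuous resolvent identity \eqref{eq:resolvent:identity:continuous} for the limit directly; the paper later quotes this identity from \cite{BHS}. Also, the comparison $|g|\le|G|$ inherited from Lemma~\ref{lem:dmGreen<dGreen} gives the zero boundary values of $g$ without the Beurling estimate. The paper's route is more elementary and self-contained, and needs only interior information.
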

\begin{proof}
    The second point of the lemma is a direct consequence of Lemmas~\ref{lem:dmGreen<dGreen} and~\ref{lem:green:bounded}.
    
    We now prove the first point of the lemma.
    By Lemma~\ref{lem:green:bounded}, Lemma~\ref{lem:lipschitz} and (ii) of Assumption~\ref{assumption:domain}, $\dmGreen$ is a (sequence of) Lipschitz function in each variable, uniformly bounded over $\eps > 0$ and compact subsets of $\Omega^2 \setminus \cD(\Omega^2)$, with uniform Lipschitz constants over $\eps$ and over compact subsets of $\Omega^2 \setminus \cD(\Omega^2)$.
    By Lemma~\ref{lem:lip:two:var}, $\dmGreen$ is hence Lipschitz as a function of two variables, with uniform Lipschitz constant over $\eps$ and over compact subsets of $\Omega^2 \setminus \cD(\Omega^2)$.
    Hence by the Arzela-Ascoli theorem we can find a subsequence along which $\dmGreen \to g \in \cC^0(\Omega^2 \setminus \cD(\Omega^2),\RR)$ uniformly on compact subsets of $\Omega^2 \setminus \cD(\Omega^2)$. 

    To conclude the proof, it only remains to identify the limit $g$. 
    Let $\chi_1 \in \Omega$. The rest of the proof consists in identifying $g(\chi_1,\cdot)$ uniquely.
    
    Let us identify first the boundary conditions for $g(\chi_1,\cdot)$.
    Let $\chi_2 \in \Omega$, $|\chi_1-\chi_2| =: 2d > 0$.
    Let $x_1 = x_1(\eps), x_2 = x_2(\eps) \in \Primal$ be such that $x_1 \to \chi_1, x_2 \to \chi_2$ as $\eps \to 0$.
    By Lemma \ref{lem:green:bounded} and Lemma \ref{lem:beurling}, we obtain that for all $x_2 \in \Primal \cap B(\chi_2,d)$,
    $$
        |\dmGreen(x_1, x_2)|\leq C(1+|\log d|) \left(\frac{\dist(x_2, \closurePrimal\setminus \Primal)}{d}\right)^{\beta}.
    $$
    By (i) and (ii) 
    of Assumption~\ref{assumption:domain}, $\dist(x_2, \closurePrimal\setminus \Primal) \overset{\eps \to 0}{\longrightarrow} \dist(\chi_2, \partial \Omega)$. Thus, in the $\eps \to 0$ limit, we obtain
    \begin{equation}
   \label{eq:control_g}     |g(\chi_1, \chi_2)|\leq C(1+|\log|\chi_1-\chi_2|) \left(\frac{\dist(\chi_2, \partial \Omega)}{|\chi_1-\chi_2|}\right)^{\beta}.
     \end{equation}
    hence $g(\chi_1, \cdot)$ can be extended by $0$ on $\partial \Omega$. 
    More precisely, if $\overline{g}(\chi_1,\cdot)$ coincides with $g(\chi_1,\cdot)$ on $\Omega$ and $\overline{g}(\chi_1,\chi_2) = 0$ for $\chi_2 \in \partial \Omega$, then $ \overline{g}(\chi_1,\cdot) \in \cC^0(\overline{\Omega}\setminus \{\chi_1\},\RR)$.

    
    
Let $f \in \cC^{\infty}(\RR^2,\RR)$ be a smooth test function compactly supported in $\Omega$. 
    We apply the discrete Green formula Equation \eqref{eq:discrete:green:formula} on $\Primal$ with the two functions $\dmGreen(x_1,\cdot)$ and $f$ to obtain
	\begin{equation}
		f(x_1) + \sum_{x_2 \in \Primal \cap \supp f}\dmGreen(x_1,x_2)\left(m(x_2)-\dDelta f(x_2)\right) = 0.
	\end{equation}
    Recall that $m(x_2) = \eps^2 \weight(x_2) \mass(x_2) + O(\eps^3)$ and note that the discrete Laplacian approximates the continuum Laplacian as $\eps \to 0$, more precisely $$\dDelta f(x_2) = \eps^2 \weight(x_2) \Delta f(x_2)+O(\eps^3)$$
    (see Lemma 2.2 of~\cite{ChelkakSmirnov}).
    Moreover, by the law of sines, $\weight(x_2)$ is half the sum of the areas of the rhombi around $x_2$.
    Hence the sum is a Riemann sum for the function $g(\chi_1,\cdot)(\mass-\Delta f)$ on $\Omega$.
    Since $g(\chi_1,\cdot)$ has a singularity at $x_1$, we must control the singularity to obtain convergence of the Riemann sum towards an integral.
    Since 
    \begin{equation}\label{eq:G:bound:log}
        |\dmGreen(x_1,x_2)| \leq |\dGreen(x_1,x_2)| = O(1+\log|x_1-x_2|)
    \end{equation}
 and since $\log$ is integrable around the origin in two dimensions (or rather, using crude bounds for the sum below),
we deduce that for all $s >0$ there exists $c(s) >0$ independent of $\eps$ such that
    $$
        \left|\sum_{x_2 \in \Primal \cap \supp f \cap B(x_1,s)} \dmGreen(x_1,x_2)\left(m(x_2)-\dDelta f(x_2)\right)\right| \leq c(s)
    $$
    with 
    $$
        c(s) \overset{s \to 0}{\longrightarrow} 0.
    $$
         This implies convergence of the Riemann sums. Taking the limit along the subsequence considered previously, we obtain
	\begin{equation}
		f + \int_{\Omega} g(\chi_1,\cdot)(\mass-\Delta f) = 0.
	\end{equation} 
   In other words, $g(\chi_1, \cdot)$ is a weak solution of the equation
    \begin{equation}\label{eq:distribution}
        \Delta g = \mass g + \delta_{x}
    \end{equation}
    with zero boundary condition on $\partial \Omega$. 
    By the uniqueness in Definition~\ref{def:mGreen} (recall the elliptic regularity arguments such as \cite[Chapter 10.1, (5)]{LiebLoss}), this implies that $g(\chi_1,\cdot) = \mGreen(\chi_1,\cdot)$, which concludes the proof.
\end{proof}

\section{Scaling limit of inverse Kasteleyn matrix and height function}
In this section, we establish the convergence of $\dpartialtwo \dmGreen$ and $\dpartialone \dpartialtwo \dmGreen$ and identify the limit, where we write $\dpartialone$ and $\dpartialtwo$ for the discrete derivatives with respect to the first or second variable respectively. In the non-massive case $m = 0$, these results were obtained by Li in \cite{Li17}. Note that an alternative proof of our convergence statements could be obtained by plugging her results in the resolvent identity. However, we preferred to develop a discrete holomorphicity theory for discrete massive differentials, which is interesting in itself and provides a more robust and self-contained proof.

\subsection{Convergence of first derivative of discrete massive Green function}\label{subsec:cv:green:first:derivative}
We state a lemma to prove convergence of the first order discrete derivative of the discrete massive Green function towards its continuous analog.  Before stating the lemma, we need to introduce some notation. 
We identify a complex number $z = a+\ii b$ with the vector $(a,b)$ of $\RR^2$, and denote by $\langle \cdot; \cdot\rangle$ the scalar product on $\RR^2$, that is, 
$$
	\langle z_1, z_2 \rangle = \Re(z_1 \overline{z_2}) .
$$

\begin{lemma}\label{lem:cv:green:derivative}
    Let $x_1 = x_1(\eps) \in \Primal$ and $w_2 = w_2(\eps) \in W$ corresponding to the middle of the edge $x_2^-x_2^+$. Uniformly for $x_1,w_2$ in compact subsets of $\Omega^2 \setminus \diagonal(\Omega^2)$,
        $$
            \begin{aligned}
                \daveragetwo \dpartialtwo \dmGreen(x_1,x_2^+) &= \frac{1}{4}\overline{\nabla_2} \mGreen(x_1,w_2) + o_{\eps \to 0}(1)\\
	        (x_2^+-x_2^-)\dpartialtwo \dmGreen(x_1,w_2) &= \frac{1}{2}\langle\nabla_2 \mGreen(x_1,w_2),x_2^+-x_2^-\rangle+o_{\eps \to 0}(\eps)
            \end{aligned}
	$$
    where $\nabla_2$ denotes the (ordinary) gradient in the second variable of the real-valued function $G^m(x_1, w_2)$; equivalently $\nabla_2 G^m (x_1, w_2) = \bar \partial_2 ( G^m (x_1, w_2))$ when it is viewed as a complex-valued function. 

    Uniformly for $x_1,w_2$ in compact subsets of $\Omega$,
    \begin{equation}\label{eq:point3_L31}
        \dpartialtwo \dmGreen(x_1,w_2) = O\left(\frac{1}{|x_1-w_2|}\right).
    \end{equation}
    When $\Omega$ and $\Primal$ have a straight boundary $\partial\Omega \cap B(\beta^*,\eta) = L$, $\Omega^2 \setminus \diagonal(\Omega^2)$ can be replaced by $\Omega_\ph^2 \setminus \{(x_1, x_2) \in \Omega_\ph^2 \colon x_1 = x_2 \text{ or } x_1 = \ph(x_2)\}$ in the first statement and $\Omega$ can be replaced by $\Omega_\ph$ in the second statement if we replace $|x_1-w_2|$ by $|x_1-w_2| \wedge |\ph(x_1)-w_2|$.
\end{lemma}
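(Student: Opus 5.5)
We fix $x_1$ and view $H := \dmGreen(x_1,\cdot)$ as a real discrete massive harmonic function of the second variable away from $x_1$, extended by zero on $\Primal^*$. Its discrete derivative $F := \dpartialtwo H$ is then a discrete massive differential in the sense above. The plan is to combine the a priori bounds on $H$ with the regularity estimates of Proposition~\ref{prop:discrete:gradient}, extract subsequential limits by Arzel\`a--Ascoli, and identify each limit with the gradient of $\mGreen$ using the convergence of Lemma~\ref{lem:cv:green}.

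\textbf{A priori bound.} Let $d=|x_1-w_2|$ and apply Lemma~\ref{lem:lipschitz} on the ball $B_\#(w_2, d/2)$. We take $r$ of order $d$ (capped by the distance to $\partial\Omega$ on the compact set) and use $\sup_{B_\#(w_2,d/2)}|H| = O(1+|\log d|)$ from Lemma~\ref{lem:green:bounded} and Lemma~\ref{lem:dmGreen<dGreen}. This gives $|H(x_2^+)-H(x_2^-)| = O(\eps/d)$ up to the log factor, which is harmless since the log can be absorbed on compacts. Hence $F(w_2)=O(1/d)$, which is \eqref{eq:point3_L31}. If a log loss persists, we would instead apply the asymptotic Cauchy formula on a ball of radius $d/2$ around $w_2$ to get the sharp $1/d$. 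In the straight-boundary case we first extend $H$ by the discrete Schwarz reflection, for which $m$ needs to be approximately symmetric; the $O(\eps^2)$ asymmetry of $m$ is controlled by the resolvent identity. The same argument then works on $\Omega_\ph$, with $d$ replaced by $|x_1-w_2|\wedge|\ph(x_1)-w_2|$.

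\textbf{Equicontinuity and limits.} Fix a compact $K \subset \Omega^2\setminus\diagonal(\Omega^2)$. For $(x_1,x_2)\in K$, Proposition~\ref{prop:discrete:gradient}, applied with $R$ and $R-r$ of order $\mathrm{dist}(K,\diagonal)$, bounds $\cF$ and $\cH$ uniformly. It follows that $x_2\mapsto \daveragetwo F(x_2)$ is uniformly bounded and satisfies a modulus of continuity $O(\delta\log(1/\delta))$, uniformly in $\eps$. Continuity in $x_1$ holds as well: $\dmGreen$ is jointly Lipschitz by the argument in the proof of Lemma~\ref{lem:cv:green}, and applying Proposition~\ref{prop:discrete:gradient} to $H(x_1,\cdot)-H(x_1',\cdot)$, which is again massive harmonic, gives $\cF,\cH=O(|x_1-x_1'|)$. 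Arzel\`a--Ascoli then yields subsequential uniform limits $\daveragetwo F \to \Phi$ on $K$.

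\textbf{Identification, and the main obstacle.} We must show $\Phi=\tfrac14\overline{\nabla_2}\mGreen$. Take a lattice path from $x_2$ to $x_2'$ of length $O(|x_2-x_2'|/\eps)$ and sum the increments $H(x^+)-H(x^-)=2(x^+-x^-)F(w)$. By the first estimate of Proposition~\ref{prop:discrete:gradient}, each increment equals $4\langle \overline{\daverage F}(x),\, x^+-x^-\rangle$ plus an error $O(\eps\cdot\eps\log(1/\eps))$ after multiplying by $|x^+-x^-|\asymp\eps$. This bookkeeping of constants is what we must check carefully so that the $\tfrac14$ and $\tfrac12$ in the statement come out right. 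The errors sum to $o(1)$, so in the limit $\mGreen(x_1,x_2')-\mGreen(x_1,x_2)=\int 4\langle\overline{\Phi},\mathrm{d}z\rangle$ by Lemma~\ref{lem:cv:green}. Therefore $\nabla_2\mGreen=4\overline{\Phi}$, i.e. $\Phi=\tfrac14\overline{\nabla_2}\mGreen$. This identifies the limit uniquely, so the whole sequence converges. The second claimed formula then follows by inserting this into the first estimate of Proposition~\ref{prop:discrete:gradient}, which gives $(x_2^+-x_2^-)F(w_2)=(x_2^+-x_2^-)\Proj[2\daverage F;\overline{x_2^+-x_2^-}]+o(\eps)$ and simplifies to $\tfrac12\langle\nabla_2\mGreen,x_2^+-x_2^-\rangle+o(\eps)$.

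The main obstacle I expect is the error term $\cM\cH\eps\log(R/\eps)$. Summed along a path of $O(1/\eps)$ edges, it must still be $o(1)$ after multiplication by $\eps$. It is, giving $O(\eps\log(1/\eps))$, but only barely, so it has to be tracked with care.
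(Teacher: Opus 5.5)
Your treatment of the two convergence statements is essentially the paper's. You use the a priori bounds from Lemmas~\ref{lem:green:bounded} and~\ref{lem:lipschitz}, equicontinuity via Proposition~\ref{prop:discrete:gradient}, Arzel\`a--Ascoli, and identification by summing increments along lattice paths together with Lemma~\ref{lem:cv:green}. The error you single out as the main obstacle is harmless, since it is $O(\eps\log(1/\eps))$ in total.

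The genuine gap is the near-diagonal bound \eqref{eq:point3_L31}. It is claimed uniformly for $x_1,w_2$ in a compact subset of $\Omega$, so $d=|x_1-w_2|$ goes down to order $\eps$. The factor $1+|\log d|$ that Lemma~\ref{lem:lipschitz} produces from $\sup_{B_\#(w_2,d/2)}|H|=O(1+|\log d|)$ is then unbounded (as large as $|\log\eps|$). It cannot be ``absorbed on compacts''. The loss is real: the remark after Lemma~\ref{lem:bound:dpartial:dfreegreen} makes exactly this point for $\dfreeGreen$. Your fallback does not cure it. The ball $B_\#(w_2,d/2)$ does not contain $x_1$, so the boundary term in Corollary~\ref{cor:asymptotic:cauchy:formula} is bounded only by $\cF$ on a circle at distance $\asymp d$ from $x_1$. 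That is the very quantity you are trying to estimate, with the same logarithmic loss.

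What is missing is a sharp way to isolate the singular part. The paper differentiates the resolvent identity (Proposition~\ref{prop:resolvent}):
$\dpartialtwo\dmGreen(x_1,w_2)=\dpartialtwo\dGreen(x_1,w_2)+\sum_{x}m(x)\dmGreen(x_1,x)\dpartialtwo\dGreen(x,w_2)$,
where the part of the sum with $x$ near $\partial\Omega$ is bounded. It then uses the sharp bound $\dpartialtwo\dGreen(x,w_2)=O(1/|x-w_2|)$ of Lemma~\ref{lem:bound:nearby:derivative}. That bound comes from the asymptotic \eqref{eq:green:asymptotic} of $\dfreeGreen$ plus a bounded harmonic correction. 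The difference of logarithms is $O(\eps/d)$ and the error term is $O(\eps^2/d^2)=O(\eps/d)$, with no logarithm. The remaining sum is $O(1/d)$, because $m=O(\eps^2)$ and $y\mapsto(1+|\log|x_1-y||)/|y-w_2|$ is integrable uniformly in $x_1,w_2$.

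Alternatively, your Cauchy-formula idea does work if, for small $d$, you apply Proposition~\ref{prop:cauchy:formula} on a \emph{macroscopic} ball around $x_1$ containing the pole. The source at $x_1$ then contributes a multiple of $\Cauchy(x_1,w_2)=O(1/|x_1-w_2|)$ by Theorem~\ref{thm:cauchy:kernel}, while the boundary terms and the mass sum are $O(1)$.

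Either fix also handles the reflected version on $\Omega_\ph$, where the pole $\ph(x_1)$ appears as well. For the reflected function you may simply take the mass $m\circ\ph$ on the reflected side, so no asymmetry of $m$ needs to be controlled.
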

\begin{proof}
    We start by proving the first point of the lemma. Define $F(x_1,w_2) := [\dpartialtwo \dmGreen](x_1,w_2)$ and $\daveragetwo F(x_1,x_2) := [\daveragetwo\dpartialtwo \dmGreen](x_1,w_2)$. The proof follows two steps:
    \begin{itemize}
        \item Step 1: we prove that $\daveragetwo\dpartialtwo \dmGreen$ is a sequence of equi-continuous functions of two variables on compact subsets of $\Omega^2 \setminus \diagonal(\Omega^2)$. 
        \item Step 2: we show that any sub-sequential limit of $\daveragetwo\dpartialtwo \dmGreen$ is $\frac{1}{4}\overline{\nabla_2}\mGreen$
    \end{itemize} 
    \emph{First step.} 
    The main idea is to combine Lemma~\ref{lem:lipschitz} and Proposition~\ref{prop:discrete:gradient} with the bound Lemma~\ref{lem:green:bounded} to prove that $\daveragetwo F$ is Lipschitz in the first variable, $\beta$-Holder in the second variable for any $\beta < 1$ with constants independent of $\eps$. 
    Then, we will be able to use the Arzela--Ascoli theorem.

    We now give the details. 
    Let $\cC_1 \subset \Omega^2 \setminus \diagonal(\Omega^2)$ be a compact subset. Here and below, if $\cC\subset \Omega$ is a set, its interior will be denoted by $\mathring{\cC}$.
    By Lemma~\ref{lem:green:bounded},
    \begin{equation}\label{eq:bound:H}
        \sup_{\eps > 0, (x_1,x_2) \in \cC_1} |\dmGreen(x_1,x_2)| < \infty.
    \end{equation}
    Let $\cC_2 \subset \mathring{\cC_1}$ be a compact subset. 
    The above equation and Lemma~\ref{lem:lipschitz} imply that 
    \begin{equation}\label{eq:bound:F}
        \sup_{\eps >0, (x_1,w_2) \in \cC_2}|F(x_1,w_2)| < \infty.
    \end{equation}
    Since $\daveragetwo F(x_1,x_2)$ is by definition a linear combination with bounded coefficients of the $F(x_1,w_2)$ with $x_2 \sim w_2 \in W$, this implies that 
    \begin{equation}\label{eq:bound:AdG}
        \sup_{\eps >0, (x_1,x_2) \in \cC_2} |\daveragetwo F(x_1,x_2)|< \infty.
    \end{equation}
    We are ready to apply Proposition \ref{prop:discrete:gradient}. 
    Let $\cC_3 \subset \mathring{\cC_2}$ be a compact subset.
    Let $d = \dist(\cC_3, \cC_2^c)$. 
    For $x_1, w_2 \in \cC_3$, by the first point of Proposition~\ref{prop:discrete:gradient} applied in $B_\#(w_2,r)$ with $r = R = d/2$,
    \begin{equation}\label{eq:AF:first:point}
        \sup_{\eps>0, (x_1, w_2) \in \cC_3} \frac{|F(x_1,w_2)-\Proj[2[\daveragetwo F](x_1,x_2^+); \overline{x_2^+-x_2^-}]|}{{\eps(1+|\log(\eps)|)}} < \infty.
    \end{equation}
    This equation will only be useful at the very end of the proof. 
    The second point of Proposition~\ref{prop:discrete:gradient} implies that 
    $$
        \sup_{\eps > 0, (x_1, w_2) \in \cC_3, x_2, x_2' \in B(w_2, d/2)}
     \frac{|[\daveragetwo F](x_1,x_2)-[\daveragetwo F](x_1,x_2')|}{|x_2-x_2'|(1+|\log(|x_2-x_2'|)|)}< \infty.
    $$
    This implies that
\begin{equation}\label{eq:continuous:second:variable}
        \sup_{\eps > 0, (x_1, x_2), (x_1,x_2') \in \cC_3} \frac{|[\daveragetwo F](x_1,x_2)-[\daveragetwo F](x_1,x_2')|}{|x_2-x_2'|(1+|\log(|x_2-x_2'|)|)} < \infty.
    \end{equation}
    Indeed, if $|x_2-x_2'| \leq d/4$, we can apply the bound above with any $w_2 \sim x_2$, and if $|x_2-x_2'| \geq d/4$ we can simply use the bound Equation~\eqref{eq:bound:AdG}. 
    On the other hand, $\daveragetwo F(x_1, x_2)$ is a discrete massive harmonic function of the \emph{first} variable $x_1$ (or rather, its real and imaginary parts are real bounded discrete massive harmonic, as a linear combination of such functions). It is furthermore uniformly bounded for $(x_1,x_2) \in \cC_2$ by Equation~\eqref{eq:bound:AdG}, hence Lemma~\ref{lem:lipschitz} implies that
    \begin{equation}\label{eq:continuous:first:variable}
        \sup_{\eps > 0, (x_1,x_2), (x_1',x_2) \in \cC_3} \frac{|\daveragetwo F(x_1,x_2) - \daveragetwo F(x_1',x_2)|}{|x_1-x_1'|} < \infty.
    \end{equation}
    We are ready to conclude the first step. 
    Let $\cC_4 \subset \mathring{\cC_3}$ be a compact subset. 
    Combining Equations~\eqref{eq:bound:AdG}, \eqref{eq:continuous:second:variable} and~\eqref{eq:continuous:first:variable} with Lemma~\ref{lem:lip:two:var}, we obtain
    $$
        \sup_{\eps > 0, (x_1,x_2),(x_1',x_2') \in \cC_4}\frac{|\daveragetwo F(x_1,x_2) - \daveragetwo F(x_1',x_2')|}{|x_2-x_2'|(1+|\log(|x_2-x_2'|)|) + |x_1-x_1'|} < \infty. 
    $$
    Hence the sequence $\daveragetwo F$ is equi-continuous on $\cC_4$.
    Note that since we made no assumption on the sequence $\cC_i$ except that $\cC_1 \subset \Omega^2 \setminus \diagonal(\Omega^2)$, $\cC_{i+1} \subset \mathring{\cC_{i-1}}$, we proved that this holds for any compact subset $\cC_4 \subset \Omega^2 \setminus \diagonal(\Omega^2)$.
    By the Arzela--Ascoli theorem, upon taking a subsequence, $\daveragetwo F \to D \in \cC^0(\Omega^2 \setminus \diagonal(\Omega^2), \CC)$ uniformly on compact subsets of $\Omega^2 \setminus \diagonal(\Omega^2)$.
    If $\Omega$ and $\Primal$ have a straight boundary $L$, the modification is straightforward, replacing Equation~\eqref{eq:bound:H} by its improvement from Lemma~\ref{lem:green:bounded}.

    \bigskip

    \emph{Second step.} 
    To conclude the proof, it remains to identify the limit $D$.
    Let $x_1 = x_1(\eps) \to \chi_1 \in \Omega$, and denote by $H(\cdot) = \dmGreen(x_1, \cdot)$ and $F(\cdot) = F(x_1, \cdot)$. 
    Let $\chi, \chi' \in \Omega \setminus \{ \chi_1\}$.
    Let $\path$ be a smooth path between $\chi$ and $\chi'$ in $\Omega \setminus \{\chi_1\}$ of length $|\gamma|< \infty$. 
    Let $\cC \subset \Omega^2 \setminus \diagonal(\Omega^2)$ be a compact subset large enough such that $(x_1, \path(t)) \in \cC$ for all $t$.
    Let $x, x' \in \infWhite$ such that $x = x(\eps) \to \chi, x' = x'(\eps) \to \chi'$. 
    Let $\dpath = w_0,w_1,\dots,w_n$ be a path in $\infWhite$ from $x$ to $x'$, that is if we denote by $x_i^-x_i^+$ the edge of $\Primal$ corresponding to $w_i$, $x_0^- = x, x_n^+ = x'$ and $x_i^+ = x_{i+1}^-$ for all $0\leq i \leq n-1$. 
    We assume further that the length of $\dpath$ is $n \leq C|\path|$, and that $\dpath$ remains at distance $O(\eps)$ from $\path$.
    (this is always possible under the bounded angle assumption). 
    Then, using the fact that $\zeta \Proj(z, \bar \zeta) = \Re (z \zeta) = \Re ( \bar z \bar \zeta) = \langle \bar z; \zeta\rangle$,
    $$
        \begin{aligned}
            H(x')-H(x) = \sum_{i=0}^{n-1}H(x_i^+)-H(x_i^-)
            &= 2\sum_{i=0}^{n-1}(x_i^+ - x_i^-)F(w_i)\\
            &\overset{\eqref{eq:AF:first:point}}{=} 2 \sum_{i=0}^{n-1} (x_i^+ - x_i^-) \left(\Proj[2[\daverage F](x_i);\overline{x_i^+-x_i^-}]+O\left(\eps (1+|\log\eps|)\right)\right)\\
            &=  \sum_{i=0}^{n-1} 2 \left\langle \overline{[\daverage F](x_i)}; x_i^+-x_i^-\right\rangle +O\left(\eps (1+|\log\eps|)\right)
        \end{aligned}
    $$
    where the constant in the $O$ depends only $\cC$ and $|\path|$. 
    Since $\daverage F \to D(\chi_1,\cdot)$ uniformly on compact subsets of $\Omega \setminus \{\chi_1\}$ and $H \to \mGreen(\chi_1,\cdot)$ pointwise by Lemma~\ref{lem:cv:green}, we obtain
    $$
        \mGreen(\chi_1,\chi')-\mGreen(\chi_1,\chi) = 4\int_{\path} \langle \overline{D(\chi_1,\omega)};\mathrm{d}\omega\rangle.
    $$
    In the last integral, $\int_{\path} \langle \overline{D(\chi_1,\omega)};\mathrm{d}\omega\rangle$ stands for $\int_0^1 \langle \overline{D(\chi_1,\gamma(t))}; \gamma'(t) \rangle \mathrm{d}t$. 
    
     Since this holds for all $\chi, \chi' \in \Omega \setminus \{\chi_1\}$ and all paths between them, this identifies the projections of $D(\chi_1, \chi)$ in all directions, and we thus obtain that for $(\chi_1, \chi_2) \in \Omega^2 \setminus \diagonal(\Omega^2)$,
    $$
        D(\chi_1, \chi_2) = \frac{1}{4}\overline{\nabla_2} \mGreen(\chi_1,\chi_2),
    $$
    which proves the first point of the lemma. 
    The second point follows by Equation \eqref{eq:AF:first:point}.

    \bigskip 

    We now move towards a proof of \eqref{eq:point3_L31}.
    Let $\cC \subset \Omega$ be a compact subset ($\cC \subset \Omega_\ph$ if $\Omega$ and $\Primal$ have a straight boundary $L$), where as before we write $\mathring{\cC}$ for its interior.
    The resolvent identity of Proposition~\ref{prop:resolvent} can be written as
    $$
        \begin{aligned}
        \dmGreen(x_1,x_2) 
        &= \dGreen(x_1,x_2) + \sum_{x \in \Primal \cap \cC}m(x)\dmGreen(x_1,x) \dGreen(x,x_2) + \sum_{x \in \Primal \setminus \cC}m(x)\dmGreen(x_1,x) \dGreen(x,x_2)\\
        &= \dGreen(x_1,x_2) + \sum_{x \in \Primal \cap \cC}m(x)\dmGreen(x_1,x) \dGreen(x,x_2) + H(x_1,x_2),
        \end{aligned}
    $$
    where $H$ is defined by the last equation. 
    Observe that $H(x_1,x_2)$ is uniformly bounded for $x_1, x_2$ in any compact subset of $\mathring{\cC}$ by Lemma~\ref{lem:green:bounded}, hence so is $\dpartialtwo H$ by Lemma~\ref{lem:lipschitz}.
    Let $\cC' \subset \mathring{\cC}$ be a compact subset.
    Differentiating  with respect to the second variable and using Lemmas~\ref{lem:dmGreen<dGreen},~\ref{lem:green:bounded} and Lemma~\ref{lem:bound:nearby:derivative}, we obtain
    \begin{equation}\label{eq:differentiate:resolvent}
    \begin{aligned}
        \dpartialtwo \dmGreen(x_1,w_2) 
        &= \dpartialtwo \dGreen(x_1,w_2) + \sum_{x \in \Primal \cap \cC}m(x)\dmGreen(x_1,x) \dpartialtwo \dGreen(x,w_2)+\dpartialtwo H(x_1,w_2)\\
        &= O\left(\frac{1}{|x_1-w_2|}\right) + \sum_{x \in \Primal\cap \cC}m(x)O\left(\frac{1+|\log|x_1-x||}{|x-w_2|}\right)+O(1)\\
    \end{aligned}
    \end{equation}
    where the $O$ is uniform for $x_1, w_2 \in \cC' \cap \Omega$. 
    Note that when $\Omega$ and $\Primal$ have a straight boundary $L$ and when $x_1, x, w_2 \in \Primal$, $|\ph(x_1)-x| = O(|x_1-x|)$ and $|\ph(x)-w_2| =O(|x-w_2|)$ by Remark~\ref{rmk:twice:the:size} so this equation also holds in this case.
    Let $2d = |x_1-w_2|$. 
    We split the sum in two:
    $$
        S_1 
        = \sum_{|y-x_1| \leq d}m(y)\frac{1+|\log|x_1-y||}{|y-w_2|}
        \leq \sum_{|y-x_1| \leq d}m(y)\frac{1+|\log|x_1-y||}{d} \leq \frac{C_1 \cM}{d},
    $$
    because $\log$ is integrable in two dimensions. On the other hand,
    $$
        \begin{aligned}
        S_2 
        = \sum_{|y-x_1| \geq d}m(y)\frac{1+|\log|x_1-y||}{|y-w_2|}
        &\leq \sum_{ |y - x_1| \ge d }m(y)\frac{1+|\log(|y-w_2|/3)|}{|y-w_2|}\\
        &\leq \sum_{y \in \Omega \cap \cC}m(y)\frac{1+|\log(|y-w_2|/3)|}{|y-w_2|}
        \leq C_2 \cM
        \end{aligned}
    $$
    because if $|y-x_1| \geq d, |y-w_2| \geq d$,  then $|y-w_2| \leq |y-x_1|+2d \leq 3|y-x_1|$ and because $z \to (\log|z|)/|z-w_2|$ is integrable in two dimensions.
    This implies that 
    $$
        \dpartialtwo \dmGreen(x_1,w_2) = O\left(\frac{1}{|x_1-w_2|}\right),
    $$
    uniformly for $x_1, w_2 \in \cC' \cap \Omega$. 
    
    When $\Omega$ and $\Primal$ have a straight boundary $L$, since $\dpartialtwo\dmGreen(x_1,w_2)$ is extended to $\cC'$ by the discrete reflection principle, this gives
    $$
        \dpartialtwo \dmGreen(x_1,w_2) = O\left(\frac{1}{|x_1-w_2| \wedge |\ph(x_1)-w_2|}\right).
    $$
    uniformly for $x_1, w_2$ in $\cC'$, which concludes the proof.
\end{proof}

\subsection{Convergence of second order discrete derivative of massive Green function}\label{subsec:cv:green:second:derivative}
We state a lemma to prove convergence of the second order discrete derivative of the discrete massive Green function towards its continuous analog. Before stating the lemma, we need to introduce some more matrix notation. 
Recall that we identify a complex number $z = a+\ii b$ with the vector $(a,b)$ of $\RR^2$. We will denote by $\cdot$ the matrix product (it should be clear from context whether we are talking about multiplication of matrices or multiplication of complex numbers). In particular, the scalar product on $\RR^2$ is 
$$
	\langle z_1, z_2 \rangle = \Re(z_1 \overline{z_2}) = (z_1)^{\intercal} \cdot z_2.
$$

Recall also the (conjugate) discrete gradients from the previous section, namely  $\overline{\nabla_1}\mGreen$, and $\overline{\nabla_2}\mGreen$, that is for $i \in \{1,2\}$,
$$
    \overline{\nabla_i}\mGreen = \begin{pmatrix}\partial_{i,x}\mGreen \\ - \partial_{i,y}\mGreen\end{pmatrix}
$$
where the variables of $\mGreen$ are implicit in the notation, and where in $\partial_{i,z}$ the number $i \in \{1,2\}$ stands for the variable which is derivated and the letter $z \in \{x,y\}$ stands for the direction of the partial derivative. We also define 
	$$
        (\nabla_1) (\nabla_2)^{\intercal} \mGreen = 
        \begin{pmatrix}
            \partial_{1,x}\partial_{2,x}\mGreen & \partial_{1,x}\partial_{2,y}\mGreen \\
            \partial_{1,y}\partial_{2,x}\mGreen & \partial_{1,y}\partial_{2,y}\mGreen,
           \end{pmatrix} 
           \quad ; \quad 
            (\overline{\nabla_1}) (\overline{\nabla_2})^{\intercal}\mGreen = \begin{pmatrix}
            \partial_{1,x}\partial_{2,x}\mGreen & -\partial_{1,x}\partial_{2,y}\mGreen \\
            -\partial_{1,y}\partial_{2,x}\mGreen & \partial_{1,y}\partial_{2,y}\mGreen
        \end{pmatrix},
   	$$
    and
    $$
        (\nabla_2) (\nabla_1)^{\intercal} \mGreen = \left((\nabla_1) (\nabla_2)^{\intercal} \mGreen)\right)^{\intercal} = 
        \begin{pmatrix}
        \partial_{1,x}\partial_{2,x}\mGreen & \partial_{1,y}\partial_{2,x}\mGreen\\
            \partial_{1,x}\partial_{2,y}\mGreen & \partial_{1,y}\partial_{2,y}\mGreen
        \end{pmatrix}
    $$

\begin{lemma}\label{lem:cv:second:derivative}
    Let $w_1 = w_1(\eps), w_2 = w_2(\eps) \in W$ corresponding respectively to the middle of the primal edges $x_1^-x_1^+$, $x_2^-x_2^+$. Uniformly for $w_1,w_2$ in compact subsets of $\Omega^2 \setminus \diagonal(\Omega^2)$, 
	$$
        \begin{aligned}
	     \daverageone \dpartialone \daveragetwo \dpartialtwo \dmGreen(x_1^+,x_2^+) &= \frac{1}{16}(\overline{\nabla_1}) (\overline{\nabla_2})^{\intercal} \mGreen(w_1, w_2) + o_{\eps \to 0}(1)\\
        \left(\dpartialone \dpartialtwo \dmGreen(w_1,w_2)\right)(x_1^+-x_1^-)(x_2^+-x_2^-) &= \frac{1}{4}(x_1^+-x_1^-)^{\intercal} \cdot \nabla_1(\nabla_2)^{\intercal} \mGreen(w_1,w_2) \cdot (x_2^+-x_2^-) \\
        & \quad \quad + o_{\eps \to 0}(\eps^2).\\
	\end{aligned}
    $$
    The first equation means that the gradients of the real and imaginary parts of $\daveragetwo \dpartialtwo \dmGreen$ converge. More precisely, if we write the complex-valued function $\daveragetwo \dpartialtwo \dmGreen$ as $ U+iV$ (where $U,V$ are real-valued), then the complex-valued function $\daverageone \dpartialone U$ converges to the first column of the matrix on the right hand side (after identifying complex numbers with vectors, as usual). Likewise, $\daverageone \dpartialone V$ converges to the second column of that matrix. 
    
    Moreover,
    $$
       \dpartialone \dpartialtwo \dmGreen(w_1,w_2) = O\left(\frac{1}{|w_1-w_2|^2}\right) 
    $$
    uniformly for $w_1,w_2$ in compact subsets of $\Omega^2$.
    When $\Omega$ and $\Primal$ have a straight boundary $\Omega \cap B(\beta^*,\eta) = L$, $\Omega^2 \setminus \cD(\Omega^2)$ can be replaced by $\Omega_\ph^2 \setminus \{(x_1, x_2) \in \Omega_\ph^2 \colon x_1 = x_2 \text{ or } x_1 = \ph(x_2)\}$ in the first statement and $\Omega$ can be replaced by $\Omega_\ph$ in the second statement if we replace $|w_1-w_2|^2$ by $|w_1-w_2|^2 \wedge |\ph(w_1)-w_2|^2$.
\end{lemma}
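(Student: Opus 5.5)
The plan is to mirror the proof of Lemma~\ref{lem:cv:green:derivative}, now working in the first variable with the function $G(x_1,w_2) := \dpartialtwo \dmGreen(x_1,w_2)$ as input. Its real and imaginary parts are, for fixed $w_2$, discrete massive harmonic in $x_1$ away from $w_2$, since $\dpartialtwo$ acts only on the second variable. Write $\daveragetwo\dpartialtwo\dmGreen = U+\ii V$. For fixed $x_2$, the functions $U(\cdot,x_2)$ and $V(\cdot,x_2)$ are real discrete massive harmonic functions of $x_1$. They are linear combinations with bounded coefficients of $\dmGreen(\cdot,b)$, $b\sim w_2$, divided by $\eps$; this is fine because Lemma~\ref{lem:cv:green:derivative} already gives their uniform boundedness on compacts off the diagonal. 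So Proposition~\ref{prop:discrete:gradient} applies in the first variable to $H=U(\cdot,x_2)$ and to $H=V(\cdot,x_2)$, with $\cH$ and $\cF$ bounded. For $\cF$ we use Lemma~\ref{lem:lipschitz} applied to the bounded massive harmonic functions $U,V$.

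\emph{Step 1 (equicontinuity).} Fix nested compacts $\cC_1\supset\cC_2\supset\cdots$ in $\Omega^2\setminus\diagonal(\Omega^2)$. The second point of Proposition~\ref{prop:discrete:gradient} in the variable $x_1$ gives a modulus $|x_1-x_1'|(1+|\log|x_1-x_1'||)$ for $\daverageone\dpartialone U$ and $\daverageone\dpartialone V$. In the variable $x_2$, we observe that $\daverageone\dpartialone\dmGreen(\cdot,\cdot)$ is massive harmonic in $x_2$ and bounded. Hence, by symmetry of $\dmGreen$ and the same argument with the roles of the variables swapped, $\daverageone\dpartialone\daveragetwo\dpartialtwo\dmGreen$ also satisfies a $(1+|\log|)$-Lipschitz bound in $x_2$. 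Combining the two bounds with Lemma~\ref{lem:lip:two:var} and Arzel\`a--Ascoli, we extract a subsequential uniform limit $D_2$ on compacts.

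\emph{Step 2 (identification).} We integrate along discrete paths in $x_1$ exactly as in Step 2 of Lemma~\ref{lem:cv:green:derivative}. By the first point of Proposition~\ref{prop:discrete:gradient}, the increments of $U(\cdot,x_2)$ along a path are Riemann sums of $\langle 2\overline{\daverageone\dpartialone U}, \mathrm{d}x_1\rangle$, up to an error $O(\eps|\log\eps|)$ per step. We also know that $U+\ii V\to\frac14\overline{\nabla_2}\mGreen$ by Lemma~\ref{lem:cv:green:derivative}. It follows that $4\overline{D_2}$ integrates to $\frac14\overline{\nabla_2}\mGreen$ along every path, column by column. This identifies $D_2=\frac1{16}(\overline{\nabla_1})(\overline{\nabla_2})^\intercal\mGreen$, with the conjugation bookkeeping giving the sign pattern of the matrix, and proves the first statement. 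The second statement follows by combining the projection estimates of Proposition~\ref{prop:discrete:gradient} in both variables, i.e.\ equation~\eqref{eq:AF:first:point} applied twice, since $\eps|\log\eps|=o(1)$ relative to the normalisation.

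\emph{Step 3 (bound).} For the $O(|w_1-w_2|^{-2})$ bound I would differentiate the resolvent identity, Proposition~\ref{prop:resolvent}, in both variables:
\[
\dpartialone\dpartialtwo\dmGreen=\dpartialone\dpartialtwo\dGreen+\sum_x m(x)\,\dpartialone\dmGreen(w_1,x)\,\dpartialtwo\dGreen(x,w_2).
\]
The first term is $O(|w_1-w_2|^{-2})$ by the critical estimates (Appendix / Li). The sum is bounded, via \eqref{eq:point3_L31} and Lemma~\ref{lem:bound:nearby:derivative}, by $\eps^2\sum_x |w_1-x|^{-1}|x-w_2|^{-1}=O(1+|\log|w_1-w_2||)$. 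Splitting the domain into a compact part and a far part as in the previous proof handles the far part. In the straight-boundary case the discrete reflection principle extends everything, with $|\ph(w_1)-w_2|$ replacing the distance. The main obstacle is Step 1's regularity in the second variable: one must check that the boundedness constant $\cF$ for $\dpartialone$ of the (already differentiated) function stays uniform. I would get this by applying Lemma~\ref{lem:lipschitz} to $U$ and $V$ on slightly larger compacts.
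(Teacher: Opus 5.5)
Your treatment of the first statement (equicontinuity from Proposition~\ref{prop:discrete:gradient} in each variable, then identification by integrating along discrete paths, column by column) follows the paper. So does the $O(|w_1-w_2|^{-2})$ bound via the differentiated resolvent identity; your estimate $\eps^2\sum_x |w_1-x|^{-1}|x-w_2|^{-1}=O(1+|\log|w_1-w_2||)$ for the mass term is even sharper than needed. One point to make explicit: the swap of variables in Step~1 rests on the commutation \eqref{eq:commute} of the real finite-difference operators $\ReIm_1\daverageone\dpartialone$ and $\ReIm_2\daveragetwo\dpartialtwo$, not on the symmetry of $\dmGreen$ alone.

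The genuine gap is the second statement. ``Equation~\eqref{eq:AF:first:point} applied twice'' is exactly the implication the paper's proof flags as not automatic. Equation~\eqref{eq:AF:first:point} reads $\dpartialtwo\dmGreen(x_1,w_2)=\Proj[2\daveragetwo\dpartialtwo\dmGreen(x_1,x_2^+);\overline{x_2^+-x_2^-}]+E(x_1,w_2)$ with $E=O(\eps|\log\eps|)$. To reach $\dpartialone\dpartialtwo\dmGreen$ you must take a discrete $x_1$-derivative of this identity. A sup bound on $E$ only gives the useless $\dpartialone E=O(|\log\eps|)$. Moreover, $\dpartialone$ does not pass through the merely real-linear map $\Proj[\,\cdot\,;\overline{x_2^+-x_2^-}]$ unless you first split into real and imaginary parts.

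What is missing is a way to couple the two one-variable projection estimates through a function that is discrete massive harmonic in the other variable, so that Lemma~\ref{lem:lipschitz} gains the extra factor $\eps$. The paper does this in three moves:
\begin{enumerate}
\item It applies the first point of Proposition~\ref{prop:discrete:gradient} in $x_2$ to the real massive harmonic function $x_2\mapsto(x_1^+-x_1^-)\dpartialone\dmGreen(w_1,x_2)$. Its sup and discrete derivative are $O(\eps)$ by Lemma~\ref{lem:lipschitz}, so the projection error is $O(\eps^2|\log\eps|)$.
\item It uses \eqref{eq:commute:bis}, $\daveragetwo\dpartialtwo\dpartialone\dmGreen=\dpartialone\daveragetwo\dpartialtwo\dmGreen$, to rewrite the resulting average through the real quantities $(x_1^+-x_1^-)\dpartialone\ReIm\daveragetwo\dpartialtwo\dmGreen$.
\item These real quantities are controlled by \eqref{eq:d:from:average} and the first statement.
\end{enumerate}

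Alternatively, you can rescue your own route. Note that $(x_2^+-x_2^-)E(\cdot,w_2)$ is a real linear combination of the functions $\dmGreen(\cdot,b)$, hence real discrete massive harmonic in $x_1$ and of size $O(\eps^2|\log\eps|)$ on a neighbourhood. Lemma~\ref{lem:lipschitz} then gives $\dpartialone E=O(\eps|\log\eps|)$. After that, differentiate the real and imaginary parts of $\daveragetwo\dpartialtwo\dmGreen$ separately using \eqref{eq:d:from:average}. Only once one of these steps is in place does your ``$\eps|\log\eps|=o(1)$'' bookkeeping yield the required $o(\eps^2)$ error.
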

\begin{proof}
    We start with the first point of the lemma. The proof idea is roughly the same as for Lemma~\ref{lem:cv:green:derivative}. We first prove using Lemmas~\ref{lem:lipschitz} and \ref{prop:discrete:gradient} that $\daverageone \dpartialone \daveragetwo \dpartialtwo \dmGreen$  is a sequence of equi-continuous functions in both variables and then identify uniquely the limit by the same argument as before since $\daveragetwo \dpartialtwo \dmGreen$ converges by Lemma~\ref{lem:cv:green:derivative}. 

    However, some new technical difficulties arise along the way, we highlight some of them:
    \begin{itemize}
        \item $\daveragetwo \dpartialtwo \dmGreen$ is a complex valued function, whose real part and imaginary parts are real discrete massive harmonic function. Since we did not develop the holomorphicity theory for complex-valued functions, we must treat them separately.
        \item We need to use some new symmetry argument to show that $\daveragetwo \dpartialtwo \daverageone \dpartialone \dmGreen$ is an equi-continuous sequence of functions of the second variable, since it is no more discrete massive harmonic.
        
        \item We know from the second point of Proposition~\ref{prop:discrete:gradient} that $\dpartialtwo \dmGreen$ is approximately the projection of $\daveragetwo\dpartialtwo \dmGreen$ on $x_2^+-x_2^-$, and that $\dpartialone \daveragetwo \dpartialtwo \dmGreen$ is approximately the projection of $\daverageone \dpartialone \daveragetwo \dpartialtwo \dmGreen$ on $x_1^+-x_1^-$. This does not imply that we can approximate $\dpartialone \dpartialtwo \dmGreen$ by a projection of $\daverageone \dpartialone \daveragetwo \dpartialtwo \dmGreen$.   
    \end{itemize}
    The main arguments to solve these problems are based on the following simple observations. Let $\ReIm_1$, $\ReIm_2$ denote either the real part $\Re$ or the imaginary part $\Im$.
    \begin{align}
            \dpartialone \dpartialtwo \dmGreen(w_1,w_2) &= \dpartialtwo \dpartialone \dmGreen(w_1,w_2) \nonumber\\
        \dpartialone \daveragetwo \dpartialtwo \dmGreen(w_1,x_2)
        & = \daveragetwo \dpartialtwo \dpartialone \dmGreen(w_1,x_2) \label{eq:commute:bis}\\
        \ReIm_1 \daverageone \dpartialone \ReIm_2 \daveragetwo \dpartialtwo \dmGreen(x_1,x_2)
        & = \ReIm_2 \daveragetwo \dpartialtwo \ReIm_1 \daverageone \dpartialone \dmGreen (x_1, x_2) \label{eq:commute}.
    \end{align}
    These observations can all be checked by direct computation: for example the last one is equal to
    $$
        \sum_{w_1 \sim x_1, w_2 \sim x_2}\ReIm_1\left(\frac{\mu_W(w_1)}{x_1^+-x_1}\right)\ReIm_2\left(\frac{\mu_W(w_2)}{x_2^+-x_2}\right)\frac{\dmGreen(x_1^+,x_2^+)-\dmGreen(x_1,x_2^+)-\dmGreen(x_1^+,x_2)+\dmGreen(x_1,x_2)}{16\mu_B(x_1)\mu_B(x_2)}.
    $$
    Let $\cC_3 \subset \Omega^2 \setminus \cD(\Omega^2)$ be a compact subset.
    Recall from Equation~\eqref{eq:bound:AdG} that 
    \begin{equation}\label{eq:bound:PAdG}
        \sup_{\eps > 0, (x_1, x_2) \in \cC_3} \ReIm_2 \daveragetwo \dpartialtwo \dmGreen(x_1,x_2) < \infty.
    \end{equation} 
    Let $\cC_4 \subset \mathring{\cC_3}$ be a compact subset.
    By Lemma~\ref{lem:lipschitz}, 
    \begin{equation}\label{eq:bound:dPAdG}
        \sup_{\eps > 0, x_1, x_2 \in \cC_4}  |\dpartialone \ReIm_2 \daveragetwo \dpartialtwo \dmGreen(x_1,x_2)| < \infty.
    \end{equation}
    Let $\cC_5 \subset \mathring{\cC_4}$ be a compact subset.
    Applying the first point of Proposition~\ref{prop:discrete:gradient} as in the proof of Lemma~\ref{lem:cv:green:derivative}, we obtain that
    \begin{equation}\label{eq:bound:dPAdG:first:var}
        \begin{aligned}
        \sup_{\underset{\eps > 0}{(x_1,x_2), (x_1',x_2) \in \cC_5}} &\frac{|\ReIm_1 \daverageone \dpartialone \ReIm_2 \daveragetwo \dpartialtwo \dmGreen(x_1,x_2)-\ReIm_1 \daverageone \dpartialone \ReIm_2 \daveragetwo \dpartialtwo \dmGreen(x_1',x_2)|}
        {|x_1-x_1'|(1+|\log(|x_1-x_1'|)|)}\\
        &< \infty.
        \end{aligned}
    \end{equation}
    Since $\dmGreen$ is symmetric in the two variables, and $\ReIm_1 \daverageone \dpartialone$ and $\ReIm_2 \daveragetwo \dpartialtwo$ commute by Equation~\eqref{eq:commute} and $\ReIm_1, \ReIm_2$ were chosen arbitrarily, we can interchange the variables:
    \begin{equation}\label{eq:bound:dPAdG:second:var}
        \begin{aligned}
        \sup_{\underset{\eps > 0}{(x_1,x_2), (x_1,x_2') \in \cC_5}} 
        &\frac
        {|\ReIm_1 \daverageone \dpartialone \ReIm_2 \daveragetwo \dpartialtwo \dmGreen(x_1,x_2)-\ReIm_1 \daverageone \dpartialone \ReIm_2 \daveragetwo \dpartialtwo \dmGreen(x_1,x_2')|}
        {|x_2-x_2'|(1+|\log(|x_2-x_2'|)|)}\\
        &< \infty.
        \end{aligned}
    \end{equation}
    Combining Equations~\eqref{eq:bound:dPAdG}, \eqref{eq:bound:dPAdG:first:var} and~\eqref{eq:bound:dPAdG:second:var} with Lemma~\ref{lem:lip:two:var}, this implies that if $\cC_6 \subset \mathring{\cC_5}$ is a compact subset,
    $$
        \begin{aligned}
        \sup_{\underset{\eps > 0}{(x_1,x_2), (x_1',x_2') \in \cC_6}}
         &\frac
        {|\ReIm_1 \daverageone \dpartialone \ReIm_2 \daveragetwo \dpartialtwo \dmGreen(x_1,x_2)-\ReIm_1 \daverageone \dpartialone \ReIm_2 \daveragetwo \dpartialtwo \dmGreen(x_1',x_2')|}
        {|x_1-x_1'|(1+|\log(|x_1-x_1'|)|)+|x_2-x_2'|(1+|\log(|x_2-x_2'|)|)}\\
        &< \infty.
        \end{aligned}
    $$
    Hence, for $\ReIm \in \{\Re, \Im\}$, the sequence $\daverageone \dpartialone \ReIm \daveragetwo \dpartialtwo \dmGreen$ is equi-continuous on $\cC_6$, and this holds for any compact subset $\cC_6 \subset \Omega^2 \setminus \cD(\Omega^2)$.
    By the Arzela--Ascoli theorem, upon taking a subsequence, $\daverageone \dpartialone \ReIm \daveragetwo \dpartialtwo \dmGreen \to \cD_{\ReIm}$ uniformly on compact subsets of $\Omega^2 \setminus \diagonal(\Omega^2)$. 
    Moreover, since $\ReIm \daveragetwo \dpartialtwo \dmGreen$ and $\dpartialone \ReIm \daveragetwo \dpartialtwo \dmGreen$ are bounded on $\cC_4$ by Equations~\eqref{eq:bound:PAdG} and~\eqref{eq:bound:dPAdG}, by the first point of Proposition~\ref{prop:discrete:gradient}
    \begin{equation}\label{eq:d:from:average}
        \begin{aligned}
        \sup_{\underset{\eps > 0}{(w_1,x_2) \in \cC_5}}
        &\frac
        {\left|\dpartialone \ReIm \daveragetwo \dpartialtwo \dmGreen(w_1,x_2) -\Proj\left[2\daverageone \dpartialone \ReIm \daveragetwo \dpartialtwo \dmGreen(x_1^+,x_2);\overline{x_1^+-x_1^-}\right]\right|}
        {\eps (1+|\log(\eps)|)}\\
        &< \infty.
        \end{aligned}
    \end{equation}
    Since by Lemma~\ref{lem:cv:green:derivative}, 
    $$
        \ReIm \daveragetwo \dpartialtwo \dmGreen(w_1,x_2) = \frac{1}{4}\ReIm \overline{\nabla_2}\mGreen(x_1,w_2) + o_{\eps \to 0}(1),
    $$
    the argument of the second step of the proof of Lemma~\ref{lem:cv:green:derivative} applies and we obtain that $\cD_{\ReIm} = \frac{1}{16}\overline{\nabla_1} \ReIm \overline{\nabla_2} \mGreen$. 
    This concludes the proof of the first point.
    When $\Omega$ and $\Primal$ have a straight boundary $L$, the modification is straightforward by replacing Equation~\eqref{eq:bound:PAdG} by its modification, see the proof of the first point of Lemma~\ref{lem:cv:green:derivative}.

    \bigskip

    To obtain the second point, we first note that by linearity of $\dpartialone$, Equation~\eqref{eq:commute:bis}, for all $w_1,x_2$,
    \begin{equation}\label{eq:interchange}
        \begin{aligned}
        (x_1^+-x_1^-)\daveragetwo \dpartialtwo \dpartialone  \dmGreen(w_1,x_2)&= (x_1^+-x_1^-)
        \dpartialone \daveragetwo \dpartialtwo \dmGreen(w_1,x_2) \\
        &= 
        \begin{pmatrix}
            (x_1^+-x_1^-)\dpartialone \Re \daveragetwo \dpartialtwo \dmGreen(w_1,x_2) \\
            (x_1^+-x_1^-) \dpartialone \Im \daveragetwo \dpartialtwo \dmGreen(w_1,x_2).
        \end{pmatrix}
        \end{aligned}
    \end{equation}
    We use the vector form rather than the complex form for the right-hand side to emphasize that both components are real. 
    The right-hand side can be estimated by Equation~\eqref{eq:d:from:average}.
    For the left-hand side, we use once more Proposition~\ref{prop:discrete:gradient}. 
    As before, applying Lemma~\ref{lem:lipschitz} consecutively to the discrete real massive harmonic functions $\dmGreen(\cdot, x_2)$ and $(x_1^+-x_1^-)\dpartialone 
    \dmGreen (w_1, \cdot)$, we obtain that for all compact subset $\cC_1 \subset \Omega^2 \setminus \cD(\Omega^2)$, 
    $$
        \sup_{\eps > 0, (w_1,x_2) \in \cC} |\dpartialone \dmGreen(w_1, x_2)| < \infty \quad ; \quad \sup_{\eps > 0, (w_1,w_2) \in \cC} |\dpartialtwo \dpartialone \dmGreen(w_1, w_2)| < \infty.
    $$
    By the first point of Proposition~\ref{prop:discrete:gradient} applied to the \emph{real-valued} discrete massive harmonic function $(x_1^+-x_1^+)\dpartialone \dmGreen(w_1, \cdot)$, if $\cC_2 \subset \mathring{\cC_1}$ is a compact subset,
    \begin{equation}\label{eq:evaluate:interchange}
        \begin{aligned}
        \sup_{\eps > 0, (w_1,w_2) \in \cC_2}
        &
        \frac
        {1}
        {\eps(1+|\log(\eps)|)}
        \Bigg|(x_1^+-x_1^-)\dpartialtwo\dpartialone \dmGreen(w_1, x_2)\\
        & - \Proj\left[2(x_1^+-x_1^-)\daveragetwo\dpartialtwo \dpartialone \dmGreen(w_1, x_2^+); \overline{x_2^+-x_2^-}\right]\Bigg|
        < \infty.
        \end{aligned}
    \end{equation}
    To obtain the second point of the lemma from Equations~\eqref{eq:d:from:average}, \eqref{eq:interchange}, \eqref{eq:evaluate:interchange} and the first point of the lemma is only a matter of basic algebra. We do it below for completeness.

    Let $w_1,w_2 \in \cC_5$. 
    From Equation~\eqref{eq:d:from:average}, the first point of the lemma, and the fact that
    \begin{equation}\label{eq:proj}
        z_1\Proj[z_2,\overline{z_1}] = \Re(z_1z_2) = z_1^{\intercal} \cdot \overline{z_2} = \overline{z_2}^{\intercal} \cdot z_1 \in \RR,
    \end{equation}
    we obtain that for $\ReIm \in \{\Re,\Im\}$
    $$
        (x_1^+-x_1^-)\dpartialone \ReIm \daveragetwo \dpartialtwo \dmGreen(w_1,x_2) = \frac{1}{8}\nabla_1^{\intercal} \ReIm \overline{\nabla_2} \mGreen(w_1,w_2) \cdot (x_1^+-x_1^-) + o(\eps),
    $$
    uniformly for $w_1,w_2 \in \cC_5$. 
    Note that both sides of this equation are real. 
    Equation~\eqref{eq:interchange} then becomes
    $$
        \begin{aligned}
        (x_1^+-x_1^-)\daveragetwo \dpartialtwo \dpartialone  \dmGreen(w_1,x_2) 
        &= \frac{1}{8}\overline{\nabla_2}\nabla_1^{\intercal} \mGreen(w_1,w_2) \cdot (x_1^+-x_1^-)+o(\eps),
        \end{aligned}
    $$
    uniformly for $w_1,w_2 \in \cC_5$. 
    From Equations~\eqref{eq:evaluate:interchange} and \eqref{eq:proj}, we finally obtain that
    $$
        \begin{aligned}
        (x_2^+-x_2^-)(x_1^+-x_1^-)\dpartialtwo \dpartialone \dmGreen(w_1,w_2) &= \frac{1}{4} (x_2^+-x_2^-)^{\intercal}\overline{\overline{\nabla_2}\nabla_1^{\intercal}\mGreen(w_1,w_2)\cdot (x_1^+-x_1^-)} + o(\eps^2)\\
        &=  \frac{1}{4} (x_2^+-x_2^-)^{\intercal}\nabla_2\nabla_1^{\intercal}\mGreen(w_1,w_2)\cdot (x_1^+-x_1^-) + o(\eps^2)
        \end{aligned}
    $$
    uniformly for $w_1,w_2 \in \cC_5$, which concludes the proof.

    \bigskip

    We now tackle the last point of the lemma. Let $\cC \subset \Omega$ ($\Omega_\ph$ if $\Omega$ and $\Primal$ have a straight boundary $L$) be a compact subset. As in the proof of Lemma~\ref{lem:cv:green:derivative}, we separate the boundary terms in the resolvent identity and differentiate in each variable to obtain
    $$
        \begin{aligned}
        \dpartialone \dpartialtwo \dmGreen(w_1,w_2) 
        &= \dpartialone \dpartialtwo \dGreen(w_1,w_2) + \sum_{x \in \Primal \cap \cC}m(x)\dpartialone \dmGreen(w_1,x) \dpartialtwo \dGreen(x,w_2)\\
        &\quad + \dpartialone \dpartialtwo H(w_1,w_2).
        \end{aligned}
    $$
    Recall that $H(w_1,w_2)$ is discrete massive harmonic in the first variable, discrete (non-massive) harmonic in the second variable.
    Moreover, it is bounded uniformly for $x_1, x_2$ in compact subsets of $\mathring{\cC}$.
    Hence its derivative in one and two variables are also bounded uniformly in any compact subset of $\mathring{\cC}$ by Lemma~\ref{lem:lipschitz}. 
    Let $\cC' \subset \mathring{\cC}$ be a compact subset.
    By Lemmas~\ref{lem:cv:green:derivative}, \ref{lem:bound:nearby:derivative} and~\ref{lem:bound:nearby:second:derivative} (and by symmetry of the discrete Green function),
    $$
        \begin{aligned}
        \dpartialone \dpartialtwo \dmGreen(w_1,w_2) 
        &= O\left(\frac{1}{|w_1-w_2|^2}\right) + \sum_{y \in \Primal \cap \cC}m(y)O\left(\frac{1}{|x-w_1||x-w_2|}\right) + O(1)\\
        \end{aligned}
    $$
    uniformly for $w_1,w_2 \in \cC' \cap \Omega$.
    Recall that as in the preceding lemma, $|\ph(w_1)-w_2| = O(|w_1-w_2|)$ for $w_1, w_2 \in \Omega$ so this holds also when $\cC'$ is a compact subset of $\Omega \cup B(\beta^*,\eta)$.
    Again, we can denote $2d = |w_1-w_2|$ and split the sum in three: for $i \in \{1,2\}$,
    $$
        S_i 
        = \sum_{|y-w_i| \leq d}\frac{m(y)}{|x-w_1||x-w_2|}
        \leq \frac{1}{d}\sum_{|y-w_i| \leq d}\frac{m(y)}{|x-w_i|} \leq \frac{C_1\cM}{d}
    $$
    because $z \to 1/|z|$ is integrable in two dimensions, and
    $$
     S_3 = \sum_{|y-w_1| \geq d, |y-w_2| \geq d}\frac{m(y)}{|x-w_1||x-w_2|}
        \leq \frac{1}{d^2}\sum_{\Primal \setminus \cC}m(y) \leq \frac{C_3\cM}{d^2}.
    $$
    This implies that 
    $$
        \dpartialone\dpartialtwo \dmGreen(w_1,w_2) = O\left(\frac{1}{|w_1-w_2|^2}\right),
    $$
    uniformly for $w_1, w_2 \in \cC' \cap \Omega$. 
    When $\Omega$ and $\Primal$ have a straight boundary $L$, since $\dmGreen(x_1,x_2)$ and its derivatives are extended to $\Primal_\ph$ by the discrete Schwarz principle, this gives
    $$
        \dpartialone\dpartialtwo \dmGreen(w_1,w_2) = O\left(\frac{1}{|w_1-w_2|^2 \wedge |\ph(w_1)-w_2|^2}\right).
    $$
    This concludes the proof.
\end{proof}

\subsection{Convergence of the inverse Kasteleyn matrix (proof of Theorem \ref{thm:isoradial})}\label{subsec:proof:thm:b0}

This section is dedicated to the proof of Theorem \ref{thm:isoradial}. Roughly speaking, Lemma \ref{lem:block} relates the inverse Kasteleyn matrix $K^{-1} (x, w)$ to the derivatives of the massive Green function when $x \in \Gamma$ is a (primal) black vertex. The tools developed in the preceding subsections therefore give us the asymptotics of $K^{-1} $ in this case, which yields the first point of Theorem \ref{thm:isoradial} in this case (i.e., the first asymptotics of \eqref{eq:thm:invK}). 

The asymptotics of $K^{-1} (y, w)$ when $y$ is instead a black (dual) vertex is however more complicated. Let us first recall an important idea concerning the classical (non-massive) case, which goes back in some form to the seminal paper of Kenyon \cite{Kenyon_confinv}. In this classical case, the idea is that $K^{-1}$ satisfies $KK^{-1} = I$, so is discrete holomorphic. This means that the derivatives of $K^{-1}(\cdot, w)$ in the $y$ direction are equal to $\ii$ times those in the $x$ direction of a given rhombus. In combination with the asymptotic analysis of the (derivatives of) $K^{-1}$ in the $x$ direction this gives us access to the asymptotics of the derivatives of $K^{-1}$ in the $y$ direction. Summing these asymptotics along a path emanating from the boundary (more specifically from the ``special'' Temperleyan corner $b^*$) then gives an integral formula for the asymptotics of $K^{-1}(y,w)$. 

In our massive case, it is of course still the case that $KK^{-1} = I$. However this now means that $K^{-1}$ is discrete \emph{massive} holomorphic, so the relation between the $x$- and $y$-derivatives of $K^{-1}( \cdot, w)$ on a given rhombus is more complex. Ultimately, we need to find a suitable  transformation of $K^{-1}$ for which an exact discrete form of the massive Cauchy--Riemann equation \eqref{eq:CR:geometric} holds. The resulting key identities are stated in \eqref{eq:asymptotic:CR}, \eqref{eq:exponential} and 
\eqref{eq:exponential2}.

\begin{proof}
    Suppose that $w_2 \in \White$ corresponds to the middle of the rhombus $x_2^-y_2^-x_2^+y_2^+$ with angle $\theta_2$ and  with $K_{w_2,x_2^+}>0$, $K_{w_2,x_2^-}<0$ as in the statement of the theorem. 
    From Lemma \ref{lem:block}, since by definition $\dmGreen = (\dDirichletDelta-m I)^{-1}$, we can write
    $$
    	\begin{aligned}
        		K^{-1}(x_1,w_2) &= (\dmGreen K^*)(x_1,w_2) \\
		&= \dmGreen(x_1,x_2^+)\overline{K(w_2,x_2^+)}+G^m(x_1,x_2^-)\overline{K(w_2,x_2^-)}\\
		&= \sqrt{\tan(\theta_2)}\left[\exp\left(\frac{V(x_2^-)-V(x_2^+)}{2}\right)\dmGreen(x_1,x_2^+)-\exp\left(\frac{V(x_2^+)-V(x_2^-)}{2}\right)\dmGreen(x_1,x_2^-)\right]
	\end{aligned}
    $$
    by definition of $K$. For any $w \in \White$ corresponding to the middle of the edge $x^-x^+$ of $\Primal$, we can perform an asymptotic expansion
    $$
    	\begin{aligned}
    	\exp\left(\frac{V(x^+)-V(x^-)}{2}\right) 
	&= \exp\left(\frac12\left\langle\nabla V(w),x^+-x^-\right\rangle+O(\eps^3)\right)\\
	& = 1 + \frac12 \left\langle\nabla V(w) , x^+-x^-\right\rangle +  R(w, \eps) +O(\eps^3)
	\end{aligned}
    $$
    uniformly for $w \in W$, with $R(w,\eps) = \frac18\left\langle\nabla V(w),x^+-x^-\right\rangle^2$ depends only on $w \in W$ (and not on the choice of $x^-$ and $x^+$) and $R(w,\eps)=O(\eps^2)$ uniformly for $w \in W$. Hence we can write, uniformly for $(x_1,w_2)$ in compact subsets of $(\Omega \cup L)^2$,
    	\begin{align}
        K^{-1}(x_1,w_2) 
        &= \sqrt{\tan(\theta_2)}\left[\dmGreen(x_1,x_2^+)\left(1-\frac{1}{2} \langle\nabla V(w_2) , x_2^+-x_2^-\rangle + R(w_2,\eps)+O(\eps^3)\right) \right.\nonumber \\
        &\quad \left. - \dmGreen(x_1,x_2^-)\left(1+\frac{1}{2} \langle\nabla V(w_2) , x_2^+-x_2^-\rangle+R(w_2,\eps)+O(\eps^3)\right)\right]\nonumber \\
        &=  \sqrt{\tan(\theta_2)}\bigg[2\dpartialtwo \dmGreen(x_1,w_2)(x_2^+-x_2^-) \left(1+R(w_2,\eps) +O(\eps^3)\right)\nonumber \\
        &\quad - (\dmGreen(x_1,x_2^+)+\dmGreen(x_1,x_2^-))\left(\frac{1}{2} \langle\nabla V(w_2) , x_2^+-x_2^-\rangle +O(\eps^3)\right) \bigg]\nonumber \\
        &=  \sqrt{\tan(\theta_2)}\left[2\dpartialtwo \dmGreen(x_1,w_2)(x_2^+-x_2^-)\left(1-\frac12 \langle\nabla V(w_2) ,x_2^+-x_2^-\rangle + R(w_2,\eps)\right) \right.\nonumber \\
        &\quad - \dmGreen(x_1,x_2^-) \langle\nabla V(w_2) ,x_2^+-x_2^-\rangle \bigg]
         + O(\eps^3 \log \eps). \label{eq:CR:1}
                \end{align}
	where at the last line we use Lemma~\ref{lem:green:bounded} to bound $\dmGreen(x_1,w_2) = O(1+|\log|x_1-w_2||) = O(\log \eps))$. We are ready to prove the first point of the theorem. 
    The discrete massive Green function and its discrete derivative $\dpartialtwo \dmGreen(x_1,w_2)$ are uniformly bounded for $x_1,w_2$ in compact subsets of $\Omega\cup L)^2 \setminus \cD((\Omega \cup L)^2)$ by Lemmas \ref{lem:green:bounded} and~\ref{lem:bound:nearby:derivative}. 
    Hence, in Equation \eqref{eq:CR:1} if we consider only terms of order up to $O(\eps^2)$, we obtain
	\begin{equation}\label{eq:K^-1:order2}
			K^{-1}(x_1,w_2)
			= \sqrt{\tan(\theta_2)}\left[2(x_2^+-x_2^-)\dpartialtwo \dmGreen(x_1,w_2)- \dmGreen(x_1,x_2^-) \langle\nabla V(w_2) , x_2^+ - x_2^-\rangle)\right]+O(\eps^2)
    \end{equation}
    Using Lemmas \ref{lem:cv:green} and \ref{lem:cv:green:derivative}, we obtain
    $$
        K^{-1}(x_1,w_2) = \sqrt{\tan(\theta_2)}\left\langle\nabla_2 \mGreen(x_1,w_2) -\mGreen(x_1,w_2)\nabla V(w_2), x_2^+-x_2^-\right\rangle+o(\eps),
    $$
    uniformly for $(x_1,w_2)$ in compact subsets of $(\Omega \cup L)^2 \setminus \cD((\Omega \cup L)^2)$. Moreover, since $\Omega$ and $\Primal$ have a straight boundary, we can plug the bounds of Lemmas~\ref{lem:cv:green} and~\ref{lem:cv:green:derivative} in Equation~\eqref{eq:CR:1} to obtain that uniformly for $(x_1,w_2)$ in compact subsets of $(\Omega \cup L)^2$, 
    \begin{equation}\label{eq:discreteCR}
        K^{-1}(x_1,w_2) = O\left(\frac{\eps}{|x_1-w_2|}\right).
    \end{equation}

    We prove the second point of the theorem. For $w_1 \neq w_2$ corresponding to the middle of the rhombus $x_1^-y_1^-x_1^+y_1^+$ with angle $\theta_1$ and with $K_{w_1,x_1^+} = 1$, since by definition $KK^{-1} = I$, the \emph{discrete massive Cauchy--Riemann equation} at $w_1$ for the function $w_1 \to K^{-1}_{w_1,w_2}$ (which is, by definition, \emph{discrete massive meromorphic} in the sense of Remark~\ref{rem:def:discrete:massive:differential}) is
    $$
    	0 = (KK^{-1})_{w_1,w_2} = K_{w_1,x_1^-}K^{-1}_{x_1^-,w_2} + K_{w_1,x_1^+}K^{-1}_{x_1^+,w_2} + K_{w_1,y_1^-}K^{-1}_{y_1^-,w_2} + K_{w_1,y_1^+}K^{-1}_{y_1^+,w_2}.
    $$
    
    \begin{rmk}\label{rmk:temperleyan:corner}
        Note that, by definition of the Temperleyan corner $b^*$, this equation also holds for $w_1$ corresponding to the middle of an edge $y_1^-b^*$ with $y_1^- \in \Dual$ if we let $K^{-1}_{b^*,w_2} = 0$. Intuitively, Temperleyan boundary condition imposes Dirichlet boundary on $\Primal$, and Neumann boundary conditions on $\Dual$ except at $b^*$ where the boundary condition is also Dirichlet.
    \end{rmk}

    The natural strategy would be to use a compactness argument to obtain convergence of $K^{-1}(\cdot,w_2)$ and its derivatives on $\Primal$ and $\Dual$, and to take the limit in the discrete Cauchy--Riemann equation which would identify uniquely the limit on $\Dual$ as the unique massive holomorphic conjugate of the limit on $\Primal$ (which we already identified in the first point of the theorem). However, $K^{-1}(\cdot, w_2)$ is not discrete massive holomorphic on $\Dual$: actually, we do not even have a definition of discrete massive holomorphicity on $\Dual$, so we need to use a different argument.
 
 	For $w_1, w_2$ in $\White$, let
	$$
		\begin{aligned}
			[\dCRx K^{-1}](w_1,w_2) &= \sqrt{\tan(\theta_1)}\left( \exp\left(\frac{V(x_1^-)-V(x_1^+)}{2}\right)K^{-1}_{x_1^+,w_2}-\exp\left(\frac{V(x_1^+)-V(x_1^-)}{2}\right)K^{-1}_{x_1^-,w_2}\right)\\
			[\dCRy K^{-1}](w_1,w_2) &= \frac{1}{\sqrt{\tan(\theta_1)}}
            \bigg[\exp\left(V(y_1^+)-\frac{V(x_1^+)+V(x_1^-)}{2}\right)K^{-1}_{y_1^+,w_2}\\
            & \qquad \qquad \qquad -\exp\left(V(y_1^-)-\frac{V(x_1^+)+V(x_1^-)}{2}\right)K^{-1}_{y_1^-,w_2}\bigg].\\
		\end{aligned}
	$$
	Then, the discrete massive Cauchy--Riemann equation \eqref{eq:discreteCR} can be rewritten as 
	\begin{equation}\label{eq:discrete:CR}
		-[\dCRx K^{-1}](w_1,w_2) = \ii [\dCRy K^{-1}](w_1,w_2).
	\end{equation}
	We first study the left-hand side.
     \begin{equation}\label{eq:asymptotic:CR}
    	\begin{aligned}
	\frac{[\dCRx K^{-1}](w_1,w_2)}{\sqrt{\tan(\theta_1)}} &= \left(1 + \frac12 \langle\nabla V(w_1) ,x_1^--x_1^+\rangle + R(w_1,\eps)+O(\eps^3)\right)K^{-1}_{x_1^+,w_2}\\
	&\quad -\left(1 + \frac12 \langle\nabla V(w_1) ,x_1^+-x_1^-\rangle +  R(w_1,\eps)+O(\eps^3)\right)K^{-1}_{x_1^-,w_2}\\
	\end{aligned}
	\end{equation}
	Note that by Equation \eqref{eq:K^-1:order2} and Lemmas~\ref{lem:cv:green}, \ref{lem:cv:green:derivative} and~\ref{lem:cv:second:derivative}, uniformly for $w_1 \neq w_2$ in compact subsets of $\Omega \cup L$,
	\begin{equation}\label{eq:K^-1bound}
		|K^{-1}_{x_1^\pm,w_2}| = O\left(\frac{\eps}{|w_1-w_2|}\right) \quad ; \quad |K^{-1}_{x_1^+,w_2}-K^{-1}_{x_1^-,w_2}| = O\left(\frac{\eps^2}{|x_1-w_2|^2}\right).
	\end{equation}
    Hence, uniformly for $w_1 \neq w_2$ in compact subsets of $\Omega \cup L$,
    \begin{equation}\label{eq:bound:K:dual}
        [\dCRx K^{-1}](w_1,w_2) =  O\left(\frac{\eps^2}{|w_1-w_2|^2}\right).
    \end{equation}
    For points which are far away from each other, we can be more precise. Multiplying by $\tan(\theta_2)^{-1/2}$ and plugging in the asymptotic expansion of Equation \eqref{eq:CR:1} in Equation \eqref{eq:asymptotic:CR} (and using that $\dpartialone \dmGreen$ and $\dpartialone \dpartialtwo \dmGreen$ are bounded by Lemmas~\ref{lem:cv:green:derivative} and~\ref{lem:cv:second:derivative} to approximate $\dmGreen(x_1^+,x_2^-) = \dmGreen(x_1^+,x_2^-) + O(\eps)$ and $\dpartialtwo \dmGreen(x_1^+,w_2) = \dpartialtwo \dmGreen(x_1^-,w_2) + O(\eps)$), we obtain that uniformly for $(w_1,w_2)$ in compact subsets of $(\Omega \cup L)^2 \setminus \cD((\Omega \cup L)^2)$,
	 $$
	 	\begin{aligned}
			\frac{[\dCRx K^{-1}](w_1,w_2)}{\sqrt{\tan(\theta_1)\tan(\theta_2)}} 
            &= 4\dpartialone \dpartialtwo \dmGreen(w_1,w_2)(x_1^+-x_1^-)(x_2^+-x_2^-)\\
            &\quad -2\dpartialone \dmGreen(w_1,x_2^-)(x_1^+-x_1^-)\langle\nabla V(w_2) ,x_2^+-x_2^-\rangle\\
			&\quad -2 \langle\nabla V(w_1) ,x_1^+-x_1^-\rangle\dpartialtwo \dmGreen(x_1^-,w_2)(x_2^+-x_2^-)\\
            &\quad + \langle\nabla V(w_1) ,x_1^+-x_1^-\rangle\dmGreen(x_1^-,x_2^-) \langle\nabla V (w_2) ,x_2^+-x_2^-\rangle
             +O(\eps^3).
		\end{aligned}
	 $$
    Using Lemmas \ref{lem:cv:green:derivative}~and~\ref{lem:cv:second:derivative}, and the fact that for two vectors $z_1$ and $z_2$, $\langle z_1,z_2\rangle = (z_1)^{\intercal}\cdot z_2 = (z_2)^{\intercal}\cdot z_1$, we obtain that uniformly for $(w_1,w_2)$ in compact subsets of $(\Omega \cup L)^2 \setminus \cD((\Omega \cup L)^2)$,
	 \begin{equation}\label{eq:asymptotic:LHS:CR}
	 	\begin{aligned}
			\frac{[\dCRx K^{-1}](w_1,w_2)}{\sqrt{\tan(\theta_1)\tan(\theta_2)}} &=
			  (x_1^+-x_1^-)^{\intercal}\cdot(\nabla_1) (\nabla_2)^{\intercal} \mGreen \cdot (x_2^+-x_2^-) - \langle \nabla_1 \mGreen, x_1^+-x_1^-\rangle\langle\nabla V(\omega_2),x_2^+-x_2^-\rangle\\
     &+\langle \nabla V(w_1), x_1^+-x_1^-\rangle\left[-\langle \nabla_2 \mGreen , x_2^+-x_2^-\rangle + \mGreen \langle \nabla V(w_2),x_2^+-x_2^-\rangle\right]+o(\eps^2)\\
    &= (x_1^+-x_1^-)^{\intercal}\cdot\left[((\nabla_1)(\nabla_2)^{\intercal} \mGreen - \nabla_1 \mGreen \cdot (\nabla V(w_2))^{\intercal}\right.\\
    & \left.\quad - \nabla V(w_1) \cdot (\nabla_2 \mGreen)^{\intercal} + \mGreen \nabla V(w_1)\cdot (\nabla V(w_2))^{\intercal}\right]\cdot(x_2^+-x_2^-) + o(\eps^2)
		\end{aligned}
	 \end{equation}
	 where some arguments are omitted in the notation when there is no risk of confusion.

    Let $\cC \subset \Omega^2 \setminus \cD(\Omega^2)$ be a compact subset. We can find 
    $\cC_1 \subset (\Omega \cup L)^2 \setminus \cD((\Omega \cup L)^2)$ and $C>0$ a constant such that for all $(y_1,w_2) \in \cC$, there exists a continuous smooth path $\path$ between $y_0 := b^*$ and $y_1$ of bounded length $|\path| < C$ such that $(\path(t),w_2) \in \cC_1$ for all $t$. 
    Let $\dpath = \tw_0, \tw_1, \dots, \tw_n$ be a path in $\White$ from $y_0$ to $y_1$, that is if we denote by $\tx_k^-\ty_k^-\tx_k^+\ty_k^+$ the rhombus corresponding to $\tw_k$, $\ty_0^- = y_0, \ty_n^+ = y_1$ and for all $0 \leq k \leq n-1$, $\ty_k^+ = \ty_{k+1}^-$. 
    By the bounded angle assumption, we can assume further that the length of $\dpath$ is $n \leq C|\gamma|\eps^{-1}$ and that $\dpath$ remains at distance $\leq C\eps$ from $\path$ upon increasing $C$. 
    If we sum the $[\dCRy K^{-1}](\tw_k,w_2)$ along $\dpath$, consecutive terms do not cancel out, so we need to modify them slightly. We can write
	 $$
	 	\begin{aligned}
	 	[\dCRy K^{-1}](\tw_k,w_2) &= \frac{\exp\left(V(\ty_k^-)-\frac{V(\tx_k^+)+V(\tx_k^-)}{2}\right)}{\sqrt{\tan(\ttheta_k)}}\left(\exp(V(\ty_{k+1}^-)-V(\ty_k^-))K_{\ty_{k+1}^-,w_2}-K_{\ty_k^-,w_2}\right)\\
		&= q_k\left(\exp(V(\ty_{k+1}^-)-V(\ty_k^-))K_{\ty_{k+1}^-,w_2}-K_{\ty_k^-,w_2}\right),
		\end{aligned}
	 $$
	 where $q_k$ is defined by the last equation. Note that
	 	 \begin{equation}\label{eq:exponential}
	 \frac{\exp(V(\ty_k^-))}{q_k} [\dCRy K^{-1}](\tw_k,w_2) = \exp(V(\ty_{k+1}^-))K_{\ty_{k+1}^-,w_2}-  \exp(V(\ty_k^-)) K_{\ty_k^-,w_2},
	 \end{equation}
	 so summing along $\dpath$, the intermediate terms cancel out and since $\exp(V(y_0))K^{-1}_{y_0,w_2} = 0$ by Remark~\ref{rmk:temperleyan:corner}, we obtain
     \begin{equation}\label{eq:exponential2}
	 	\exp(V(y_1))K^{-1}_{y_1,w_2} = \sum_{k=0}^n \frac{\exp(V(\ty_k^-))}{q_k} [\dCRy K^{-1}](\tw_k,w_2).
        \end{equation}
	 Note that 
	 \begin{align*}
	 	q_k &= \frac{\exp(V(\ty_k^-)-V(\tw_k)+O(\eps^2))}{\sqrt{\tan(\ttheta_k)}} = \frac{1-\frac12 \nabla V(\tw_k)\cdot(\ty_k^+-\ty_k^-)}{\sqrt{\tan(\ttheta_k)}}+O(\eps^2)\\
        & = \frac{1+ O(\eps)}{\sqrt{\tan(\tilde \theta_k)}} ,
\end{align*}
	where the $O$ is uniform over $\Omega$. Hence, using the discrete massive Cauchy--Riemann Equation \eqref{eq:discrete:CR} as well as the \emph{a priori} bounds on $K^{-1}$ from \eqref{eq:K^-1bound}, and plugging in the asymptotic expansion of Equation \eqref{eq:asymptotic:LHS:CR}, we obtain: 
	 \begin{equation}\label{eq:K^-1:sum}
	 	\begin{aligned}
	  	\exp(V(y_1)K^{-1}_{y_1,w_2})
        &=\sum_{k=0}^n  \exp(V(\ty_k^-))\frac{\sqrt{\tan(\ttheta_k)}}{1+ O(\eps)}[\dCRy K^{-1}](\tw_k,w_2)\\
        %
        &=\ii\sum_{k=0}^n  \exp(V(\ty_k^-))(1+ O(\eps)){\sqrt{\tan(\ttheta_k)}}[\dCRx K^{-1}](\tw_k,w_2)\\
        %
		& = \ii\sqrt{\tan(\theta_2)}(1+ O(\eps))\Big\{\sum_{k=0}^n  \exp(V(\ty_k^-))\tan(\ttheta_k) (\tx_k^+-\tx_k^-)^{\intercal} \cdot \bigg[(\nabla_1)(\nabla_2)^{\intercal} \mGreen\\
        & \quad - \nabla_1 \mGreen \cdot (\nabla V(w_2))^{\intercal} - \nabla V(\tw_k) \cdot (\nabla_2 \mGreen)^{\intercal} + \mGreen \nabla V(\tw_k) \cdot (\nabla V(w_2))^{\intercal}\bigg]\\
        &\quad \cdot (x_2^+-x_2^-) + o(\eps^2)\Big\} 
		\end{aligned}
	 \end{equation}
    where the $o$ is uniform for $(y_1,w_2) \in \cC$. Finally, observe that for all rhombus $x^-y^-x^+y^+$ with angle $\theta$, it holds that
	 $$
	 	y^+-y^- = \tan(\theta)\Rot_{\pi/2} \cdot (x^+-x^-).
	 $$
   where we identify a complex number with the associated vector in $\RR^2$, and we let $\Rot_{\alpha}$ denote the only matrix such that $\Rot_{\alpha} z$ is the rotation of $z$ by an angle $\alpha \in [0,2\pi)$ for all vector $z$. Note that $\Rot_{\alpha}^{\intercal} = \Rot_{-\alpha}$. Hence, bearing in mind that the number of terms $n$ in this sum is at most $n =|\dpath|\le C \eps^{-1}$, and using the fact that for vectors $x,y$ and a matrix $M$ we have the identity $y^\intercal \cdot M \cdot x = \langle x, M^\intercal \cdot y \rangle$, we obtain  
	 $$
	 	\begin{aligned}
	  	\frac{\exp(V(\upsilon_1))K^{-1}_{y_1,w_2}}{\sqrt{\tan(\theta_2)} (1+ O(\eps))}
		& = \ii\Big\{\sum_{k=0}^n  \exp(V(\ty_k^-)) (\Rot_{-\pi/2}(\ty_k^+-\ty_k^-))^{\intercal} \cdot \left[(\nabla_1)(\nabla_2)^{\intercal} \mGreen - \nabla_1 \mGreen \cdot (\nabla V(w_2))^{\intercal}\right.\\
  &\quad \left.- \nabla V(\tw_k) \cdot (\nabla_2 \mGreen)^{\intercal} + \mGreen \nabla V(\tw_k)\cdot (\nabla V(w_2))^{\intercal}\right]\cdot (x_2^+-x_2^-)\Big\}+ o(\eps^2) n \\
		&= \ii\int_{\path}\exp(V(\upsilon))\mathrm{d}\path(\upsilon)^{\intercal} \cdot \Rot_{\pi/2} \cdot \left[(\nabla_1)(\nabla_2)^{\intercal} \mGreen - \nabla_1 \mGreen \cdot (\nabla V(\omega_2))^{\intercal}\right.\\
  &\quad \left.- \nabla V(\upsilon) \cdot (\nabla_2 \mGreen)^{\intercal} + \mGreen \nabla V(\upsilon) \cdot (\nabla V(\omega_2)))^{\intercal}\right]\cdot (x_2^+-x_2^-)+ o(\eps)\\
  &= \ii\Bigg\langle x_2^+-x_2^-,\int_{\path}\exp(V(\upsilon)) \left[(\nabla_2)(\nabla_1)^{\intercal} \mGreen - \nabla V(\omega_2) \cdot (\nabla_1 \mGreen)^{\intercal}\right.\\
  &\quad \left.- \nabla_2 \mGreen \cdot (\nabla V(\upsilon))^{\intercal} + \mGreen \nabla V(\omega_2)) \cdot (\nabla V(\upsilon))^{\intercal}\right]\cdot \Rot_{-\pi/2}\cdot \mathrm{d}\path(\upsilon)\Bigg\rangle + o(\eps)
		\end{aligned}.
	 $$
    uniformly for $(y_1,w_2) \in \cC$. Hence, we have proved the second point of Equation~\eqref{eq:thm:invK}, with the following definition of $\kappastarbar$: for all smooth path $\path$ from  $\beta^*$ to $\omega_1$ avoiding $\omega_2$, 
    $$
        \begin{aligned}
	   \kappastarbar(\omega_1,\omega_2) 
            &= \exp(-V(\omega_1))\int_{\path}\exp(V(\omega)) \left[(\nabla_2)(\nabla_1)^{\intercal} \mGreen(\omega,\omega_2) - \nabla V(\omega_2) \cdot (\nabla_1 \mGreen(\omega,\omega_2))^{\intercal}\right.\\
            &\quad \left.- \nabla_2 \mGreen(\omega,\omega_2) \cdot (\nabla V(\omega))^{\intercal} + \mGreen \nabla V(\omega_2)) \cdot (\nabla V(\omega))^{\intercal}\right]\cdot \Rot_{-\pi/2}\cdot \mathrm{d}\omega
	\end{aligned}
    $$
    The operators $\kappabar$ and $\kappastarbar$ are massive harmonic conjugates in the sense of Definition~\ref{def:massive:harmonic:conjugate}, since
    $$
        \begin{aligned}
            e^{V(\omega_1)}(\nabla_1)^{\intercal}(\kappabar e^{-V(\omega_1)}) 
            &= (\nabla_1)^{\intercal}\kappabar - \kappabar \cdot (\nabla)^{\intercal}V(\omega_1)\\
            &= \nabla_2(\nabla_1)^{\intercal} \mGreen - \nabla V(\omega_2) \cdot (\nabla_1)^{\intercal}\mGreen \\
            &\quad - \nabla_2 \mGreen \cdot (\nabla)^{\intercal}V(\omega_1) +\mGreen   \nabla V(\omega_2) \cdot (\nabla V)^{\intercal}(\omega_1)\\
            &= e^{-V(\omega)}(\nabla_1)^{\intercal}(e^{V(\omega_1)}\kappastarbar)\cdot \Rot_{\pi/2}.
        \end{aligned}
    $$
    Hence, the operators $\kappa$ and $\kappastar$ are also massive harmonic conjugates since their real parts, resp. imaginary parts, are massive harmonic conjugates .

    It remains to prove the bound for nearby points. Let $\cC \subset (\Omega \cup L) \times (\Omega \cup L \setminus \{\beta^*\})$ be a compact subset. We can find 
    $\cC_1$ a compact subset of $(\Omega \cup L) \times (\Omega \cup L \setminus \{\beta^*\})$ such that for some $C>0$ and all $(y_1,w_2) \in \cC$, there is a continuous smooth path $\path$ between $y_0 := b^*$ and $y_1$ such that $(\path(t),w_2) \in \cC_1$ for all $t$, $|\path| \leq C$ and $\path$ remains at distance at least $|y_1-w_2|/C$ from $w_2$. Moreover, we can assume that for some $\nu > 0$ if $w_2$ is within $\nu$ of $y_1$, then the portion of the path starting from $y_1$ is locally a straight line escaping $B(w_2, 2\nu)$. 

    Let $(y_1,w_2) \in \cC$. 
    As before, by the bounded angle assumption we can find $\dpath = (\tw_0, \dots, \tw_n)$ of length $n\leq C \eps^{-1}$ remaining at distance at most $C\eps$ from $\path$. Using the same argument as before and the estimate Equation~\eqref{eq:bound:K:dual}, Equation~\eqref{eq:K^-1:sum} implies that
    $$
        K^{-1}_{y_1,w_2}
		 = O\left(\sum_{k=0}^n  \frac{\eps^2}{|\tw_k-w_2|^2}\right)
    $$
    Using that $\path$ is a straight line near $y_1$ and the fact that
    $$
        \int_r^1 \frac{\mathrm{d}s}{s^2} = O\left(\frac{1}{r}\right),
    $$
    we obtain that
    $$
        K^{-1}(y_1,w_2) = O\left(\frac{\eps}{|b^*-w_2| \wedge |y_1-w_2|}\right),
    $$
    uniformly for $(y_1,w_2)$ in compact subsets of $(\Omega \cup L)^2$.
\end{proof}

\subsection{Convergence of height moments (proof of Theorem \ref{thm:height})}\label{sec:Li}
In this section, we briefly summarize and adapt the arguments of~\cite{Li17} to explain how Theorem~\ref{thm:isoradial} implies Theorem~\ref{thm:height}.

\begin{remark}
    The interested reader who will compare this with Li's work might be interested by the following two remarks on notation:
    \begin{itemize}
        \item Li calls “dual graph” and denotes by $\cG'$ what we denote by $\Primal$ and call “primal graph”. Even if they bear different names, these two objects are defined exactly the same way in her work and ours. 
        \item for $f : \RR^2 \to \RR$, Li calls “directional derivative” of $f$ at $z$ in the direction $\zeta$ the complex number $\lim_{\eps \to 0}\frac{f(z+\eps \zeta)-f(z)}{\eps\zeta}$. We will avoid this notation.
    \end{itemize}
\end{remark}

We define a massive Dirac operator, corresponding to the $\dirac$ operator of~\cite{Kenyon2009},~\cite{Li17} in the critical case.
For $w \in \White$, if $x^+$ denotes the neighbor of $w$ in $\Primal$ with $K(w,x^+) > 0$, let $\psi(w) = 2\sqrt{\cos(\theta)\sin(\theta)}\frac{x^+-w}{|x^+-w|}$.
The operator $\dirac$ is defined as the gauge change of $K$ via $\psi$:
\begin{equation}\label{eq:gauge:change}
    \dirac(w,b) = \psi(w)K(w,b), w \in \White, b \in \Primal \cup \Dual.
\end{equation}

Let $f, f'$ be two faces of the superposition graph, separated by an edge $e = wb$ of the superposition graph. Let $\xi(wb) = 0$ if $w$ is on the left of $ff'$, $\xi(wb) = 1$ otherwise. The height difference between $f$ and $f'$ is
\begin{equation}\label{eq:height:difference}
    h_{\eps}(f')-h_{\eps}(f) = (-1)^{\xi(wb)}(1_e-\PP(e))
\end{equation}
where $1_e$ denotes the indicator that $e$ belongs to the dimer configuration, $\PP(e) = \EE(1_e)$. 

With these notations, we can reformulate Kasteleyn theory for the edge probabilities (see \cite[Lemma~11]{Li17}) as follows: for edges $e_1 = (w_1b_1), \dots, e_k = (w_kb_k)$,
\begin{equation}\label{eq:expected:height}
    \EE[(1_{e_1}-\PP(e_1))\dots(1_{e_k}-\PP(e_k))] = \prod_{i=1}^k \dirac(w_i,b_i) \det\big(\dirac^{-1}(w_i,b_j)1_{i \neq j}\big).
\end{equation}
This holds as well in the massive case, since the proof uses only the local statistics formula and determinantal identities.

Let $k \geq 2$, $\beta_1, \dots, \beta_k \in L \setminus \{\beta^*\}$ be points on the boundary of $\partial \Omega$. 
Let $\zeta_1, \dots, \zeta_k$ be faces of the superposition graph. Assume first that these faces $\zeta_i$ lie in a given compact subset of $\Omega^k \setminus \cD(\Omega^k)$.
 Let $\path_i$ be pairwise disjoint paths of faces of the superposition graph, starting from the nearest face to $\beta_i$ below the straight boundary and ending at $\zeta_i$. 
For $z_i$ a face of the superposition graph along the path $\path_i$, we let $\Delta z_i$ denote the increment between $z_i$ and the next face.
Let $E_i$ be the set of edges $w_ib_i$ intersecting $\path_i$.
 
Li notes that if we fix $h_\eps(\beta_1)=0$, then $h_\eps(\beta_i)=0$ for all $i$. This also holds here.
Writing $h_{\eps}(\zeta_i) = \sum_{z_i \in \path_i}(h_\eps(z_i + \Delta z_i)-h_\eps(z_i))$, the joint moments of the height can be computed using Equations~\eqref{eq:height:difference} and~\eqref{eq:expected:height}.
Expanding the determinant, we obtain
\begin{equation}\label{eq:det:expanded}
    \EE\left[\prod_{i=1}^k h_\eps(\zeta_i)\right]
    = \sum_{\sigma}\sum_{w_ib_i \in E_i}(-1)^{\xi(w_1b_1)+\dots+\xi(w_kb_k)}\sgn(\sigma)\prod_{i=1}^k \dirac(w_i,b_i)\dirac^{-1}(b_{\sigma(i)},w_i).
\end{equation}
It is a sum over all permutations with no fixed point, and all $k$-tuples of edges $(w_1b_1, w_kb_k)$ with $w_ib_i \in E_i$.
Note that since $\dirac$ is obtained from $K$ by Equation~\eqref{eq:gauge:change}:
$$
    \dirac^{-1}(b,w) = \psi^{-1}(w)K^{-1}(b,w).
$$
Hence, Theorem~\ref{thm:isoradial} implies that uniformly for $(x_1,w_2)$, resp. $(y_1,w_2)$, in compact subsets of $\Omega^2 \setminus \cD(\Omega^2)$,
$$
    \begin{aligned}
    \dirac^{-1}(x_1,w_2) 
    &= \frac{\eps}{x_2^+-x_2^-}\langle \kappabar(x_1,w_2),x_2^+-x_2^-\rangle + o(\eps)\\
    \dirac^{-1}(y_1,w_2) 
    &= \frac{i\eps}{x_2^+-x_2^-}\langle \kappastarbar(y_1,w_2),x_2^+-x_2^-\rangle + o(\eps).
    \end{aligned}
$$
We will write 
$$
    \dirac^{-1}(b,w) = \eps i^{\eta(b)}f_{\eta(b)}(b,w) + o(\eps)
$$
uniformly for $(w,b)$ in compact subsets of $\Omega^2 \setminus \cD(\Omega^2)$, with $\eta(b)= 1_{b \in \Dual}$ and
$$
    f_0(x_1,w_2) = \frac{\langle \kappabar(x_1,w_2),x_2^+-x_2^-\rangle}{x_2^+-x_2^-} \quad ; \quad f_1(y_1,w_2) = \frac{\langle \kappastarbar(x_1,w_2),x_2^+-x_2^-\rangle}{x_2^+-x_2^-}
$$
This extends Sections~3 and~4 of~\cite{Li17}, where she obtains an asymptotic expression for $\dirac^{-1}$.
Hence, when the path are disjoint, plugging this asymptotic in Equation~\eqref{eq:det:expanded}, we can write
$$
    \EE\left[\prod_{i=1}^k h_\eps(\zeta_i)\right]
    = \sum_{\sigma}\sum_{w_ib_i \in E_i}(-1)^{\xi(w_1b_1)+\dots+\xi(w_kb_k)}\sgn(\sigma)\prod_{i=1}^k \eps\dirac(w_i,b_i)i^{\eta(b_{\sigma(i)})}f_{\eta(b_{\sigma(i)})}(b_{\sigma(i)},w_i) + o(\eps^k).
$$
Now, Li observes that if $\Delta z_i$ is the increment of $\path_i$ along the edge dual to $w_ib_i$, then 
$$
    \ii(-1)^{\xi(w_ib_i)}\eps \dirac(w_i,b_i) = 2\Delta z_i +O(\eps^{2}).
$$
We find a factor $2$ missing from her computations here, which cancels out later. 
In the non-massive case, this is an equality, see her Lemma~12: there is no $O(\eps^2)$. 
In the massive case, this holds up to $O(\eps^2)$ (uniformly for $w,b \in \Omega$) since the massive Kasteleyn operator $K(w,b)$ is equal to its critical value, up to $1+O(\eps)$. 
Hence
\begin{equation}\label{eq:joint:height}
    \EE\left[\prod_{i=1}^k h_\eps(\zeta_i)\right]
    = \sum_{\sigma} \sum_{w_ib_i \in E_i}\sgn(\sigma)\prod_{i=1}^k 2\Delta z_i (-i)\ii^{\eta(b_{\sigma(i)})}f_{\eta(b_{\sigma(i)})}(b_{\sigma(i)},w_i) + o(\eps^k).
\end{equation}
Now, Li observes (see her Lemma~13) that
$$
    \overline{x_i^+-x_i^-}\Delta z_i = (x_i^+-x_i^-)(-1)^{\eta(b_i)+1}\overline{\Delta z_i}.
$$
This is true because if $\eta(b_i) = 0$, i.e. $b_i \in \Primal$, then $x_i^+-x_i^-$ and $\Delta z_i$ are orthogonal while if $\eta(b_i) = 0$, i.e. $b_i \in \Dual$, then $x_i^+-x_i^-$ and $\Delta z_i$ are parallel.
Hence, if we let $\overline{\kappa^{(0)}} = \kappabar$, $\overline{\kappa^{(1)}} = \kappastarbar$ we obtain
\begin{equation}\label{eq:dz:f}
    \begin{aligned}
        (-\ii)\ii^{\eta(b_j)}2\Delta z_i f_{\eta(b_j)}(b_j,w_i)
        &= (-\ii)\ii^{\eta(b_j)}2\Delta z_i  \frac{\langle \overline{\kappa^{(\eta(b_j))}}(b_j,w_i), x_i^+-x_i^-\rangle}{x_i^+-x_i^-}\\
        &= (-\ii)\ii^{\eta(b_j)}\Delta z_i \frac{\kappa^{(\eta(b_j))}(b_j,w_i)(x_i^+-x_i^-)+\overline{\kappa^{(\eta(b_j))}}(b_j,w_i)\overline{(x_i^+-x_i^-)}}{x_i^+-x_i^-}\\
        &= (-\ii)\ii^{\eta(b_j)}\left(\kappa^{(\eta(b_j)}(b_j,w_i) \Delta z_i + (-1)^{\eta(b_i)+1} \overline{\kappa^{(\eta(b_j)}}(b_j,w_i) \overline{\Delta z_i}\right)\\
        &= \big(F_0(b_j,w_i)+(-1)^{\eta(b_j)}F_1(b_j,w_i)\big)\Delta z_i + \\
        &\quad + (-1)^{\eta(b_j)+\eta(b_i)}\big(\overline{F_0(b_j,w_i)}+(-1)^{\eta(b_j)}\overline{F_1(b_j,w_j)}\big)\overline{\Delta z_i},
    \end{aligned}
\end{equation}
with 
$$
    F_0 = \frac{\kappastar - \ii\kappa}{2} \quad ; \quad F_1 = -\frac{\kappastar + \ii\kappa}{2},
$$
which should be compared with the statement of \cite[Lemma~13]{Li17}.
If we denote $z^{(0)} = z, z^{(1)} = \overline{z}$, developing Equation~\eqref{eq:joint:height}, we obtain
\begin{equation}\label{eq:joint:height:bis}
    \begin{aligned}
    \EE\left[\prod_{i=1}^k h_\eps(\zeta_i)\right]
    &= \sum_{s_i, t_i \in \{0,1\}}\sum_{\sigma}\sgn(\sigma)\sum_{w_ib_i \in E_i}\prod_{i=1}^k F^{(s_i)}_{t_i}(b_{\sigma(i),w_i})\Delta z_i^{(s_i)}(-1)^{t_i \eta(b_{\sigma(i)}) + s_i\eta(b_{\sigma(i)})+ s_i\eta(b_i)}\\
    &= \sum_{s_i, t_i \in \{0,1\}}\sum_{\sigma}\sgn(\sigma)\sum_{w_ib_i \in E_i}\prod_{i=1}^k F^{(s_i)}_{t_i}(b_{\sigma(i),w_i})\Delta z_i^{(s_i)}(-1)^{\eta(b_{\sigma(i)})(t_i+s_i+s_{\sigma(i)})}
    \end{aligned}
\end{equation}
The indices $s_i,t_i$ denote which of the four terms we choose in Equation~\eqref{eq:dz:f} for each term of the product in Equation~\eqref{eq:joint:height}.
The last equation is obtained by reindexing the last factor.
Li proves in her Lemma~14 that along any portion of the path $\path_i$,
$$
    \sum (-1)^{\eta(b_i)}\Delta z_i = O(\eps),
$$
hence every term with $s_i+s_{\sigma(i)}+t_i =1 \mod 2$ for some $i$ remains $O(\eps)$ when taking the sum along the path $\path_i$, since it is the product of an oscillating term with a smooth function. We obtain
$$
    \EE\left[\prod_{i=1}^k h_\eps(\zeta_i)\right]
    = \sum_{s_i \in \{0,1\}}\sum_{\sigma}\sgn(\sigma)\sum_{w_ib_i \in E_i}\prod_{i=1}^k F^{(s_i)}_{s_i+s_{\sigma(i)}}(b_{\sigma(i)},w_i)\Delta z_i^{(s_i)} + o(\eps)
$$
This is a Riemann-sum for a $k$-fold integral: 
$$
    \begin{aligned}
    \EE\left[\prod_{i=1}^k h_\eps(\zeta_i)\right]
    \overset{\eps \to 0}{\longrightarrow} &\sum_{s_i \in \{0,1\}}\sum_{\sigma}\sgn(\sigma)\int_{\path_1}\cdots \int_{\path_k}\prod_{i=1}^k F^{(s_i)}_{s_i+s_{\sigma(i)}}(z_{\sigma(i)},z_i)\mathrm{d} z_i^{(s_i)}\\
    &= \sum_{s_i \in \{0,1\}} \int_{\path_1}\cdots \int_{\path_k}  \det_{i \neq j}\big[F_{s_i+s_j}^{(s_j)}(z_i,z_j)\big] \prod_{i=1}^k \mathrm{d}z_i^{(s_i)}.
    \end{aligned}
$$

Note that the diagonal terms in the determinant are $0$ since we sum over permutations $\sigma$ with no fixed points.
Note that this holds uniformly for $(\zeta_1, \dots, \zeta_k) \in \Omega^k \setminus \cD(\Omega^k)$, upon choosing paths $\path_i$ of length $O(1/\eps)$ which remain at macroscopic distance from one another.
This proves Equation~\eqref{eq:thm:height}.

We now prove the bound Equation~\eqref{eq:bound:height} which holds for any faces $(\zeta_1, \dots, \zeta_k)$ in compact subsets of $\Omega^k$.
In the non-massive case, this is done in Lemma~16 of~\cite{Li17}. We prefer to use the argument of Appendix~A.3 of~\cite{LT15}, where it is explained in more details.
Note that they can give a more precise bound since they have a more precise asymptotic estimate of $K^{-1}$. We will be satisfied with the following very rough bound.

Let $\cC \subset \Omega$ be a compact subset, $(\zeta_1, \dots, \zeta_k)$ be faces with $\zeta_i \subset \cC$. 
We choose paths $\path_1, \dots, \path_k$ from respectively $\zeta_1, \dots, \zeta_k$ towards distinct points on the straight boundary, at distance of order $1$ from each other.
We choose paths which are macroscopically far away except possibly near their extremities $\zeta_i$. 
Near the extremities, we choose $\path_i$ which remains within $O(\eps)$ of a straight line.
We choose a different asymptotic direction for each of the straight lines.
An example is provided in Figure~12 of~\cite{LT15}, in the case where the faces are partitioned into two sets, $5$ of them near a face $f$ and $5$ of them near a face $f'$ at macroscopic distance from $f$.
Even though~\cite{LT15} is in the square lattice, we can do the same thing using the bounded angle assumption. 
We can also assume that each path has length $N_i = |\path_i| = O(1/\eps)$.

Number the edges of each path starting from the extremity $\zeta_i$: $e_i(1), e_i(2), \dots, e_i(N_i)$. 
Let $\delta_{ij} = |\zeta_i-\zeta_j|$.
Note that for such a choice of paths, we have 
$$
    \dist(e_i(m_i),e_j(m_j)) \geq c_{ij}(\delta_{ij}+\eps(m_i+m_j)) 
$$
for all $1 \leq i,j \leq k$, $1 \leq m_i \leq N_i$, $1 \leq m_j \leq N_j$ and some constant $c_{ij}>0$.
Since 
$$
    \dirac(b_i,w_j) = O\left(\frac{1}{|b_i-w_j|}\right)
$$
uniformly for $b_i,w_j$ in compact subsets of $\Omega \cup L$ by Theorem~\ref{thm:isoradial}, and since $\dirac(w,b) = O(1)$ uniformly for $w\sim b$, Equation~\eqref{eq:det:expanded} gives, for some constant $C>0$ changing from line to line
$$ 
    \begin{aligned}
    \EE\left[\prod_{i=1}^k h_\eps(\zeta_i)\right]
    &\leq C \sum_{\sigma} \sum_{1 \leq m_i \leq N_i} \prod_{i=1}^k \frac{\eps}{\delta_{i\sigma(i)}+\eps(m_i+m_{\sigma(i)})}\\
    &\leq C \sum_{\sigma} \sum_{1 \leq m_i \leq N_i} \prod_{i=1}^k \frac{\eps}{\delta_{i\sigma(i)}+\eps m_i}\\
    &\leq C \sum_{\sigma} \prod_{i=1}^k \sum_{1 \leq m_i \leq N_i} \frac{\eps}{\delta_{i\sigma(i)}+\eps m_i}\\
    &\leq C \sum_{\sigma} \prod_{i=1}^k (1+|\log(\delta_{i\sigma(i)})|)
    \end{aligned}
$$
where at the last line we use a classical bound for the harmonic series and the fact that $N_i = O(1/\eps)$.

\section{Identification with sine-Gordon model}

\subsection{Boundary value problem for correlation functions (Proof of Proposition \ref{lem:bc})}
\label{SS:corr_bvp}

Before proving Proposition \ref{lem:bc}, we first elaborate a bit on $\mGreen, \kappa, \kappastar, F_0, F_1$. 
Recall that $\mGreen$ is defined by Equation~\eqref{eq:def:mGreen} or~\eqref{eq:green:distribution}.
In the critical case $\mass = 0$ and in the upper half-plane $\Omega = \HH$, it is well-known (see for example~\cite{KenyonGFF}) that
$$
    G_{\HH}(w,z) = \frac{1}{4\pi}\left(\log|z-w|^2-\log|z-\bar w|^2\right).
$$
Still in the critical case $\mass = 0$ but for a general domain $\Omega$, the expression of the Green function is obtained by conformal invariance: if $g : \Omega \to \HH$ is a conformal map, 
\begin{equation}\label{eq:Green:half-plane:domain}
    G(w,z) = G_{\HH}(g(w),g(z)) = \frac{1}{4\pi}\left(\log|g(z)-g(w)|^2-\log|g(z)-\bar g(w)|^2\right)
\end{equation}
There is no such explicit expression of the massive Green function in the general case $M \neq 0$. 
A direct consequence of Definition~\ref{def:mGreen} is the bound
\begin{equation}\label{eq:bound:mGreen}
    0 \leq \mGreen(w,z) \leq G(w,z).
\end{equation}
The massive and non-massive Green functions can be related through the \textbf{resolvent identity} (Proposition 4.10 of~\cite{BHS}, see also~\cite{MakarovSmirnov, ChelkakWan} for earlier use of this identity):
\begin{equation}\label{eq:resolvent:identity:continuous}
    G^m(w,z) = G(w,z) + \int_{y \in \Omega}\mass(y)G^m(w,y)G(y,z)\mathrm{d}y.
\end{equation}
This is proved by observing that the right-hand side is solution to the Dirichlet problem~\eqref{eq:green:distribution} defining the massive Green function.
Note that the integral in the right-hand side of Equation~\eqref{eq:resolvent:identity:continuous} is well-defined thanks to the bound~\eqref{eq:bound:mGreen}.
The resolvent identity is very useful since it expresses the massive Green function as an explicit singular object (the critical Green function) + a non-singular non-explicit object which is easier to bound.

\textbf{The operators $\kappa$ and $\kappastar$.}
Recall that $\kappa$ is defined by Equation~\eqref{eq:def:kappa} 
\begin{equation}
    \begin{aligned}
        \kappa(w,z) 
        &= \overline{\nabla_z} \mGreen(w,z) - \overline{\alpha}(z)\mGreen(w,z)
    \end{aligned}
\end{equation}
and that $z \in \Omega$, $\kappastar(\cdot,z)$ is defined as the $\alpha$-conjugate of $\kappa(\cdot,z)$ in the sense of Definition~\ref{def:massive:harmonic:conjugate}.

In the critical case $\alpha = 0$, we write $\kappa_0,\kappastar_0$.
By Equation~\eqref{eq:Green:half-plane:domain},
    \begin{equation}\label{eq:kappa_0}
        \kappa_0(w,z) = \frac{1}{4\pi}\left(\frac{g'(z)}{g(z)-g(w)}-\frac{g'(z)}{g(z)-\bar g(w)}\right).
    \end{equation}
Be careful that $\kappa_0^*$ is defined as the conjugate in the sense of Definition~\ref{def:massive:harmonic:conjugate} with $\alpha = 0$. 
    It is easily checked that the conjugate (in this sense) of a holomorphic function $f(w)$ is $-\ii f(w)$, while the conjugate of an anti-holomorphic function $h(w)$ is $\ii h(w)$, hence
    $$
        \kappastar_0(w,z) = -\frac{\ii}{4\pi}\left(\frac{g'(z)}{g(z)-g(w)} + \frac{g'(z)}{g(z)-\bar g(w)} \right).
    $$
    They satisfy
    \begin{equation}\label{eq:dirac:critical}
        \dzbar \kappa_0(w,z) = \frac{1}{2}\delta_w(z) \quad ; \quad \dzbar \kappastar_0(w,z) = -\frac{\ii}{2}\delta_w(z).
    \end{equation}
In the general case, we write $\kappa = u+\ii v$, $\kappastar = u^* + \ii v^*$, where $u^*(\cdot,z)$, resp. $v^*(\cdot,z)$, is the $\alpha$-conjugate of $u(\cdot,z)$, resp. $v(\cdot,z)$.
Let us be more precise. Let $z \in \Omega$.
For all $w \in \Omega$,
$$
    e^{V(w)}\nabla_w(u(w,z)e^{-V(w)}) = e^{-V(w)}(-\ii)\nabla_w(u^*(w,z)e^{V(w)}),    
$$
    (and the same holds for $v,v^*$). 
    For any $w_1, w_2 \in \Omega$, integrating along any path from $w_1$ to $w_2$ avoiding $z$,
    \begin{equation}\label{eq:uu^*}
        u^*(w_2,z)e^{V(w_2)} - u^*(w_1,z)e^{V(w_1)} 
        = \int_{w_1}^{w_2}e^{2V(w)}\left\langle \ii\nabla_w(u(w,z)e^{-V(w)}),\mathrm{d}w\right\rangle.
    \end{equation}
    (and the same holds for $v, v^*)$.
    In particular, since $\kappastar$ vanishes at $\beta^*$ by definition (see Theorem~\ref{thm:isoradial},
    \begin{equation}\label{eq:uu^*:zero}
        u^*(w,z) = e^{-V(w)} \int_{\beta^*}^{w}e^{2V(\omega)}\left\langle \ii\nabla_\omega(u(\omega,z)e^{-V(\omega)}),\mathrm{d}\omega\right\rangle,
    \end{equation}
    and the same holds for $v,v^*$.
    The following consequence of the resolvent identity for the operators $\kappa, \kappastar$ will be crucial in the proof of Proposition~\ref{lem:F0F1}.
    \begin{lem}\label{lem:kappa:kappa_0}
        For all $z,w \in \Omega$,
        \begin{equation}
            \label{eq:step3}
        \kappa(w,z) = \kappa_0(w,z) + O(\log|z-w|) \quad ; \quad \kappastar(w,z) = \kappastar_0(w,z) + O(\log|z-w|).
        \end{equation}
    \end{lem}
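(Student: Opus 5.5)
The plan is to use the continuous resolvent identity \eqref{eq:resolvent:identity:continuous} to write $\mGreen = G + R$, with
\[
R(w,z) = \int_\Omega \mass(y)\mGreen(w,y)G(y,z)\,\mathrm{d}y,
\]
and to show that every contribution to $\kappa-\kappa_0$ and to $\kappastar-\kappastar_0$ is $O(\log|z-w|)$. The $O$ is understood uniformly for $z,w$ in compact subsets of $\Omega$; with the straight boundary one may also allow $\Omega_\ph$ together with the reflected distance.

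\textbf{Step 1: the estimate for $\kappa$.} By definition,
\[
\kappa-\kappa_0 = \overline{\nabla_z}R(w,z) - \overline{\alpha}(z)\mGreen(w,z).
\]
The second term is $O(1+|\log|z-w||)$ by \eqref{eq:bound:mGreen} and the explicit form \eqref{eq:Green:half-plane:domain}. For the first term, differentiate under the integral:
\[
\overline{\nabla_z}R = \int_\Omega \mass(y)\mGreen(w,y)\,\overline{\nabla_z}G(y,z)\,\mathrm{d}y .
\]
Here $|\mGreen(w,y)|\le C(1+|\log|w-y||)$ and $|\nabla_z G(y,z)|\le C/|y-z|$ locally, since \eqref{eq:Green:half-plane:domain} gives $\nabla_z G = O(1/|y-z|)$ on compacts. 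Split the integral into the regions $|y-w|\le d$, $|y-z|\le d$ and the rest, where $2d=|z-w|$, exactly as in the sums $S_1,S_2$ of Lemma~\ref{lem:cv:green:derivative}. This yields a bound $O(1+|\log d|)$, and in fact even $O(1)$ from the $S_1,S_2$ computation. Boundary regions contribute a bounded amount because $G(y,\cdot)$ vanishes on $\partial\Omega$ and $\mass$ is bounded.

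\textbf{Step 2: the estimate for $\kappastar$, which I expect to be the main obstacle.} Here I use the integral representation \eqref{eq:uu^*:zero} and the analogous formula for $\kappastar_0$ with $V\equiv 0$. Write $u=u_0+\delta u$, where $\delta u=\Re(\kappa-\kappa_0)$, and expand
\[
e^{2V}\nabla(u e^{-V}) = e^{V}\bigl(\nabla u - u\nabla V\bigr).
\]
This splits $u^*$ into three pieces:
\begin{enumerate}[label=(\roman*)]
\item the term $e^{-V(w)}\int_{\beta^*}^w e^{V}\langle\ii\nabla u_0,\mathrm{d}\omega\rangle$, which I compare with $u_0^*$;
\item the term $\int e^{V}\langle \ii u\nabla V,\mathrm{d}\omega\rangle$;
\item the term $\int e^{V}\langle\ii\nabla\delta u,\mathrm{d}\omega\rangle$.
\end{enumerate}

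Term (ii) is a line integral of a function bounded by $C/|\omega-z|$. Choose the path from $\beta^*$ to $w$ so that it stays at distance comparable to $|\omega-z|\vee|w-z|$ from $z$ and ends radially near $w$, as in the proof of Theorem~\ref{thm:isoradial}. Then term (ii) is $O(1+|\log|w-z||)$.

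For term (i), integrate by parts: write $e^{V(\omega)}=e^{V(w)}+O(|\omega-w|)$ along the path. The main part reproduces $u_0^*(w,z)$ exactly; note that $u_0^*(\beta^*,z)=0$ since $\kappastar_0$ vanishes on the boundary segment. The remainder is $\int O(|\omega-w|)\,|\nabla u_0|$. Since $|\nabla_\omega u_0|=O(|\omega-z|^{-2})$, this remainder is bounded by $\int |\omega-w|\,|\omega-z|^{-2}\,|\mathrm{d}\omega| = O(\log)$ along the chosen radial path.

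For term (iii), I need a gradient bound $\nabla_w\delta u = O(1/|w-z|)$. I would get it by differentiating the resolvent integral once more in $w$. This gives a kernel $\nabla_w\mGreen(w,y)\nabla_zG(y,z)$, which by the analogue of \eqref{eq:point3_L31} in the continuum is bounded by $C/(|w-y||y-z|)$ and integrates to $O(1+|\log|w-z||)$. I also need the term $\nabla_w(\bar\alpha(z)\mGreen)=O(1/|w-z|)$. Integrating $O(1/|\omega-z|)$ along the path then gives $O(\log)$.

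The delicate part is justifying that term (i) matches $\kappastar_0$ with only an $O(\log)$ error, which the integration by parts above handles, and that $\nabla_w\mGreen$ satisfies the $1/|w-y|$ bound. For the latter I would pass to the limit in \eqref{eq:point3_L31} using Lemma~\ref{lem:cv:green:derivative}, or argue directly from the resolvent identity again. Finally, repeat the argument for $v,v^*$.
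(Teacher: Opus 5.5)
Your proposal is correct and follows essentially the same route as the paper. In both, you differentiate the resolvent identity to get $\kappa-\kappa_0=O(\log|z-w|)$. You then split the integral representation \eqref{eq:uu^*:zero} of $u^*$ into a piece compared with $u_0^*$ via $e^{V(\omega)-V(w)}=1+O(|\omega-w|)$, plus pieces that are line integrals of $O(1/|\omega-z|)$. You only regroup the $u\nabla V$ terms differently, and your path ending radially at $w$ makes the $O(\log)$ integrations more explicit than in the paper; also, what you call ``integration by parts'' in (i) is really just this expansion of $e^{V}$.

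The bound $\nabla_w\mGreen(w,y)=O(1/|w-y|)$ needed in (iii) already follows from your Step~1 by symmetry of $\mGreen$, which is how the paper gets it, so no passage through the discrete estimate is needed. Finally, $\kappastar_0$ vanishes only at the corner $\beta^*$, not on the whole segment; that is all you use, and any nonzero constant would in any case be $O(1)$.
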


    \begin{proof}
    Differentiating the resolvent identity, we obtain
    \begin{equation}\label{eq:resolvent:dgreen}
    \begin{aligned}
        \dz_z \mGreen(w,z) 
        &= \dz_z G(w,z) + \int_{y \in \Omega}\mass(y)G^m(w,y)\dz_z G(y,z)\mathrm{d}y.
    \end{aligned}
\end{equation}
    so
    \begin{equation}\label{eq:resolvent:kappa}
    \begin{aligned}
        \kappa(w,z) 
        &= \kappa_0(w,z) + \int_{y \in \Omega}\mass(y)G^m(w,y)\kappa_0(y,z)\mathrm{d}y - \bar \alpha(z)\mGreen(w,z).
    \end{aligned}
\end{equation}
    Since $\mGreen(w,z) = O(\log|z-w|)$ and $\dz_z G(y,z) = \kappa_0(y,z) = O(1/|z-y|)$ (by the explicit expression of $\kappa_0$ Equation~\eqref{eq:kappa_0}), we obtain that 
    \begin{equation}\label{eq:dgreen=dgreen_0+error}
        \dz_z \mGreen(w,z) = \dz_z G(w,z) + O(1)
    \end{equation}
    (the integral remains bounded) which implies 
    \begin{equation}\label{eq:k=k_0+error}
        \kappa(w,z) = \kappa_0(w,z) + O(\log(|z-w|).
    \end{equation}
    We establish the same for $\kappa_0^*$.
    Taking the real part of Equation~\eqref{eq:resolvent:kappa}, we obtain
    \begin{equation}
    \begin{aligned}
        u(w,z) 
        &= u_0(w,z) + \int_{y \in \Omega}\mass(y)G^m(w,y)u_0(y,z)\mathrm{d}y-\Re(\alpha)(z)\mGreen(w,z)\\
        &=: u_0(w,z) + u_1(w,z) + u_2(w,z).
    \end{aligned}
\end{equation}
    We denote $\kappa_0 = u_0+\ii v_0$, $\kappastar_0 = u_0^*+\ii v_0^*$. 
    Be careful that $u_0^*, v_0^*$ denote the $\alpha$-conjugate with $\alpha = 0$ (that is $\nabla u^* = \ii\nabla u$) while $u^*, v^*$ denote the $\alpha$-conjugate with a general $\alpha$ (that is $e^{-V} \nabla(u^* e^V) = \ii e^V \nabla(ue^{-V})$).
    We show that $u^* = u_0^* + O(\log|z-w|)$.
    By Equation~\eqref{eq:uu^*:zero},
    \begin{equation}\label{eq:u*=U0+U1+U2}
    \begin{aligned}
        u^*(w,z) 
        &= e^{-V(w)} \int_{\beta^*}^{w}e^{2V(\omega)}\left\langle \ii\nabla_\omega(u(\omega,z)e^{-V(\omega)}),\mathrm{d}\omega\right\rangle\\
        &= e^{-V(w)} \int_{\beta^*}^{w}e^{2V(\omega)}\left\langle \ii\nabla_\omega((u_0(\omega,z)+u_1(\omega,z)+u_2(\omega,z))e^{-V(\omega)}),\mathrm{d}\omega\right\rangle\\
        &=: U_0(w,z) + U_1(w,z) + U_2(w,z).
    \end{aligned}
    \end{equation}
    The first term is 
    \begin{equation}
    \begin{aligned}
        U_0(w,z) 
        &= e^{-V(w)} \int_{\beta^*}^{w}e^{2V(\omega)}\left\langle \ii\nabla_\omega(u_0(\omega,z)e^{-V(\omega)}),\mathrm{d}\omega\right\rangle\\
        &= e^{-V(w)} \int_{\beta^*}^{w}e^{V(\omega)}\left\langle \ii\nabla_\omega u_0(\omega,z),\mathrm{d}\omega\right\rangle
        - e^{-V(w)} \int_{\beta^*}^{w}e^{V(\omega)}u_0(w,z)\left\langle \ii\nabla_\omega V(\omega),\mathrm{d}\omega\right\rangle\\
        &= \int_{\beta^*}^{w}(1+O(w-\omega))\left\langle\nabla_\omega u_0(\omega,z),\mathrm{d}\omega\right\rangle
        - e^{-V(w)} \int_{\beta^*}^{w}e^{V(\omega)}u_0(w,z)\left\langle \ii\nabla_\omega V(\omega),\mathrm{d}\omega\right\rangle\\
        &= u_0^*(w,z) + \int_{\beta^*}^{w}O(w-\omega)\left\langle \ii\nabla_\omega(u_0(\omega,z)),\mathrm{d}\omega\right\rangle
        - e^{-V(w)} \int_{\beta^*}^{w}e^{V(\omega)}u_0(w,z)\left\langle \ii\nabla_\omega V(\omega),\mathrm{d}\omega\right\rangle
    \end{aligned}
    \end{equation}
    since $e^{ V(w)- V(\omega)} = 1+O(w-\omega)$ and by definition of $u_0^*$.
    Using the explicit form of $\kappa_0$ from Equation~\eqref{eq:kappa_0} and the definition $u_0(w,z) = \Re(\kappa_0(w,z))$, we obtain that 
    \begin{equation}\label{eq:bound:u_0}
        u_0(w,z) = O(1/(z-w)) \quad ; \quad \nabla_\omega u_0(\omega,z) = O(1/(z-w)^2),
    \end{equation}
    and after integrating this gives 
    \begin{equation}\label{eq:bound:U0}
        U_0(w,z) = u_0^*(w,z) + O(\log|z-w|).
    \end{equation}
    The second term is
    \begin{equation}
    \begin{aligned}
        U_1(w,z) 
        &= e^{- V(w)} \int_{\beta^*}^{w}e^{2 V(\omega)} \left\langle \ii\int_{y \in \Omega}\mass(y)u_0(y,z)\nabla_\omega \mGreen(\omega,y)\mathrm{d}y,\mathrm{d}\omega\right\rangle 
    \end{aligned}
    \end{equation}
    By symmetry of the Green function and Equation~\eqref{eq:dgreen=dgreen_0+error},
    $$
        \nabla_\omega\mGreen(\omega,y) = \nabla_\omega G(\omega,y) + O(1) = O\left(\frac{1}{\omega-y}\right)
    $$
    Using this and Equation~\eqref{eq:bound:u_0},
    $$
        \int_{y \in \Omega}\mass(y)u_0(y,z)\nabla_\omega \mGreen(\omega,y)\mathrm{d}y = O\left(\frac{1}{\omega-z}\right) 
    $$
    And hence
\begin{equation}\label{eq:bound:U1}
        U_1(w,z) = \int_{\beta^*}^{w} \left\langle O\left(\frac{1}{\omega-z}\right),\mathrm{d}\omega\right\rangle = O(\log|z-w|)).
    \end{equation}
    Finally, using Equation~\eqref{eq:bound:u_0} and the fact that $ V$ is a smooth function, we obtain
    \begin{equation}\label{eq:bound:U2}
        U_2(w,z) = \int_{\beta^*}^{w} \left\langle O\left(\frac{1}{\omega-z}\right),\mathrm{d}\omega\right\rangle = O(\log|z-w|).
    \end{equation}
    Plugging Equations~\eqref{eq:bound:U0},~\eqref{eq:bound:U1} and~\eqref{eq:bound:U2} in Equation~\eqref{eq:u*=U0+U1+U2} gives
    $$
        u^*(w,z) = u_0^*(w,z) + \log(|w-z|).
    $$
    The same arguments hold for $v^*$, which implies
    \begin{equation}
        \kappa^*(w,z) = \kappa_0^*(w,z) + O(\log|z-w|)).
    \end{equation}
    This concludes the proof of Lemma \ref{lem:kappa:kappa_0}.
    \end{proof}

    We are ready to prove Proposition~\ref{lem:F0F1}.
\begin{proof}[Proof of Proposition~\ref{lem:F0F1} ]
    We first study the off-diagonal behavior, that is we establish Proposition~\ref{lem:F0F1}  for $z \neq w$.
    We start with the first column.
    Set $z \in \Omega$. 
    The fact that $\kappa(\cdot,z)$ and $\kappastar(\cdot,z)$ are $\alpha$-conjugate for $z \neq w$ can be reformulated as follows.
    If we define $f(w) = u(w,z)+\ii u^*(w,z)$ and $g(w) = v(w,z)+\ii v^*(w,z)$, they are $\alpha$-holomorphic away from $z$ i.e. satisfy for all $w \neq z$,
    \begin{equation}\label{eq:kappa:f:g}
    \bar\partial_w f(w) = \frac12 \alpha(w)\bar f(w) \quad ; \quad  \bar\partial_w g(w) = \frac12 \alpha(w)\bar g(w).
\end{equation}
Since
    $$
        \begin{aligned}
        F_0 
        &= \frac12 (\kappastar - \ii \kappa)
        = \frac12 (u^*+\ii v^* - \ii u+v)
        = \frac12(v+\ii v^* -\ii(u+\ii u^*)) 
        = \frac12(g -\ii f)\\
        F_1 
        &= -\frac12 (\kappastar + \ii \kappa)
        = -\frac12 (u^*+\ii v^* + \ii u -v)
        = -\frac12(-(v-\ii v^*) +\ii(u-\ii u^*)) 
        = \frac12(\bar g -\ii \bar f),
        \end{aligned}
    $$
Equation~\eqref{eq:kappa:f:g} implies that for $w \neq z$,
    \begin{equation}\label{eq:F0F1:offdiag:w}
        \begin{aligned}
        \bar \partial_w F_0(w,z) 
        &= \bar \partial_w \frac{g(w) -\ii f(w)}{2}
        = \frac{\alpha(w)}{2}\frac{\bar g(w) - \ii \bar f(w)}{2}
        = \frac{\alpha(w)}{2}F_1(w,z)\\
        \partial_w F_1(w,z) 
        &= \bar \partial_w \frac{\bar g(w) -\ii\bar f(w)}{2}
        = \frac{\bar\alpha(w)}{2}\frac{g(w) - \ii f(w)}{2}
        = \frac{\bar\alpha(w)}{2}F_0(w,z).
        \end{aligned}
    \end{equation}

    Set $w \in \Omega$.
    We claim that for $z \neq w$,
    \begin{equation}\label{eq:kappa:kappastar}
        \bar \partial_z \kappa(w,z) = -\frac{\bar \alpha}{2}\bar\kappa(w,z) \quad ; \quad \bar \partial_z \kappastar(w,z) = -\frac{\bar \alpha}{2}\bar\kappastar(w,z).
    \end{equation}
    This is proved below.
    Assuming Equation~\eqref{eq:kappa:kappastar}, for $z \neq w$,
    \begin{equation}\label{eq:F0F1:offdiag:z}
        \begin{aligned}
        \bar\partial_z F_0(w,z) = \bar\partial_z \frac{\kappastar(w,z)-\ii\kappa(w,z)}{2} = -\frac{\bar\alpha(z)}{2}\frac{\kappastarbar(w,z)-\ii\kappabar(w,z)}{2} = \frac{\bar\alpha(z)}{2}\bar F_1(w,z)\\
        \bar\partial_z F_1(w,z) = -\bar\partial_z \frac{\kappastar(w,z)+\ii\kappa(w,z)}{2} = \frac{\bar\alpha(z)}{2}\frac{\kappastarbar(w,z)+\ii\kappabar(w,z)}{2} = \frac{\bar\alpha(z)}
        {2}\bar F_0(w,z).
        \end{aligned}
    \end{equation}
    We check Equation~\eqref{eq:kappa:kappastar}.
    The first part of this equation is Equation~\eqref{eq:holomorphy:kappa} with $z \neq w$. 
    This is equivalent to the $(-\bar \alpha)$-Cauchy Riemann Equation~\eqref{eq:a:CR} 
    $$
    \begin{cases}
     \partial_{z_x} ( u(w,z)e^{ V(z)}) & =
     \partial_{z_y} ( v(w,z)e^{ V(z)}) \\
        \partial_{z_y} (u(w,z)e^{- V(z)}) & = -  \partial_{z_x} ( v(w,z)e^{- V(z)}) 
    \end{cases} \quad ; \quad w \neq z.
    $$
    To check the second part of Equation~\eqref{eq:kappa:kappastar}, we must prove that this also hold for $\kappastar$. 
    Recall that $u^*, v^*$ are obtained from $u, v$ by Equation~\eqref{eq:uu^*:zero}. 
    Since the derivatives in $z$ and $w$ commute
    $$
        \begin{aligned}
            \partial_{z_x}(e^{ V(z)}u^*(w,z)) 
        &= e^{- V(w)}\int_{\beta^*}^w e^{2 V(\omega)}\left\langle \ii\nabla_\omega(e^{- V(\omega)} \partial_{z_x}(e^{ V(z)}u(\omega,z)),\mathrm{d}\omega\right\rangle\\
        &= e^{- V(w)}\int_{\beta^*}^w e^{2 V(\omega)}\left\langle \ii\nabla_\omega(e^{- V(\omega)} \partial_{z_y} ( e^{ V(z)}v(\omega,z)),\mathrm{d}\omega\right\rangle\\
        &= \partial_{z_y} (e^{ V(z)}v^*(w,z))
        \end{aligned}
    $$
    along any fixed path from $\beta^*$ to $w$ avoiding $z$.
    The second part of the $\-\bar \alpha$-Cauchy-Riemann Equation works for the same reason, which gives the second part of Equation~\eqref{eq:kappa:kappastar}.


    We finally consider the diagonal behavior.
    Combining Lemma~\ref{lem:kappa:kappa_0} with the explicit expression of $\kappa_0$ and $\kappastar_0$, we obtain that
    \begin{equation}\label{eq:asymp:F0F1}
        F_0(w,z) = -\frac{\ii}{4\pi}\frac{g'(z)}{g(z)-g(w)}+O(\log|w-z|) \quad ; \quad F_1(w,z) = O(\log|w-z|).
    \end{equation}
    This directly implies the diagonal behavior 
    \begin{equation}\label{eq:diagonal:behavior}
        F_0(w,z) = \frac{\ii}{4\pi(w-z)}+O(\log|w-z|) \quad ; \quad F_1(w,z) = O(\log|w-z|)
    \end{equation}
    which is exactly Equation~\eqref{eq:singularity:F0F1}.
    It is standard to prove that the Dirac term in~\eqref{eq:F0F1_bulk} follows from the off-diagonal equalities~\eqref{eq:F0F1:offdiag:w} and~\eqref{eq:F0F1:offdiag:z} and the diagonal behavior \eqref{eq:diagonal:behavior}, by integration by parts and the residue theorem. 
    We detail it below for completeness.
    Let $B(w,\eps)$ denote the Euclidean ball of radius $\eps$ around $w$, and for $z \in \partial B(w,\eps)$ let $n(z)$ denote the outward unit normal vector.  
    Let $dV(z)$ and $dl(z)$ denote respectively the infinitesimal volume and perimeter.
    Note that along $\partial B(w,\eps)$, $n(z)dl(z) = dz$.
    Let $f$ be any smooth test function with zero boundary conditions in $\Omega$.
    Integrating by parts $\kappastar$ and $f$ (and integrating back after some algebraic manipulations) gives
    \begin{equation}\label{eq:proof:dirac:term}
        \begin{aligned}
            \int_{\Omega} F_0(w,z)\dz f(z)\mathrm{d}V(z)
            &= \lim_{\eps \to 0} \int_{\Omega \setminus B(w,\eps)}F_0(w,z)\dz f(z)\mathrm{d}V(z)\\
            &= \lim_{\eps \to 0} \left[\int_{\partial B(w,\eps)}F_0(w,z)f(z)n(z)\mathrm{d}l(z)-\int_{\Omega \setminus B(w,\eps)} \dzbar_z F_0(w,z)f(z)\mathrm{d}V(z)\right] \\
            &= \lim_{\eps \to 0} \left[\int_{\partial B(w,\eps)}\left(-\frac{\ii}{4\pi}\frac{g'(z)}{g(z)-g(w)}+O(\log|w-z|)\right)f(z)n(z)\mathrm{d}l(z)\right.\\
            &\qquad \left. +\frac12\int_{\Omega \setminus B(w,\eps)} \bar\alpha(z) \kappastarbar(w,z)f(z)\mathrm{d}V(z)\right] \\
            &= \lim_{\eps \to 0} \left[\frac{1}{4\ii\pi}\int_{\partial B(w,\eps)}\frac{g'(z)}{g(z)-g(w)}f(z)(-\ii)\mathrm{d}z + O(\eps \log(\eps)) \right]\\
            &\qquad +\frac12\int_{\Omega} \bar\alpha(z) \kappastarbar(w,z)f(z)\mathrm{d}V(z) \\
            &= -\frac{\ii}2f(w) +\frac12\int_{\Omega} \bar\alpha(z) \kappastarbar(w,z)f(z)\mathrm{d}V(z) 
        \end{aligned}
    \end{equation}
    which is the second part of Equation~\eqref{eq:kappa:kappastar}.
    The same argument works for the other equations.

We now verify the statement concerning the boundary conditions in Proposition \ref{lem:F0F1}. To obtain the boundary condition in the first variable, we simply notice that $\kappa(w,z) = 0$ when $w \in \partial \Omega$, which directly give the result.

    We turn to the boundary conditions in the second variable.
   By definition (Equation~\eqref{eq:green:distribution}), $\mGreen(w,z) = 0$ for $z \in \partial \Omega$.
    Hence, since $\mGreen$ vanishes in the tangent direction to the boundary, for $w \in \Omega, z \in \partial \Omega$, $\nabla_z \mGreen(w,z) = h_z(w) \eta(z)$ where $h_z : \Omega \to \RR$. Hence,
    $$
        \kappa(w,z) = h_z(w) \bar\eta(z) \quad ; \quad z \in \partial \Omega, w \in \Omega.
    $$
    Recall that for $z \in \Omega$, $\kappastar(\cdot, z)$ is defined as the unique $\alpha$-conjugate of $\kappa(\cdot,z)$.
    We claim that this also holds for $z \in \partial \Omega$ (this will be proved below).
    Since the $\alpha$-conjugate is defined coordinate by coordinate, if $h_z^* : \Omega \to \RR$ denotes the unique $\alpha$-conjugate of $h_z: \Omega \to \RR$, we simply have 
    $$
        \kappastar(w,z) = h_z^*(w)\bar\eta(z) \quad ; \quad z \in \partial \Omega, w \in \Omega.
    $$
    Recall the definition of $F_0,F_1$ from Equation~\eqref{eq:def:F_i}.
    For $z \in \partial \Omega$, the above implies
    $$
        F_0 = \bar\eta(z)\frac{h_z^*(w)-\ii h_z(w)}{2} \quad ; \quad F_1 = -\bar\eta(z)\frac{h_z^*(w)+\ii h_z(w)}{2}.
    $$
    Hence,
    $$
        F_1(w,z) 
        = -\frac{\bar\eta(z)}{\eta(z)}\bar F_0(w,z)
        = \frac{\bar\tau(z)}{\tau(z)}\bar F_0(w,z)
        = \bar\tau^2(z) \bar F_0(w,z).
    $$
    
    We now check that $\kappa(\cdot, z)$ and $\kappastar(\cdot,z)$ are also $\alpha$-conjugate when $z \in \partial \Omega$. 
    We already know that Equation~\eqref{eq:uu^*} holds for $z \in \Omega$, that is for all $w_1,w_2 \neq z$,
    \begin{equation}
        u^*(w_2,z)e^{ V(w_2)} - u^*(w_1,z)e^{ V(w_1)} 
        = \int_{w_1}^{w_2}e^{-2 V(w)}\Rot_{\pi/2}\nabla_v(u(w,z)e^{- V(w)})\mathrm{d}w.
    \end{equation} 
    To prove that this remains true at $z \in \partial \Omega$, we can apply a dominated convergence theorem. 
    For this, it suffices to prove that $u(w,z)$ and $\nabla_w u(w,z)$ are continuous functions in $\bar\Omega \times \bar \Omega \setminus \cD(\bar\Omega \times \bar \Omega)$, bounded away from the diagonal.  
    By definition, if we denote by $\partial_{w_1}, \partial_{w_2},\partial_{z_1}, \partial_{z_2}$ the partial derivatives with respect to the real and imaginary parts of $z$ and $w$,
    \begin{align}
        u(w,z) &= \Re \kappa(w,z) = \partial_{z_1} G^m(w,z) - \alpha_1(z) G^m(w,z),\\
        v(w,z) &= \Im \kappa(w,z) = - \partial_{z_2} G^m(w,z) + \alpha_y(z) G^m(w,z).
    \end{align}
    Hence, we only need to prove that for $i, j \in \{1,2\}$, $G^m(w,z)$, $\partial_{z_i}G^m(w,z)$ and $\partial_{w_j} \partial_{z_i} G^m(w,z)$ are continuous in $(z,w)$ and bounded away from the diagonal.
    
    (i) For $G^m(w,z)$, this is true by definition, and furthermore
    $$
        0 \leq G^m(w,z) \leq G(w,z) = O(\log|z-w|).
    $$

    (ii) The partial derivatives of the critical Green function in the half-plane are explicit:
    $$
        \partial_{z_x}G_{\HH}(w,z) = \frac{1}{2\pi}\left(\frac{z_x-w_x}{|z-w|^2}-\frac{z_x-w_x}{|z-\bar w|^2}\right)~;~\partial_{z_x}G_{\HH}(w,z) = \frac{1}{2\pi}\left(\frac{z_y-w_y}{|z-w|^2}-\frac{z_y+w_y}{|z-\bar w|^2}\right).
    $$
    Hence, for $i \in \{1,2\}$, $\partial_{z_i}G_{\HH}(w,z)$ is continuous in $(w,z)$ away from the diagonal with a $O(1/|z-w|)$ singularity.
    Since the critical Green function in a general domain $G$ is obtained from $G_\HH$ by Equation~\eqref{eq:Green:half-plane:domain}, this is also true for $G$.  
    Deriving the resolvent identity~\eqref{eq:resolvent:identity:continuous} with respect to the second variable, we obtain for $(w,z) \in \Omega \times \Omega \setminus \cD(\Omega \times \Omega)$,
    \begin{equation}\label{eq:resolvent:derivative}
        \partial_{z_i} G^m(w,z) = \partial_{z_i} G(w,z) + \int_{y \in \Omega}\mass(y)G^m(w,y) \partial_{z_i} G(y,z)\mathrm{d}y.
    \end{equation}
    By applying the dominated convergence theorem in Equation~\eqref{eq:resolvent:derivative}, we obtain that $\partial_{z_i} G^m(w,z)$ is continuous in $(w,z)$ away from the diagonal, with a $O(1/|z-w|)$ singularity. 

    (iii) Differentiating the resolvent identity with respect to the first variable, we obtain
    \begin{equation}\label{eq:resolvent:derivative:twice}
        \partial_{w_j}\partial_{z_i} G^m(w,z) = \partial_{w_j}\partial_{z_i} G(w,z) + \int_{y \in \Omega}\mass(y)\partial_{w_j}G^m(w,y) \partial_{z_i} G(y,z)\mathrm{d}y.
    \end{equation}
    Similarly as before, an explicit computation shows that $\partial_{w_j}\partial_{z_i} G(w,z)$ is continuous and bounded away from the diagonal. By symmetry of $G^m$ and (ii), $\partial_{w_j}G^m(w,y)$ is continuous and bounded away from the diagonal with a $O(1/|w-y|))$ singularity. Moreover, as we already proved above, $\partial_{z_i}G(y,z)$ is continuous and bounded away from the diagonal with a $O(1/|z-y|))$ singularity. 
    Hence, the integrand in the right-hand side of Equation~\eqref{eq:resolvent:derivative:twice} remains integrable for $(w,z)$ away from the diagonal and the dominated convergence theorem applies.
\end{proof}

\subsection{Overview of proof of Theorem \ref{T:SG}}
\label{SS:identification}

In this section we give an overview of what remains to be proved for Theorem \ref{T:SG}. We start by rephrasing Theorem~\ref{thm:height} and Proposition~\ref{lem:F0F1} as follows.  Let $\EE_{(\Omega, \alpha)}$ denote the expectation corresponding to the law of the limiting height function $\phi$, which is already known to exist as soon as $\Omega$ is bounded and simply connected and $\alpha$ derives from a log-convex potential which is ($C^1$) on $\bar \Omega$, by \cite{BHS}.
For a pair of function $(F_0,F_1)$ and $\mathbf{s} = (s_1, \ldots, s_n) \in \{0,1\}^n$, we call
    $$
    v_{(F_0,F_1)}^{(\mathbf{s})}(z_1, \ldots, z_k): = \det_{i \neq j}\Big[F_{s_i+s_j}^{(s_j)}(z_i,z_j)\Big] 
    $$ 

\begin{corollary}\label{cor:BV:alpha}
    Assume that $\Omega$ is bounded with a straight boundary, and that $\alpha$ derives from a log-convex potential which is smooth ($C^1$) on $\bar \Omega$.
    Then,
    \begin{itemize}
        \item $\BV_1(\Omega, \alpha)$ has a solution $(F_0,F_1)$ (provided by Theorem~\ref{thm:height}) 
        \item Then for $ \mathbf{s} = (s_1, \ldots, s_n) \in \{0,1\}^n$,
    \begin{equation}\label{eq:Wirt_height1}
        \EE_{(\Omega, \alpha)} [ \prod_{i=1}^n \dz^{(s_i)}_{z_i}\phi(z_i)] = v_{(F_0,F_1)}^{(\mathbf{s})}(z_1, \ldots, z_k) 
    \end{equation}
    \end{itemize}
\end{corollary}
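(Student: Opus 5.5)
The first bullet follows directly from Proposition~\ref{lem:F0F1}. By Theorem~\ref{thm:isoradial}, the functions $\kappa$ and $\kappastar$ are well defined under the straight boundary assumption, and hence so are $F_0,F_1$ via \eqref{eq:def:F_i}. Proposition~\ref{lem:F0F1} gives exactly the bulk equations $\bar\partial_w F_0 = \frac{\alpha}{2}F_1 + \frac{\ii}{2}\delta$ and $\partial_w F_1 = \frac{\bar\alpha}{2}F_0$, together with the boundary condition $F_1=-F_0$ for $w\in\partial\Omega$. These are precisely the defining conditions of $\BV_1(\Omega,\alpha)$ in \eqref{eq:BV:alpha}, so nothing further is needed.

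For the second bullet I would start from Theorem~\ref{thm:height}. Together with the uniform bound \eqref{eq:bound:height}, it identifies the moments of the limiting field. The centered discrete height functions converge in law to the limit $\phi$ of \cite{BHS}, and \eqref{eq:bound:height} gives uniform integrability, since it bounds higher moments by logarithmically integrable quantities. Hence for pairwise distinct points,
\[
\EE_{(\Omega,\alpha)}\Big[\prod_{i=1}^n \phi(\zeta_i)\Big]=\sum_{\mathbf{s}}\int_{\path_1}\cdots\int_{\path_n}v^{(\mathbf{s})}_{(F_0,F_1)}(z_1,\dots,z_n)\prod_i \mathrm{d}z_i^{(s_i)},
\]
understood in the weak sense after pairing with test functions. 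The $\log$-bound \eqref{eq:bound:height} makes the pairing legitimate by dominated convergence.

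The key step is to differentiate this identity with respect to each endpoint $\zeta_i$ in the Wirtinger sense. Because the right-hand side is an iterated path integral whose value does not depend on the choice of disjoint paths (Theorem~\ref{thm:height} allows any such paths), it defines a function of $(\zeta_1,\dots,\zeta_n)$. Applying $\partial^{(s_i)}_{\zeta_i}$ simply removes the $i$th integral and picks out the integrand component multiplying $\mathrm{d}z_i^{(s_i)}$ evaluated at $z_i=\zeta_i$. Doing this for all $i$ gives $v^{(\mathbf{s})}_{(F_0,F_1)}(\zeta_1,\dots,\zeta_n)$. On the left-hand side, the derivative is interpreted distributionally as $\EE[\prod\partial^{(s_i)}\phi(z_i)]$, namely the kernel obtained by integrating against $-\partial^{(s_i)}f_i$, which is the definition \eqref{eq:height_corr}.

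I expect the main obstacle to be justifying path-independence and the differentiation rigorously. Path-independence requires that, in each variable, the one-form $\sum_{s}(\cdot)\,\mathrm{d}z^{(s)}$ be closed away from the other points. I would verify this via the first-order equations \eqref{eq:F0F1_bulk} in the second variable: the terms $\bar\partial_z F_0=\frac{\bar\alpha}{2}\bar F_1$ and $\bar\partial_z F_1=\frac{\bar\alpha}{2}\bar F_0$ should cancel between the holomorphic and antiholomorphic components when expanding the determinant. As a fallback, path-independence also follows from the discrete statement, since the discrete height function is a genuine function, so its limit expression cannot depend on the path. Finally, the singularities $O(1/|z_i-z_j|)$ from \eqref{eq:singularity:F0F1} are locally integrable, so the weak derivative agrees with the pointwise one off the diagonal.
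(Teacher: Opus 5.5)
Your proposal is correct and matches the paper, which presents this corollary as a direct rephrasing of Proposition~\ref{lem:F0F1} (first bullet) and Theorem~\ref{thm:height} (second bullet), with \eqref{eq:Wirt_height1} understood in the path-integrated sense \eqref{eq:momentsv}. Your moment-convergence argument and your differentiation of the path integrals (whose path independence is already part of Theorem~\ref{thm:height}) are harmless refinements of this, with two small corrections.

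First, a direct closedness check would need the equations \eqref{eq:F0F1_bulk} in both variables, not only the second, since $z_i$ enters row $i$ as a first argument and column $i$ as a second argument.

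Second, local integrability is not the right justification at the end. Products of entries are $O(|z_i-z_j|^{-2})$, which is not integrable across the diagonal of $\Omega^n$. Agreement of weak and pointwise derivatives off the diagonal instead follows simply from smoothness of the moment kernel there (for instance, for test functions with disjoint supports).
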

Here, as before, $\partial^{(0)}$ and $\partial^{(1)}$ refer to the Wirtinger derivatives $\partial_z$ and $\bar\partial_z$ respectively. Of course, the limiting height function $\phi$ is rough, so the Wirtinger derivatives of $\phi$ do not make literal sense, and these are understood in the weak sense. 
Concretely, the meaning of \eqref{eq:Wirt_height1} is that for any smooth, compactly supported test function $f:\Omega \to \RR$ one can write
\begin{equation}\label{eq:momentsv}
\EE_{(\Omega, \alpha)} [ (\phi, f)^n] = \int \ldots \int f(z_1)\ldots f(z_n) V_{(F_0,F_1)} (z_1, \ldots, z_n) \mathrm{d}A(z_1) \ldots \mathrm{d}A(z_n)
\end{equation}
where $dA(z)$ denotes integration with respect to the area (Lebesgue) measure, and 
$$
    V_{(F_0,F_1)}(z_1, \ldots, z_n) = \sum_{\mathbf{s} \in \{0,1\}^n} \int_{\path_1}\cdots \int_{\path_n}  v_{(F_0,F_1)}^{(\mathbf{s})}(z_1, \ldots, z_n) \prod_{i=1}^n \mathrm{d}z_i^{(s_i)}.
$$
{Here, $\gamma_1, \ldots, \gamma_n$ are arbitrary paths that are disjoint, connecting $z_1, \ldots, z_n$ respectively to the straight edge portion of $\partial \Omega$}. 
The identification with the sine-Gordon field of Conjecture \ref{eq:conjBHS2} proceeds in four steps:
\begin{itemize}
\item First, we prove a \textbf{conformal change of coordinate} formula which, together with results from~\cite{BHS} implies that Corollary~\ref{cor:BV:alpha} holds for general domains

\item Next, we check that the determinant in Corollary~\ref{cor:BV:alpha} can be written in terms of \textbf{fermionic expressions}.
This involves performing a gauge change to transform the boundary conditions of the boundary value problem.
These fermionic expressions match the sine-Gordon field via the Coleman transform.


\item We finally explain how to check in concrete cases our conditions on the domain $\Omega$ and the vector field $\alpha$. 
Together with the Coleman transform of \cite{PVW}, this proves in particular that in the case of the unit disc, the limiting field that we identify is indeed the sine-Gordon model.
\end{itemize}

\paragraph{Step 1.}
We verify that Corollary~\ref{cor:BV:alpha} extends to more general domains with the conformal covariance rule described in \cite{BHS}.
This is an important step in the identification of the limiting height function as the sine-Gordon field for the following reason: on the one hand, our Theorem \ref{thm:height} requires the boundary of the domain to have a straight portion. On the other hand, the Coleman correspondence (on which we rely for this identification) is established in \cite{PVW} only in the case where the domain $\Omega = \DD$ is the unit disc. The result below will therefore allow us to compare these two results and eventually conclude that in the unit disc, the limiting moments of the height function match those of the sine-Gordon model.  

This change of domain relies on the following change of coordinates formula.
Let $T:\Omega \to T \Omega$ be a conformal isomorphism.
Denote by  
\begin{equation}\label{eq:covariance}
     T\alpha(w) = \overline{\psi'(w)} \alpha ( \psi(w)). 
\end{equation}
the change of drift vector under the conformal covariance rule of \cite{BHS}.
    
\begin{prop}
    \label{thm:covariance}
    Let $\Omega$ be any simply connected domain.
    Let $T:\Omega \to T\Omega$ be a conformal isomorphism and let $\psi = T^{-1}$.
    For $(F_0,F_1)$, let 
    $$
    TF_s (w_1, w_2) = \psi'(w_2) F_s(\psi(w_1), \psi(w_2)); \quad s\in \{0,1\}. 
    $$ 
    Then, $(F_0, F_1)$ solves $\BV_1(\Omega,\alpha)$ if and only if $(TF_0, TF_1)$ solves $\BV_1(T\Omega, T\alpha)$.
    When that is the case, for all $w_1, \ldots, w_k \in T\Omega$ which are pairwise disjoint,
    $$
    V_{(F_0,F_1)} (\psi(w_1), \ldots, \psi(w_k)) = V_{(TF_0, TF_1)} (w_1, \ldots, w_k).
    $$
\end{prop}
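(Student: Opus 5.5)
The plan is a direct chain-rule computation, handled separately for the bulk equations, the delta term, the boundary condition, and the integrated correlations. I take $w_1 = T(\omega_1)$, $w_2 = T(\omega_2)$, so that $\psi(w_i)=\omega_i$, and set $G_s(w_1,w_2) = \psi'(w_2)F_s(\psi(w_1),\psi(w_2))$.

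\emph{Bulk equations.} Since $\psi$ is holomorphic, $\bar\partial_{w_1}[F_0(\psi(w_1),\cdot)] = \overline{\psi'(w_1)}\,(\bar\partial F_0)(\psi(w_1),\cdot)$. Substituting $\bar\partial_\omega F_0 = \tfrac{\alpha}{2}F_1$ gives
\[
\bar\partial_{w_1} G_0 = \tfrac12\,\overline{\psi'(w_1)}\,\alpha(\psi(w_1))\,G_1 = \tfrac12\,(T\alpha)(w_1)\,G_1 .
\]
The prefactor $\psi'(w_2)$ is just a constant in $w_1$. Similarly, $\partial_{w_1}[F_1(\psi(w_1),\cdot)] = \psi'(w_1)(\partial F_1)(\psi(w_1),\cdot)$, and $\partial F_1 = \tfrac{\bar\alpha}{2}F_0$ produces the factor $\psi'(w_1)\bar\alpha(\psi(w_1)) = \overline{T\alpha(w_1)}$, which is the required equation. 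Since $T$ is a bijection, the reverse implication comes from the same computation applied to $T^{-1}$.

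\emph{Delta term.} I pair against a test function $f$ and change variables $\omega=\psi(w)$, using that the Jacobian of the area measure is $|\psi'|^2$. Then $\delta_{\omega_1}(\psi(w_2))\,\psi'(w_2)$ pulls back to $\psi'(w_1)|\psi'(w_1)|^{-2}\delta_{w_1}(w_2) = \overline{\psi'(w_1)}^{-1}\delta_{w_1}(w_2)$. Multiplying by the $\overline{\psi'(w_1)}$ coming from the chain rule returns exactly $\tfrac{\ii}{2}\delta_{w_1}(w_2)$. More robustly, one can use the local singularity $F_0 \sim \tfrac{\ii}{4\pi(\omega_1-\omega_2)}$: the product $\psi'(w_2)/(\psi(w_1)-\psi(w_2))$ behaves like $1/(w_1-w_2)$ up to a smooth correction, so the residue argument of \eqref{eq:proof:dirac:term} applies verbatim.

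\emph{Boundary condition.} The condition $F_1=-F_0$ for the first variable on $\partial\Omega$ transfers immediately, because $T$ maps boundary to boundary and the prefactor $\psi'(w_2)$ is common to both functions.

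\emph{Identity for $V$.} In $v^{(\mathbf s)}$, each determinant entry $F^{(s_j)}_{s_i+s_j}(z_i,z_j)$ carries the factor $\psi'(w_j)^{(s_j)}$, i.e.\ conjugated exactly when $s_j=1$. Every permutation term contains each column index $j$ exactly once, so the determinant built from the $G_s$ equals $\prod_j \psi'(w_j)^{(s_j)}$ times the determinant built from the $F_s$ at the points $\psi(w_j)$. Changing variables along the paths, the image paths $\psi\circ\gamma_j$ satisfy $\mathrm d(\psi(w_j))^{(s_j)} = \psi'(w_j)^{(s_j)}\,\mathrm d w_j^{(s_j)}$. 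These factors match those produced by the determinant, so the integrands agree.

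\emph{Main obstacle.} The delicate point is the role of the paths. $V$ is defined using paths ending on the straight edge and is claimed to be path independent, and after the conformal map the images of these paths end on the corresponding boundary arc. I would justify this by noting that $V$ is path independent, which follows from the holomorphic/antiholomorphic structure of the integrand together with the boundary conditions, and that the boundary values are preserved under $T$. If necessary, I would restrict the statement to paths that are images of admissible ones. A secondary check is whether $\psi'(w_2)$ or its conjugate is the correct weight on the $\bar F$ entries; the consistency test is the $\bar\partial_z$ equation of Proposition \ref{lem:F0F1} transported under the conformal map.
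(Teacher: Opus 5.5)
Your proposal is correct and follows the paper's proof essentially step for step: the chain rule $\bar\partial_w[F_0(\psi(w),\cdot)]=\overline{\psi'(w)}\,(\bar\partial F_0)(\psi(w),\cdot)$ for the bulk equations, the Jacobian identity $|\psi'(w)|^2\delta_{\psi(w)}(\psi(z))=\delta_w(z)$ for the Dirac term, the immediate transfer of the boundary condition, and columnwise multilinearity of the determinant together with $\mathrm{d}z_j^{(s_j)}=\psi'(w_j)^{(s_j)}\,\mathrm{d}w_j^{(s_j)}$ for the identity on $V$. On the path issue, the paper does not prove path independence and instead uses the image paths $T(\gamma_i)$, which is your fallback; it also leaves implicit the converse direction, which you obtain by applying the same computation to $T^{-1}$.
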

The proof of this Proposition is given in Section~\ref{SS:covariance}.

\begin{remark}
    Note that the identity \eqref{eq:covariance} matches exactly (1.23) in the known conformal covariance of the limiting near-critical dimer height function (Theorem 1.7 of \cite{BHS}).
\end{remark}

Recall that by Theorem 1.7 in \cite{BHS}, if $T: \Omega \to T\Omega$ is a conformal isomorphism of bounded simply connected domains with inverse $T^{-1}$, and $T$ extends analytically to a neighbourhood of $\Omega$ (which we will henceforth denote as "smooth"), 
then the height function $\phi^{\Omega,\alpha}$ with law $\PP_{(\Omega, \alpha)}$ and the height function $\phi^{T\Omega,T\alpha}$ with law $\PP_{(T\Omega, T\alpha)}$ are related by
\begin{equation}\label{eq:thm:BHS}
    \phi^{\Omega, \alpha} \circ \psi = \phi^{T\Omega,T\alpha}
\end{equation}
Together, Corollary~\ref{cor:BV:alpha}, Proposition~\ref{thm:covariance} and \eqref{eq:thm:BHS} directly imply the following
\begin{corollary}\label{cor:BV:alpha:conformal}
    Assume that $\Omega$ is the image by a smooth conformal transformation of a bounded domain with a straight boundary, and that $\alpha$ derives from a log-convex potential which is smooth ($C^1$) on $\bar \Omega$.
    \begin{itemize}
        \item $\BV_1(\Omega, \alpha)$ has a solution $(F_0,F_1)$ (provided by Theorem~\ref{thm:height} and the conformal change of coordinate)
        \item For $ \mathbf{s} = (s_1, \ldots, s_n) \in \{0,1\}^n$,
    \begin{equation}\label{eq:Wirt_height}
        \EE_{(\Omega, \alpha)} [ \prod_{i=1}^n \dz^{(s_i)}_{z_i}\phi(z_i)] = v_{(F_0,F_1)}^{(\mathbf{s})}(z_1, \ldots, z_k) 
    \end{equation}
    \end{itemize}
\end{corollary}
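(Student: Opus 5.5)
The plan is to transport every ingredient of Corollary~\ref{cor:BV:alpha} from a straight-boundary domain $U$ to $\Omega = T U$ by means of Proposition~\ref{thm:covariance} and the covariance identity \eqref{eq:thm:BHS}. Throughout, $T:U\to\Omega$ is a smooth conformal isomorphism, meaning it extends analytically to a neighbourhood of $\bar U$, and $\psi=T^{-1}$.

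\emph{Choice of drift on $U$.} The first step is to pick the drift on $U$. We set $\alpha_U(\zeta) := \overline{T'(\zeta)}\,\alpha(T(\zeta))$, which is the rule \eqref{eq:covariance} applied to $\psi$, so that $T\alpha_U=\alpha$. We must then check that $\alpha_U$ again derives from a potential that is log-convex and $C^1$ on $\bar U$.
\begin{itemize}
\item Gradient structure: since $\alpha = 2\dzbar V$, we get $\alpha_U = 2\,\overline{T'}\,(\dzbar V)\circ T = 2\dzbar (V\circ T)$. Hence $V_U = V\circ T$, and smoothness on $\bar U$ follows from the analyticity of $T$ up to the boundary.
\item Positivity of the mass: $M_U = \Delta V_U + |\nabla V_U|^2 = |T'|^2 \, (M\circ T)$, which is positive because $T'\neq 0$ on $\bar U$.
\end{itemize}
Corollary~\ref{cor:BV:alpha} therefore applies on $(U,\alpha_U)$ and yields a solution $(F_0^U,F_1^U)$ of $\BV_1(U,\alpha_U)$ together with \eqref{eq:Wirt_height1}. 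Applying Proposition~\ref{thm:covariance} with the map $T$, the pair $(TF_0^U,TF_1^U)$ solves $\BV_1(\Omega,\alpha)$, which gives the first bullet.

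\emph{Transfer of the moments.} For the second bullet, the cleanest route is to pass through the integrated form \eqref{eq:momentsv}, since the pointwise Wirtinger statement is only meaningful weakly. By \eqref{eq:thm:BHS}, $\phi^{U,\alpha_U} = \phi^{\Omega,\alpha}\circ T$ in law, with the roles of $\Omega$ and $T\Omega$ assigned to match the direction of $T$. Consequently, for a test function $f$ on $\Omega$,
\[
(\phi^{\Omega,\alpha},f) = (\phi^{U,\alpha_U}, (f\circ T)\,|T'|^2).
\]
Applying \eqref{eq:momentsv} on $U$ and changing variables $z_i = T(\zeta_i)$ expresses $\EE_{(\Omega,\alpha)}[(\phi,f)^n]$ as $\int \prod f(z_i)\, V_{(F^U_0,F^U_1)}(\psi(z_1),\ldots,\psi(z_n)) \prod \mathrm{d}A(z_i)$. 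The second part of Proposition~\ref{thm:covariance} identifies the last kernel with $V_{(TF_0^U,TF_1^U)}(z_1,\ldots,z_n)$. The paths $\gamma_i$ are the images under $T$ of paths ending on the straight portion of $\partial U$, so they end on $T(L)\subset\partial\Omega$.

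\emph{Differentiation and the main obstacle.} Since $V_{(TF_0^U,TF_1^U)}$ is a sum over $\mathbf s$ of iterated path integrals of $v^{(\mathbf s)}$ against $\mathrm{d}z_i^{(s_i)}$, the weak Wirtinger derivative in each $z_i$ picks out exactly the term $v^{(\mathbf s)}_{(TF_0^U,TF_1^U)}$, as on $U$; this gives \eqref{eq:Wirt_height}. The main obstacle is justifying that these paths may be endpoint-independent in $\Omega$. One must check that the kernels are path-independent off the diagonal and that contributions of the integrals against the other variables do not interfere. On $U$ this is inherited from the discrete construction, and for $\Omega$ it follows from the equality of $V$ kernels established above, so no separate argument should be needed.
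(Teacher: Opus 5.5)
Your proposal is correct and takes the same route as the paper, which obtains this corollary by combining Corollary~\ref{cor:BV:alpha} on the straight-boundary domain, Proposition~\ref{thm:covariance}, and the covariance identity \eqref{eq:thm:BHS} from \cite{BHS}. The extra checks you supply are the verifications the paper leaves implicit: the pulled-back drift $\overline{T'}\,\alpha\circ T = 2\dzbar(V\circ T)$ has mass $|T'|^2\, M\circ T$, and the change of variables transfers the integrated moments. The one caveat is that positivity of this mass up to the boundary needs $T'\neq 0$ on $\bar U$, i.e.\ that $T$ extends conformally to a neighbourhood of $\bar U$.
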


\paragraph{Step 2.} The next step gives a fermionic representation for the correlation function $v^{(\mathbf{s})}_{(F_0,F_1)}$. 
In order to do this, we check that they coincide with the correlation functions $\chi^{(\mathbf{s})}_{SG(\rho|\Omega)}$ of the mixed Wirtinger derivatives of the sine-Gordon introduced in Equation~\eqref{eq:def:correlations:SG}.
Recall the definition of the correlations of the mixed Wirtinger derivatives of the sine-Gordon introduced in Equation \eqref{eq:def:correlations:SG}.
\begin{equation}
    \chi^{(\mathbf{s})}_{\mathrm{SG}(\rho)}(z_1, \ldots, z_n) 
    = \Big\langle \prod_{j=1}^n \partial^{(s_j)}_{z_j} \phi(z_j) \Big\rangle_{\mathrm{SG}(\rho|\Omega)}.
\end{equation}
The proof is based on a change of gauge compared to the expression for $(F_0, F_1)$ appearing in Corollary \ref{cor:BV:alpha:conformal} and some rather surprising exact identities. 
We first introduce a modified electromagnetic field  
\begin{equation}\label{eq:alphatilde}
\tilde \alpha(x) = \alpha(x) e^{\mathbf{i} \arg g'(x)}.
\end{equation}
where $g: \Omega \to \HH$ is a conformal isomorphism, and $\arg g'$ is any determination of the argument.
The significance of this modified vector field (and how it arises naturally from Conjecture \eqref{eq:conjBHS2}) will be explained in Section \ref{SS:Coleman}.
Define the change of gauge 
\begin{equation}
    \label{eq:gaugechangeF}
\tilde F_0(w,z) = \sqrt{\frac{ g'(w)}{g'(z)}} F_0(w,z); \quad  \tilde F_1(w,z) = \sqrt{\frac{\overline{g'}(w)}{g'(z)}} F_1(w,z).
\end{equation}
We define a new boundary value problem, $\BV_2(\Omega, \tilde\alpha)$:
\begin{equation}\label{eq:BV2:alpha}
\begin{cases}
\bar \partial_w F_0(w,z) 
        = \frac{\tilde\alpha(w)}{2} F_1(w,z) + \frac{\ii}{2}\delta_w(z) 
        \quad ; \quad 
        \partial_w F_1(w,z) 
        = \frac{\overline{\tilde{\alpha}}(w)}{2}F_0(w,z)
        &\text{ on }\Omega^2\\
\tilde F_1(w,z) = -\tau(w) \tilde F_0(w,z) ; & \text{ if } w \in \partial \Omega.
\end{cases}
\end{equation}
where as before $\tau(w)$ is the tangent vector at $w \in \partial \Omega$ such that $\Omega$ lies locally to the left of $\tau(w)$.
Note that only the boundary conditions changes compared to $\BV_1(\Omega,\alpha)$.
Again, the first two lines can be conveniently rewritten in terms of the Dirac operator
$$
\slashed{\partial}_{\tilde\alpha} 
\begin{pmatrix}
    \ii \tilde{F}_1 & \ii \overline{\tilde F_0}\\
    -\ii \tilde F_0 & -\ii \overline{\tilde F_1}  
\end{pmatrix} 
= I \text{ on }  \Omega^2
$$

\begin{lemma} \label{lem:detgaugechange}
    $(F_0,F_1)$ satisfies $\BV_1(\Omega, \alpha)$ if and only if $(\tilde F_0, \tilde F_1)$ satisfies $\BV_2(\Omega, \tilde \alpha)$.  
    Moreover,
    \begin{equation}\label{eq:reform_thmheight2}
        \det_{i \neq j} [ F^{(s_j)}_{s_i + s_j} (z_i, z_j)] = \det_{i \neq j} [\tilde F^{(s_j)}_{s_i + s_j} (z_i, z_j)].
    \end{equation}   
\end{lemma}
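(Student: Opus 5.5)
The proof splits into a direct computation for the bulk equations, a check of the boundary condition, and a bookkeeping argument showing the gauge factors cancel in the determinant.

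\emph{Bulk equations.} Write $\lambda(w)=\sqrt{g'(w)}$, which is holomorphic and nonvanishing, so $\bar\partial_w\lambda=0$ and $\partial_w\bar\lambda=0$. Then
\[
\tilde F_0=\frac{\lambda(w)}{\lambda(z)}F_0, \qquad \tilde F_1=\frac{\bar\lambda(w)}{\lambda(z)}F_1 .
\]
Applying $\bar\partial_w$ to $\tilde F_0$ gives $\frac{\lambda(w)}{\lambda(z)}\bigl(\frac{\alpha}{2}F_1+\frac{\ii}{2}\delta\bigr)$. On the delta term the prefactor equals $1$ at $w=z$, so that term is unchanged. The other term equals $\frac{\alpha}{2}\frac{\lambda(w)}{\bar\lambda(w)}\tilde F_1$. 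Since $\lambda/\bar\lambda=e^{\ii\arg g'(w)}$, this is exactly $\frac{\tilde\alpha}{2}\tilde F_1$.

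In the same way, $\partial_w\tilde F_1=\frac{\bar\lambda(w)}{\lambda(z)}\frac{\bar\alpha}{2}F_0=\frac{\bar\alpha}{2}e^{-\ii\arg g'}\tilde F_0=\frac{\overline{\tilde\alpha}}{2}\tilde F_0$. Every step is reversible because $\lambda\neq0$, so this proves the "if and only if" for the bulk equations.

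\emph{Boundary condition.} For $w\in\partial\Omega$, the condition $F_1=-F_0$ becomes $\tilde F_1=-\frac{\bar\lambda(w)}{\lambda(w)}\tilde F_0=-e^{-\ii\arg g'(w)}\tilde F_0$. Since $g$ maps $\partial\Omega$ to $\RR$ with $\Omega$ on the left, $g'(w)\tau(w)$ is real and positive, so $\arg g'(w)=-\arg\tau(w)$. This turns the condition into $\tilde F_1=-\tau(w)\tilde F_0$, again reversibly.

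This is the step where conventions must be watched, namely the branch of $\sqrt{g'}$ and the orientation sign. Any global sign ambiguity in the square root cancels, because $\tilde F_s$ involves a ratio of square roots at $w$ and at $z$. A continuous branch exists because $\Omega$ is simply connected.

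\emph{Determinant identity.} The entry in row $i$, column $j$ is $F^{(s_j)}_{s_i+s_j}(z_i,z_j)$, and the claim is that its gauge factor has the form $a_i/a_j$, so that the factors cancel in each product of the expansion. Let $c_i=\lambda(z_i)$ if $s_i=0$ and $c_i=\bar\lambda(z_i)$ if $s_i=1$. Check the four cases of $(s_i,s_j)$:
\begin{itemize}
\item $s_j=0$, $s_i=0$: the factor of $F_0$ is $\lambda(z_i)/\lambda(z_j)$.
\item $s_j=0$, $s_i=1$: the factor of $F_1$ is $\bar\lambda(z_i)/\lambda(z_j)$.
\item $s_j=1$, $s_i=1$: the entry is $\bar{\tilde F}_0$, with factor $\bar\lambda(z_i)/\bar\lambda(z_j)$.
\item $s_j=1$, $s_i=0$: the entry is $\bar{\tilde F}_1$, with factor $\lambda(z_i)/\bar\lambda(z_j)$.
\end{itemize}
In every case the factor is $c_i/c_j$. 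So the new matrix is $D A D^{-1}$ with $D=\mathrm{diag}(c_i)$ and $A$ the old matrix. Conjugation by a diagonal matrix preserves the zero diagonal and preserves the determinant, since each permutation term $\prod_i c_i/c_{\sigma(i)}=1$. This gives \eqref{eq:reform_thmheight2}.
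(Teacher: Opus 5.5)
Your proposal is correct and follows essentially the same route as the paper. The paper also gets the bulk equations by direct differentiation, using holomorphy of $\sqrt{g'}$ and $\sqrt{g'}/\overline{\sqrt{g'}}=e^{\ii\arg g'}$; the boundary condition from $g'(w)\tau(w)>0$ on $\partial\Omega$; and the determinant identity by conjugating with a diagonal matrix, where your four-case check of the factors $c_i/c_j$ is the expanded form of the paper's compact formula $\tilde F^{(s_j)}_{s_i+s_j}(z_i,z_j)=\sqrt{g'(z_i)^{(s_i)}/g'(z_j)^{(s_j)}}\,F^{(s_j)}_{s_i+s_j}(z_i,z_j)$.
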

This lemma will be proved in Section~\ref{SS:fermionic}.

\medskip 
As a summary of the first two steps, we have already proved that 
$$
    v_{(F_0,F_1)}^{(\mathbf{s})}(z_1, \ldots, z_k) = \EE_{(\Omega, \alpha)} [ \prod_{i=1}^n \dz^{(s_i)}_{z_i}\phi(z_i)]  =  \det_{i \neq j}\Big[F_{s_i+s_j}^{(s_j)}(z_i,z_j)\Big] 
$$ 
where $(F_0, F_1)$ satisfy $\BV_1(\Omega, \alpha)$.
By Lemma~\ref{lem:detgaugechange}, we find that this is equivalent to
\begin{equation}\label{eq:fermionic_inverseDirac}
v^{(\mathbf{s})}_{(F_0,F_1)} (z_1, \ldots, z_n) 
= \det_{i \neq j} [\tilde F^{(s_j)}_{s_i + s_j} (z_i, z_j)]
\end{equation}
with $(\tilde F_0, \tilde F_1)$ the unique solution to $\BV_2(\Omega, \tilde\alpha)$.
It only remains to connect this expression with the fermionic expression for correlations of the sine-Gordon obtained in~\cite{PVW} to conclude: this is done in Section~\ref{SS:identificationPVW}.

We will as a result be able to make a connection to the sine-Gordon model, and more precisely to its Coleman form, i.e., to its expression as a Thirring model, as we now explain. In this model the ``spins'' are fermionic variables   $(\psi(z))_{z\in \Omega}$ where for each $z$, $\psi(z) = (\psi_1(z), \psi_2(z))$ is a vector of size two, whose formal density is given by 
 \begin{equation}\label{eq:fermion_density}
 \exp\Big[-\int_{\Omega}\mathrm{d}x \ \big( \bar\psi \slashed \partial \psi +\tilde \alpha \bar\psi_1\psi_1+\overline{\tilde \alpha} \bar\psi_2\psi_2 \big) \Big] .
 \end{equation}
 See \cite{BenfattoFalcoMastropietro_Thirring} for a rigorous construction (this is a particular case of the so-called \textbf{Thirring model}). We call this field the \textbf{Coleman form of the sine-Gordon with electromagnetic field $\tilde\alpha$} ($\CSG(\tilde\alpha)$ for short).


\begin{theorem}
    \label{thm:fermionic} 
    Assume that $\Omega$ is the image by a smooth conformal transformation of a bounded domain with a straight boundary, and that $\alpha$ derives from a log-convex potential which is smooth ($C^1$) on $\bar \Omega$.
    \begin{itemize}
        \item  {$\BV_2(\Omega, \tilde \alpha)$} has a solution (provided by Theorem~\ref{thm:height}, the conformal change of coordinate and the above gauge change)
        \item If the solution to  $\BV_2(\Omega, \tilde \alpha)$ is unique, the correlation kernels $v^{(\mathbf{s})}$ of holomorphic and antiholomorphic derivatives  of the height function $(\Omega, \alpha)$ 
        agree with the Majorana fermionic correlations of the above field:
        for $\mathbf{s}\in \{0,1\}^n$, letting $\lambda = -1/ (4\sqrt{\pi})$,    \begin{equation}\label{eq:fermionic}
v^{(\mathbf{s})} (z_1, \ldots,z_n) =         \EE_{(\Omega, \alpha)} [ \prod_{i=1}^n \dz^{(s_i)}_{z_i}\phi(z_i)]  
        = 
        \lambda^{n}  \det\left(  \Big\langle \psi^{s_i}(z_i), \psi^{s_j} (z_j)\Big\rangle\right)_{1\le i, j\le n}
    \end{equation}
    where $\psi^0 $ stands for $\psi$, and $\psi^1$ for $\psi^*$, defined as $\psi = \psi_1 + \ii \psi_2$ and $\psi^*  = \psi_1 - \ii \psi_2$. 
    Alternatively, $\Big\langle \psi^{s_i}(z_i), \psi^{s_j} (z_j)\Big\rangle$ is the unique solution of the boundary value problem $\BV_3(\Omega,  \theta)$, with $\theta = \tilde \alpha/(2\ii)$ defined in \eqref{eq:BV3:alpha}.

\item If furthermore, $\Omega = \DD$ and $\theta = \tilde \alpha / (2\ii)$ is real-valued and in $C^\infty (\Omega)$, then 
$$
v^{(\mathbf{s})} (z_1, \ldots,z_n) =         \EE_{(\Omega, \alpha)} [ \prod_{i=1}^n \dz^{(s_i)}_{z_i}\phi(z_i)]  
      =\lambda^n \Big\langle \prod_{j=1}^n \partial^{(s_j)} \phi(z_j))\Big\rangle_{\mathrm{SG}(-\frac{4\theta}{\pi}|\Omega)}
$$
    \end{itemize}
    
\end{theorem}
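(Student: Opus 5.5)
The plan is to chain the results already established. \textbf{Existence.} By Corollary~\ref{cor:BV:alpha:conformal}, the pair $(F_0,F_1)$ obtained from Theorem~\ref{thm:height} and the conformal covariance of Proposition~\ref{thm:covariance} solves $\BV_1(\Omega,\alpha)$, and moreover $v^{(\mathbf{s})}=\det_{i\neq j}[F^{(s_j)}_{s_i+s_j}(z_i,z_j)]$. Lemma~\ref{lem:detgaugechange} then shows that the gauge-changed pair $(\tilde F_0,\tilde F_1)$ of \eqref{eq:gaugechangeF} solves $\BV_2(\Omega,\tilde\alpha)$ and has the same determinant. This gives the first bullet together with \eqref{eq:fermionic_inverseDirac}.

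\textbf{Identification with the fermionic two-point functions.} I would form the $2\times 2$ matrix kernel $S(w,z)$ with entries $\langle\psi^{a}(w)\psi^{b}(z)\rangle$, $a,b\in\{0,1\}$, for the Gaussian Grassmann field with action \eqref{eq:fermion_density}. Formally, $S$ is the inverse of $\slashed\partial_{\tilde\alpha}$ with the boundary condition inherited from the construction, which is the problem $\BV_3(\Omega,\theta)$ with $\theta=\tilde\alpha/(2\ii)$.

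\begin{itemize}
\item Rewrite $\BV_3$ in the variables $\psi=\psi_1+\ii\psi_2$ and $\psi^*=\psi_1-\ii\psi_2$.
\item Show that, after this change of variables and a rescaling by $\lambda=-1/(4\sqrt\pi)$ (and a factor $\ii$ absorbing $\theta=\tilde\alpha/(2\ii)$), the entries of $S$ satisfy exactly the equations and boundary condition $\tilde F_1=-\tau\tilde F_0$ of $\BV_2(\Omega,\tilde\alpha)$. Concretely, $\langle\psi\psi\rangle\leftrightarrow\tilde F_0$, $\langle\psi\psi^*\rangle\leftrightarrow\tilde F_1$, and the conjugates give the remaining entries.
\item Invoke the assumed uniqueness for $\BV_2$ to conclude $\tilde F_{s_i+s_j}^{(s_j)}(z_i,z_j)=\lambda^{2}\cdot(\text{corresponding two-point function})$ up to the $\lambda$ normalisation, matched via the diagonal singularity $\ii/(4\pi(w-z))$ from Proposition~\ref{lem:F0F1}.
\item Use Wick's theorem for Grassmann Gaussians: the $n$-point Majorana correlation is a Pfaffian of two-point functions. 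Since the diagonal blocks vanish, it reduces to the determinant $\det(\langle\psi^{s_i}(z_i),\psi^{s_j}(z_j)\rangle)$ with zero diagonal. Multiplying by $\lambda^n$ gives \eqref{eq:fermionic}.
\end{itemize}

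\textbf{Disc case.} When $\Omega=\DD$ and $\theta$ is real and smooth, I would apply the Coleman correspondence of \cite{PVW} (Theorem~\ref{thm:PVWintro} and its precise form, extended to non-compactly supported $\rho$ by Corollary~\ref{Cor:PVW_notcompact}). It expresses $\langle\prod\partial^{(s_j)}\phi\rangle_{\mathrm{SG}(\rho|\DD)}$ as the same fermionic determinant, with mass $\rho$ related to $\theta$ by $\rho=-4\theta/\pi$. This requires checking that PVW's boundary condition and normalisations of the free fermion propagator coincide with $\BV_3$.

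\textbf{Main obstacle.} The hard part is the bookkeeping in the middle step: matching the boundary condition $\tilde F_1=-\tau\tilde F_0$ and the constants ($\lambda$, the factor $2\ii$ in $\theta$, and $\rho=-4\theta/\pi$) exactly with the fermionic propagator and with PVW's conventions. I would first do this in the massless case $\alpha=0$, where everything is explicit (Remark~\ref{rem:GFF}), to fix the constants, and then use uniqueness to propagate the result to general $\tilde\alpha$.
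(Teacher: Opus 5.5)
Your plan follows the paper's architecture: existence from Corollary~\ref{cor:BV:alpha:conformal} and Lemma~\ref{lem:detgaugechange}, an explicit dictionary between $\BV_2(\Omega,\tilde\alpha)$ and $\BV_3(\Omega,\theta)$, uniqueness, and then Theorem~\ref{thm:PVW} with Lemma~\ref{Cor:PVW_notcompact} in the disc. However, the dictionary is the only step with real content, and you state it wrongly.

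\textbf{The dictionary needs a conjugation.} $\BV_2$ prescribes $\partial_w\tilde F_1$, while $\BV_3$ prescribes $\bar\partial_w H_1$, so the second component must be conjugated. The paper takes
\[
H_0=-8\pi\ii\,\tilde F_0,\qquad H_1=-8\pi\ii\,\overline{\tilde F_1}.
\]
The constant is forced by matching the Dirac terms $\tfrac{\ii}{2}\delta$ and $4\pi\delta$. A direct check then gives $\bar\partial_w H_0=-\tfrac{\tilde\alpha}{2}\overline{H_1}+4\pi\delta$ and $\bar\partial_w H_1=-\tfrac{\tilde\alpha}{2}\overline{H_0}$. On $\partial\Omega$ it gives $\tilde F_1=-\tau\tilde F_0\iff\overline{H_0}=\tau H_1$. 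Hence $\theta=\tilde\alpha/(2\ii)$, and uniqueness yields $(H_0,H_1)=(\langle\psi\psi\rangle_\theta,\langle\psi\psi^*\rangle_\theta)$. Here these are the correlators of \cite{PVW}, or by definition the solution of $\BV_3$; no formal Grassmann Gaussian enters. With your pairing $\langle\psi\psi^*\rangle\leftrightarrow\tilde F_1$ the bulk equations do not match, and the mixed entries of the determinant would come out with $s_i$ and $s_j$ interchanged.

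\textbf{The constants.} The correct entrywise relation is
\[
\tilde F^{(s_j)}_{s_i+s_j}(z_i,z_j)=\frac{(-1)^{s_i}\ii}{8\pi}\,\langle\psi^{s_i}(z_i)\psi^{s_j}(z_j)\rangle,
\]
not a multiple $\lambda^2$. What this step actually yields is
\[
v^{(\mathbf{s})}=\Big(\frac{\ii}{8\pi}\Big)^n\prod_{i=1}^n(-1)^{s_i}\det\big(\langle\psi^{s_i}(z_i)\psi^{s_j}(z_j)\rangle\big)_{i\neq j},
\]
which is also what the paper's own computation \eqref{eq:psiSG2} produces. The constant $\lambda=-1/(4\sqrt\pi)$ appears only after inserting the factors $2\ii^{(s_j)}\sqrt\pi$ from \eqref{eq:psiSG1}, i.e.\ in the sine-Gordon form of the last bullet. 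So you cannot place $\lambda$ at the two-point level. Calibrating via the massless case is also unnecessary, since the constant is fixed locally by the Dirac term independently of $\tilde\alpha$.

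\textbf{The Wick step.} For the second bullet no Wick theorem is needed, because its right-hand side is by definition a determinant of two-point functions. In the disc, the determinant arises as $\mathrm{Pf}^2$ because $\partial^{(s)}\phi$ corresponds to $\psi^{s}\tilde\psi^{s}$ for two independent copies in Theorem~\ref{thm:PVW}. It is not because diagonal blocks vanish.
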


\paragraph{Step 3.}
The above result Theorem~\ref{thm:fermionic} only applies under some assumption on $\Omega$ and $\alpha$ which may look hard to check in practice.
We provide an explicit criterion for the uniqueness of the boundary value problem, and explain how this can be applied in practice.
Following a similar argument as that found in \cite{BauerschmidtMasonWebb}, we observe that the solutions to the boundary value problem $\BV_2(\Omega, \tilde \alpha)$ (or equivalently $\BV_1(\Omega, \alpha)$ by Lemma~\ref{lem:detgaugechange}) are unique.
\begin{lemma}\label{lem:unicity}
    Let $\Omega$ be an arbitrary (not necessarily bounded) simply connected domain, and $\tilde\alpha$ be a $C^1$ vector field on $\bar \Omega$.
    Assume that $\Im(\tilde\alpha) \leq 0$ and that the support of $\Im(\alpha)$ is not empty.
    Then if $\BV_2(\Omega, \tilde\alpha)$ has a solution, it is unique.
\end{lemma}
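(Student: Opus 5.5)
By linearity, uniqueness reduces to a homogeneous statement. If $(F_0,F_1)$ and $(F_0',F_1')$ both solve $\BV_2(\Omega,\tilde\alpha)$, fix $z\in\Omega$ and set $f=F_0(\cdot,z)-F_0'(\cdot,z)$ and $h=F_1(\cdot,z)-F_1'(\cdot,z)$. The Dirac masses cancel, so on all of $\Omega$ we have
$$\bar\partial f=\tfrac{\tilde\alpha}{2}h,\qquad \partial h=\tfrac{\overline{\tilde\alpha}}{2}f,\qquad h=-\tau f \text{ on }\partial\Omega.$$
The singularities at $w=z$ are both of the form $\frac{\ii}{4\pi(w-z)}+O(\log|w-z|)$, so they cancel in the difference, leaving a locally integrable and in fact removable singularity. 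I will show $f\equiv h\equiv 0$.

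The plan is an energy, or integration-by-parts, identity in the spirit of \cite{BauerschmidtMasonWebb}. Consider
$$I=\int_\Omega\bigl(\bar f\,\bar\partial f+h\,\overline{\partial h}\bigr)\,\mathrm{d}A.$$
Substituting the equations gives
$$I=\int_\Omega\Bigl(\tfrac{\tilde\alpha}{2}\bar f h+\tfrac{\tilde\alpha}{2}h\bar f\Bigr)\mathrm{d}A=\int_\Omega \tilde\alpha\,\bar f h\,\mathrm{d}A.$$
That alone does not have a sign, so I will instead use the combination that produces $|f|^2$ and $|h|^2$. Take
$$J=\int_\Omega\bigl(\bar f\,\bar\partial f\,\text{-type}\bigr):\qquad \partial_{\bar w}(|f|^2)=\bar f\,\bar\partial f+f\,\overline{\partial f},$$
and compute $\int_\Omega \bar\partial(f\bar h)\,\mathrm{d}A$ and $\int_\Omega \partial(\bar f h)\,\mathrm{d}A$. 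The first equals $\int (\bar\partial f)\bar h+f\,\overline{\partial h}=\int \tfrac{\tilde\alpha}{2}|h|^2+\tfrac{\tilde\alpha}{2}|f|^2$. The second is its complex conjugate. Both are boundary terms by Green/Stokes: $\int_\Omega\bar\partial G\,\mathrm{d}A=\frac{1}{2\ii}\oint_{\partial\Omega}G\,\mathrm{d}w$. Hence
$$\int_\Omega\tilde\alpha\,(|f|^2+|h|^2)\,\mathrm{d}A=\frac{1}{\ii}\oint_{\partial\Omega}f\bar h\,\mathrm{d}w.$$

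On $\partial\Omega$, using $\bar h=-\bar\tau\bar f$ and $\mathrm{d}w=\tau\,|\mathrm{d}w|$ gives
$$f\bar h\,\mathrm{d}w=-|f|^2|\tau|^2|\mathrm{d}w|=-|f|^2|\mathrm{d}w|,$$
so the right-hand side equals $\ii\oint|f|^2|\mathrm{d}w|$, which is purely imaginary. Taking imaginary parts yields
$$\int_\Omega \Im(\tilde\alpha)\,(|f|^2+|h|^2)\,\mathrm{d}A=\oint_{\partial\Omega}|f|^2\,|\mathrm{d}w|\ \ge 0.$$
Since $\Im\tilde\alpha\le 0$, both sides must vanish. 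Therefore $f=h=0$ on the open set $U=\{\Im\tilde\alpha<0\}$, which is nonempty by the support hypothesis (I read the hypothesis as referring to $\tilde\alpha$), and $f=0$ on $\partial\Omega$.

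To propagate to all of $\Omega$, use unique continuation for the first-order elliptic system $\bar\partial f=\frac{\tilde\alpha}{2}h$, $\partial h=\frac{\overline{\tilde\alpha}}{2}f$. The pair $(f,\bar h)$ satisfies $\bar\partial(f,\bar h)=A\,(\bar h,h)$-type equations with bounded coefficients, i.e. $|\bar\partial F|\le C|F|$ for $F=(f,\bar h)$. By the similarity principle (Carleman–Bers–Vekua), such $F$ is a holomorphic vector times a continuous invertible factor. Vanishing on an open set of the connected domain $\Omega$ then forces $F\equiv 0$.

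The main obstacle is justifying the integration by parts. The boundary may be unbounded or only as regular as the hypotheses allow, and we must control the behaviour near $w=z$ and on $\partial\Omega$. I would first exhaust $\Omega$ by smooth subdomains $\Omega_n$ (or $\Omega\cap B(0,R)$ with $R\to\infty$, assuming solutions are required to decay at infinity). Then I would check that the boundary term converges to $\oint_{\partial\Omega}$ using the continuity of solutions up to the boundary that is implicit in the boundary condition, and excise a small disc around $z$, where the contributions of $f$ and $h$ are $O(\log)$ and the circle integral is $O(\eps\log^2\eps)\to0$.
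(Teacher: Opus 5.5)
Your proof is correct and is essentially the paper's argument: the same Green's-theorem identity for $\bar\partial(f\bar h)$ (the paper uses the conjugate quantity $\partial(u_0\bar u_1)$ in the Dirac-operator variables), whose boundary term has a sign that forces $f=h=0$ wherever $\Im\tilde\alpha<0$. Your propagation step via unique continuation (the Carleman--Bers--Vekua similarity principle) is more careful than the paper's, which claims the solutions are holomorphic or antiholomorphic off the support of $\Im\tilde\alpha$; that claim holds only where $\tilde\alpha$ itself vanishes.
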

This lemma will be proved in Section~\ref{SS:uniqueness:BV}.
This criterion may appear arbitrary, and we believe that uniqueness in the boundary value problem holds under more general conditions.
However, this will be enough to check the assumption of Theorem~\ref{thm:fermionic} in many cases in combination with Proposition~\ref{thm:covariance}, Lemma~\ref{lem:detgaugechange}, and the fact that when $\Omega$ is the half-plane, $\tilde \alpha = \alpha$.
In particular, we can prove Theorem~\ref{T:SG}.

\begin{proof}[Proof of Theorem~\ref{T:SG}]
    Let $\Omega = \DD$, $T: \Omega \to \HH$ with inverse $g$ and $V_\HH, V = V_\HH \circ g, \alpha_\HH = \nabla V_\HH, \alpha = \nabla V$ be as in the statement of the theorem.
    Note that 
    \begin{equation}\label{eq:alpha:disk}
        \alpha(z) = \nabla V_\HH \circ g = \overline{g'(z)} V(g(z)).
    \end{equation}
    We check the assumptions of Theorem~\ref{thm:fermionic}.
    First of all, $\alpha$ derives from the log-convex potential $V$.

    In the half-plane, uniqueness holds for $\BV_2(\HH,\tilde\alpha)$ by Lemma~\ref{lem:unicity} since $\tilde\alpha_\HH = \alpha_\HH$ has non-positive imaginary part by our assumptions on $V_\HH$.
    By Lemma \ref{lem:detgaugechange}, uniqueness also holds for $\BV_1(\HH, \alpha_\HH)$ since $\tilde \alpha_\HH = \alpha_\HH$.
    By Proposition~\ref{thm:covariance}, uniqueness also holds for $\BV_1(\Omega, \alpha)$.

    Finally, we check that $\tilde\alpha$ is imaginary-valued. By definition of $\tilde \alpha$ and Equation~\eqref{eq:alpha:disk},
    $$
        \tilde\alpha(z) 
        = e^{\ii \arg g'(z)} \alpha(z)
        = e^{\ii \arg g'(z)} \overline{g'(z)} \alpha_\HH(g(z))
        = |g'(z)|\alpha_\HH(g(z)),
    $$
    where $\alpha_\HH(g(z))$ is pure imaginary. This concludes the proof under the assumption that $\alpha$ satisfies \eqref{eq:Vcond}. 

    Let us briefly explain what happens under the assumptions of Remark \ref{R:alternate_alpha} instead, i.e., when $V_\DD (z) = f ( \Im (g(z))$, where $f$ is as in Theorem \ref{T:SG}, and $\alpha(z) = \nabla V_\DD(z), z \in \DD$. While $\alpha$ derives from a potential by definition, there are several that we need to check:
    \begin{itemize}
        \item $\tilde \alpha$ is purely imaginary, with negative imaginary part. 
        
        \item the potential $V_\DD$ is log-convex. 
    \end{itemize}
    In fact these two aspects are closely related to one another, and we briefly explain how to verify this. We start with the verification of the first point. Let $\tilde f (w) = f ( \Im (w)), w \in \HH$. Then $V_\DD(z) = \tilde f \circ g$. Thus another expression for $\alpha$ is $\alpha = 2 \bar \partial V_\DD$. Applying the chain rule for Wirtinger derivatives, we find, letting $w = g(z)$:
    \begin{align*}
        \bar \partial V_\DD(z) & = \partial \tilde f (w) \cdot  \bar \partial g (z) + \bar \partial \tilde f (w)  \cdot\bar \partial \bar g(z) \\
        & = 2 \nabla \tilde f (w) \overline{ g'(z)}
    \end{align*}
(since $g$ is holomorphic, the first term in the right hand side of the first equality is zero).   Now, $\tilde f$ depends only on the imaginary part of $w$ so its gradient is purely imaginary, in fact given by $\ii f'(\Im(w))$. We deduce that 
\begin{equation}\label{eq:alphaVD}
\alpha (z) = \ii f'(\Im (w)) \overline{ g'(z)}.
\end{equation}
Therefore, 
$$
\tilde \alpha(z) = e^{ \ii \arg g'(z)} \alpha(z) = \ii |g'(z)| f' ( \Im (w)).
$$
In particular, $\tilde \alpha$ is purely imaginary, with negative imaginary part, as desired in the first point. 

For the second part, we need to check if $\Delta V_\DD + |\nabla V_\DD|^2 \ge 0$. We already computed $\nabla V_\DD$ so let us compute 
\begin{align*}
\Delta V_\DD &= \tfrac14 \partial \bar \partial V_\DD\\
& = \tfrac12 \partial \Big( \ii f'(\Im (g(z))) \overline{g'(z)}\Big)
\end{align*}    
where we used \eqref{eq:alphaVD}. Since $\bar g'$ is antiholomorphic, we have 
$$
 \partial \Big( \ii f'(\Im (g(z))) \overline{g'(z)}\Big) = \ii \overline{g'(z)} \partial \Big( f'(\Im(g(z))) \Big)
 $$
 The above holmorphic derivative can be computed exactly in the same manner as above; in fact, since $f'$ is also a real valued function we can simply take the conjugate in \eqref{eq:alphaVD} and replace $f$ by $f'$. We obtain:
 \begin{equation}
\Delta V_\DD (z) = |g'(z)|^2 f''( \Im g(z))     \end{equation}
so that altogether
$$
\Delta V_\DD (z) + |\nabla V_\DD(z)|^2 = |g'(z)|^2 \Big( f'' + (f')^2 \Big) (\Im g(z)) \ge 0
$$
since we assumed that $f$ is log-convex (i.e., $(e^f)'' \ge 0$). This concludes the proof of Theorem \ref{T:SG} also in this case. 
\end{proof}

\subsection{Change of coordinate formula}

\label{SS:covariance}

\begin{proof}[Proof of Proposition \ref{thm:covariance}.]
    Recall that $T: \Omega \to T\Omega$ is a conformal isomorphism and that we define the functions $TF_0, TF_1$ in $T\Omega$ by 
    $$
    TF_s (w_1, w_2) = \psi'(w_2) F_s(\psi(w_1), \psi(w_2)); \quad s\in \{0,1\}. 
    $$
    Let us fix $s_1, \ldots, s_k \in \{0,1\}$. For $1\le i \le k$, let $\tilde \gamma_i = T(\gamma_i)$ denote the paths in $T\Omega$ obtained by mapping $\gamma_i $ through $T$. Then by change of variable $w_i = \psi(z_i)$, one immediately has
    $$
\int_{\path_1}\cdots \int_{\path_k}  \det_{i \neq j}\Big[F_{s_i+s_j}^{(s_j)}(z_i,z_j)\Big] \prod_{i=1}^k \mathrm{d}z_i^{(s_i)} = 
\int_{\tilde \gamma_1}\cdots \int_{\tilde \gamma_k}  \det_{i \neq j}\Big[F_{s_i+s_j}^{(s_j)}(\psi(w_i),\psi(w_j))\Big]  \prod_{i=1}^k \psi'(w_i)^{(s_i)}\mathrm{d}w_i^{(s_i)}
    $$
    Using the multilinearity of the determinant (columnwise) we can rewrite this multiple integral as 
    $$
    \int_{\tilde \gamma_1}\cdots \int_{\tilde \gamma_k}  \det_{i \neq j}\Big[\psi'(w_j)^{(s_j)}F_{s_i+s_j}^{(s_j)}(\psi(w_i),\psi(w_j))\Big]  \prod_{i=1}^k \mathrm{d}w_i^{(s_i)} = \int_{\tilde \gamma_1}\cdots \int_{\tilde \gamma_k}  \det_{i \neq j}\Big[TF_{s_i+s_j}^{(s_j)}(w_i,w_j)\Big]  \prod_{i=1}^k \mathrm{d}w_i^{(s_i)}
    $$
    When summing over the signs $s_1, \ldots, s_k \in \{0,1\}$ we get a function of $w_1, \ldots, w_k$ which has the right formal expression. 
    
    It remains to verify that $TF_0, TF_1$ solves $\BV_1(T\Omega,T\alpha)$.
    It is straightforward to check that $TF_s$ has the correct diagonal behavior, that is the second equation of $\BV_1(T\Omega,T\alpha)$: it follows from the diagonal behavior of $F_s$, the definition of $TF_s$ and the analycity of $T$.
    Now, we verify that $TF_0, TF_1$ satisfy the first equation of $\BV_1(T\Omega,T\alpha)$. Consider for instance $TF_0$. Then 
    \begin{align*}
\bar\partial_{w} TF_0 (w, z) & = \bar\partial_{w} [ \psi'(z) F_0 ( \psi(w), \psi(z)] \\
& = \psi'(z) \bar\psi'(w) \bar\partial_{w} F_0( \psi(w), \psi(z))\\
& = \psi'(z) \bar\psi'(w) \left[\frac{\alpha(w)}{2} F_1 ( \psi(w), \psi(z)) + \frac{\ii}{2}\delta_{\psi(w)}(\psi(z)) \right]\\
& = \psi'(w) \frac{\bar \alpha (w)}{2} TF_1(w, z) + \frac{\ii}{2}|\psi'(w)|^2\delta_{\psi(w)}(\psi(z))\\
& = \frac{T\alpha(w)}{2} TF_1( w, z) + \frac{\ii}{2}|\psi'(w)|^2\delta_{\psi(w)}(\psi(z))\\
& = \frac{T\alpha(w)}{2} TF_1( w, z) + \frac{\ii}{2}\delta_{w}(z),
    \end{align*}
    where at the last line we use that $|\psi'(w)|^2\delta_{\psi(w)}(\psi(z)) = \delta_{w}(z)$ due to the change of variable formula, since these equations are understood in the weak sense.
    The other equation is checked similarly.
    
    To conclude the proof of Proposition \ref{thm:covariance}, we only need to check the boundary conditions, that is the third equation of $\BV_1(T\Omega,T\alpha)$. We simply write, for $w \in \partial \Omega$,
    $$
        TF_1(w,z) 
        = \psi'(z) F_1(\psi(w),\psi(z))
        = -\psi'(z) F_0(\psi(w),\psi(z))
        = - TF_0(w,z).\qedhere
    $$
\end{proof}



\subsection{Gauge change in boundary value problem (Lemma \ref{lem:detgaugechange})}

\label{SS:fermionic}



\begin{proof}[Proof of Lemma~\ref{lem:detgaugechange}]
First of all, it is straightforward to check that the gauge change does not change the diagonal behavior, thus $(F_0, F_1)$ satisfies the second equation of $\BV_1(\Omega, \alpha)$ if and only if $(\tilde F_0, \tilde F_1)$ satisfies the second equation of $\BV_2(\Omega, \tilde\alpha)$.

We now check that $(F_0, F_1)$ satisfies  \eqref{eq:F0F1_bulk} in $\Omega$ if and only if $(\tilde F_0, \tilde F_1)$ satisfies the same equation in $\Omega$ with $\alpha$ replaced by $\tilde \alpha$. For instance, since $g'$ is analytic (so that $\bar \partial_w g'(w) = 0$),
\begin{align*}
    \bar \partial_w \tilde F_0(w,z) & = \bar \partial_w \left\{\sqrt{\frac{ g'(w)}{g'(z)}} F_0(w,z) \right\}\\
    & = \sqrt{\frac{ g'(w)}{g'(z)}} \bar \partial_w F_0(w,z) \\
    & = \sqrt{\frac{ g'(w)}{g'(z)}} \left( \frac{\alpha(w)}{2} F_1( w,z) + \frac{\ii}{2} \delta_w(z) \right)\\
    & = \sqrt{\frac{g'(w)}{\bar g'(w)}} \frac{\alpha(w)}{2} \tilde F_1(w,z) + \frac{\ii}{2} \delta_w(z) \\
    & = \frac{\tilde \alpha(w)}{2} \tilde F_1(w,z) + \frac{\ii}{2} \delta_w(z) .
\end{align*}
In the last line, we used the fact that 
$$
\frac{g'(w)}{\bar g'(w)} = e^{  2\ii \arg g'(w)}
$$
as well as the definition of $\tilde \alpha(w)$ in \eqref{eq:alphatilde}. The other three equations can be checked the same way.

To check the boundary conditions, we simply observe that by definition of $\tilde F_1, \tilde F_0$, and using the boundary conditions satisfied by $F_1, F_0$ (see again Proposition \ref{lem:F0F1}), we have 
    for $w \in \partial\Omega, z \in \Omega$:
    \begin{align*}
        \tilde F_1(w,z) & = -\sqrt{\frac{\bar g'(w)}{g'(z)}} \tilde F_0 (w, z)
        = -\tau(w) \tilde F_0(w,z),
    \end{align*}
    using the fact that $g^{-1}: \HH \to \Omega$ gives a parameterization of the boundary $\partial \Omega$  (thus $1/ g'(w)$ is proportional to $\tau(w)$).
    (Although this is not needed in the following, one can also observe that in the other variable $z$, the boundary condition reads 
    \begin{equation}\label{eq:BV2z}
    \tilde F_1(w,z) = - \bar\tau (z) \overline{\tilde F}_0(w,z) ; \ z \in \partial \Omega.
    \end{equation}
Thus $(F_0, F_1)$ satisfies BV$_1(\Omega, \alpha)$ if and only if $(\tilde F_0, \tilde F_1)$ satisfies BV$_2(\Omega, \tilde \alpha)$. 

For the next item, the starting point is the basic observation that for a matrix $K$ and a nonzero complex valued function $f$, \begin{equation}\label{eq:basicdet}
        \det [K(x_i, x_j)]_{1\le i,j\le n} = \det \left[ \frac{c(x_i)}{c(x_j)} K(x_i, x_j)\right]_{1\le i,j\le n}.
    \end{equation}
    (This follows immediately from the multilinearity of the determinant which implies that in the right hand side we pick up a term $c(x_i)$ for each line and another ter $1/c(x_j)$ for each column, so the determinant is unchanged overall).

    Here the transformation we apply to $F$ is not obviously of this form. However, note that if $s \in \{0,1\}$ then we can write compactly 
    \begin{align}
    \tilde F_s (w,z) = \sqrt{\frac{g'(w)^{(s)}}{g'(z)}} F_s(w,z).\label{FtildeinF}
    \end{align}
    Thus, if
    $s_i, s_j$ are given then 
    \begin{align*}
        \tilde F^{(s_j)}_{s_i + s_j} (z_i, z_j) & = \left\{\sqrt{\frac{g'(z_i)^{(s_i+s_j)}}{g'(z_j)}}\right\}^{(s_j)} F_{s_i + s_j}^{(s_j)} ( z_i, z_j)\\
        & = \sqrt{\frac{g'(z_i)^{(s_i)}}{g'(z_j)^{(s_j)}}} F_{s_i +s_j}^{(s_j)} (z_i, z_j)
    \end{align*}
    (consider separately the case $s_j = 0$ and $s_j =1$ to check the last identity). We can thus apply \eqref{eq:basicdet} to conclude with the proof of Lemma \ref{lem:detgaugechange}.
        \end{proof}

\subsection{Uniqueness of boundary value problem (Lemma~\ref{lem:unicity})}\label{SS:uniqueness:BV}
\begin{proof}
Assume that $\BV_2(\Omega, \tilde \alpha)$ has two solutions $(\tilde F_0, \tilde F_1)$ and $(\tilde G_0, \tilde G_1)$.
Let $z \in \Omega$ be fixed, and denote $u_i(w) = \tilde F_i(w,z) - \tilde G_i(w,z)$ for $i = 0,1$.
We prove that $u_0 = u_1 = 0$. 
First note that $u_0,u_1$ satisfy $\BV_2(\Omega, \tilde\alpha)$ without the Dirac term, thus is smooth by elliptic regularity theory.
By definition of $\BV_2(\Omega, \tilde \alpha)$, $u_0(w)=-\tau u_1(w)$ for $w \in \partial\Omega$ and, for $w \in \Omega$,
\begin{align}
\begin{pmatrix}  \alpha & 2\bar\partial \\ 2\partial & \overline{ \alpha}\end{pmatrix}\begin{pmatrix}u_0\\ u_1
\end{pmatrix} =0.
\end{align}
First note that
\begin{align*}
    4\Im(\partial (u_0\bar u_1))&= \frac{2}{\ii}\Big(\partial(u_0\bar u_1) - \bar\partial(\bar u_0u_1)\Big).
 \end{align*}
Furthermore, 
\begin{align*}
 2\partial ( u_0 \bar u_1) &= 2\bar u_1 \partial u_0 + 2u_0 \partial \bar u_1 \\   
 &  = - \bar \alpha u_1 \bar u_1  - \bar \alpha \bar u_0 u_0 \\
 & = - \bar \alpha (|u_0|^2+ |u_1|^2).
\end{align*}
 Likewise, 
 \begin{align*}
     \bar\partial(\bar u_0u_1) & = - \alpha ( |u_0|^2+ |u_1|^2),
 \end{align*}
 so that 
\begin{align}
    4\Im(\partial (u_0\bar u_1))& = \frac1{\ii} \Big( - \bar \alpha + \alpha\Big) ( |u_0|^2+ |u_1|^2)\\
    & = 2 \Im (\alpha) ( |u_0|^2+ |u_1|^2).
\end{align}
Now, by Green's theorem, if $f$ is a $C^1$ function on $\bar \Omega$,  
 $\int_\Omega \partial f(w) \mathrm{d}A(w)=\frac{1}{2\ii}\int_{\partial \Omega}f(w)\mathrm{d}\bar w$. 
 We therefore have:
\begin{align}
     \int_{\Omega}\Im(\alpha)(|u_0|^2+|u_1|^2) \mathrm{d}A(w)
     &= \Im \left( 2 \int_\Omega \partial (u_0 \bar u_1) \mathrm{d}A(w) \right) \nonumber \\
     & = \Im \left( \frac1{\ii} \int_{\partial \Omega} u_0(w) \bar u_1 (w) \mathrm{d}\bar w\right )\nonumber\\
     &=\Im \left( \frac1{\ii} \int_{\partial \Omega} - \tau(w) u_1(w) \bar u_1(w) \mathrm{d} \bar w \right) \nonumber\\
     & = \Im \left( \ii \int_0^{1} \tau (\gamma(s) ) |u_1(\gamma(s))|^2  \overline{ \gamma'(s)} \mathrm{d}s \right)   
     \ge 0\label{eq:greens:theorem}
 \end{align}
 where $\gamma(s), s\in [0,1]$ is a parametrisation (in the anticlockwise direction) of $\partial \Omega$, and  in the last line, we use the fact that $\gamma'(s)$ is a positive multiple of $\tau(s)$.

Since $\Im (\alpha) <0$ in its support, this shows that 
%
 $u_0=u_1=0$ in the support of $\Im(\alpha)$. 
 Outside of the support of $\Im(\alpha)$, $u_0$ and $u_1$ are holomorphic/anti-holomorphic, and thus also vanish on the complement of the support. 
 \end{proof}
 

\subsection{Identification with sine-Gordon: proof of Theorem \ref{thm:fermionic}}

\label{SS:identificationPVW}
We start with giving more details about 
the main theorem of \cite{PVW} (their Theorem (1.1)), which is a bosonisation theorem for the above observables together with further trigonometric functions. Since we only need a subset of their results, we state here for convenience the part which is needed for our purposes. Fix functions $(f_j)_{1\le j \le p}$ and $(g_{j'})_{1\le j'\le q}$ as in Theorem \ref{thm:PVWintro}.

\begin{theorem}\label{thm:PVW}
(Part of Theorem 1.1 and Proposition 6.16 in \cite{PVW}, $U = V = A = \emptyset$ in their notation). Let $\Omega = \mathbb{D}$ be the unit disc and let $\theta:\Omega \to \RR$ be a \emph{real-valued} smooth function, i.e., $\theta \in C_c^\infty(\Omega)$. Then the correlations of the derivatives of the sine-Gordon field are given by 
\begin{align}\label{eq:PVWtheorem}
&\Big\langle \prod_{j=1}^p (2\ii\sqrt{\pi}\partial \phi(f_j))\prod_{j'=1}^q (-2\ii\sqrt{\pi}\bar\partial \phi(f'_{j'}))\Big\rangle_{\mathrm{SG}(-\frac{4\theta}{\pi}|\Omega)} \nonumber \\
= & \int\Big \langle \prod_{j} \psi\tilde\psi(z_j)\prod_{j'}\psi^*\tilde\psi^*(w_{j'})\Big \rangle_\theta\prod_{j}f(z_j)\prod_{j'}f'(w_{j'})\mathrm{d}z\mathrm{d}w.
\end{align}
Here, $\psi $ and $\tilde \psi$ are independent and the right hand side may be defined by
\begin{align}\label{eq:PVW_Prop6_16}
\Big \langle \prod_{j} \psi^{s_j}(z_j)\Big \rangle_\theta: = \mathrm{Pf}\Big [\langle\psi^{s_i}(z_i)\psi^{s_j}(z_j)\rangle_\theta\Big]_{i,j}
\end{align}
where, compared to \eqref{eq:PVWtheorem}, we identify $s_j=0$ with no $^*$ and $s_j=1$ with $^*$, i.e., $\psi^{s_j} = \psi$ when $s_j =0$, and $\psi^{s_j} = \psi^*$ when $s_j=1$.
\end{theorem}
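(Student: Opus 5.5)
Since this statement is a specialisation of results in \cite{PVW} rather than a new result, I would not reprove it. Instead I would check carefully that it is exactly what their Theorem~1.1 and Proposition~6.16 give once the parameters are specialised, and that the normalisations agree with ours. Concretely, I would take \cite[Theorem~1.1]{PVW} with the sets $U=V=A=\emptyset$, so that no trigonometric (charge) insertions appear and only derivative observables $\partial\phi(f_j)$, $\bar\partial\phi(f'_{j'})$ remain. I would then read off the Coleman correspondence: each bosonic derivative insertion $2\ii\sqrt{\pi}\,\partial\phi$, resp. $-2\ii\sqrt{\pi}\,\bar\partial\phi$, corresponds to the fermionic bilinear $\psi\tilde\psi$, resp. $\psi^*\tilde\psi^*$. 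The coupling $\rho=-4\theta/\pi$ in $\mathrm{SG}(\rho|\Omega)$ corresponds to mass $\theta$ in the free fermion measure $\langle\cdot\rangle_\theta$. The point to verify is that the constants $2\ii\sqrt{\pi}$ and $-4/\pi$ match PVW's conventions: their Wick ordering including the constant $c_\alpha$, the heat-kernel regularisation $C_\eps$, and their normalisation $\mathbb{E}^{\mathrm{GFF}\#}[\phi(x)\phi(y)]\sim-(2\pi)^{-1}\log|x-y|$. I would also verify that the $\eps\to0$ limit defining the left-hand side is the one in Theorem~\ref{thm:PVWintro}, using the disjoint-support assumption on the test functions.

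The second step is the Pfaffian formula \eqref{eq:PVW_Prop6_16}. Here I would invoke \cite[Proposition~6.16]{PVW}, which expresses the correlations of the massive Majorana field $\psi$ in $\mathbb{D}$ as a Pfaffian of its two-point functions, with kernel given by the inverse of the massive Dirac operator under PVW's boundary conditions. As a sanity check I would note that this is simply Wick's theorem for a Gaussian Grassmann integral, since the quadratic action in \eqref{eq:fermion_density} is Gaussian. Because $\psi$ and $\tilde\psi$ are independent copies, the mixed correlation on the right of \eqref{eq:PVWtheorem} factorises as the product of two identical Pfaffians. This product is the square of a Pfaffian, i.e.\ a determinant, which is the form later used to match \eqref{eq:fermionic}.

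The step I expect to be the main obstacle is the normalisation bookkeeping rather than any analysis. This includes the signs and factors of $\ii$ in front of $\partial\phi$ and $\bar\partial\phi$, and the identification of $\psi^{s}$ with PVW's $\psi$, $\psi^*$. One must also confirm that their mass convention (real $\theta$, compactly supported) is the one stated, so that $\theta=\tilde\alpha/(2\ii)$ in Theorem~\ref{thm:fermionic} is consistent. I would first match the $n=2$ case against the explicit two-point functions, comparing the short-distance singularity $\frac{\ii}{4\pi(w-z)}$ in \eqref{eq:singularity:F0F1} with PVW's fermion propagator. This fixes every constant, and the higher-order cases then follow from the Pfaffian structure.
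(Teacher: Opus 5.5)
Your proposal takes the same route as the paper: the paper gives no independent proof, but quotes \cite[Theorem~1.1 and Proposition~6.16]{PVW} with $U=V=A=\emptyset$ and then, as you do, rewrites the $\psi\tilde\psi$ correlations as $\mathrm{Pf}^2=\det$ using that $\tilde\psi$ is an independent copy of $\psi$ (when you split $\prod_j\psi\tilde\psi(z_j)$ into two Pfaffians, keep track of the sign $(-1)^{n(n-1)/2}$ that comes from reordering anticommuting variables). The only misplaced step is your $n=2$ check against the singularity $\frac{\ii}{4\pi(w-z)}$ in \eqref{eq:singularity:F0F1}: this is a dimer-side quantity, used later (through $H_0=-8\pi\ii\tilde F_0$) in the proof of Theorem~\ref{thm:fermionic}, so it has no bearing on this statement, which is internal to \cite{PVW}; moreover, a leading short-distance singularity cannot determine the coupling $-4\theta/\pi$, so that constant has to be read off from \cite{PVW} itself.
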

(In \cite{PVW}, \eqref{eq:PVW_Prop6_16} is a property proved in Proposition 6.16 rather than a definition.) Since $\tilde\psi$ denotes an independent copy of $\psi$, note that an equivalent formulation of \eqref{eq:PVWtheorem} is: 
\begin{align}
\Big\langle \prod_j (2\ii^{(s_j)}\sqrt{\pi}\partial^{(s_j)} \phi(f_j))\Big\rangle_{SG(-\frac{4\theta}{\pi}|\Omega)}&=\int\text{Pf}^2\Big [\langle\psi^{s_i}(z_i)\psi^{s_j}(z_j)\rangle_\theta\Big]_{i,j}\prod_jf_j(z_j)\mathrm{d}z_j^{(s_j)}\nonumber \\&=\int\det\Big [\langle\psi^{s_i}(z_i)\psi^{s_j}(z_j)\rangle_\alpha\Big]_{i,j}\prod_jf_j(z_j)\mathrm{d}z_j^{(s_j)}\label{eq:psiSG1}
\end{align}
since fermions anticommute. 

The two-point Ising correlators $\langle\psi(z_i)\psi(z_j)\rangle, \langle\psi(z_i)\psi^*(z_j)\rangle$ are defined in Definition 6.4 in \cite{PVW} and can be conveniently studied in terms of a real-fermion spinor, defined in (6.1.3) as 
$$
f^{[\eta]}(w,z)=\frac{1}{2}\Big(\bar\eta\langle \psi(w)\psi(z)\rangle_\theta+\eta\langle\psi(w)\psi^*(z)\rangle_\theta\Big)$$ 
(they denote this $f^{[\eta]}$ simply by $f$). $f^{[\eta]}$ is then a massive holomorphic function, in their sense (i.e., satisfying (6.0.1)), which reads:
\begin{align}\label{eq:BVPf_eta}
\bar\partial_w f^{[\eta]}(w,z)=-\ii\theta(w)\overline{f^{[\eta]}(w,z)}
\end{align}
and satisfies $f^{[\eta]}(w,z)=\bar\eta(w-z)^{-1}+o(|w-z|^{-1})$ (note that the antiholomorphic correlation thus does not contribute to the singularity near $z = w$) as well as the boundary condition 
\begin{align}
\overline{f^{[\eta]}}(w,z)=\tau(w) f^{[\eta]}(w,z), w \in \partial \Omega
\end{align}
(see Definition 6.4 in \cite{PVW}; note that there are no punctures in our case, i.e., the set $\{a_1,...,a_n\}$ is empty). By taking $\eta=1$ and $\eta=\ii$, this implies the holomorphic and antiholomorphic correlations satisfy the bulk problem, for $z \neq w \in \Omega$,
\begin{align}\label{eq:bulk:problem}
\bar\partial_w \langle \psi(w)\psi(z)\rangle_\theta=-\ii\theta(w)\overline{\langle \psi(w)\psi^*(z)\rangle_\theta} \quad ; \quad
\bar\partial_w \langle \psi(w)\psi^*(z)\rangle_\theta=-\ii\theta(w)\overline{\langle \psi(w)\psi(z)\rangle_\theta}, 
\end{align}
the singularity
\begin{equation}\label{eq:singularity:PVW}
\langle \psi(w)\psi(z)\rangle_\theta = 2(w-z)^{-1} + o(|w-z|^{-1})
\quad ; \quad 
\langle \psi(w)\psi^*(z)\rangle_\theta = o(|w-z|^{-1}).
\end{equation}
and the boundary condition for $w\in \partial \Omega$
\begin{equation}\label{eq:bc:PVW}
\overline{\langle\psi(w)\psi(z)\rangle_\theta}=\tau(w){\langle \psi(w)\psi^*(z)\rangle_\theta}.
\end{equation}

Using the residue theorem, one sees that the singularity~\eqref{eq:singularity:PVW} contributes a Dirac term $4\pi\delta_w(z)$ to the bulk equation (this is the exact same proof as in Equation~\eqref{eq:proof:dirac:term}), thus the pair $(H_0(w,z), H_1(w,z)) = (\langle \psi(w)\psi(z)\rangle_\theta,  \langle \psi(w)\psi(z)\rangle_\theta)$ satisfy the boundary value problem which we denote by $\BV_3(\Omega, \theta)$
\begin{equation}\label{eq:BV3:alpha}
\begin{cases} 
\bar\partial_w H_0(w,z)=-\ii\theta(w)\overline{H_1(w,z)}+4\pi\delta_w(z)
\quad ; \quad
\bar\partial_w H_1(w,z)=-\ii\theta(w)\overline{H_0(w,z)} & \text{ on }\Omega^2\\
\overline{H_0(w,z)}=\tau(w)H_1(w,z) & \text{ if } w \in \partial \Omega.
\end{cases}
\end{equation}

All in all, we obtain that
\begin{equation}\label{eq:BV:PVW}
    \chi^{(\mathbf{s})}_{\mathrm{SG}(\rho)}(z_1, \ldots, z_n) 
    = \Big\langle \prod_{j=1}^n \partial^{(s_j)}_{z_j} \phi(z_j) \Big\rangle_{\mathrm{SG}(\rho|\Omega)}
    =  (2\sqrt{\pi}\ii)^{-n}\prod_j (-1)^{s_j}\det\Big [\langle\psi^{s_i}(z_i)\psi^{s_j}(z_j)\rangle_\alpha\Big]
\end{equation}
where $(H_0(w,z), H_1(w,z)) = (\langle \psi(w)\psi(z)\rangle_\theta,  \langle \psi(w)\psi(z)\rangle_\theta)$ satisfy the boundary value problem $\BV_3(\Omega, \theta)$

We finally give a proof of Theorem \ref{thm:fermionic} which identifies the holomorphic and antiholomorphic derivative correlation of our limiting height function with those of the sine-Gordon model. 
\begin{proof}[Proof of Theorem~\ref{thm:fermionic}]
Recall that by Lemma~\ref{lem:detgaugechange}, the correlation kernel of the holomorphic and antiholomorphic derivatives (associated with sign $\mathbf{s} \in \{0,1\}^n$) is given by 
$$
v^{(\mathbf{s})} ( z_1, \ldots, z_n) = \det \left( \tilde F^{(s_j)}_{s_i + s_j} (z_i, z_j) \right)_{i \neq j}.
$$
Let 
\begin{align}\label{eq:F_00F_01}
\begin{cases}H_0(w,z) &= -8\pi\ii \tilde  F_0(w,z)\\
H_1 (w,z) & =-8\pi \ii \overline{\tilde{F}_1}(w,z),
\end{cases}
\end{align}

The first equation of $\BV_2(\Omega, \alpha)$ implies
\begin{align*}
   \begin{cases}
    \dzbar_w H_0(w,z) & = -8\pi\ii \dzbar_w \tilde F_0(w,z)  = - \frac{\tilde \alpha(w)}{2} \overline{H_1} (w,z) 
    + 4\pi\delta_z(w) 
    \\
    \dzbar_w F_{0,1}(w,z) & = -8\pi\ii \dzbar_w \overline{\tilde F_1}(w,z) = - \frac{\tilde \alpha(w)}{2} \overline{F_{0,0}} (w,z)
    \end{cases}
\end{align*}

This is exactly the first equation of $\BV_3(\Omega, \theta)$ if we take $\theta = \tilde \alpha/{2\ii}$. The boundary values for $(\tilde F_0, \tilde F_1)$ in $\BV_2(\Omega, \tilde\alpha)$ imply 
$$
H_1 (w,z) 
=-8\pi \ii \overline{\tilde{F}_1}(w,z)
= 8\pi\ii \bar \tau(w) \overline{\tilde{F}_0}(w,z)
= \bar \tau (w) \overline{H_0}(w,z); \ w \in \partial \Omega. 
$$

We have proved that $(\tilde F_0, \tilde F_1)$ satisfies $\BV_2(\Omega, \tilde\alpha)$ if and only if $(H_0 (w,z),H_1(w,z))$ verifies the boundary value problem $\BV_3(\Omega, \theta)$ for $\theta = \tilde \alpha/{2\ii}$ (the proof of the "if" part of the statement is exactly the same).
Thus, when $(\tilde F_0, \tilde F_1)$ satisfies $\BV_2(\Omega, \tilde\alpha)$ and the solution to $\BV_2(\Omega, \tilde\alpha)$ is unique as in the statement of Theorem~\ref{thm:fermionic}, 
\begin{equation}\label{eq:H=psi}
\Big(H_0(w,z), H_1(w,z)\Big) = \Big(\langle \psi(w)\psi(z) \rangle_{\theta},\langle \psi(w)\psi^*(z)\rangle_\theta\Big).
\end{equation}

It remains to connect the entries of the matrix describing the correlation kernel $v^{(\mathbf{s})}$ to those of the sine-Gordon model $\chi^{(\mathbf{s})}$. To this end, let $G_s(w,z) = (-1)^s\ii \tilde F_s(w,z)$. Then 
$$
 G^{(s_j)}_{s_i + s_j} = (-1)^{s_i+s_j}\ii^{(s_j)} \tilde F^{(s_j)}_{s_i + s_j}
 = (-1)^{s_i}\ii \tilde F^{(s_j)}_{s_i + s_j}
$$
so the matrix with entries given by  the left hand side differ by a factor $\ii$ and an additional factor $(-1)$ for each line where $s_i = 1$. Hence
\begin{align}
    \det\left(\tilde F^{(s_j)}_{s_i+ s_j}(z_i, z_j)\right)_{i\neq j} & = (- \ii)^n \det \left( G^{(s_j)}_{s_i + s_j}(z_i, z_j) \right)_{i\neq j} \prod_{i=1}^n (-1)^{s_i}
\end{align}
Now observe that $$
G^{(s_j)}_{s_i + s_j} (z_i,z_j) = (-8\pi)^{-1}H_{s_i+s_j}^{(s_i)}(z_i, z_j) = (-8\pi)^{-1}\langle \psi^{s_i}(z_i) , \psi^{s_j}(z_j)\rangle.
 $$
The first equation is easily verified for $s_i =0$, $s_j =0$ or $s_j = 1$ by \eqref{eq:F_00F_01}, and follows for $s_i = 1$ and $s_j = 0,1$ by conjugation. 
The second equation follows from Equation~\eqref{eq:H=psi} in the cases $s_i = 0$ and by conjugation for the cases $s_i = 1.$
Consequently,
\begin{align}
     v^{(\mathbf{s})}(z_1, \ldots, z_n) & = (\ii/8\pi)^n \prod_{i=1}^n (-1)^{s_i} \det\Big( \langle \psi^{s_i}(z_i) , \psi^{s_j}(z_j)\rangle\Big)_{i\neq j}\nonumber \\
     & = (\ii/8\pi)^n \prod_{i=1}^n (-1)^{s_i} \cdot \Big\langle \prod_{j=1}^n (2\ii^{(s_j)}\sqrt{\pi}\partial^{(s_j)} \phi(z_j))\Big\rangle_{\mathrm{SG}(-\frac{4\theta}{\pi}|\Omega)} \label{eq:psiSG2}\\
     & = \left(-4\sqrt{\pi}\right)^{-n}  \Big\langle \prod_{j=1}^n \partial^{(s_j)} \phi(z_j))\Big\rangle_{\mathrm{SG}(-\frac{4\theta}{\pi}|\Omega)}\label{eq:SGconclusion} 
\end{align}
where in \eqref{eq:psiSG2} we used the relation between the fermionic correlations and the sine-Gordon model \eqref{eq:psiSG1}. We also used the fact that the function $\theta$ does not need to be compactly supported (as mentioned in \eqref{eq:rho_notcompact}). We briefly provide a justification for this last fact below.
Altogether, this proves \eqref{eq:SGconclusion} and thus concludes the proof of Theorem \ref{T:SG}.
\end{proof}

\begin{lemma} \label{Cor:PVW_notcompact} Let $\Omega = \mathbb{D}$ be the unit disc and let $\alpha:\bar \Omega\rightarrow \mathbb{R}$ be smooth up to and including the boundary ($\alpha \in C^\infty(\bar \Omega))$. Fix signs $(s_j)_{1\le j \le n} \in \{0,1\}^n$ and some test functions $f_1, \ldots,f_n$ such that $\partial^{(s_j)} f_j$ have disjoint support. Then
    \begin{align}
\Big\langle \prod_j (2\ii^{(s_j)}\sqrt{\pi}\partial^{(s_j)} \phi(f_j))\Big\rangle_{\mathrm{SG}(-\frac{4\theta}{\pi}|\Omega)}&=\int\det\Big [\langle\psi^{s_i}(z_i)\psi^{s_j}(z_j)\rangle_\alpha\Big]_{i,j}\prod_jf_j(z_j)\mathrm{d}z
\end{align}
\end{lemma}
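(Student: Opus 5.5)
The plan is to obtain the statement as a limit of Theorem \ref{thm:PVW} along compactly supported truncations, in the spirit of \eqref{eq:rho_notcompact}. Fix a sequence of smooth cutoffs $\chi_N \in C_c^\infty(\Omega)$ with $0\le \chi_N\le 1$ and $\chi_N = 1$ on compacts $K_N\uparrow\Omega$. Set $\theta_N = \chi_N\theta$. For each $N$, Theorem \ref{thm:PVW} together with \eqref{eq:psiSG1} gives the identity with $\theta$ replaced by $\theta_N$ on both sides. The left-hand side for the non-compactly supported $\theta$ is then \emph{defined} as the limit in $N$, as in \eqref{eq:rho_notcompact}. So the whole content is to show two things: the right-hand side converges to the determinant built from the two-point functions $\langle\psi^{s_i}(z_i)\psi^{s_j}(z_j)\rangle_\theta$, and the limit is independent of the choice of cutoffs.

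\textbf{Step 1: characterise the limiting two-point functions.} For each $N$, the pair $(\langle\psi\psi\rangle_{\theta_N},\langle\psi\psi^*\rangle_{\theta_N})$ is the solution of $\BV_3(\Omega,\theta_N)$. Here $\theta$ is real, so $\tilde\alpha = 2\ii\theta$ and $\Im\tilde\alpha = 2\theta$. I would check that Lemma \ref{lem:unicity}, or its energy identity \eqref{eq:greens:theorem}, applies with the sign condition coming from $\theta$; if the sign is not available, I would argue uniqueness directly by the same Green-formula computation. The point is that $\BV_3(\Omega,\theta)$ has at most one solution, and I take its solution as the definition of $\langle\cdot\rangle_\theta$.

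\textbf{Step 2: uniform bounds and compactness.} Write $H^N = H^{0} + R^N$, where $H^0$ is the explicit massless solution in $\DD$, which has the pole $2/(w-z)$. The remainder $R^N$ satisfies a Dirac equation with source $-\ii\theta_N\overline{H^N}$, which is bounded by $\|\theta\|_\infty |H^N|$. Invert $\bar\partial$ using the Cauchy transform with the boundary condition of $\BV_3$, mirroring the resolvent argument in Lemma \ref{lem:kappa:kappa_0}. Combined with the energy identity \eqref{eq:greens:theorem}, which controls the $L^2$ norm of $R^N$ uniformly in $N$, this yields $R^N = O(\log|w-z|)$ uniformly in $N$. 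It also gives equicontinuity away from the diagonal, since the Cauchy transform of an $L^p$ function with $p>2$ is H\"older. By Arzel\`a--Ascoli, $H^N$ converges along subsequences locally uniformly off the diagonal. The limit solves $\BV_3(\Omega,\theta)$: the bulk equation holds because $\theta_N\to\theta$ locally uniformly, and the boundary condition holds because of the uniform-in-$N$ control up to $\partial\DD$ provided by the Cauchy transform estimate. By uniqueness the full sequence converges, and the limit does not depend on the cutoffs.

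\textbf{Step 3: pass to the limit in the integral.} The test functions have disjoint supports, so the off-diagonal entries of the determinant stay away from singularities. The diagonal entries vanish, or are absent, since the determinant is taken over $i\neq j$ as in \eqref{eq:fermionic}. Hence the integrand is bounded uniformly in $N$ on the compact product of supports, and dominated convergence gives convergence of the right-hand side. This shows that the limit defining the left-hand side exists and equals the stated formula.

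The main obstacle is Step 2, and specifically obtaining bounds uniform in $N$ up to the boundary, where $\theta_N$ ceases to vanish in the limit. I would first try to make the energy identity \eqref{eq:greens:theorem} give the $L^2$ control. If the sign of $\theta$ prevents that, I would fall back on a Fredholm argument: write the problem as $(I - T_{\theta})H = H^0$ with $T_\theta$ compact, and use continuity of $\theta\mapsto(I-T_\theta)^{-1}$ in operator norm.
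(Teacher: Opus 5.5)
Your skeleton matches the paper's proof: truncate $\theta$, show the truncated two-point functions converge off the diagonal with bounds uniform in $N$, then apply dominated convergence using the disjoint supports. The difference is that the paper does not prove convergence of the correlators itself. It imports it from Lemma~6.39 of \cite{PVW}. That lemma gives convergence whenever $\theta_N\to\theta$ pointwise and $\dist(z,\partial\Omega)^2|\nabla\theta_N(z)|\le\kappa_b$ uniformly in $N$. This holds, e.g., for cutoffs varying on the scale of $\dist(z,\partial\DD)$, which your arbitrary $\chi_N$ need not satisfy.

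The step you do by hand has genuine gaps. The first concerns uniqueness. For a homogeneous solution of $\BV_3(\Omega,\theta)$, the combination $f=\tfrac12(\bar\eta H_0+\eta H_1)$ satisfies $\bar\partial f=-\ii\theta\bar f$ in $\DD$ with $\bar f=\tau f$ on $\partial\DD$. The Green-formula computation behind \eqref{eq:greens:theorem} then gives exactly
\[
\int_{\DD}\theta\,|f|^2\,\mathrm{d}A=\frac14\oint_{\partial\DD}|f|^2\,|\mathrm{d}w| .
\]
This yields nothing unless $\theta\le 0$, which is the condition $\Im\tilde\alpha\le0$ of Lemma~\ref{lem:unicity}. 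The lemma allows any real $\theta\in C^\infty(\bar\Omega)$, so ``the same Green-formula computation'' cannot give uniqueness in general. Uniqueness is nevertheless true, but it needs another idea, for instance the Bers--Vekua similarity principle. Write $f=e^{s}h$ with $h$ holomorphic and $s$ H\"older, normalised by $\Im s=0$ on $\partial\DD$. Then $\bar h=\tau h$ on the circle, so the holomorphic function $\ii w\,h(w)^2$ equals $|h|^2\ge0$ there and vanishes at $0$. Hence it is identically zero, and so is $h$.

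The second gap is that the uniform-in-$N$ bound of Step~2 does not come out of the energy identity. Even for $\theta\le0$, it only controls the weighted quantity $\int|\theta_N|\,(|R^N_0|^2+|R^N_1|^2)$. That weight degenerates precisely near $\partial\DD$, where the truncation acts. Moreover, for $R^N=H^N-H^0$ the identity contains the cross term $\int\theta_N\,\overline{H^0}\,R^N$. Since $H^0\sim 2/(w-z)$ is not square integrable, Cauchy--Schwarz does not close. You would first have to remove the first-order correction $S(\theta_N\overline{H^0})$, where $S$ is the boundary-adapted inverse of $\bar\partial$; this correction carries the $O(\log|w-z|)$ term.

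Your Fredholm fallback is the right repair, but not as stated. Since $\theta_N=\chi_N\theta$ does not converge to $\theta$ uniformly unless $\theta|_{\partial\DD}=0$, operator-norm continuity of $\theta\mapsto(I-T_\theta)^{-1}$ in the sup norm does not apply. What does work is the following. Write $T_{\theta_N}-T_\theta=S\circ M_{\theta_N-\theta}\circ(\text{conjugation})$, with $S$ compact on $L^p(\DD)$ for $1<p<2$ and $M_{\theta_N-\theta}\to0$ strongly by dominated convergence. Then $\|T_{\theta_N}-T_\theta\|=\|M_{\theta_N-\theta}S^*\|\to0$. Invertibility of $I-T_\theta$ then follows from the Fredholm alternative, that is, from uniqueness for general sign, which is the first gap again.

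Completed this way, your route would give a self-contained proof for bounded $\theta$ that needs no gradient control on the cutoffs. The paper instead imports the whole convergence statement from \cite{PVW}.
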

\begin{proof}
This follows by checking the convergence of the right hand side in \eqref{eq:PVWtheorem}. This is already done in Lemma 6.39 in \cite{PVW}, namely, their results imply that for $\kappa_b >0$ fixed, if
\begin{align}
\text{dist}(z,\partial \Omega)^2|\nabla\alpha_N(z)|\leq \kappa_b, && \text{dist}(z,\partial \Omega)^2|\nabla\alpha(z)|\leq \kappa_b
\end{align}
and $\alpha_N\rightarrow \alpha$ pointwise, then the two-point Ising correlators converge. The convergence of the integrals follows from the fact that the $f_i$ have disjoint support and uniform bounds and the massive holomorphicity (resp. antiholomorphicity) of the fermionic correlators. 
\end{proof}

\begin{remark}
    Note that a consequence of the identification \eqref{eq:H=psi} is that $F_0 (w,z) = - F_0(z,w)$ and $ F_1 (w,z) = - \bar F_1(z,w)$, since fermions anticommute and since $\overline{\langle \psi(w) , \psi^*(z)\rangle} =\langle \psi^*(w), \psi(z) \rangle $. This is by no means obvious from the definition of $F_0 $ and $F_1$.  
\end{remark}

\subsection{Fredholm regularised determinant formula (proof of Theorem \ref{thm:schatten})}

\label{SS:Fredholm}

We include here a few brief reminders concerning such regularised determinants as well as the related notion of $p$-th Schatten class, and refer to the book by Barry Simon \cite[Chapter 5]{Simon} for more details. The idea is probably more familiar in the context of trace-class operators, so we start with this. Roughly, the idea is the following. Let $H$ be a Hilbert space and let $A:H \to H$ be a positive, bounded linear operator. The trace of $A$ is then the (possibly infinite) series $\Tr(A) = \sum_n \langle A e_n, e_n \rangle$ for a choice of orthonormal basis $(e_n)_{n\ge 1}$ of $H$ (the convergence and value of the trace does not depend on the choice of orthonormal basis).
Let $K: H \to H$ be a bounded operator, not necessarily positive. We say that $K$ is \emph{trace-class} if the series $\Tr(|K|)< \infty $, where $|K | = \sqrt{K^*K}$. When $K$ is trace-class, one can check that the $K^2, K^3, \ldots$ are all trace class operators, with $|\Tr(K^n)|\le \Tr(|K|)^n$. This entails that the power series $\sum_{n\ge 1} (-1)^n \mu^n \Tr (K^n)/ n$ has positive radius of convergence. By definition, we set
$$
\det_1(I + \mu K) = \exp \left( \sum_{n=1}^\infty (-1)^{n+1}\frac{\mu^n \Tr (K^n)}n \right)
$$
the Fredholm-regularised determinant of $I + \mu K$ (for $\mu$ small enough). (This generalizes the well-known expression of the log of a determinant $I+A$ for a finite-dimensional matrix $A$). 

Now let $p\ge 1$ be an integer. We say that $K$ lies in the Schatten class $S_p$ of order $p$ if $\|K\|_p: = \Tr [(\sqrt{K^*K})^p]^{1/p}< \infty$. In particular, $K \in S_1$ if and only if $K$ is trace-class. Also, $K\in S_4$ if and only if $\Tr [(K^*K)^2]< \infty$. The condition $K \in S_p$ is weaker than $K\in S_1$ if $p\ge 1$: that is, $S_1 \subset S_p$. However, even if we do not assume that $K$ is trace-class but only $K\in S_p$ we can still make sense of the Fredholm regularized determinant $\det_p(I + \mu K)$ of order $p$ (for $\mu $ small), namely via the formula
\begin{equation}\label{eq:Schatten_p}
\det_p(I+\mu K)=\exp\Big(\sum_{n\geq p}\frac{(-\mu)^{n+1}}{n}\text{Tr}(K)^n\Big).
\end{equation}



Recall our formula for the moments of the limiting height field given in Corollary \ref{thm:fermionic}, which concretely boils down to \eqref{eq:momentsv}. For convenience, this reads  
\begin{align}\label{eq:mometsv2}
\EE_{(\Omega, \alpha)} [ (\phi, f)^n] = \int \ldots \int f(z_1)\ldots f(z_n) V_{(\Omega, \alpha)} (z_1, \ldots, z_n) \mathrm{d}A(z_1) \ldots \mathrm{d}A(z_n)
\end{align}
where
\begin{align}
    V(z_1, \ldots, z_n) = V_{\Omega; \alpha} (z_1, \ldots, z_n) = \sum_{\mathbf{s} \in \{0,1\}^n} \int_{\path_1}\cdots \int_{\path_n}  v^{(\mathbf{s})}(z_1, \ldots, z_n) \prod_{i=1}^n \mathrm{d}z_i^{(s_i)}.
\end{align}
and 
\begin{equation}\label{eq:fermionic_inverseDirac2}
v^{(\mathbf{s})} (z_1, \ldots, z_n) = \prod_{i=1}^n \mathbf{i}^{-1+2s_i}
\det \Big((\slashed \partial_\alpha)_{2-s_i,1+s_j}^{-1}(z_i,z_j)\Big)_{i\neq j}.
\end{equation}
First, we need a trace formula for the cumulants. By the moments-to-cumulants map (see \cite[(3.27)]{PeccatiTaqqu})
\begin{align}\label{eq:cumulants-moments}
    \Big <\phi(f);...;\phi(f)\Big>_{(\Omega, \alpha)}=\sum_\pi(|\pi|-1)!(-1)^{|\pi|-1}\prod_{B\in \pi}\mathbb{E}_{(\Omega,\alpha)}[(\phi,f)^{|B|}].
\end{align}

Since the moments are determinantal, we obtain an expression for the cumulants by expanding the determinant as a sum over permutations; only the cyclic permutations remain. This yields the following expression for the cumulants:

\begin{lemma}
The cumulants satisfy
\begin{align*}
& 
\Big\langle\phi(f);...;\phi(f)\Big\rangle_{(\Omega, \alpha)}\\
&=
(n-1)!(-1)^{n+1}\sum_{s_1,...,s_n}\int_{\Omega^n}dxf(x_1)...f(x_n)\int_{\gamma(x_1)}...\int_{\gamma(x_n)}\prod_{j=1}^n\langle \psi^{s_j}(z_j)\psi^{s_{j+1}}(z_{j+1})\rangle\prod_{j=1}^n\mathrm{d}z_j^{(s_j)}.
%
\end{align*}
where $\langle \psi^{s}(z)\psi^{r}(w)\rangle := \mathbf{i}^{-1+2s}(\slashed \partial_\alpha)_{2-s,1+r}^{-1}(z,w)$.
\end{lemma}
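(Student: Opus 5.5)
The plan is to go from the moment formula \eqref{eq:mometsv2} to the cumulants through the moments-to-cumulants relation \eqref{eq:cumulants-moments}. This is the standard computation for determinantal structures. To make the algebra uniform, I first write the moment kernel as a sum over permutations. Fix $\mathbf{s}$ and set $k(i,j) = \langle \psi^{s_i}(z_i)\psi^{s_j}(z_j)\rangle$. By \eqref{eq:fermionic_inverseDirac2}, $v^{(\mathbf{s})}$ is $\det(k(i,j))_{i\neq j}$, with the diagonal entries set to zero. Expanding gives
\[
v^{(\mathbf{s})} = \sum_{\sigma \text{ derangement}} \sgn(\sigma)\prod_i k(i,\sigma(i)).
\]
Each derangement factors into disjoint cycles of length at least $2$, with $\sgn(\sigma)=\prod_{c}(-1)^{|c|-1}$.

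The integration over $f(x_i)\,\mathrm{d}A(x_i)$ and over the paths $\gamma(x_i)$, together with the sum over the signs $s_i$, factorises along the cycles of $\sigma$. This is because the sum over $\mathbf{s}$ is a product of independent sums, one per index, and each index appears only in the factors of its own cycle. For a cycle $c$ of length $m$, define the cycle weight
\[
w_m = \sum_{s_1,\dots,s_m}\int f(x_1)\cdots f(x_m)\int_{\gamma(x_1)}\!\!\cdots\int_{\gamma(x_m)} \prod_{j=1}^m k(j,j+1)\prod_j \mathrm{d}z_j^{(s_j)},
\]
with indices taken cyclically. Its value depends only on $m$, by relabelling the dummy variables.

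It follows that $\EE[(\phi,f)^n] = \sum_{\sigma}\prod_{c}(-1)^{|c|-1}w_{|c|}$, where the sum runs over derangements of $\{1,\dots,n\}$. Equivalently, the sum runs over all set partitions of $\{1,\dots,n\}$ into blocks of size at least $2$, each block $B$ being weighted by $(|B|-1)!\,(-1)^{|B|-1}w_{|B|}$. This weight counts the cyclic orders on $B$. If we define this weight to be zero for blocks of size $1$, the moments become the ``exponential'' of these block weights. Möbius inversion on the partition lattice, which is exactly what \eqref{eq:cumulants-moments} expresses, then identifies the $n$-th cumulant with the connected part, i.e. the single-block term. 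That term is $(n-1)!\,(-1)^{n-1}w_n$, which is the claimed formula since $(-1)^{n-1}=(-1)^{n+1}$.

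The main obstacle is justifying the factorisation and the exchange of sums and integrals. For fixed distinct $x_i$, the disjoint paths can be chosen consistently for all the subsets appearing in the moment formula, so the multiple path integrals in different moments are compatible. One also needs integrability near coinciding points. Here I would invoke the bound \eqref{eq:bound:height}, which gives log-type singularities after path integration. These are integrable against $f(x_1)\cdots f(x_n)$, so Fubini applies. I would also check that the path integrals for a given block depend only on that block's points, which holds because the paths may be chosen independently per point, by the path-independence in Theorem \ref{thm:height}. With these points in place, the combinatorial inversion is purely formal.
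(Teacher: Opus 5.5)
Your proof is correct and follows essentially the same route as the paper: expand the zero-diagonal determinants over derangements, factor along cycles, and invert the moment--cumulant relation on the partition lattice. The only difference is in how the inversion is carried out. The paper substitutes the moments into \eqref{eq:cumulants-moments} and removes every non-cyclic permutation using the Stirling identity $\sum_{r} S(k,r)(r-1)!(-1)^{r-1}=\delta_{k,1}$. You instead write the moments as $\sum_\pi\prod_{B}(|B|-1)!\,(-1)^{|B|-1}w_{|B|}$ and read off the cumulants from the uniqueness of that inversion, which is an equivalent step; your remarks on Fubini and on choosing paths consistently are more care than the paper itself takes.
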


\begin{proof}
Insert \eqref{eq:mometsv2} into \eqref{eq:cumulants-moments} to get
\begin{align}
\langle \phi_\eps(x_1);...;\phi_\eps(x_n)\rangle=\sum_{s_1,...,s_n}\int_{\gamma_1}...\int_{\gamma_n}\prod_{j=1}^n\mathrm{d} z_j^{s_j}\sum_\pi (|\pi|-1)!(-1)^{|\pi|-1}\prod_{B\in \pi}\det\Big(K^{s_i,s_j}(z_i,z_j)\Big)_{i,j\in B}
\end{align}
Expanding the determinant,
\begin{align}
&\sum_\pi(|\pi|-1)!(-1)^{|\pi|-1}\prod_{B\in \pi}\sum_{\sigma_B\in S_B}\text{sgn}(\sigma_B)\prod_{j \in B}K^{s_j,s_{\sigma_B(j)}}(x_j,x_{\sigma_B(j)})\\
&=\sum_{\sigma\in S_n}\text{sgn}(\sigma)\prod_{j=1}^nK^{s_j,s_{\sigma(j)}}(x_j,x_{\sigma(j)})\sum_{\pi}\mathbbm{1}_{\sigma=\prod_{B\in \pi}\sigma_B}(|\pi|-1)!(-1)^{|\pi|-1}
\end{align}
where we used that $\text{sgn}(\prod_{B\in \pi}\sigma_B)=\prod_{B\in \pi}\text{sgn}(\sigma_B)$. Now recall that any $\sigma$ has a unique cycle decomposition, and the condition that $\sigma=\prod_{B\in \pi}\sigma_B$ is the condition that each cycle $c$ of $\sigma$ lies in a block $B$ of $\pi$, in the sense that $\mathrm{Im}(c)\subset B$. If $|\pi|=r$ and there are $k$ cycles in $\sigma$, note that the number of ways to partition $k$ distinct objects (the cycles) into $r$ subsets (the blocks of $\pi$) is precisely the Stirling number of the second kind, $S(k,r)$. From the combinatorial identity
\begin{align}
    \sum_{r=1}^kS(k,r)(r-1)!(-1)^{r-1}=\delta_{k,1}
\end{align}
(where $\delta$ here is the Kronecker delta) we thus deduce:
\begin{align*}
& \lim_{\eps\rightarrow 0}\Big\langle\phi_\eps(x_1);...;\phi_\eps(x_n)\Big\rangle_{(\Omega, \alpha)}\\
&=
\sum_{s_1,...,s_n}\int_{\gamma_1, \ldots, \gamma_n}\sum_{\sigma}\textnormal{sgn}(\sigma)\prod_{j=1}^n 
\langle \psi^{s_j}(z_j)\psi^{s_{\sigma(j)}}(z_{\sigma(j)})\rangle
\prod_{j=1}^n\mathrm{d}z^{(s_j)},
\end{align*}
where the sum is over all \emph{cyclic} permutations.
In particular, $\text{sgn}(\sigma)=(-1)^{n+1}$. Furthermore, all cyclic permutations contribute the same:
indeed for a given cyclic permutation $\sigma = (i_1 ...i_n)$, where $i_j$ are all distinct, consider the permutation $\pi$ such that $\pi(i_j)=j$. Now relabel the variables $s'_{\pi(j)}=s_j, x'_{\pi(j)}=x_j, z'_{\pi(j)}=z_j$, observe that 
\begin{align}
\prod_{j=1}^n\langle \psi^{s_j}(z_j)\psi^{s_{\sigma(j)}}(z_{\sigma(j)})\rangle=\prod_{j=1}^n\langle \psi^{s_{\pi^{-1}(j)}}(z_{\pi^{-1}(j)})\psi^{s_{\sigma(\pi^{-1}(j))}}(z_{\sigma(\pi^{-1}(j))})\rangle=\prod_{j=1}^n\langle \psi^{s'_j}(z'_j)\psi^{s'_{j+1}}(z'_{j+1})\rangle
\end{align}
where the first equality holds by commutativity and the second equality holds since $\pi(\sigma(\pi^{-1}(j)))=\pi(\sigma(i_j))=\pi(i_{j+1})=j+1$. Hence, each term in the sum over $\sigma$ is equal, and since there are $(n-1)!$ cyclic permutations we deduce the lemma.
\end{proof}

 We obtain
\begin{align}
    \Big <\phi(f);...;\phi(f)\Big>_{(\Omega, \alpha)}=(n-1)!\sum_s\int_{\Omega^n\times(\prod_i\gamma_i)}\prod_i f(x_i)\prod_i \mathbf{i}^{-1+2s_i}(\slashed \partial_\alpha)_{2-s_i,1+s_{i+1}}^{-1}(z_i,z_{i+1}) \prod_i \mathrm{d}x_i \mathrm{d}z_i^{(s_i)}.
\end{align}
If we parametrise $z_i=\gamma(x_i)(t_i)$, so that $\mathrm{d}z_i^{(s_i)}=\dot\gamma(x_i)^{(s_i)}(t_i)\mathrm{d}t_i$ we get
\begin{align}
&(n-1)!\sum_s\int_{\Omega^n\times(\times_i\gamma_i)}\prod_i f(x_i)\prod_i K_{s_i,s_{i+1}}((x_i,t_i);(x_{i+1},t_{i+1})) \prod_i \mathrm{d}x_i \mathrm{d}t_i\label{temp2352sf}\\
&= (n-1)! \Tr((fK)^n)\label{temp45yhtfd}
\end{align}
where 
\begin{align}
K_{r,s}((x,t),(y,t'))=\ii^{-1+2r}(\slashed\partial_\alpha)^{-1}_{2-r,1+s}(\gamma(x)(t),\gamma(y)(t'))\dot \gamma(x)^{(r)}(t)\label{defKop}
\end{align}
defines an operator $K:\mathcal{H}\rightarrow \mathcal{H}$ with
\begin{align}
\mathcal{H}=L^2(\Omega\times [0,1])\oplus L^2(\Omega\times[0,1]).
\end{align}

Note in the above that we have been somewhat imprecise in how we select the paths so that the operator $K$ is truly the same for each $n$. We select these paths as follows, in the case that $\Omega$ is the unit disc $\mathbb{D}$, define the paths with endpoint at $x\in \Omega$ as the horizontal path $(\sqrt{1-x_2^2}(1-t)+tx_1,x_2),$ $t\in [0,1]$. These paths are disjoint whenever the second coordinate $x_2$ of $x$ differs.

Note that
\begin{align}
    B_n =\{(x_1,...,x_n)\in \Omega^n: \exists i \neq j \text{ with } \Im(x_i)=\Im(x_j)\}
\end{align}
has Lebesgue measure zero, or in words, the set of points where there is some pair $(x_i,x_j)$ on the same horizontal line is measure zero. On $\Omega^n \setminus B_n$, the $n$ paths $\gamma(x_i)$ are disjoint, hence removing $B_n$ from the integral in \eqref{temp2352sf} allows \eqref{temp45yhtfd}.

If instead $\Omega$ is an arbitrary $C^1$ Jordan domain, there is a conformal map $\Phi$ from the unit disc to $\Omega$, and so we select the path at $x\in \Omega$ as the image of the horizontal path in the disc originating at $\Phi^{-1}(x)$. Since the image of $B_n$ under $\Phi$ is measure zero in $\Omega,$ similar to before, we can remove the set $\Phi(B_n)$ from the integral in \eqref{temp2352sf} which gives \eqref{temp45yhtfd}.

\begin{proof}[Proof of Theorem \ref{thm:schatten}.]
We first show that $K$ lies in the fourth Schatten class $S_4=\{L: \text{Tr}[(L^*L)^2]<\infty\}$ (recall the discussion above \eqref{eq:Schatten_p} and in \cite[Chapter 5]{Simon}).
In particular, this gives convergence of the series
\begin{align}
\sum_{n=4}^\infty\frac{\mu^n}{n}\text{Tr}[(fK)^n]< \infty
\end{align}
for $|\mu|$ sufficiently small.

From the definition of $K$ in \eqref{defKop}, the discussion in section 4.1, \eqref{FtildeinF}, the definition of $F_0, F_1$ in terms of $\kappa, \kappa^*$ \eqref{eq:def:F_i} and the near-diagonal estimates for $\kappa, \kappa^*$ \eqref{lem:kappa:kappa_0} we have the estimate
\begin{align}\label{eq:K1/x}
|K^{r,r'}((x,s),(y,t))|\leq \frac{C_\Omega |\dot \gamma(x)^{r}(s)|}{|\gamma(x)(s)-\gamma(y)(t)|} \leq \frac{C}{|\gamma(x)(s)-\gamma(y)(t)|}
\end{align}
where we used that $\Omega$ is a $C^1$ Jordan domain for the boundedness of $\dot \gamma.$ Recall above that we defined the paths in $\Omega$ by fixing a conformal map $\Phi$ from the unit disc to $\Omega$, then the path originating from $z\in \Omega$ was defined as the image of the horizontal line segment in the disc from $\Phi^{-1}(z)$ to the boundary. 
We want to show that
\begin{align}
\textnormal{Tr}[(K^*K)^2]<\infty\label{K4thSchatten}
\end{align}
where
\begin{align}
\text{Tr}[(K^*K)^2]&=\sum_{\xi\in\{\pm\}}\int_{\Omega\times[0,1]}[K^*K\cdot K^*K]^{\xi,\xi}(y,y)\mathrm{d}y\\
&= \sum_{s,u,r_1,r_2\in\{\pm\}}\int_{(\Omega\times[0,1])^4}\overline{K^{r_1,s}(\mathbf{y_1};\mathbf{x})}K^{r_1,u}(\mathbf{y_1};\mathbf{z})\overline{K^{r_2,u}(\mathbf{y_2};\mathbf{z})}K^{r_2,s}(\mathbf{y_2};\mathbf{x})\mathrm{d}\mathbf{y_1}\mathrm{d}\mathbf{y_2}\mathrm{d}\mathbf{z}\mathrm{d}\mathbf{x}\label{eq:K2xx}
\end{align}
where $\mathbf{y}_1=(y,t)$, $y\in \Omega, t\in[0,1]$, etc. Observe that it is sufficient that the kernel $K^*K(\mathbf{x},\mathbf{y})$ is square-integrable on $(\Omega\times[0,1])^2$,
\begin{align}
\int_{(\Omega\times[0,1])^2}|K^*K^{r,r'}(\mathbf{x},\mathbf{y})|^2\mathrm{d}\mathbf{x}\mathrm{d}\mathbf{y}< \infty.\label{HilbertschmidtK*K}
\end{align}
Hence, using \eqref{eq:K1/x} and Fubini, \eqref{K4thSchatten} can be deduced by showing the function on the right hand side of
\begin{align}
|K^*K^{r,r'}((x,s),(y,t))|\leq \int_{\Omega}\int_0^1 \frac{C^2}{|\gamma(x)(s)-\gamma(z)(\tau)||\gamma(y)(t)-\gamma(z)(\tau)|}\mathrm{d}\tau  \mathrm{d}z \label{temp4mmh76}
\end{align}
is square-integrable.

Now, we use the conformal map $\Phi: \mathbb{D}\rightarrow \Omega$ and our assumption that $\partial \Omega$ is Dini-smooth which implies (see \cite[Theorem 3.5]{Pommerenke1992BoundaryMaps}) that $\Phi'$ extends continuously to $\overline{\mathbb{D}}$ and satisfies $\Phi'(z)\neq 0$ for all $z\in \overline{\mathbb{D}}$. We see 
\begin{align}
    |\Phi^{-1}(u)-\Phi^{-1}(v)|= |\int_u^v (\Phi^{-1})'(z)dz|\leq \sup |(\Phi^{-1})'(z)|\text{len(path $u\rightarrow v$)}|\leq C'|u-v|\label{temp43rebcv}
\end{align}
for $u,v \in \Omega$, where we used that $|(\Phi^{-1})'(z)|=|1/\Phi'(\Phi(z))|$ is bounded in the last inequality. Hence 
\begin{align}
    |z-w|/C'\leq |\Phi(z)-\Phi(w)|\label{temp65ytaa}
\end{align}for $z,w\in \overline{\mathbb{D}}$. Make the change of variable $z=\Phi(\xi)$ in the integral in \eqref{temp4mmh76}, we see 
\begin{align}
|K^*K^{r,r'}((x,s),(y,t))|&\leq C^2\int_{\overline{\mathbb{D}}}\mathrm{d}\xi |\Phi'(\xi)|^2\int_0^1\mathrm{d}\tau  \frac{1}{|\gamma(x)(s)-\gamma(\Phi(\xi))(\tau)||\gamma(y)(t)-\gamma(\Phi(\xi))(\tau)|}\\
&\leq C^2C'^2C''^2\int_{\overline{\mathbb{D}}}\mathrm{d}\xi \int_0^1\mathrm{d}\tau\frac{1}{|a-\tilde\gamma(\xi)(\tau)||b-\tilde\gamma(\xi)(\tau)|}\label{temp8uyjhm}
\end{align}
where in the previous inequality, we set $a=\Phi^{-1}(\gamma(x)(s)), b=\Phi^{-1}(\gamma(y)(t))$, $\tilde \gamma$ is the horizontal path in $\mathbb{D}$ such that $\Phi(\tilde\gamma(\xi)(\tau))=\gamma(\Phi(\xi))(\tau)$, and we used the bounds $|\Phi'|\leq C''$ and \eqref{temp65ytaa}. Use Fubini on the integral \eqref{temp8uyjhm} then observe that for a fixed $\tau$, $\tilde\gamma(\xi)(\tau), \xi\in \mathbb{D}$, is just a horizontal shift of the points in the set $\mathbb{D},$ i.e. $\tilde\gamma(\xi)(\tau)=\xi+f(\tau)$ for some function $f$. Using the well-known bound
\begin{align}
\int_K \mathrm{d}z\frac{1}{|z-x|}\frac{1}{|z-y|}\leq C(1+|\log|x-y||)
\end{align}
for any compact set $K$, we deduce 
\begin{align}
|K^*K^{r,r'}((x,s),(y,t))|\leq C'''(1+|\log|a-b||)\leq C''''(1+|\log|\gamma(x)(s)-\gamma(y)(t)||)
\end{align}
where we used \eqref{temp43rebcv} again. Inserting this bound into the left hand side of \eqref{HilbertschmidtK*K}, using Fubini to first integrate over either $x$ or $y$, we see the resulting bound is finite since both $\log$ and $\log^2$ are integrable.

Hence we have proved
 \eqref{K4thSchatten}.
\end{proof}


\appendix

\section{Control of the non-massive discrete Green function and its derivatives for nearby points.}\label{app:green:bound}
In this appendix, we recall classical bounds on the behaviour of the discrete non-massive Green function and its discrete derivatives near the diagonal.
These bounds can be found in~\cite{Ken02} (square lattice case), and could probably be extracted from the proof of~\cite{Li17} in the isoradial case. 
We provide proofs for completeness. 

Recall that when $\Omega$ and $\Primal$ have a straight boundary $L = \Omega \cap B$ with $B = (\beta^*,\eta)$, we denote by $\ph$ the reflection along $L$.
Recall that the discrete Green function $\dGreen$ can be extended to $\Primal_\ph = \Primal \cup (\infPrimal \cap B)$ by the discrete Schwarz principle.
This is also true for the continuous Green function, which can be extended by the continuous Schwarz principle to $\Omega_\ph = \Omega \cup B$.
\begin{lemma}\label{lem:green:bounded}
        Uniformly for $x_1$ in compact subsets of $\Omega$, $x_2 \in \Primal$,
            $$
		      \dGreen(x_1,x_2) = O\left(1+|\log|x_1-x_2||\right).
            $$ 
        If $\Omega$ and $\Primal$ have a straight boundary near $\beta^*$ (say on the ball $B = B(\beta^*, \eta)$, so that $L = \partial\Omega \cap B$ is straight, recall that $\Gamma_\ph = \Gamma \cup \ph( \Gamma \cap B)$ and $\Omega_\ph = \Omega \cup B$, where $\ph$ is the reflection across the straight line $L$ defined in Section \ref{subsec:straight}.
        Recall that in this case, $\dGreen$ is naturally extended to $\Primal_\ph$.
        Then, uniformly for $x_1$ in compact subsets of $\Omega_\ph$, $x_2 \in \Primal_\ph$,
            $$
		      \dGreen(x_1,x_2) = O\left(1+|\log(|x_1-x_2| \wedge |\ph(x_1)-x_2|)|\right).
            $$ 
    \end{lemma}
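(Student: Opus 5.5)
We prove the bound by comparison with the free Green function and the maximum principle, with a separate reflection step for the straight-boundary case.

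\emph{Upper bound.} The function
$$
h(x_2) := \dfreeGreen(x_1,x_2) - \dGreen(x_1,x_2)
$$
is discrete harmonic in $x_2$ on $\Primal$, since both terms have Laplacian $\delta_{x_1}$. On $\closurePrimal\setminus\Primal$ the Dirichlet Green function vanishes, so there
$$
h = \dfreeGreen(x_1,\cdot) = \tfrac1{2\pi}\log|x_1-\cdot| + O(\eps^2/|x_1-\cdot|^2).
$$
This is $O(1)$: $\Omega$ is bounded, and $x_1$ lies in a compact subset of $\Omega$, so it is at macroscopic distance $\geq d>0$ from the boundary (using (i)--(ii) of Assumption~\ref{assumption:domain}). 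By the maximum principle $h=O(1)$ on $\Primal$. With the sign convention of Definition~\ref{def:dfreeGreen} (a negative Laplacian), $\dGreen\le 0\le$ is not the relevant issue; we only need absolute values. We get
$$
|\dGreen(x_1,x_2)| \le |\dfreeGreen(x_1,x_2)| + O(1).
$$

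\emph{Behaviour of $\dfreeGreen$.} By \eqref{eq:green:asymptotic}, for $|x_1-x_2|\geq \eps$ we have $\dfreeGreen(x_1,x_2) = O(1+|\log|x_1-x_2||)$. The only delicate range is $|x_1-x_2|$ of order $\eps$, including $x_1=x_2$. There the normalisation $\dfreeGreen(x_1,x_1) = (\log\eps - c)/(2\pi)$ and the asymptotics at distance $O(\eps)$ (finitely many lattice steps, controlled by the bounded angle property) give $O(1+|\log\eps|) = O(1+|\log|x_1-x_2||)$ when $|x_1-x_2|\asymp\eps$. At $x_1=x_2$ the statement is read with $|x_1-x_2|$ replaced by $\eps$, as is implicit. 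This proves the first claim.

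\emph{Straight boundary case.} Here we use reflection. Inside $B$, the Dirichlet condition on $L$ together with the discrete Schwarz principle lets us write, for the extended Green function,
$$
\dGreen(x_1,x_2) = \dfreeGreen(x_1,x_2) - \dfreeGreen(\ph(x_1),x_2) + h(x_1,x_2),
$$
where $h$ is discrete harmonic on $\Primal_\ph$ near $B$. We must control $h$ at the part of $\partial(\Primal\cup B)$ away from $L$, where the image-charge terms are at macroscopic distance and hence $O(1)$. When $x_1$ is at distance at least $\eta/2$ from $L$, the previous argument applies directly, and $\ph(x_1)$ is far away or undefined.

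\emph{Main obstacle.} The hard case is $x_1$ close to $L$. The two logarithms are then combined: each of $\dfreeGreen(x_1,\cdot)$ and $\dfreeGreen(\ph(x_1),\cdot)$ is $O(1+|\log(\cdot)|)$ in its own distance, and together they give $O(1+|\log(|x_1-x_2|\wedge|\ph(x_1)-x_2|)|)$. What makes this work is bounding the harmonic remainder $h$ uniformly by the maximum principle on $\Primal_\ph$, using that $\dGreen$ vanishes on $\partialout\Primal\subset L$ and that the reflected lattice is exactly symmetric. The same combination of the maximum principle and the asymptotics \eqref{eq:green:asymptotic} gives the bound for $x_2$ in the reflected region, by the antisymmetry $\dGreen(x_1,\ph(x_2)) = -\dGreen(x_1,x_2)$.
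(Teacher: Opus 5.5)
Your first part is the paper's argument: $\dfreeGreen(x_1,\cdot)-\dGreen(x_1,\cdot)$ is harmonic on $\Primal$, its boundary values are $O(1)$ by \eqref{eq:green:asymptotic}, and the maximum principle finishes it.

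The straight-boundary part has a genuine gap. You apply the maximum principle to $h$ without identifying the boundary of the vertex set or the values of $h$ there. You say you work on $\Primal_\ph$. But then $\partialout\Primal\cap L$ is interior to $\Primal_\ph$: the vanishing of $\dGreen$ there is what makes the reflected function harmonic, not a boundary condition. The actual boundary consists of $\partialout\Primal\setminus L$ together with the vertices bordering $\ph(\Primal\cap B)$ along the arc $\partial B\setminus\overline{\Omega}$.

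At those arc vertices the extended function $\dGreen(x_1,\cdot)=-\dGreen(x_1,\ph(\cdot))$ does not vanish, and nothing in your argument bounds it. You only discuss the free-Green terms, and even these are not at macroscopic distance. A compact subset of $\Omega_\ph=\Omega\cup B$ may contain points of $\partial B\cap\Omega$ lying within $\eta/2$ of $L$, which is your ``hard case''. Then $x_1$ and $\ph(x_1)$ come within $O(\eps)$ of this part of the boundary, and the individual terms of $h$ are of order $|\log\eps|$ there. Bounding $\dGreen(x_1,\cdot)$ on that arc is essentially the statement being proved. Near the endpoints of $L$ it would require estimates on $\dGreen$ close to $\partial\Omega$, which the first part does not give.

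The way out, which is the paper's route, is to apply the maximum principle on $\Primal$ itself to
$$\tilde H(x_1,\cdot)=\dfreeGreen(x_1,\cdot)-\dfreeGreen(\ph(x_1),\cdot)-\dGreen(x_1,\cdot).$$
This is harmonic on $\Primal$ since $\ph(x_1)\notin\Primal$, and its boundary is $\partialout\Primal$, where $\dGreen=0$.
\begin{itemize}
\item On $\partialout\Primal\setminus L$, both free terms are $O(1)$, because $x_1$ and $\ph(x_1)$ stay at macroscopic distance from it.
\item On $\partialout\Primal\cap L$ you need $\dfreeGreen(x_1,y)-\dfreeGreen(\ph(x_1),y)=O(1)$. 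This does not follow from the symmetry of $\infPrimal\cap B$ alone, because $\dfreeGreen$ is defined on the whole, non-symmetric lattice. The paper gets exact equality from the locality of Kenyon's explicit formula. Alternatively, \eqref{eq:green:asymptotic} suffices: for $y\in L$ we have $|x_1-y|=|\ph(x_1)-y|$, so the logarithms cancel exactly, and the error terms are $O(\eps^2/|x_1-y|^2)=O(1)$.
\end{itemize}
This gives $\tilde H=O(1)$ on $\Primal$. Your combination of the two logarithms, together with the antisymmetry $\dGreen(x_1,\ph(x_2))=-\dGreen(x_1,x_2)$, then completes the proof.
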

    \begin{proof}
        Let $\cC \subset \Omega$ be a compact subset. 
        Recall that $\dfreeGreen$ denotes the non-massive Green function on $\infPrimal$, which we introduced in Definition~\eqref{def:dfreeGreen}. 
        For $x_1, x_2 \in \Primal$, let
        \begin{equation}\label{eq:G=Gfree-H}
            H(x_1,\cdot) = \dfreeGreen(x_1,\cdot) - \dGreen(x_1,\cdot).
        \end{equation} 
        By definition of $\dfreeGreen$ and $\dGreen$, $H(x_1, \cdot)$ is harmonic in $\Primal$ and coincides with $\dfreeGreen(x_1,\cdot)$ on $\partialout \Primal$. 
        On the one hand, Equation \eqref{eq:green:asymptotic} implies that $H(x_1,\cdot) = O(1)$ uniformly for $x_1 \in \Primal \cap \cC$, $x_2 \in \partialout \Primal$, since $\dist(\cC, \partialout \Primal) > 0$.
        Hence by the maximum principle this remains true on all $\Primal$. 
        On the other hand, Equation \eqref{eq:green:asymptotic} implies that $\dfreeGreen(x_1,x_2) = O(1+|\log|x_1-x_2||)$, uniformly for $x_1,x_2 \in \infPrimal$.
        The result follows.

        When $\Omega$ and $\Primal$ have a straight boundary $L$, there is a classical improvement of this argument due to Kenyon~\cite{Ken02} (see the proof of his Corollary~19 for the square lattice case).
        Let $\tcC \subset \Omega_\ph$ be a compact subset. 
        Recall that  $\infPrimal \cap B(\beta^*,\eta)$ is symmetric along $L$, and that $\ph$ denotes the symmetry along $L$. 

        Since $\tcC \setminus B(\beta^*,\eta)$ is a compact subset of $\Omega$, we can apply the first point to obtain that $\dGreen(x_1,x_2) = O(1+|\log|x_1-x_2||)$ uniformly for $x_1 \in \tcC \setminus B(\beta^*,\eta)$ and $x_2 \in \Primal$. 
        Since $\dGreen(x_1, x_2) = \dGreen(x_1, \ph(x_2))$ for $x_2 \in B(\beta^*,\eta)$, we obtain that $\dGreen(x_1,x_2) = O(1+|\log(|x_1-x_2|\wedge |x_1 - \ph(x_2)||)$ uniformly for $x_1 \in \tcC \setminus B(\beta^*,\eta)$ and $x_2 \in \Primal \cup B(\beta^*,\eta)$.

        For $x_1 \in \tcC \cap B(\beta^*,\eta)$, $x_2 \in \Primal$, let 
        \begin{equation}\label{eq:G=Gfree-phGfree-H}
            \tH(x_1,x_2) = \dfreeGreen(x_1,x_2)-\dfreeGreen(\ph(x_1),x_2) - \dGreen(x_1,x_2).
        \end{equation}
        By definition of $\dGreen$ and $\dfreeGreen$, $\tH(x_1,\cdot)$ is harmonic in $\Primal$ and coincides with $\dfreeGreen(x_1,\cdot)-\dfreeGreen(\ph(x_1),\cdot)$ on $\partialout \Primal$.
        By symmetry of $\infPrimal \cap B(\beta^*,\eta)$ and locality of the discrete free Green function (see Remark~\ref{rem:locality} and before), $\tH$ vanishes on $\partialout \Primal \cap  L$. 
    Hence, Equation \eqref{eq:green:asymptotic} implies that $H(x_1,x_2)$ is $O(1)$ uniformly for $x_1 \in \Primal \cap \tcC$ and $x_2 \in \partialout \Primal$ since $\dist(\tcC, \partial \Omega \setminus L)>0$. 
    By the maximum principle this remains true for all $x_2 \in \Primal$. 
    On the other hand, Equation~\eqref{eq:green:asymptotic} implies that $\dfreeGreen(x_1,x_2) = O(1+\log|x_1-x_2|)$ uniformly for $x_1, x_2 \in \RR^2$,
    which concludes the proof.
    \end{proof}

The same argument can be applied to bound the derivative of the discrete function with Dirichlet boundary conditions. We first need a bound on the derivative of the discrete free Green function, which is a direct consequence of the asymptotic~\eqref{eq:green:asymptotic}:
\begin{lem}\label{lem:bound:dpartial:dfreegreen}
    Uniformly in $x_1, w_2, \eps$ and the isoradial graph $\infPrimal$,
    $$
        \dfreeGreen(x_1,x_2^+)-\dfreeGreen(x_1,x_2^-) = O\left(\frac{\eps}{|x_1-w_2|}\right)
    $$
\end{lem}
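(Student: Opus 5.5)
We will derive the bound directly from the asymptotic expansion \eqref{eq:green:asymptotic} of the free discrete Green function, splitting into a far regime and a near regime according to the size of $|x_1-w_2|$ relative to $\eps$.

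\emph{Far regime.} Suppose $|x_1-w_2|\ge C_0\eps$ for a large constant $C_0$ depending only on the bounded angle assumption. Since $|x_2^\pm-w_2|\le\eps$, both $|x_1-x_2^\pm|$ are comparable to $|x_1-w_2|$. By \eqref{eq:green:asymptotic},
\[
\dfreeGreen(x_1,x_2^+)-\dfreeGreen(x_1,x_2^-)=\frac{1}{2\pi}\log\frac{|x_1-x_2^+|}{|x_1-x_2^-|}+O\Big(\frac{\eps^2}{|x_1-w_2|^2}\Big).
\]
The triangle inequality gives $\big||x_1-x_2^+|-|x_1-x_2^-|\big|\le|x_2^+-x_2^-|\le 2\eps$, so the ratio inside the logarithm is $1+O(\eps/|x_1-w_2|)$. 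The logarithm is therefore $O(\eps/|x_1-w_2|)$. The error term $O(\eps^2/|x_1-w_2|^2)$ is dominated by $\eps/|x_1-w_2|$ in this regime. All constants involved are uniform in $\eps$ and in the graph, because the constant in \eqref{eq:green:asymptotic} is uniform.

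\emph{Near regime.} Suppose $|x_1-w_2|<C_0\eps$. Then the right-hand side $\eps/|x_1-w_2|$ is bounded below by $1/C_0$, so it suffices to show that the difference is $O(1)$. There are two cases.

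If $x_1\notin\{x_2^+,x_2^-\}$, then by the bounded angle assumption the distances $|x_1-x_2^\pm|$ lie in $[c\eps,C\eps]$. Applying \eqref{eq:green:asymptotic} to each term, the difference is $\frac{1}{2\pi}\log$ of a ratio bounded above and below, plus $O(1)$, hence $O(1)$.

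If $x_1$ coincides with one endpoint, say $x_1=x_2^-$, we use the normalisation in Definition~\ref{def:dfreeGreen}: $\dfreeGreen(x_1,x_1)=(\log\eps-c)/(2\pi)$. We also have $\dfreeGreen(x_1,x_2^+)=\frac{1}{2\pi}\log|x_2^+-x_2^-|+O(1)$ with $|x_2^+-x_2^-|\asymp\eps$. The $\log\eps$ terms cancel, leaving $O(1)$. This is exactly the computation already performed in the proof of Lemma~\ref{lem:lipschitz}.

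\emph{Main obstacle.} The only delicate point is the near regime, specifically the case where $x_1$ equals one of $x_2^\pm$. There the expansion \eqref{eq:green:asymptotic} is not directly available, and one must use the diagonal normalisation together with the bounded angle assumption to guarantee $|x_2^+-x_2^-|\asymp\eps$.
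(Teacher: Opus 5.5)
Your proof is correct and takes the paper's route: the paper gives no separate argument, only the remark that the lemma is a direct consequence of \eqref{eq:green:asymptotic}, which is your far-regime log-difference estimate. Your separate near-regime treatment spells out what the paper leaves implicit. For $|x_1-w_2|\lesssim\eps$ you reduce to an $O(1)$ bound, and in the coincident case you use the normalisation of $\dfreeGreen(x_1,x_1)$; this is the same computation the paper carries out in the proof of Lemma~\ref{lem:lipschitz}.
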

\begin{rmk}
    Note that this is much better than what we could obtain using Lemma~\ref{lem:lipschitz} with $r = |x_1-w_2|/2$ and Equation~\eqref{eq:green:asymptotic}: 
$$
    \dfreeGreen(x_1,x_2^+)-\dfreeGreen(x_1,x_2^-) = O\left(\frac{\eps \log(|x_1-w_2|)}{|x_1-w_2|}\right)
$$
\end{rmk}

We are now in position to bound the derivative of the discrete non-massive Green function.
\begin{lemma}\label{lem:bound:nearby:derivative}
    Uniformly for $x_1, w_2$ in compact subsets of $\Omega$,
    $$
        \dpartialtwo\dGreen(x_1,w_2) = O\left(\frac{1}{|x_1-w_2|}\right).
    $$
    If $\Omega$ and $\Primal$ have a straight boundary $L$, uniformly for $x_1, w_2$ in compact subsets of $\Omega_\ph$,
    $$
        \dpartialtwo\dGreen(x_1,w_2) = O\left(\frac{1}{|x_1-w_2| \wedge |\ph(x_1)-w_2|}\right).
    $$
\end{lemma}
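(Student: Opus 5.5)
The plan is to mimic the proof of Lemma~\ref{lem:green:bounded}: decompose $\dGreen$ into a free Green function plus a harmonic correction, bound the derivative of the free part with Lemma~\ref{lem:bound:dpartial:dfreegreen}, and bound the derivative of the correction with the Lipschitz estimate of Lemma~\ref{lem:lipschitz}, used in the massless case $m=0$.

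\emph{Interior case.} Fix a compact $\cC \subset \Omega$ and a slightly larger compact $\cC' \subset \Omega$ with $\cC \subset \mathring{\cC'}$. As in \eqref{eq:G=Gfree-H}, I write
\[
\dGreen(x_1,\cdot) = \dfreeGreen(x_1,\cdot) - H(x_1,\cdot).
\]
Here $H(x_1,\cdot)$ is discrete harmonic in $\Primal$. The proof of Lemma~\ref{lem:green:bounded} shows that $H = O(1)$ uniformly for $x_1 \in \cC'$ and $x_2 \in \Primal$. Taking $\dpartialtwo$ at $w_2$ multiplies the increment $\dGreen(x_1,x_2^+)-\dGreen(x_1,x_2^-)$ by $1/(2(x_2^+-x_2^-))$, and $|x_2^+-x_2^-| \asymp \eps$. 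So it suffices to show
\[
\dGreen(x_1,x_2^+)-\dGreen(x_1,x_2^-) = O\bigl(\eps/|x_1-w_2|\bigr).
\]
For the free part this is exactly Lemma~\ref{lem:bound:dpartial:dfreegreen}. For $H$, I apply Lemma~\ref{lem:lipschitz} with $m=0$, centred at $w_2$, with fixed radii $R=r=\dist(\cC,\partial\cC')/2$. This gives $|H(x_1,x_2^+)-H(x_1,x_2^-)| = O(\eps)$, which is $O(\eps/|x_1-w_2|)$ since $|x_1-w_2|$ is bounded by the diameter of $\Omega$.

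The nearest-neighbour case $|x_1-w_2| \asymp \eps$ needs no separate argument. There the claim is just $O(1/\eps)$, which follows from the same estimates: Lemma~\ref{lem:bound:dpartial:dfreegreen} holds uniformly, and near the diagonal the free Green function increment is $O(1)$.

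\emph{Straight-boundary case.} I would use the reflected decomposition \eqref{eq:G=Gfree-phGfree-H}:
\[
\dGreen(x_1,\cdot) = \dfreeGreen(x_1,\cdot) - \dfreeGreen(\ph(x_1),\cdot) - \tH(x_1,\cdot),
\]
for $x_1$ in $B(\beta^*,\eta)$. Lemma~\ref{lem:bound:dpartial:dfreegreen} applied to both free terms gives a bound $O\bigl(\eps/(|x_1-w_2|\wedge|\ph(x_1)-w_2|)\bigr)$. The correction $\tH$ is bounded and harmonic, so Lemma~\ref{lem:lipschitz} applies to it again. For $x_1$ in the compact set but outside $B(\beta^*,\eta)$, I fall back on the interior argument.

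Finally, $w_2$ may lie in the reflected region $\Primal_\ph\setminus\Primal$. There I use the discrete Schwarz reflection $\dGreen(x_1,\ph(x_2)) = -\dGreen(x_1,x_2)$: the derivative at $w_2$ equals, up to sign and rotation, the derivative at $\ph(w_2)$. This reduces to the previous case, with $|x_1-\ph(w_2)| = |\ph(x_1)-w_2|$.

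\emph{Main obstacle.} The delicate step is applying Lemma~\ref{lem:lipschitz} to $\tH$ near the straight boundary $L$. There the correction is only harmonic on $\Primal$, and the discrete balls $B_\#(w_2,R+r)$ can leave $\Primal$. I expect to handle this by first extending $\tH$ by reflection across $L$, and checking that the extension is harmonic in $\Primal_\ph\cap B(\beta^*,2\eta)$. This is where Remark~\ref{rmk:twice:the:size} and the locality of $\dfreeGreen$ enter: $\tH$ vanishes on $L$, so the odd reflection stays harmonic. The Lipschitz lemma can then be applied on balls of fixed radius contained in $\Omega_\ph$.
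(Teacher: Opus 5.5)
Your strategy is the same as the paper's. You split $\dGreen$ into free Green function(s) plus a bounded harmonic correction, bound the free part with Lemma~\ref{lem:bound:dpartial:dfreegreen}, and bound the correction with Lemma~\ref{lem:lipschitz}, after an odd reflection of $\tH$ across $L$ so that balls of fixed radius are available. The interior case and the case $x_1\in\tcC\cap B(\beta^*,\eta)$ go through as you describe.

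\textbf{The gap.} The step that fails is ``for $x_1$ in the compact set but outside $B(\beta^*,\eta)$, I fall back on the interior argument.''
\begin{itemize}
\item Here $x_1$ does lie in the compact set $\tcC\setminus B(\beta^*,\eta)\subset\Omega$, but $w_2$ still ranges over all of $\tcC$. It can therefore be within $O(\eps)$ of $L$.
\item The interior argument needs a ball of fixed radius around $w_2$ on which $H(x_1,\cdot)=\dfreeGreen(x_1,\cdot)-\dGreen(x_1,\cdot)$ is harmonic. Near $L$ there is no such ball inside $\Primal$.
\item You cannot reflect $H(x_1,\cdot)$ either: on $\partialout\Primal\cap L$ it equals $\dfreeGreen(x_1,\cdot)$, which does not vanish.
\item The reflected decomposition is unavailable too. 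For $x_1\notin B(\beta^*,\eta)$ the lattice is not symmetric around the relevant paths from $x_1$, so $\tH$ no longer vanishes on $L$.
\item Your final reduction $w_2\mapsto\ph(w_2)$ only produces another point near $L$.
\end{itemize}
This mixed regime is exactly the ``small technical detail'' the paper treats separately at the end of its proof. There it uses the harmonicity of $x_1\mapsto\dpartialtwo\dGreen(x_1,w_2)$ together with the maximum principle.

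\textbf{A repair within your framework.} Let $\delta=\dist(\tcC\setminus B(\beta^*,\eta),\partial\Omega)>0$ and suppose $\dist(w_2,L)<\delta/2$.
\begin{itemize}
\item Then $|x_1-w_2|\ge\delta/2$, so you only need an $O(1)$ bound.
\item The points of $\tcC$ within $\delta/2$ of $L$ form a compact subset of the open ball $B(\beta^*,\eta)$. Hence there is a ball of fixed radius around $w_2$ inside $B(\beta^*,\eta)$ that avoids $x_1$.
\item Since $x_1\notin B(\beta^*,\eta)$, the function $\dGreen(x_1,\cdot)$ itself is harmonic on $\Primal\cap B(\beta^*,\eta)$ and vanishes on $L$. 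Its odd reflection is therefore harmonic on that ball.
\item The reflected function is bounded there by Lemma~\ref{lem:green:bounded}.
\item Lemma~\ref{lem:lipschitz} then gives $\dpartialtwo\dGreen(x_1,w_2)=O(1)$.
\end{itemize}
The same reflection, this time in the first variable, also handles $x_1$ in the reflected region, which you did not address.
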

\begin{proof}
    Let $\cC \subset \Omega$ be a compact subset. Taking the discrete derivative of Equation~\eqref{eq:G=Gfree-H}, we obtain that 
    $$
        \dpartialtwo G_\#(x_1,w_2) = \dpartialtwo\dfreeGreen(x_1,w_2)-\dpartialtwo H(x_1,w_2).
    $$
    where $H(x_1,\cdot): \Primal \to \RR$ is harmonic in $\Primal$ and coincides with $\dfreeGreen(x_1,\cdot)$ on $\partialout \Primal$. 
    As before, by Equation~\eqref{eq:green:asymptotic}, $H(x_1,x_2)$ is uniformly bounded for $x_1 \in \Primal \cap \cC, x_2 \in \partialout \Primal$ so by the maximum principle, it is uniformly bounded for $x_1 \in \cC \cap \Primal$, $x_2 \in \Primal$. 
    By Lemma~\ref{lem:lipschitz}, uniformly for $x_1 \in \cC$, $w_2 \in \cC$
    $$
        \dpartialtwo H(x_1,w_2) = O(1).
    $$
    Together with Lemma~\ref{lem:bound:dpartial:dfreegreen}, this proves the lemma.

    Assume now that $\Omega$ and $\Primal$ have a straight boundary $L$ and $\tcC \subset \Omega_\ph$ is a compact subset.
    Let also $\tcC \subset \mathring{\cD} \subset \cD \subset \Omega_\ph$ be a compact subset. 
     
    If $x_1 \in \tcC \cap B(\beta^*,\eta)$, we apply the same improvement as in Lemma~\ref{lem:green:bounded}. 
    Taking the discrete derivative of Equation~\eqref{eq:G=Gfree-phGfree-H}, we obtain for all $x_1, w_2 \in \Omega_\ph$,
    $$
        \dpartialtwo\dGreen(x_1,w_2) = \dpartialtwo\dfreeGreen(x_1,w_2)-\dpartialtwo\dfreeGreen(\ph(x_1),w_2) - \dpartialtwo \tH(x_1,w_2).
    $$
    By symmetry of $\infPrimal \cap B(\beta^*,\eta)$ and locality of the discrete free Green function (see Section~\ref{subsec:def:green:function}), $\tH$ vanishes on $L$. 
    Together with Equation~\eqref{eq:green:asymptotic}, it implies that $\tH$ is uniformly bounded for $x_1 \in \Primal \cap \tcC$, $x_2 \in \partialout \Primal$.
    By the maximum principle, $\tH$ is uniformly bounded for $x_1 \in \Primal \cap \tcC$, $x_2 \in \Primal$.
    Since $\tH(x_1, \cdot)$ vanishes on $L$, it can be extended to $\Primal_\ph$ by the discrete Schwarz reflection principle.
    The extended function is uniformly bounded for $x_1 \in \tcC \cap B(\beta^*,\eta)$, $x_2 \in \Primal_\ph$. 
    Hence by Lemma~\ref{lem:lipschitz}, $\dpartialtwo \tH(x_1,w_2)$ is uniformly bounded for $x_1 \in \tcC \cap B(\beta^*,\eta), w_2 \in \tcC$. 
    Together with Lemma~\ref{lem:bound:dpartial:dfreegreen}, this concludes the proof when $x_1 \in \tcC \cap B(\beta^*,\eta), x_2 \in \tcC$.

    There is still a small technical detail to fix, that is when $x_1 \in \tcC \setminus B(\beta^*,\eta)$, $x_2 \in \tcC \cap B(\beta^*,\eta)$.
    Let $\tcC \setminus B(\beta^*,\eta) \subset \mathring{\cC} \subset \cC \subset \Omega$ be a compact subset. 
    If $x_1 \in \tcC \setminus B(\beta^*,\eta) \subset \cC, w_2 \in \cC$, the result holds by the first point. 
    If $x_1 \in \tcC \setminus B(\beta^*,\eta), w_2 \in \tcC \setminus \cC$, the result holds by the second point and the maximum principle since $x_1 \to \dGreen(x_1, w_2)$ is harmonic in $\tcC \setminus B(\beta^*,\eta)$ and $\dist(\partial (\tcC \setminus B(\beta^*,\eta)), \tcC \setminus \cC) >0$.
    \end{proof}

\begin{lemma}\label{lem:bound:nearby:second:derivative}
    Uniformly for $w_1,w_2$ in compact subsets of $\Omega$, 
    $$
        \dpartialone \dpartialtwo \dGreen(w_1,w_2) = O\left(\frac{1}{|w_1-w_2|^2}\right).
    $$
    If $\Omega$ and $\Primal$ have a straight boundary $L$, uniformly for $w_1,w_2$ in compact subsets of $\Omega_\ph$, 
    $$
        \dpartialone \dpartialtwo \dGreen(w_1,w_2) = O\left(\frac{1}{|w_1-w_2|^2 \wedge |\ph(w_1)-w_2|^2}\right).
    $$
\end{lemma}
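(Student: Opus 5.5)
Both bounds will be proved by mimicking the proofs of Lemmas~\ref{lem:green:bounded} and~\ref{lem:bound:nearby:derivative}: decompose $\dGreen$ into an explicit free part plus a harmonic correction, control the free part by the asymptotics~\eqref{eq:green:asymptotic}, and control the correction by the maximum principle together with Lemma~\ref{lem:lipschitz} applied twice.

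\textbf{Interior case.} For $\cC \subset \Omega$ compact, write
\[
\dGreen(x_1,x_2)=\dfreeGreen(x_1,x_2)-H(x_1,x_2)
\]
as in~\eqref{eq:G=Gfree-H}, and apply $\dpartialone\dpartialtwo$.

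\emph{Free part.} I first need the analogue of Lemma~\ref{lem:bound:dpartial:dfreegreen} for a mixed second difference:
\[
\dfreeGreen(x_1^+,x_2^+)-\dfreeGreen(x_1^-,x_2^+)-\dfreeGreen(x_1^+,x_2^-)+\dfreeGreen(x_1^-,x_2^-)=O\!\left(\frac{\eps^2}{|w_1-w_2|^2}\right).
\]
When $|w_1-w_2|\ge C\eps$, the function $\frac1{2\pi}\log|x_1-x_2|$ has mixed second derivatives of order $|w_1-w_2|^{-2}$, so its mixed second difference is $O(\eps^2/|w_1-w_2|^2)$ by Taylor expansion. The error term in~\eqref{eq:green:asymptotic} is $O(\eps^2/|x_1-x_2|^2)$, which also has the right size. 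When $|w_1-w_2|=O(\eps)$, each of the four terms in the difference is $O(1)$ by Lemma~\ref{lem:bound:dpartial:dfreegreen} applied in each variable, so the bound is trivial.

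The asymptotics~\eqref{eq:green:asymptotic} are not differentiable, so I cannot simply differentiate them. Instead I compare each of the four terms with the logarithm and expand. This is fine because the error term is of order $\eps^2/d^2$, which is already the target size.

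Dividing by $(x_1^+-x_1^-)(x_2^+-x_2^-)\asymp\eps^2$ gives
\[
\dpartialone\dpartialtwo\dfreeGreen=O\!\left(\frac{1}{|w_1-w_2|^2}\right).
\]

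\emph{Correction.} The correction $H(x_1,\cdot)$ is harmonic in $\Primal$, and it is uniformly bounded for $x_1\in\cC$ by the maximum principle, exactly as in Lemma~\ref{lem:green:bounded}. Moreover $H(\cdot,x_2)$ is harmonic in $x_1$ near $\cC$; I would use the symmetry of $\dGreen$ and of $\dfreeGreen$ to see this. Applying Lemma~\ref{lem:lipschitz} in $x_2$ gives a uniform bound on $\dpartialtwo H$ on a smaller compact set. Since $\dpartialtwo H(\cdot,w_2)$ is again harmonic in the first variable, applying Lemma~\ref{lem:lipschitz} once more bounds $\dpartialone\dpartialtwo H$ by $O(1)$ on a still smaller compact set. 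This is the same nested-compacts pattern used in the proof of Lemma~\ref{lem:cv:second:derivative}.

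\textbf{Straight boundary case.} Here I use the decomposition~\eqref{eq:G=Gfree-phGfree-H}, $\dGreen=\dfreeGreen(x_1,\cdot)-\dfreeGreen(\ph(x_1),\cdot)-\tH$. The reflected free term is handled by the same mixed-difference bound, with $\ph(w_1)$ in place of $w_1$; the reflection maps edges to edges because of the symmetry of $\infPrimal\cap B$. The correction $\tH$ vanishes on $L$ by locality, so it extends by the discrete Schwarz reflection principle, is bounded, and the double application of Lemma~\ref{lem:lipschitz} goes through as before.

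\textbf{Main obstacle.} I expect the main obstacle to be the bookkeeping of cases: $w_1$ inside versus outside $B(\beta^*,\eta)$, and similarly for $w_2$. The mixed case, with $w_1$ outside the ball and $w_2$ near $L$, should be handled as at the end of Lemma~\ref{lem:bound:nearby:derivative}. There, $\dpartialtwo\dGreen(\cdot,w_2)$ is harmonic and bounded away from $w_2$ by the first-derivative bound already proved, and one more application of Lemma~\ref{lem:lipschitz} in the first variable gives $O(1)$, which is enough since the two points are then at macroscopic distance.
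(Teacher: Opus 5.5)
Your argument is correct, but it takes a longer route than the paper. The paper does not return to the decomposition $\dGreen=\dfreeGreen-H$. Instead it bootstraps directly from the first-derivative bound of Lemma~\ref{lem:bound:nearby:derivative}.

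For fixed $w_2$, the function $x_1\mapsto\dpartialtwo\dGreen(x_1,w_2)$ is discrete harmonic on a ball around $w_1$ of radius $r\asymp |w_1-w_2|\wedge d$, where $d$ is the distance to the complement of a slightly larger compact set. On that ball it is $O(1/|w_1-w_2|)$ by Lemma~\ref{lem:bound:nearby:derivative}. Lemma~\ref{lem:lipschitz} carries the scale factor $\eps(r+1/r)$, so a single application in the first variable gives
$$\dpartialone\dpartialtwo\dGreen(w_1,w_2)=O\big(r^{-1}|w_1-w_2|^{-1}\big)=O\big(|w_1-w_2|^{-2}\big).$$
In the straight-boundary case the paper applies the same one-line argument to the Schwarz-reflected extension of $\dpartialtwo\dGreen(\cdot,w_2)$, on a ball of radius $\asymp |w_1-w_2|\wedge|\ph(w_1)-w_2|$. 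The case distinctions you anticipate, and the second-order treatment of $\tH$, are thus all absorbed into Lemma~\ref{lem:bound:nearby:derivative}.

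Your route proves something the paper never states explicitly. It bounds the mixed second difference of $\dfreeGreen$ by $O(\eps^2/|w_1-w_2|^2)$. This works precisely because the error in \eqref{eq:green:asymptotic} is already of order $\eps^2/|x_1-x_2|^2$. You combine this with an $O(1)$ bound on the doubly differentiated correction. This gives a self-contained, explicit split into a singular part and a smooth part, and does not use Lemma~\ref{lem:bound:nearby:derivative}. The cost is the extra bookkeeping.

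Your last paragraph already contains the paper's key move: Lipschitz in $x_1$ applied to $\dpartialtwo\dGreen(\cdot,w_2)$. You used it only at macroscopic separation. The $1/r$ factor in Lemma~\ref{lem:lipschitz} makes it work at every scale.

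One small imprecision: in the regime $|w_1-w_2|=O(\eps)$, the individual values $\dfreeGreen(x_1^\pm,x_2^\pm)$ are of order $\log\eps$, not $O(1)$. What is $O(1)$, by Lemma~\ref{lem:bound:dpartial:dfreegreen}, are the two first-order differences $\dfreeGreen(x_1^+,x_2^\pm)-\dfreeGreen(x_1^-,x_2^\pm)$. That is all your argument needs.
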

\begin{proof}
    Let $\cC \subset \mathring{\cD} \subset \cD \subset \Omega$ be two compact subsets, $d = \dist(\cC, \cD^c) > 0$. 
    Let $w_1, w_2 \in \cC$.
    The function $x_1 \to \dpartialtwo \dGreen(x_1,w_2)$ is harmonic on $B(w_1, d/2 \wedge |x_1-w_2|/2)$, and it is $O(1/|w_1-w_2|)$ uniformly for $w_1, w_2 \in \cC$, $x_1 \in B(w_1, d/2 \wedge |x_1-w_2|/2)$, by Lemma~\ref{lem:bound:nearby:derivative}. 
    Applying Lemma~\ref{lem:lipschitz} (which works in particular when $m = 0$) to the ball $B(w_1, r+R)$ with $r=d/4 \wedge |x_1-w_2|/4, R = 2r$ gives the lemma.

    If $\Omega$ and $\Primal$ have a straight boundary $L$, let $\cC \subset \mathring{\cD} \subset \cD \subset \Omega_\ph$ be compact subsets. Let $w_1, w_2$ and 
    $d = \dist(\cC, \cD^c)$.
    The function $x_1 \to \dpartialtwo \dGreen(x_1,w_2)$ can be extended by the discrete Schwarz principle to $\Primal_\ph$. 
    The extended function is harmonic on $B(w_1, d/2 \wedge |w_1-w_2|/2 \wedge |\ph(w_1)-w_2|/2)$ and it is $O(1/(|w_1-w_2|\wedge |\ph(w_1)-w_2|)$ uniformly for $w_1, w_2 \in \cC$ and $x_1 \in B(w_1, d/2 \wedge |w_1-w_2|/2 \wedge |\ph(w_1)-w_2|/2)$ by Lemma~\ref{lem:bound:nearby:derivative}. 
    Applying Lemma~\ref{lem:lipschitz} (in the $m=0$ case is just Corollary 2.9 in \cite{ChelkakSmirnov}) to the ball $B(w_1, r+R)$ with $r=d/4 \wedge |w_1-w_2|/4 \wedge |\ph(w_1)-w_2|/4, R = 2r$ gives the lemma.
\end{proof}

\section{Elementary technical results on functions of two variables}

In this appendix, we prove an elementary result on functions of two variables.
\begin{lemma}\label{lem:lip:two:var}
    Let $\cC_1$ be a compact set of $\RR^2$.
    Let $C_1>0$ and $\phi, \psi: \RR \to \RR$ be two continuous increasing functions such that $\phi(0) = \psi(0) = 0$.
    Let $f : \cC_1 \to \RR$ be a function satisfying
    $$
        \begin{aligned}
        \forall (x_1,x_2) &\in \cC_1, |f(x_1,x_2)| \leq C_1\\
        \forall (x_1,x_2), (x_1',x_2) &\in \cC_1, |f(x_1,x_2)-f(x_1',x_2)| \leq C_1\phi(|x_1-x_1'|)\\
        \forall (x_1,x_2), (x_1,x_2') &\in \cC_1, |f(x_1,x_2)-f(x_1,x_2')| \leq C_1\psi(|x_2-x_2'|).
        \end{aligned}
    $$
    Then, for all compact subset $\cC_2 \subset \mathring{\cC_1}$ there exists $C_2 >0$ such that
    $$
        \forall (x_1,x_2), (x_1',x_2') \in \cC_2, |f(x_1,x_2)-f(x_1',x_2')| \leq C_2\phi(|x_1-x_1'|+\psi(|x_2-x_2'|)).
    $$
\end{lemma}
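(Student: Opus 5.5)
The plan is to combine the two one-variable bounds by the triangle inequality through an intermediate point, and then convert the additive bound into the composite form $\phi(|x_1-x_1'|+\psi(|x_2-x_2'|))$ using monotonicity of $\phi$ plus a linear lower bound on $\phi$. The second step needs an assumption the statement does not make explicit. As worded, the conclusion can fail: take $f(x_1,x_2)=x_2$, $\psi(t)=t$, $\phi(t)=t^2$. Then $f$ is constant in $x_1$, so the second hypothesis holds, but with $x_1=x_1'$ the claimed bound would read $|x_2-x_2'|\le C_2|x_2-x_2'|^2$, which is false as $x_2'\to x_2$. I would therefore prove the statement under the additional hypothesis that $\phi(t)\ge c\,t$ on $[0,D]$ for some $c>0$, where $D$ is a bound for the relevant arguments (finite by compactness of $\cC_1$ and continuity of $\psi$). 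I would also use that $\phi,\psi$ are \emph{strictly} increasing, so that $\phi(t),\psi(t)>0$ for $t>0$. Both properties hold in every use in the paper, where $\phi,\psi$ are of the form $t$ or $t(1+|\log t|)$.

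Write $a=|x_1-x_1'|$, $b=|x_2-x_2'|$, and let $\delta=\dist(\cC_2,\cC_1^c)>0$.

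\emph{Near case: $a,b<\delta/2$.} The intermediate point $(x_1,x_2')$ lies within $\delta$ of $(x_1',x_2')\in\cC_2$, hence belongs to $\cC_1$. The triangle inequality and the two hypotheses give
$$|f(x_1,x_2)-f(x_1',x_2')|\le |f(x_1,x_2)-f(x_1,x_2')|+|f(x_1,x_2')-f(x_1',x_2')|\le C_1\psi(b)+C_1\phi(a).$$
Since $\phi$ is increasing, $\phi(a)\le \phi(a+\psi(b))$. For the other term, note that $\psi(b)\le D$, so the linear lower bound gives $\psi(b)\le c^{-1}\phi(\psi(b))\le c^{-1}\phi(a+\psi(b))$. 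Together,
$$|f(x_1,x_2)-f(x_1',x_2')|\le C_1(1+c^{-1})\,\phi(a+\psi(b)).$$

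\emph{Far case: $a\ge\delta/2$ or $b\ge\delta/2$.} Here I would use only the crude bound $|f(x_1,x_2)-f(x_1',x_2')|\le 2C_1$. In this case $a+\psi(b)\ge m:=\min(\delta/2,\psi(\delta/2))>0$, so $\phi(a+\psi(b))\ge\phi(m)>0$. Hence the difference is at most $(2C_1/\phi(m))\,\phi(a+\psi(b))$.

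Taking $C_2=\max\{C_1(1+c^{-1}),\,2C_1/\phi(m)\}$ covers both cases and gives the stated bound.

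The main obstacle is exactly the conversion of the $\psi(b)$ term into $\phi(\,\cdot+\psi(b))$. This is where the linear lower bound on $\phi$ is indispensable, and I would state it explicitly as a hypothesis (or remark that it holds for the specific $\phi$ used in the paper) rather than try to avoid it, since the counterexample above shows it cannot be dropped.
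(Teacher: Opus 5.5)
Your counterexample is correct: as printed, the conclusion $C_2\,\phi(|x_1-x_1'|+\psi(|x_2-x_2'|))$ is false. (Take $\phi(t)=t|t|$ rather than $t^2$ if you want $\phi$ increasing on all of $\RR$; only $t\ge 0$ matters.)

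However, the intended conclusion is the additive bound $C_2\bigl(\phi(|x_1-x_1'|)+\psi(|x_2-x_2'|)\bigr)$. This is what the paper's proof establishes, and it is the form written out after the lemma is invoked, e.g.\ the denominator $|x_1-x_1'|(1+|\log|x_1-x_1'||)+|x_2-x_2'|(1+|\log|x_2-x_2'||)$ in the proof of Lemma~\ref{lem:cv:second:derivative}.

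The paper's argument is exactly your near-case estimate, stopped before the conversion step:
\begin{itemize}
\item If $a<\delta/2$, the intermediate point lies in $\cC_1$. This depends only on $a$, so no restriction on $b$ is needed. The triangle inequality then gives $C_1(\phi(a)+\psi(b))$.
\item If $a\ge\delta/2$, the crude bound gives $2C_1\le (2C_1/\phi(\delta/2))\,\phi(a)$.
\end{itemize}
So the intended statement needs neither your linear lower bound on $\phi$ nor $\psi(\delta/2)>0$, and it needs no far case in $b$.

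Your rescue of the literal composite form is valid under your extra hypothesis, but it buys nothing and costs an assumption. Worse, when $\phi=\psi=t(1+|\log t|)$ and $a=0$, it yields a modulus of order $b\log^2(1/b)$ instead of $b\log(1/b)$. That is strictly weaker than the additive estimate used in Lemma~\ref{lem:cv:second:derivative}, although still enough for the Arzel\`a--Ascoli equicontinuity, which is all that is needed downstream.
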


\begin{proof}
    Let $d = \dist(\cC^2, \cC_1^c)$. 
    For all $(x_1,x_2), (x_1',x_2') \in \cC_2$ such that $|x_1-x_1'| \geq d/2$, then
    $$
        |f(x_1,x_2)-f(x_1',x_2')| \leq 2C_1 \leq \frac{2C_1}{\phi(d/2)} \phi(|x_1-x_1'|)
    $$
    For all $(x_1,x_2), (x_1',x_2') \in \cC_2$ such that $|x_1-x_1'| \leq d/2$, $(x_1',x_2) \in \cC_1$ so
    $$
        \begin{aligned}
            |f(x_1,x_2)-f(x_1',x_2')| 
            &\leq |f(x_1,x_2)-f(x_1',x_2)| + |f(x_1,x_2)-f(x_1',x_2)|\\
            &\leq C_1 \phi(|x_1-x_1'|) + C_1 \psi(|x_2-x_2'|).
        \end{aligned}
    $$
    Hence the lemma holds with $C_2 = \max(C_1, 2C_1/\phi(d/2))$.
\end{proof}

\section{Heuristic for identifying the limiting height field with the sine-Gordon model}

\subsection{Coleman's correspondence and sine-Gordon model with  external field}

\label{SS:Coleman}

The goal of this subsection is to give a heuristic argument that the correlations describing the limiting height function in Theorem \ref{thm:height} can be identified with the sine-Gordon model via Coleman's transform, for a generic domain. Note that we only prove this in the case of unit disc, using the construction of the correlation functions the sine-Gordon field given in \cite{PVW}.

Under $\PP_{\mathrm{SG}(\alpha)}$, $\ph$ has formal density
\begin{align}\label{phaseshiftSG}
\exp\Big[-\int_{\Omega} 2\partial \ph(x) \bar\partial \ph(x)+z_0\big \langle e^{-\ii\sqrt{4\pi}\ph},\tilde \alpha(x)\big\rangle  \ \mathrm{d}A(x)\Big] ,
\end{align}
which we rewrite when $z_0 = -2$ as
\begin{align} \label{eq:preColeman}
    \exp\Big[ - 2 \int_\Omega \partial \ph(x) \bar \partial \ph(x) -\int_{\Omega} \tilde \alpha \ e^{\ii\sqrt{4\pi }\ph}+ \overline{\tilde\alpha} \ e^{-\ii\sqrt{4\pi }\ph} \ \mathrm{d}A(x)\Big] 
\end{align}


In \cite{ColemanSGMTM}, Coleman observed the formal equivalence of the sine-Gordon field with the massive Thirring model. We recall from \eqref{eq:fermion_density} that in this model the ``spins'' are fermionic variables   $(\psi(z))_{z\in \Omega}$ where for each $z$, $\psi(z) = (\psi_1(z), \psi_2(z))$ is a vector of size two, whose formal density is given by 
 \begin{equation}\label{eq:fermion_density2}
 \exp\Big[-\int_{\Omega}\mathrm{d}A(x) \ \big( \bar\psi \slashed \partial \psi +\tilde \alpha \bar\psi_1\psi_1+\overline{\tilde \alpha} \bar\psi_2\psi_2 \big) \Big] .
 \end{equation}
Once again we refer to \cite{BenfattoFalcoMastropietro_Thirring} for a rigorous construction. Note the surprising feature that in this correspondence, the nonlinear potential $\cos (\sqrt{4\pi \ph)}$ is transformed into the above quadratic form, which boils down to simply adding a mass term to the field and is thus in a certain sense `trivial'.

\medskip To explain his idea, Coleman started from the observation that the massless GFF and the massless free fermion ``measure'' are equivalent in the sense that the following observables have the same correlations:
\begin{align}\label{temp5tfhh}
\!:\!e^{\ii\sqrt{4\pi}\varphi}\!:\! \ \leftrightarrow A\!:\!\bar\psi_1\psi_1\!:\!\\
\!:\!e^{-\ii\sqrt{4\pi}\varphi}\!:\! \ \leftrightarrow A\!:\!\bar\psi_2\psi_2\!:\!\\
-\ii\partial \varphi \leftrightarrow B\!:\!\bar\psi_2\psi_1\!:\!\label{temp7yvb}\\
\ii\bar\partial \varphi  \leftrightarrow B\!:\!\bar\psi_1\psi_2\!:\!\label{temp7yvv}
\end{align}
where $A = 4\pi e^{-\gamma/2}$ (with $\gamma$ being the Euler--Mascheroni constant) and $B = \sqrt{\pi}$. (In this massless case, this boils down to some simple Cauchy--Vandermonde determinant identities; see \cite[(2.69)]{BauerschmidtWebb}). In the classical case of the sine-Gordon model, $\tilde \alpha $ is a constant and real-valued. If we replace each term in \eqref{eq:preColeman} by its value under this dictionary in this case, this \emph{suggests} that the sine-Gordon model with real, constant  electromagnetic field should indeed be equivalent to the Fermionic measure given by \eqref{eq:fermion_density} still with $\tilde \alpha$ a real constant. Coleman's original idea to verify this involved an asymptotic expansion of both measures when $z_0$ (or equivalently $\alpha$) tends to 0. This has since then been proven rigorously under various assumptions on $\Omega, \alpha$ at or below the free fermion point (the Coleman correspondence is in fact more general than what we state here). For a more detailed discussion of this, at least in the full plane, we invite the reader to consult \cite{BauerschmidtWebb}.




Going back to the case of interest where $\tilde \alpha$ is not constant, we can still use the same equivalence. In that case we find that the law of $\ph$ under $\PP_{\mathrm{SG}(\alpha)}$ (see \eqref{phaseshiftSG})
is equivalent to the \emph{complex-massive} Dirac field with formal density \eqref{eq:fermion_density}.


\begin{rmk}
   The relation \eqref{temp7yvv} has an extra minus sign compared to \cite{BauerschmidtWebb} stemming from a minor sign error in equation (2.11) of Lemma 2.2 in  that paper.
 This has been amended in the arXiv version of that paper.
\end{rmk}

\begin{remark}
    As an aside, in the case that $\alpha(x)=\alpha$ is constant, but still complex,  observe that $\slashed\partial_{\alpha}\slashed\partial_{-\bar\alpha}=(\Delta-|\alpha|^2)I$,  hence the associated Green function is 
\begin{align}
\slashed\partial_{\alpha}^{-1}(w,z)=\slashed\partial_{-\bar\alpha} (\Delta-|\alpha|^2)^{-1}(w,z)I.
\end{align}
Furthermore, in the case of the full plane, we have $(\Delta-|\alpha|^2)^{-1}(w,z)=-\frac{1}{2\pi}K_0(|\alpha||w-z|)$, so
\begin{align}
\slashed\partial_{\alpha}^{-1}(w,z)=\frac{1}{2\pi}\begin{pmatrix}
    \overline{ \alpha}  \ K_0(|\alpha||w-z|) & -2\bar\partial_w K_0(|\alpha||w-z|)\\
    -2\partial_w K_0(|\alpha||w-z|)&\alpha \ K_0(|\alpha||w-z|)
\end{pmatrix}.
\end{align}
We thus obtain an explicit expression for the associated Green function in this case. 
\end{remark}

Using Coleman's correspondence as heuristic, namely \eqref{temp7yvb} and \eqref{temp7yvv} above, it is natural to expect that the derivatives of the sine-Gordon model with electromagnetic field have correlations given by
\begin{align}
\mathbb{E}_{\mathrm{SG}(\alpha)} \Big[\prod_{i=1}^n\ii^{-1+2s_i}\partial^{(s_i)}\ph(x_i)\Big]&=B^n\Big\langle \prod_{i=1}^n\!:\!\bar\psi_{2-s_i}(x_i)\psi_{1+s_i}(x_i)\!:\!\Big \rangle. \label{mSG-Coleman}
\end{align}
Using standard computations for Grassmann integrals (see, e.g., Lemma A.2 in \cite{BauerschmidtWebb}), we arrive at
\begin{align}
\mathbb{E}_{\mathrm{SG}(\alpha)}\Big[\prod_{i=1}^n\ii^{-1+2s_i}\partial^{s_i}\ph(x_i)\Big] &=B^n\det \Big((\slashed \partial_\alpha)_{2-s_i,1+s_j}^{-1}(x_i,x_j)\Big)_{i\neq j},
\end{align}
when the $x_i\in \Omega $ are pairwise distinct. 

\subsection{Effect of centering and gauge change}

Here we explain how Conjecture \eqref{eq:conjBHS2} can be rephrased in terms of Coleman's transform. It was conjectured in \cite{BHS} that the limiting height function field $h$ has a law given by 
\begin{equation}\label{mSG}
\mathbb{P}_{\mathrm{SG}(\alpha)}(\mathrm{d} h) \propto \exp \left[ z_0 \int_\Omega  \langle e^{- \ii \sqrt{\beta} h(x)} ,\ii \alpha(x) \rangle \mathrm{d}x \right] \mathbb{P}^{\mathrm{GFF}\#} (\mathrm {d} h),
\end{equation}
where $\beta = 4\pi$, the constant $z_0 \in \mathbb{R}$ was not identified and might be lattice dependent, and $\mathbb{P}^{\mathrm{GFF}\#} (\mathrm {d} h)$ denotes the law of the Gaussian free field on $\Omega$, normalised in such a way that $\mathbb{E}^{\mathrm{GFF}\#} (h(x) h(y)) = (-2\pi)^{-1} \log |x - y | + O(1)$ as $|y-x | \to 0$. The law $\PP_{\mathrm{mSG}}$ defined \eqref{mSG} is what we call the \textbf{sine-Gordon model with external field}. The external field in question refers to the vector field $\alpha(x)$ which acts like an external magnetic field, tilting the direction of $e^{\ii \sqrt{4\pi} h}$.

To explain how Theorem \ref{thm:height} is consistent with this conjecture, it is important to also specify the boundary conditions of the field in $\mathbb{P}^{\mathrm{GFF}\#}$, which are only implicit in \cite{BHS}, and which (explicitly) are given by the so-called winding boundary conditions (see, e.g., (2.7) in \cite{BLR_DimersGeometry}). Thus, when $\Omega$ is a smooth domain with boundary parametrized by a simple smooth curve $\gamma:[a,b] \to \partial \Omega$ which winds anticlockwise along $\partial \Omega$, and $x = \gamma(a) \in \partial D$ is a reference point (corresponding to the limit of the removed corner in the Temperleyan construction) we can define a function on the boundary by 
$$
u (\gamma(t)) = \frac1{\sqrt{4\pi}}u_0 ( \gamma(t)) ; \text{ where } u_0(\gamma(t)) = W_{\mathrm{int}} (\gamma([a,t]) ),$$
the \emph{intrinsic winding} of the boundary between $x$ and $\gamma(t)$ (see Section 2.1 in \cite{BLR_DimersGeometry}). The constant $(1/\sqrt{4\pi})$ in the expression for $u$ above (in front of $u_0$) is significant, and is not present in \cite{BLR_DimersGeometry}. This reflects the fact that the Gaussian free field in \cite{BLR_DimersGeometry} differs from the one here by a factor of $\sqrt{2\pi}$ (owing to different normalisations of the Green function). The extra factor of $\sqrt{2}$ is $1/\chi$ for $\chi = 1/\sqrt{2}$, the imaginary geometry constant associated with $\kappa = 2$: using notations of \cite{BLR_DimersGeometry}, the limit of the uncentered height function is $(1/\chi) h^{\mathrm{GFF}} + u_0 = (1/\chi) (h^{\mathrm{GFF}} + \frac1{\sqrt{2}} u_0)) $, where $h^{\mathrm{GFF}}$ is a Dirichlet Gaussian free field normalised according to the conventions of \cite{BLR_DimersGeometry}.

It is elementary to check that the function $u$ is invariant under orientation preserving reparametrization of $\gamma$ and thus depends only on $\Omega$ and $x$. The harmonic extension of this function to $\Omega$ is then denoted by $u_{(\Omega, x)}$. Thus, if $\Omega = \DD$ and $x = 1$, $u_{(\DD, 1)} (z) = (4\pi)^{-1/2}\arg(z)$, where $\arg$ is the determination of the argument which has a jump of $-2\pi$ when crossing the positive real line from bottom to top. On the other hand, when $\Omega = \HH$ is the upper half-plane and $x = 0$, say, then $u_{(\HH, 0)} (z) = 0$. To isolate the effect of the boundary conditions we thus write
$$
h = \phi + u
$$
where $h$ has the law $\mathbb{P}^{\mathrm{GFF}\#}$, $u = u_{(\Omega, x)}$, and $\phi$ is a Gaussian free field with Dirichlet boundary conditions (and with covariance normalised similar to above, i.e., $\EE ( \tilde h(x) \tilde h(y) ) = (-2\pi)^{-1} \log |x- y | + O(1)$). We will write $\tilde{\mathbb{P}}^{ \mathrm{GFF}\#}$ for the law of $\tilde h$.

Now observe that, since $\langle z, w\rangle = \Re (z\bar w)$, we can write
\begin{align*}
\langle e^{- \ii \sqrt{\beta} h(x)} , \ii\alpha(x) \rangle 
 = \Re \left( e^{- \ii \sqrt{\beta} (\phi(x) + u(x))} \overline{\ii\alpha} (x) \right)
= \langle e^{- \ii \sqrt{\beta} \phi(x) }, \ii \tilde \alpha (x) \rangle,
\end{align*}
where
\begin{equation}
\tilde \alpha (x) = \alpha(x) e^{\ii \sqrt{\beta} u(x)}
\end{equation}
is a new vector field which corresponds to the electromagnetic field of $\ph$.
Indeed, under $\mathbb{P}_{\mathrm{SG}(\alpha)}$, since $\phi  = h - u$ we have that $\phi$ has law given by 
$$
\exp \left[  z_0 \int_\Omega \langle e^{ - \ii \sqrt{\beta} \phi }, \ii \tilde \alpha (x) \rangle \mathrm{d} x\right] \PP^{\tilde{\mathrm{GFF}}\#} ( \mathrm{d} \phi) 
$$
Since $u$ is a deterministic function, the correlations of the centered height function $h - \EE_{\mathrm{SG}(\alpha)}(h)$ under $\PP_{\mathrm{SG}(\alpha)}$ are identical to those of the centered field $\phi - \EE_{\mathrm{SG}(\alpha)}(\phi)$.

\bibliographystyle{alpha}
\bibliography{references}

\end{document}